\documentclass[10pt]{article}

\RequirePackage{amsthm,amsmath,amsfonts,amssymb}
\RequirePackage[numbers]{natbib}
\RequirePackage[citecolor=blue,urlcolor=blue]{hyperref}  
\RequirePackage{graphicx}

\usepackage{bbm}
\usepackage[margin=2.54cm]{geometry}
\usepackage[utf8]{inputenc}
\usepackage[T1]{fontenc}
\usepackage{amsmath} 
\usepackage{amssymb}
\usepackage{paralist}
\usepackage{hyperref}
\usepackage{amsthm}
\usepackage{mathrsfs}
\usepackage{color}
\usepackage{mathtools}
\mathtoolsset{showonlyrefs=true}
\usepackage{accents}
\usepackage{dsfont}
\usepackage{xcolor}
\theoremstyle{plain}

\newtheorem{theorem}{Theorem}[section]
\newtheorem{lemma}[theorem]{Lemma}
\newtheorem{corollary}[theorem]{Corollary}
\newtheorem{proposition}[theorem]{Proposition}

\theoremstyle{definition}

\newtheorem{definition}[theorem]{Definition}
\newtheorem{example}{Example}[section]

\theoremstyle{remark}
\newtheorem{remark}[theorem]{Remark}

\DeclareMathOperator{\Var}{Var}
\DeclareMathOperator{\Cov}{Cov}
\DeclareMathOperator{\supp}{supp}

\newcommand{\extmu}{{\tilde{\mu}}}

\renewcommand{\d}{\mathrm{d}}

\renewcommand{\k}{\kappa}

\newcommand{\R}{{\mathbb{R}}}
\renewcommand{\P}{{\mathbb{P}}}
\newcommand{\E}{{\mathbb{E}}}

\newcommand{\Z}{{\mathbb{Z}}}

\newcommand{\N}{\mathbb{N}}

\newcommand{\mc}{\mathcal}

\newcommand{\bs}{\boldsymbol}

\newcommand{\new}{\mathrm{new}}
\newcommand{\eps}{\varepsilon}

\newcommand{\1}{\mathds{1}}
\newcommand{\vertiii}[1]{{\left\vert\kern-0.25ex\left\vert\kern-0.25ex\left\vert
#1 
    \right\vert\kern-0.25ex\right\vert\kern-0.25ex\right\vert}}

\newcommand{\Fcal}{\mathcal{F}}
\newcommand{\Gcal}{\mathcal{G}}

\newcommand{\Kcal}{\mathcal{K}}
\newcommand{\Lcal}{\mathcal{L}}
\newcommand{\Mcal}{\mathcal{M}}
\newcommand{\Ncal}{\mathcal{N}}

\newcommand{\e}{\mathrm{e}}

\newcommand{\eqdef}{=\vcentcolon}
\def\showauthornotes{1}
\ifnum\showauthornotes=1
\newcommand{\Authornote}[2]{{\sf\small\color{blue}{[#1: #2]}}}
\else
\newcommand{\Authornote}[2]{}
\fi

\allowdisplaybreaks
\newcommand{\zd}{{{\mathbb Z}^d}}
\newcommand{\F}{{\mathcal F}}
\newcommand{\Feta}{\F_{{\text{grad}}}}
\newcommand{\Fkappa}{\F_{{\text{disord}}}}
\newcommand{\RR}{\ensuremath{\mathbb{R}}}
\newcommand{\rmd}{\,\mathrm{d}}
\newcommand{\ormd}{\mathrm{d}}
\newcommand{\bzd}{\ensuremath{{(\zd)^*}}}
\newcommand{\bzdl}{\ensuremath{{\Lambda^*}}}
\newcommand{\bzdlb}{\ensuremath{{\overline{\Lambda^*}}}}
\newcommand{\ubzd}{\ensuremath{{(\zd)^{*,\leftrightarrow}}}}
\newcommand{\ubzdl}{\ensuremath{{\Lambda^{*,\leftrightarrow}}}}
\newcommand{\ubzdlb}{\ensuremath{{\overline{\Lambda}^{*,\leftrightarrow}}}}

\newcommand{\C}{{\mathcal C}}
\newcommand{\A}{{\mathcal A}}

\newcommand{\Vn}{V_\new}
\newcommand{\tVn}{\tilde{V}_{\new}}
\newcommand{\V}{V}
\newcommand{\W}{W}

\makeatletter
\newcommand{\vast}{\bBigg@{4}}
\newcommand{\Vast}{\bBigg@{5}}
\makeatother

\begin{document}

\title{Gradient Gibbs measures with non-convex potentials and the universality class of the Gaussian Free Field}
\author{Simon Buchholz\thanks{Max Planck Institute for Intelligent Systems, \url{simon.buchholz@tuebingen.mpg.de}} \and Codina Cotar\thanks{University College London, \url{c.cotar@ucl.ac.uk}} \and Florian Schweiger\thanks{University of Bath, \url{fms68@bath.ac.uk}}}
\maketitle
\begin{abstract}
We study a general class of gradient interface models with Hamiltonian $H=\beta\sum V(\nabla\phi)$, $\beta>0$, assuming essentially  that the potential $V$ is even, $V'(s)\ge \alpha s$ on $[0,\infty)$ for some $\alpha>0$, and $-M\leq V''\le C$. We establish a Helffer-Sjöstrand representation for these models, and use it to prove that their scaling limits are Gaussian Free Fields (GFFs), and that their covariances decay at the same rate as the GFF.
This extends results for strictly convex potentials
to a large class of non-convex potentials and to arbitrary temperatures. Additionally, we prove Brascamp-Lieb and dimension-free Poincar\'e inequalities for the models.

We obtain these results by representing the interface as a mixture of gradient interface models with strictly convex potentials, extending an idea by Biskup-Spohn who had considered mixtures of Gaussians at moderate inverse temperature $\beta=1$. The construction of such a representation is one of the key new contributions of this work. 

\end{abstract}

\tableofcontents

\section{Introduction}\label{sec:introduction}
The main motivation for this work comes from gradient interface models: these are statistical mechanics models that can be used to model phase transitions in physical systems, or, in the case of vector-valued fields, solid materials. Informally, gradient models can be defined as a random field $(\phi_x)_{x\in \Z^d}\in \R^{\Z^d}$ with the Gibbs distribution informally given by (see Section~\ref{s:definitions} below for the formal definition)
\begin{equation}\label{eq:formal_grad_int_def}
\frac{\exp\left(-\frac{\beta}{2}\sum_{x, y: |x-y|=1} V(\phi(x)-\phi(y))\right)}{Z}\prod_{x\in\Z^d} \d\phi(x),
\end{equation}
where $\beta>0$ is the inverse temperature. It is conjectured that many features of these models are universal in the sense that they do not depend on the specific choice of $V$. One particularly interesting example is the scaling limit of the field, which should always be a multiple of the (continuum) Gaussian free field.

There are very few tools available to study general models like \eqref{eq:formal_grad_int_def}, and most research activity has focused on specific classes of $V$. We will describe what is known in Section \ref{sec:intro_interface}, but will highlight here those that have motivated the present work.

One well-studied class of models are the so-called Ginzburg-Landau interface models, where $V\in C^2(\R)$ with $0<c\le V''(x)\le C<\infty$. These models have a random-walk representation due to Helffer and Sjöstrand, and it has been used to establish their scaling limit \cite{MR1461951} and many other properties.

In another direction, in \cite{MR2322690,MR2778801} potentials $V$ that are log-mixtures of Gaussians in the sense that

\begin{equation}\label{e:def_GM}V(s)=-\log\int_{[0,\infty)}\e^{-\kappa s^2/2}\rho(\d \kappa)
\end{equation}
have been studied. The specific structure of $V$ allows to rewrite the law of the interface as a mixture of Gaussian fields in random environment, and in \cite{MR2778801} that property was used to find the scaling limit of such models.

The main idea of this work is to combine these two approaches. That is, we will study potentials $V$ that are log-mixtures of $V_\kappa\in C^2(\R)$ with $0<c\le V_\kappa''(x)\le C<\infty$, derive a Helffer-Sjöstrand representation in random environment for them, and use it to prove that under suitable assumptions their scaling limit is still a multiple of the Gaussian free field.

But before we discuss those results, we will address the question which potentials $V$ can be represented as such a log-mixture.

\subsection{Decomposition of \texorpdfstring{$\alpha$}{alpha}-monotone potentials}\label{sec:intro_decomp}

\subsubsection*{The decomposition}

We begin by introducing the class of potentials that we will study.
\begin{definition}\label{d:def_V}
A function $V\colon\R\to\R$ is called a \emph{strongly log-concave log-mixture} interaction if the following holds: 
It can be written as
\begin{equation}\label{e:def_V}
V(s)=-\log\int\exp(-V_\kappa(s))\rho(\d\kappa)
\end{equation}
where $(K,\Kcal,\rho)$ is some measure space, the map $(\kappa,s)\mapsto V_\kappa(s)$ is jointly measurable, and for each $\kappa$ the function $V_\kappa$ is in $C^2(\R)$, even, and there exists a constant $0<c_V$ such that $c_V\le V_\kappa''(s)$ for all $s\in\R$, $\kappa\in K$.

Given a function $\mathfrak{g}\colon[0,\infty)\to\R$, we say that the strongly log-concave log-mixture is of $\mathfrak{g}$-controlled growth, if additionally $V_\kappa''(s)\le \mathfrak{g}(s)$ for all $s\ge0$, $\kappa\in K$. If we can choose $\mathfrak{g}$ equal to some constant $C_V$, we say that the strongly log-concave log-mixture is of quadratic growth. 
\end{definition}

It is easy to derive some necessary conditions for a function $V$ to be a strongly log-concave log-mixture (see also Remark~\ref{decompgenrem} below). Clearly $V$ must be even, and (under mild assumptions on $\rho$) in $C^2(\R)$. Moreover, if $c_V\le V_\kappa''(s)$ for all $s\in\R$, then $V_\kappa(s)-\frac{c_Vs^2}{2}$ is a non-decreasing function on $[0,\infty)$. So if \eqref{e:def_V} holds, then $V(s)-\frac{c_Vs^2}{2}$ is non-decreasing as well.
\begin{definition}
    A function $V:\R\to \R$ is called $\alpha$-monotone if $V(s)-\alpha s^2/2$ is a non-decreasing function on $(0,\infty)$.
    If $V$ is $C^1$ and even, then equivalently $V$ is $\alpha$-monotone if $V'(s)\geq \alpha s$ for $s\geq 0$. 
\end{definition}
As we just saw, if $V$ is a strongly log-concave log-mixture, then necessarily $V$ is $c_V$-monotone for some $c_V>0$. 

One of our main results is that this is already close to optimal. Namely we can show that $\alpha$-monotone potentials
with lower bounded second derivative can be written as strongly log-concave log-mixtures, with a growth rate not worse than the one of $V''$. 
\begin{theorem}\label{t:decomp}
Suppose that $V$ is an $\alpha$-monotone $C^2$ even potential such that there is a constant $M>0$ such that $V''(s)\geq -M$. Fix $0<\zeta<1$ arbitrarily. Then $V$ is a  strongly log-concave log-mixture, and in fact in $\eqref{e:def_V}$ one can choose $c_V\ge (1-\zeta)\alpha$, $K$ to be a countable set and $\Kcal$ to be its powerset (so that $\kappa\to V_\kappa(\cdot)$ takes only countably many values). 

Moreover, we can choose the strongly log-concave log-mixture to be of $\mathfrak{g}$-controlled growth, where $\mathfrak{g}(s):=C\max(1,V''(s))$
for some constant $C<\infty$ (that depends on $V$). In particular, if $V''$ is bounded above, we can choose the strongly log-concave log-mixture to be of quadratic growth. 
\end{theorem}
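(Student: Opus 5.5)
The plan is to split off the strongly convex part and decompose the remaining monotone factor. Set $c:=(1-\zeta)\alpha$ and $W(s):=V(s)-\frac{c s^2}{2}$. Then $W$ is even and $C^2$, and $W'(s)\ge \zeta\alpha s$ for $s\ge0$, so $W$ is strictly increasing in $|s|$ with quantitative growth $W(s)-W(0)\ge \zeta\alpha s^2/2$. Moreover $W''\ge -M-c$. It then suffices to write
\begin{equation}
\e^{-W(s)}=\sum_{k\in K}\rho_k\,\e^{-U_k(s)},\qquad \rho_k>0,
\end{equation}
with $K$ countable and each $U_k$ even, $C^2$ and convex, with $U_k''\le C\max(1,V''(s))$. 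Indeed, $V_k:=\frac{cs^2}{2}+U_k$ then satisfies $V_k''\ge c=(1-\zeta)\alpha$ and $V_k''\le C\max(1,V'')$ after enlarging $C$. So everything reduces to a statement about the even, radially decreasing, positive function $f:=\e^{-W}$, which decays at least like a Gaussian.

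For the decomposition I would use a layer-cake idea. Write $f=\sum_k (f_{k-1}-f_k)$ along a telescoping sequence, where each increment is "almost" an indicator of an interval $[-r_k,r_k]$ and is made strictly positive and smooth. Concretely, choose radii $0=r_0<r_1<r_2<\dots\to\infty$ and convex even profiles $\psi_k$ with $\psi_k=0$ on $[-r_k,r_k]$ and strongly convex growth outside. These would be mollified versions of $\lambda_k(|s|-r_k)_+^2$ plus a small strictly convex term to keep them $C^2$ and the terms positive everywhere. Then try
\begin{equation}
f(s)=\sum_k \rho_k\,\e^{-W(s)}\e^{\,\theta_k(s)}\text{-type pieces},
\end{equation}
or, more robustly, $\rho_k\e^{-U_k}$ with $U_k:=W_k+\psi_k$, where $W_k$ is a convexification of $W$ that agrees with $W$ outside a neighbourhood of $r_k$. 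The key mechanism is the following. Non-convexity of $W$ has size at most $M+c$, and it can be absorbed by the curvature $\lambda_k\approx M+c+1$ of $\psi_k$ wherever $\psi_k$ is active. Where $\psi_k$ is inactive, i.e.\ on $[-r_k,r_k]$, we instead use that on a bounded interval the monotone $W$ can be bounded below by an even convex function $W_k\le W$ with $W_k''\le C\max(1,V'')$. The defect $\e^{-W}-\sum_{j\le k}\rho_j\e^{-U_j}$ must remain nonnegative and be carried to the next step. Positivity of the weights and of the remainder is where monotonicity, i.e.\ $\alpha$-monotonicity with slack $\zeta\alpha$, enters: since $f$ decreases in $|s|$ at a Gaussian rate controlled by $\zeta\alpha$, the remainder at stage $k$ is concentrated at $|s|\gtrsim r_k$ and can be dominated by the next piece.

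The steps, in order, are as follows.
\begin{enumerate}
\item Perform the reduction above to $f=\e^{-W}$.
\item Construct the convex minorants $W_k$ and the cut-off profiles $\psi_k$ with the upper bound $C\max(1,V'')$ on their second derivatives. The $\psi_k$ have bounded curvature, which is why the growth function is $\max(1,V'')$ and not just $V''$.
\item Inductively choose $r_k$ and $\rho_k$ so that the remainders $R_k:=f-\sum_{j\le k}\rho_j\e^{-U_j}$ stay nonnegative and tend to $0$ pointwise. Countability of $K$ is then automatic.
\item Finally, pass to the limit, which gives exact equality $f=\sum_k\rho_k\e^{-U_k}$ by monotone convergence.
\end{enumerate}

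I expect step 3 to be the main obstacle: getting exact equality with all weights nonnegative, rather than just an approximation. The pieces are not indicators, so their tails overlap, and one must ensure that $\rho_k\e^{-U_k}\le R_{k-1}$ everywhere, including in the tails. My first attempt would be to choose each $U_k$ to grow strictly faster than $W$ outside $[-r_k,r_k]$ (via $\psi_k$), so that the ratio $\e^{-U_k}/\e^{-W}$ is at most $1$ and decreasing in $|s|$. I would then take $\rho_k$ maximal subject to $\rho_k\e^{-U_k}\le R_{k-1}$, and show using the $\zeta\alpha$-slack that $R_{k-1}/f$ is itself radially nondecreasing, so that this constraint is binding only near $r_k$. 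This invariant, "$R_k/f$ radially nondecreasing and vanishing on $[-r_k,r_k]$", is what I would try to propagate inductively. If it propagates, $R_k\to0$ follows from $r_k\to\infty$.
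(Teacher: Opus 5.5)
Your reduction to $f=\e^{-W}$ with $W(s)=V(s)-\frac{(1-\zeta)\alpha s^2}{2}$ matches the paper's first step. The decomposition scheme you then propose for $f$ cannot work, however, because the invariant you want to propagate contradicts a necessary property of any decomposition into finite convex pieces.

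Suppose $\e^{-W}=\sum_k\rho_k\e^{-U_k}$ with every $U_k$ finite, even and convex. Then every tail $R_k=\sum_{j>k}\rho_j\e^{-U_j}$ is a sum of strictly positive, radially nonincreasing functions. Hence either $R_k\equiv0$, or $R_k>0$ everywhere with $R_k(s)\le R_k(0)$. A remainder that vanishes on $[-r_k,r_k]$ and lives at $|s|\gtrsim r_k$ is therefore impossible. That layer-cake picture is only available when pieces may equal $+\infty$, which is Proposition~\ref{p:trivialdecomp}, exactly the case the theorem has to go beyond.

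You can also see the failure directly in your greedy rule. Suppose $R_{k-1}/f$ is radially nondecreasing and $\e^{-U_k}/f$ is positive and radially nonincreasing. Then $R_{k-1}\e^{U_k}$ is radially nondecreasing, so the maximal admissible weight is $\rho_k=R_{k-1}(0)\e^{U_k(0)}$. The constraint binds at the origin, not near $r_k$. Already at the first step this gives $R_1(0)=0$, so every later admissible weight is $0$. Yet $R_1\not\equiv0$ unless $W$ was convex to begin with, so the construction stalls with a nonzero remainder.

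What is missing is the correct invariant and a mechanism that propagates it. In the paper the remainder is always $\e^{-U_i}$ with $U_i$ finite, radially increasing and convex on $(0,t_i)$. The subtracted convex piece stays strictly below the remainder ($U_{i+1}>U_i$), so one never takes a maximal weight.

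In Lemma~\ref{le:decomp_two} the new convex piece is $U_i+G$ with $G''=(U_i'')_-$ beyond $t_i$, so its second derivative there is $(U_i'')_+\ge0$. The initial data $\Delta_0'=-\tfrac12\min(1,U_i'(t_i))$ and $\e^{\Delta_0}-1=-\Delta_0'/(16M)$ are tuned so that the new remainder is convex on a strictly larger interval $(0,t_{i+1})$. At the same time they keep $-U_{i+1}''\le M$ and $-U_{i+1}''\le M\,U_{i+1}'$ beyond $t_i$.

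The remaining ingredients are these:
\begin{itemize}
\item \textbf{Progress.} Either $t_{i+1}\ge t_i+(16M)^{-1}$, or $U_{i+1}'(t_{i+1})\ge 2U_i'(t_i)$ with $U_i'(t_i)<1$. This dichotomy forces $t_i\to\infty$.
\item \textbf{Exactness.} It follows from $U_i(0)\to\infty$.
\item \textbf{Uniform curvature bound.} Getting $C\max(1,V'')$ over infinitely many pieces requires the telescoping estimate $U_i''\le U_0''+1000M$. It comes from the exponentially decaying curvature increments in Lemma~\ref{le:decomp_two}, after grouping the $t_i$ into intervals of length $(16M)^{-1}$.
\end{itemize}
Your steps 2 and 3 contain none of these ingredients. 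In particular you have no quantitative way to enlarge the region where $-\log$ of the remainder is convex while keeping the peeled-off piece convex with uniformly controlled curvature. As it stands, the proposal does not prove the theorem.
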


To put this result into context, it is worthwhile to compare strongly log-concave log-mixtures to log-mixtures of Gaussians as in \eqref{e:def_GM}. If $V$ is a log-mixture of Gaussians, then
\[\e^{-V(\sqrt{2s})}=\int_{[0,\infty)}\e^{-\kappa s}\rho(\d \kappa)\]
i.e. $\e^{-V(\sqrt{2s})}$ is the Laplace transform of a positive measure. So by Bernstein's theorem on completely monotone functions, an even $V$ can be represented as in \eqref{e:def_GM} if and only if $s\mapsto\e^{-V(\sqrt{2s})}$ restricted to $(0,\infty)$ is completely monotone. There are some interesting potentials $V$ with this property, for example $V(s)=|s|^p$ with $0<p\le2$, and many results can be extended from Gaussian measures to such $V$, see in particular \cite{MR3846841} for general results, and Section \ref{sec:intro_interface} for applications to interface models.

However, the condition that $\mathbb{R}_+\ni s\mapsto\e^{-V(\sqrt{2s})}$ is completely monotone on $(0,\infty)$ is very restrictive. Indeed, it requires in particular that $V(\sqrt{\cdot})\in C^\infty((0,\infty))$, with restrictions on the signs of all its derivatives. Meanwhile, Theorem \ref{t:decomp} only requires an assumption for $V''$. Additionally, if $V$ is a log-mixture of Gaussians, there is no reason for the same to be true for $\beta V$ for $\beta\in(0,\infty)$. Thus our decomposition is far more general.

\begin{example}\label{ex:function}
For a concrete example, consider the function $V(t)=t^2/8-\cos(t)$. This is an even $C^2$ function that is $\alpha$-monotone
    for $\alpha\leq \alpha_0$ where $\alpha_0\approx0.0327$. Indeed, $V'(t)=t/4+\sin(t)> 0.0327t$, as can be checked numerically. But it is not a log-mixture of Gaussians because it can be checked that the second derivative of $\e^{-V(\sqrt{2s})}=\e^{-s/4+\cos(\sqrt{2s})}$ changes sign.
    In Figure \ref{fig:1} below, we will illustrate what our decomposition looks like for this particular example.
\end{example}

Let us point out that the main difficulty in Theorem \ref{t:decomp} is to ensure the $C^2$-regularity of the $V_\kappa$ and the uniform upper bound on $V_\kappa''$. Without those requirements it would be much easier to obtain a decomposition as a mixture of uniformly convex potentials. Indeed we have the following result.
\begin{proposition}\label{p:trivialdecomp}
Let $V\in C(\R)$ be even and $\alpha$-monotone for some $\alpha>0$. Then there are even functions $V_\kappa\colon\R\to\R\cup\{+\infty\}$ for $\kappa\in[0,\infty]$ such that $V_\kappa\in C^2(I_\kappa)$ for some open interval $I_\kappa\subset\R$, $V_\kappa''(s)\ge\alpha$ for all $s\in I_\kappa$, and $V_\kappa=+\infty$ on $\R\setminus I_\kappa$, as well as a positive Radon measure $\rho$ on $[0,\infty]$ such that \eqref{e:def_V} holds. 
\end{proposition}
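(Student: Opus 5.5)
We write $e^{-V}$ as a mixture of truncated Gaussian-type weights, using a layer-cake decomposition of the non-increasing function $W(s):=V(s)-\alpha s^2/2$ on $[0,\infty)$.

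\textbf{Setting up the layer cake.} Since $V$ is even, continuous and $\alpha$-monotone, $W$ is even, continuous and non-decreasing in $|s|$. Hence $g(s):=e^{-W(s)}$ is even, continuous, positive, and non-increasing in $|s|$, with $g(s)\le g(0)=e^{-V(0)}$. The layer-cake formula gives
\[
g(s)=\int_0^{g(0)}\1_{\{g(s)>t\}}\,\d t .
\]
For $t\in(0,g(0))$ the superlevel set $\{s:g(s)>t\}$ is an open symmetric interval $(-r(t),r(t))$ with $r(t)\in(0,\infty]$. It is open because $g$ is continuous, and it is an interval around $0$ because $g$ is monotone in $|s|$; it contains $0$ since $t<g(0)$. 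Multiplying by $e^{-\alpha s^2/2}$ yields
\[
e^{-V(s)}=\int_0^{g(0)} e^{-\alpha s^2/2}\1_{(-r(t),r(t))}(s)\,\d t .
\]

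\textbf{Reparametrizing by $\kappa\in[0,\infty]$.} Take $\kappa:=r(t)$ as the index. The map $t\mapsto r(t)$ is non-increasing, hence measurable. Let $\rho$ be the pushforward of Lebesgue measure on $(0,g(0))$ under $t\mapsto r(t)$; it is a finite, hence Radon, positive measure on $(0,\infty]\subset[0,\infty]$. Now set $I_\kappa:=(-\kappa,\kappa)$, with $I_\infty=\R$, and define
\[
V_\kappa(s):=\alpha s^2/2 \text{ on } I_\kappa,\qquad V_\kappa:=+\infty \text{ otherwise}.
\]
Each $V_\kappa$ is even and $C^2$ on $I_\kappa$, with $V_\kappa''=\alpha$ there, and $e^{-V_\kappa(s)}=e^{-\alpha s^2/2}\1_{I_\kappa}(s)$. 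The change of variables formula for pushforwards then turns the display above into
\[
e^{-V(s)}=\int e^{-V_\kappa(s)}\rho(\d\kappa),
\]
which is \eqref{e:def_V}. Joint measurability of $(\kappa,s)\mapsto V_\kappa(s)$ holds because the set $\{|s|<\kappa\}$ is Borel.

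\textbf{Main point to check.} The step needing care is that the superlevel sets are genuinely open intervals; this uses continuity of $V$ together with monotonicity of $W$ in $|s|$. Degenerate cases cause no trouble: if $g$ is constant on some range, $\rho$ simply acquires atoms, and $r(t)=\infty$ is allowed through $\kappa=\infty$. Nothing beyond measurability of $r$ is required.
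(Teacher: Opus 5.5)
Your proof is correct and is essentially the paper's argument: the paper also uses the layer-cake decomposition of $\e^{-(V(s)-\alpha s^2/2)}$, with $V_\kappa=\alpha s^2/2$ on $(-\kappa,\kappa)$ and $+\infty$ elsewhere. The only difference is that the paper specifies $\rho$ directly, via $\rho([\kappa,\infty))=\e^{-\tilde V(\kappa)}-c$ together with an atom of mass $c=\lim_{|x|\to\infty}\e^{-\tilde V(x)}$ at $\infty$; this is exactly your pushforward of Lebesgue measure under $t\mapsto r(t)$, since both measures satisfy $\rho((\kappa,\infty])=\e^{-\tilde V(\kappa)}$.
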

Here by $[0,\infty]$ we mean the one-point compactification of $[0,\infty)$. 
The proof being very short, we give it right away.
\begin{proof}
Note that $\tilde V(x)=V(x)-\frac{\alpha x^2}{2}$ defines a continuous even function that is non-decreasing on $[0,\infty)$. So we can define $c=\lim_{|x|\to\infty}\e^{-\tilde V(x)}\in[0,\infty)$ and define $\rho$ by $\rho(\{\infty\})=c$ and 
\begin{align}
\rho([\kappa,\infty)) = \e^{-\tilde V(\kappa)}-c\quad\forall \kappa\in[0,\infty)
\end{align}
Define $\tilde V_\kappa(x)=\infty \1_{[\kappa,\infty)}(|x|)$ for $\kappa\in[0,\infty]$ (where $0\cdot\infty=0$). Then we have
\[
\e^{-\tilde V(x)}=\int_{[0,\infty]} \e^{-\tilde V_\kappa(x)}\,\rho(\d \kappa).
\]
and so, if we define $V_\kappa(x)=\frac{\alpha x^2}{2}+\tilde V_\kappa(x)=\begin{cases}\frac{\alpha x^2}{2}&|x|<\kappa\\+\infty &|x|\ge \kappa
\end{cases}$ then \eqref{e:def_V} follows.
\end{proof}

We are not aware of a place in the literature where a decomposition like this has appeared explicitly, but layer-cake decompositions of this type are of course well-known. In the context of interface models, the interactions $V_\kappa$ in this decomposition have been studied in \cite{MR3395146}. Additionally, in \cite{sellke} it was used that $\alpha$-monotone potentials can be written as the sum of a log-mixture of Gaussians and a monotone potential.

While the decomposition from Proposition \ref{p:trivialdecomp} is sufficient to prove some of the results in Sections \ref{sec:intro_interface} and Section \ref{applicstrength} (such as localisation/delocalisation and a Brascamp-Lieb inequality), for the Helffer-Sjöstrand representation and its corollaries such as the scaling limit, it is essential to have a decomposition into $C^2$-functions with upper bounds on their second derivatives.

\subsubsection*{Outline of the proof}

Unfortunately, the proof of Theorem \ref{t:decomp} is much more technical than the one in Proposition \ref{p:trivialdecomp}.
Our proof is constructive, and we will construct the functions $V_\kappa$ iteratively, ensuring that the remainder becomes more and more convex along the way. The key step (see Lemma~\ref{le:decomp_two} below) is to show that under the stated assumption an even potential $V$ which is convex on $(-t_0,t_0)$ can be decomposed into two potentials $\Vn$ and $W$
so that $\e^{-V} = \e^{-W}+\e^{-\Vn}$ with the following properties.
The potential $W$ is a convex function  and $\Vn$ is convex  on $(-t_1, t_1)$ for some $t_1>t_0$
and with lower bounded second derivative. The intuition why this is possible is that for two affine potentials $V$ and $\Vn$ with $V<\Vn$ and $V'>\Vn'$ (on some bounded interval)
the corresponding potential $W=-\log(\e^{-V}-\e^{-\Vn})$ is strictly convex (see Section~\ref{sec:sing_inter}). With this lemma established, we can then choose $W$ as our first $V_\kappa$, and iterate the procedure with $\Vn$ in place of $\V$. Of course, we will need to carefully keep track of how $V$ and its derivatives evolve under this procedure, to ensure that there is no blow-up and that after countably many iteration steps we have obtained a decomposition as desired.


\subsection{Applications of the decomposition to gradient interface models}\label{sec:intro_interface}

\subsubsection*{Our new results on gradient models}

Let us describe our new results on gradient interface models, made possible by our decomposition. As mentioned, our work is motivated by results for Ginzburg-Landau interfaces in \cite{MR1461951,MR1463032,MR1872740}, and results for Gaussian mixtures in \cite{MR2322690,MR2778801}. The combination of the two methods allows to deal with a much larger class of interaction potentials. Notably, our results hold true at all inverse temperature $\beta>0$.

Our main results for interface models concern the scaling limit of the model. As already observed in \cite{MR2322690,MR2778801}, in this generality it may happen that there are several different Gibbs measures with the same tilt, and that these different Gibbs measures have different scaling limits. In view of this, also here we cannot hope that any Gibbs measure has the same deterministic scaling limit. The best we can hope for is that any shift-invariant ergodic Gibbs measure with a given tilt scales to a multiple of the continuum GFF. Just like \cite{MR2778801}, we will have to restrict ourselves to zero tilt (see Section \ref{applicstrength} for an explanation why). We say that the measure is tempered if $\E_\mu(\eta(b)^2)<\infty$ for all $b\in\bzd$.

\begin{theorem}\label{t:scalinglimit}
Let $d\ge2$. Suppose that $V$ is a strongly log-concave log-mixture of quadratic growth, and let $\mu$ be a shift-invariant ergodic tempered gradient Gibbs measure for $V$ with zero tilt. Let $C_c^\infty(\R^d,\R^d)$ be the set of all infinitely differentiable functions $f:\mathbb{R}^d\to\mathbb{R}^d$ with compact support.

Then there exists a symmetric positive-definite matrix $q\in\R^{d\times d}$ with the following property: For any $f=(f_1,\ldots,f_d)\in C_c^\infty(\R^d,\R^d)$ consider the random variable 
\begin{equation}\label{e:observable}F_\eps(\eta):=\eps^{d/2}\sum_{x\in\zd}\sum_{i=1}^d f_i(\eps x)\eta((x,x+e_i)),
\end{equation}
where $\eta$ is distributed according to $\mu$ and $\eps>0$. Then for any $\lambda\in\R$ we have
\begin{equation}\label{e:scalinglimit}
\lim_{\eps\to0}\E_\mu(\exp(\lambda F_\eps(\eta)))=\exp\left(\frac{\lambda ^2}{2}\mathfrak{Q}_f\right),
\end{equation}
where 
\[\mathfrak{Q}_f:=(\nabla\cdot f,(-Q)^{-1}\nabla\cdot f)_{L^2(\R^d)}=\int_{\R^d}\sum_{i=1}^d\sum_{j=1}^d (\partial_i f_i)(y)((-Q)^{-1}\partial_j f_j)(y)\d y\]
and $Q:=\sum_{i,j=1}^d q_{ij}\frac{\partial^2}{\partial x_i\partial x_j}$. 
In particular, the random variables $F_\eps(\eta)$ converge in law, as $\eps\to0$, to a centred Gaussian random variable with variance $\mathfrak{Q}_f$.
\end{theorem}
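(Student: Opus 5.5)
The plan is to combine the random-environment approach of Biskup--Spohn with the Helffer--Sjöstrand approach of Naddaf--Spencer / Giacomin--Olla--Spohn. Since $V$ is a strongly log-concave log-mixture of quadratic growth, we can lift $\mu$ to an extended measure $\extmu$ on pairs $(\eta,\kappa)\in\R^{\bzd}\times K^{\bzd}$. Formally, its density is proportional to
\[\prod_b \e^{-V_{\kappa(b)}(\eta(b))}\,\rho(\d\kappa(b))\,\d\eta.\]
We would construct it as a limit of finite-volume measures and check, via DLR equations, that its $\eta$-marginal is $\mu$. We would also check that it inherits shift-invariance, ergodicity, temperedness and zero tilt; ergodicity of the extension should follow because the conditional law of $\kappa$ given $\eta$ is a product measure. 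Conditionally on $\kappa$, the field $\eta$ is a gradient Gibbs measure for the inhomogeneous, uniformly convex potentials $V_{\kappa(b)}$, which satisfy $c_V\le V_{\kappa(b)}''\le C_V$. This conditional measure admits a Helffer--Sjöstrand representation. So the first step is to establish, in random environment, the identity
\[\Cov(F,G\mid\kappa)=\E\bigl(\langle \partial F,(\mathcal{L}_\kappa)^{-1}\partial G\rangle\mid\kappa\bigr),\]
where $\mathcal{L}_\kappa=-\Delta_\phi+\nabla^*V_\kappa''(\nabla\phi)\nabla$ is the random-walk-in-dynamic-random-environment generator. Its conductances $a(b,t)=V''_{\kappa(b)}(\eta_t(b))$ are uniformly elliptic in $[c_V,C_V]$.

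Next we compute the moment generating function of the linear observable $F_\eps$. We would write $\E_\mu\e^{\lambda F_\eps}=\E_{\extmu}\bigl[\E(\e^{\lambda F_\eps}\mid\kappa)\bigr]$ and treat the conditional expectation as in the Giacomin--Olla--Spohn / Naddaf--Spencer argument. Differentiating $\log\E(\e^{\lambda F_\eps}\mid\kappa)$ in $\lambda$ and applying Helffer--Sjöstrand under the tilted measure expresses the variance as
\[\eps^{d}\sum_{x,y}\bigl(f(\eps x)\cdot\nabla\bigr)\,G^{\kappa,\lambda}(x,y)\,\bigl(f(\eps y)\cdot\nabla\bigr),\]
an averaged Green's function of a random walk in a dynamic random environment. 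The goal is then a homogenization statement: the annealed Green's function, averaged over both the dynamics and the environment $\kappa$ drawn from $\extmu$, rescales to $(-Q)^{-1}$. The matrix $q$ is identified through the quenched/annealed invariance principle for the environment process $(\eta_t,\kappa)$ seen from the walker. Here we would use the Kipnis--Varadhan theory for reversible ergodic environment processes, which only needs stationarity, reversibility (from the Gibbs property of $\extmu$), ergodicity and uniform ellipticity. Positive-definiteness of $q$ follows from the bounds $c_V\le a\le C_V$, since the effective diffusivity lies between the corresponding constant-conductance values. The tilt $\lambda F_\eps$ only perturbs the Gibbs measure by a linear term of size $O(\eps^{d/2})$ per bond, so its effect on the environment vanishes as $\eps\to0$; a uniform-in-$\lambda$ Brascamp--Lieb bound, available from strong convexity given $\kappa$, controls the error.

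The main obstacle is the averaging over $\kappa$. The Biskup--Spohn Gaussian case needed only a static random-conductance homogenization. Here the conductances depend on both the frozen $\kappa$ and the moving field, and $\kappa$ is correlated with $\eta$ under $\extmu$. We must therefore show that the joint process (field dynamics plus frozen $\kappa$) viewed from the walker is a stationary, reversible, ergodic Markov process under $\extmu$, and that the two-scale limit (first $\lambda$-tilt, then $\eps\to0$) commutes with the $\kappa$-average. I would first try to prove ergodicity of the environment process via triviality of shift-invariant sets under the extended Gibbs measure. I would then use dominated convergence, justified by the uniform ellipticity bounds, to pass the $\eps\to0$ limit inside $\E_{\extmu}$. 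This yields a conditional limit $\exp(\lambda^2\mathfrak{Q}_f/2)$ that is deterministic by ergodicity, which gives the claim.
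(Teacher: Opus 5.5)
Your architecture (extension $\extmu$, Helffer--Sj\"ostrand for fixed $\kappa$, averaging over $\kappa$, homogenization of a walk in an ergodic environment) matches the paper's. The gap is that you never control the conditional law $\mu[\kappa]=\extmu(\d\eta\mid\Fkappa)(\kappa)$ beyond the DLR equations, and the averaging step depends on it.

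Zero tilt of $\mu$ only gives $\E_{\bar\mu}\E_{\mu[\kappa]}\eta(b)=0$. A priori $m_\kappa(b)=\E_{\mu[\kappa]}\eta(b)$ could be a nontrivial stationary mean-zero gradient field. Then $\E(\e^{\lambda F_\eps}\mid\kappa)$ carries the factor $\exp\bigl(\lambda\eps^{d/2}\sum_{x,i}f_i(\eps x)m_\kappa((x,x+e_i))\bigr)$, which is of the same order as $F_\eps$ itself. Equivalently, $\Cov_\mu(F,G)=\E_{\bar\mu}\Cov_{\mu[\kappa]}(F,G)+\Cov_{\bar\mu}(\E_{\mu[\kappa]}F,\E_{\mu[\kappa]}G)$, and the last term is invisible to any Helffer--Sj\"ostrand formula, so the limiting variance need not be $\mathfrak{Q}_f$. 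The paper closes this with the uniqueness theorem of Cotar--K\"ulske (Theorem~\ref{t:CK}). There, $\mu[\kappa]$ is the a.s.\ unique shift-covariant disordered gradient Gibbs measure whose annealed measure has zero tilt, is ergodic and has bounded second moments. The reflected family under $\eta\mapsto-\eta$ has the same properties, so $\mu[\kappa]$ is symmetric and $\E(F\mid\Fkappa)=0$ for every odd $F$ (Lemma~\ref{l:extension_disordered}).

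The same circle of ideas supplies two further things you take for granted.
First, the Helffer--Sj\"ostrand identity for fixed $\kappa$ needs the Langevin dynamics to have no non-constant invariant functions in $L^2(\mu[\kappa])$. Otherwise the boundary term $\lim_{T\to\infty}\Cov(F(\eta_0),G(\eta_T))$ survives. This does not follow from spatial ergodicity of $\extmu$; the paper gets it in Lemma~\ref{l:langevin_time_mixing} from the Funaki--Spohn coupling as adapted by Cotar--K\"ulske.
Second, Brascamp--Lieb is a finite-volume inequality that concentrates around a boundary-dependent mean. It does not by itself give $\E_\mu\e^{\lambda F_\eps}\le\e^{C\lambda^2}$ for an arbitrary ergodic $\mu$. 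The paper obtains this (Theorem~\ref{t:existenceGGMs}(b)) by identifying $\mu[\kappa]$, again via uniqueness, with the limit of zero-boundary finite-volume measures from Lemma~\ref{c:unique_disordered_Gibbs}.

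Separately, the step ``apply Helffer--Sj\"ostrand under the tilted measure'' is not justified. The Langevin dynamics of $\e^{\lambda F_\eps}\mu[\kappa]$ carries an $\eps$-dependent external field and is not shift-covariant. The walk then lives in a non-stationary environment, to which the stationary Kipnis--Varadhan theory does not directly apply. Your claim that ``its effect vanishes as $\eps\to0$'' is exactly what would need proof.

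The paper sidesteps this with the Naddaf--Spencer device. Since $\E_\mu F_\eps=0$, $\Lambda_\eps'(\lambda)=\Cov_\mu(\e^{\lambda F_\eps},F_\eps)$ is a covariance under the untilted stationary measure. The annealed formula, valid because $F_\eps$ is odd, gives $\Lambda_\eps'(\lambda)=\lambda\,\mathsf{E}_\mu\bigl(\e^{\lambda F_\eps(\eta_0)}\mathcal{Q}_\eps\bigr)$, where
$\mathcal{Q}_\eps=\int_0^\infty\eps^{d-2}\sum_{x,y,i,j}f_i(\eps x)f_j(\eps y)\nabla\nabla'p^{[\kappa],\eta}(0,(x,x+e_i),s\eps^{-2},(y,y+e_j))\,\d s$.
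What is then needed is $\mathcal{Q}_\eps\to\mathfrak{Q}_f$ in $L^2(\mathsf{P}_\mu)$, not just in mean. This requires an invariance principle in probability; an annealed CLT is too weak, so your Kipnis--Varadhan route would have to be used in its ``in probability'' form. It also requires the annealed $L^2$ bound on $\nabla p^{[\kappa],\eta}$, and this is where $d\ge2$ enters. Cauchy--Schwarz against $\Lambda_\eps(2\lambda)\le\e^{C\lambda^2}$ then gives $\Lambda_\eps'=\lambda\mathfrak{Q}_f\Lambda_\eps+o(1)$ locally uniformly in $\lambda$, and integrating this ODE yields the claim. No $\kappa$-wise deterministic limit of the conditional moment generating function is needed.
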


Note that we stated Theorem \ref{t:scalinglimit} for $d\ge2$ only, because one step in our proof breaks down for $d=1$. However, the result is still true in $d=1$ (and much more can be shown). Namely in $d=1$ the gradient variables $\eta(x,x+e_1)$ on finite boxes are independent random variables conditioned so that their sum matches the boundary values, and so the result follows from a version of Donsker's theorem for random walk bridges as in \cite{MR238373}. See also \cite[Remarks 4.5 and 8.1]{MR2228384}.

To show that Theorem \ref{t:scalinglimit} is not vacuous, we need to prove the existence of tempered ergodic gradient Gibbs measures with zero tilt. Crucially, for the proof of Theorem \ref{t:scalinglimit} we also need to show that any shift-invariant ergodic gradient Gibbs measure $\mu$ with zero tilt has exponential moment bounds.

\begin{theorem}\label{t:existenceGGMs}
Set $d\ge 1$. Let $V$ be a strongly log-concave log-mixture of quadratic growth with $c_V=\alpha>0$.
\begin{itemize}
\item [(a)]
Then there exists at least one shift-invariant ergodic tempered gradient Gibbs measure $\mu$ for $V$ with zero tilt.

\item [(b)] 
Let $\mu$ be an arbitrary shift-invariant ergodic tempered gradient Gibbs measure for $V$ with zero tilt. Let $f\in C_c^\infty(\R^d,\R^d)$ and take $F_\eps$ as in \eqref{e:observable}.
Then $\mu$ satisfies for all $\lambda\in\R$, $0<\delta<\alpha/2$, $\eps>0$ and all $b\in\bzd$ 
\begin{equation}\label{e:Gaussianmoments}
\E_\mu\left(\exp((\alpha/2-\delta)\eta(b)^2)\right)\le\frac{\sqrt{\alpha}}{\sqrt{2\delta}} ~\mbox{and}~~\mathbb{E}_{\mu}(\exp(\lambda F_\eps(\eta)))\le\exp\bigg(\frac{C(d,f)}{\alpha}\lambda^2\bigg),
\end{equation}
where $C(d,f)>0$ does not depend on $\epsilon$. 
\end{itemize}
\end{theorem}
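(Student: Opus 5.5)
The plan is to use the mixture representation to write $\mu$ as an annealed version of gradient Gibbs measures with strictly convex, $C^2$ potentials $V_{\kappa(b)}$ with $\alpha\le V_\kappa''\le C_V$. For such uniformly convex models we can use the Brascamp--Lieb inequality, Gaussian concentration, and Funaki--Spohn type existence results. Concretely, on a finite box $\Lambda$ with zero boundary condition (zero tilt), the finite-volume measure
\[
\mu_\Lambda(\d\phi)\propto \prod_{b}\e^{-V(\nabla\phi(b))}\,\d\phi
\]
equals
\[
\int \mu_\Lambda^{\kappa}(\d\phi)\,\nu_\Lambda(\d\kappa),
\]
where $\kappa=(\kappa(b))_b$ is a random environment. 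Here $\mu^\kappa_\Lambda$ is the finite-volume measure with the inhomogeneous strictly convex potential $\sum_b V_{\kappa(b)}$, and $\nu_\Lambda$ is the induced marginal on environments. Each $\mu_\Lambda^\kappa$ is log-concave with Hessian at least $\alpha$ times the discrete Laplacian. It is also centred, because each $V_\kappa$ is even and the boundary condition is zero.

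\emph{Step 1 (bounds for fixed environment).} By Brascamp--Lieb, for a linear observable $L(\phi)=\sum_b g(b)\nabla\phi(b)$ we get $\E_{\mu^\kappa_\Lambda}\e^{\lambda L}\le \exp\big(\tfrac{\lambda^2}{2\alpha}(g,(-\Delta_\Lambda)^{-1}g)\big)$, with the operator read in gradient form. Averaging over $\kappa$ gives the same bound for $\mu_\Lambda$. For $L=F_\eps$ the quadratic form is $\eps^d\sum (\nabla^*f(\eps\cdot),(-\Delta)^{-1}\nabla^* f(\eps\cdot))\le \|f\|$-dependent constant uniformly in $\eps$, because $(-\Delta)^{-1/2}\nabla^*$ is a contraction; this gives $C(d,f)$. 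For $\eta(b)$ the same contraction gives variance at most $1/\alpha$. The Brascamp--Lieb exponential-of-square version, $\E\e^{(\alpha/2-\delta)\eta(b)^2}\le \sqrt{\alpha/(2\delta)}$, follows by comparison with the Gaussian of variance $\le1/\alpha$. Concretely, we use the Harg\'e/Caffarelli form of Brascamp--Lieb, valid for convex functions of linear observables under log-concave perturbations of Gaussians with centred means, and then compute $\E\e^{(\alpha/2-\delta)X^2}=(1-(1-2\delta/\alpha))^{-1/2}$ for $X\sim N(0,1/\alpha)$. I expect this step to be the delicate one: the plain Brascamp--Lieb variance bound is not enough, and we need a convex-function comparison, which requires the mean to vanish. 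Evenness gives that.

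\emph{Step 2 (existence, part (a)).} These uniform bounds make the family $\mu_\Lambda$ tight in gradient space. Averaging over translates of the box (or using periodic boxes of size $N$) produces shift-invariant limits. The DLR equations pass to the limit, since $V$ is continuous and its specifications are Feller. The limit is tempered and has zero tilt by symmetry $\eta\mapsto-\eta$. Finally, we take an ergodic component via ergodic decomposition. This component is still Gibbs, and it has zero tilt: the tilt is an invariant quantity, and the second-moment bound combined with the averaged Gaussian bound forces the tilt to be zero a.s. The same argument would alternatively let us choose a symmetric limit.

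\emph{Step 3 (part (b), arbitrary $\mu$).} For a general ergodic zero-tilt tempered $\mu$, we condition on the configuration outside $\Lambda_N$. By DLR, $\mu$ is a mixture of $\mu_{\Lambda_N}^{\psi}$ with boundary data $\psi$. Applying the mixture representation again, the Step 1 bounds hold with the mean $m^{\psi,\kappa}$ of each conditional measure. Concentration then gives $\E\e^{\lambda(L-\E^{\psi,\kappa}L)}\le\exp(\lambda^2 C/\alpha)$. It remains to show that these conditional means vanish in the limit $N\to\infty$. For this I would use ergodicity and zero tilt: by the ergodic theorem, the boundary gradient average tends to zero, so the harmonic-type extension contributes $o(1)$ to $\eta(b)$ and to $F_\eps$. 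The technical point is controlling the mean uniformly through the random environment. I would try a convex-comparison argument, in which the mean gradient is dominated in $L^2$ by the boundary oscillation divided by $N$, and then pass $N\to\infty$ using Fatou's lemma.
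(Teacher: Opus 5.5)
Your Step~1 and part~(a) are sound and follow the paper's route: Brascamp--Lieb through the mixture, spatially averaged zero-boundary measures, tightness, and ergodic decomposition. Your observation that $\nabla(-\Delta_{\Lambda,0})^{-1}\nabla^*$ is an orthogonal projection gives the $\eps$-uniform bound on $F_\eps$ more cleanly than the paper's Green's function estimates. You should make the zero tilt of the ergodic component explicit, as in Theorem~\ref{existergod}:
\begin{itemize}
\item the same projection bound applied to box averages gives $\E\big(|\Lambda_n|^{-1}\sum_{x\in\Lambda_n}\eta(b_{x,j})\big)^2\le(\alpha|\Lambda_n|)^{-1}$, uniformly over the shifted finite volumes;
\item this passes to the weak limit by lower semicontinuity, so the invariant empirical tilt vanishes $\mu$-a.s.;
\item hence it vanishes under almost every ergodic component, where by the ergodic theorem it equals that component's tilt.
\end{itemize}

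Part~(b), however, has a genuine gap at exactly the step you flag. Let $m_N$ be the mean of $\eta(b)$ under the finite-volume measure with boundary data $\psi$ outside $\Lambda_N$ and environment $\kappa$. Nothing you propose makes $m_N\to0$, for three reasons.
\begin{itemize}
\item The mean of the non-Gaussian model in environment $\kappa$ is not a harmonic extension of $\psi$.
\item The law of $\kappa$ given $\psi$, proportional to $Z_{\Lambda_N,\psi}[\kappa]\prod\rho(\d\kappa(b))$, depends on $\psi$.
\item Even for the homogeneous GFF, the interior gradient is controlled by the oscillation of $\psi$ on $\partial\Lambda_N$ divided by $N$, not by boundary gradient averages.
\end{itemize}
Converting sublinear boundary data into a vanishing interior mean gradient would need a large-scale Liouville/Lipschitz estimate for this nonlinear random problem. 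Energy bounds control $\sum_b m_N(b)^2$ only through $\E\sum_b\eta(b)^2$, which is of order $N^d$ because of the fluctuations.

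More fundamentally, $m_N=\E_{\extmu}[\eta(b)\mid\sigma(\eta\text{ outside }\Lambda_N)\vee\Fkappa]$ is a reverse martingale. So $m_N$ converges in $L^2$ to $\E_{\extmu}[\eta(b)\mid\mathcal G_\infty]$, where $\mathcal G_\infty=\bigcap_N\sigma(\eta\text{ outside }\Lambda_N)\vee\Fkappa$. Your claim therefore requires that conditioning on $\kappa$ and on the configuration at infinity does not shift the mean of $\eta(b)$. That is a tail-triviality (extremality-type) property of $\mu[\kappa]=\extmu(\cdot\mid\Fkappa)$, together with $\E_{\mu[\kappa]}\eta(b)=0$ (which the paper obtains from uniqueness in Lemma~\ref{l:extension_disordered}). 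Neither follows from spatial ergodicity of $\mu$ by soft arguments.

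Note also that your Step~3 never uses $V_\kappa''\le C_V$. In the paper this bound is indispensable precisely here: it gives strong solutions of the Langevin dynamics and the coupling behind uniqueness. The paper points out that without it the corresponding uniqueness and extremality statements are currently out of reach, so an argument that avoids it should make you suspicious. Even granting $m_N\to0$ in $L^1$, you would have to truncate the convex test functions to linear growth to recover the sharp constant $\sqrt{\alpha}/\sqrt{2\delta}$, since exponential moments of $m_N$ are not available a priori.

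The missing idea is uniqueness of shift-covariant disordered gradient Gibbs measures (Theorem~\ref{t:CK}). The paper argues in three steps:
\begin{itemize}
\item $\kappa\mapsto\extmu(\d\eta\mid\Fkappa)$ is a shift-covariant disordered gradient Gibbs measure over the ergodic environment law $\bar\mu$ (Lemmas~\ref{l:extension_tinv_erg} and~\ref{l:extension_disordered}), with direction-averaged zero tilt by the cocycle ergodic theorem.
\item For the same $\bar\mu$, Lemma~\ref{c:unique_disordered_Gibbs} builds a disordered measure from Koml\'os-averaged zero-boundary finite-volume measures, which inherits the Brascamp--Lieb bounds.
\item Uniqueness identifies the two for $\bar\mu$-a.e.\ $\kappa$, and averaging over $\kappa$ yields \eqref{e:Gaussianmoments}.
\end{itemize}
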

\begin{remark}
Part (a) holds more generally, and our proof will work for any $V$ that is even and satisfies a Brascamp-Lieb inequality in the spirit of \eqref{hargebl}. 
\end{remark}

In order to emphasise the meaning of these results, let us state explicitly what the combination of Theorems \ref{t:decomp}, \ref{t:scalinglimit} and \ref{t:existenceGGMs} implies for gradient interface models.
\begin{corollary}
\label{scalresalpha}
Let $d\ge 2$. Suppose that $V$ is an $\alpha$-monotone $C^2$ even potential such that there are constants $C,M>0$ such that $-M\le V''(s)\le C$. Then there is at least one shift-invariant ergodic tempered gradient Gibbs measure for that interaction with zero tilt, and each such measure scales to (possibly anisotropic) continuum Gaussian free field (in the sense that  \eqref{e:scalinglimit} holds).
\end{corollary}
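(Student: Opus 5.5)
The corollary should follow by chaining Theorem~\ref{t:decomp}, Theorem~\ref{t:existenceGGMs} and Theorem~\ref{t:scalinglimit}.

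First, I would apply Theorem~\ref{t:decomp} to $V$. Its hypotheses hold directly: $V$ is even, $C^2$ and $\alpha$-monotone, and $V''\ge -M$. I would fix, say, $\zeta=1/2$. The theorem then shows that $V$ is a strongly log-concave log-mixture with $c_V\ge\alpha/2>0$. It is of $\mathfrak{g}$-controlled growth with $\mathfrak{g}(s)=C'\max(1,V''(s))$, where $C'$ is the constant from the theorem (depending on $V$). Since $V''\le C$ by assumption, $\mathfrak{g}\le C'\max(1,C)$ is a finite constant. Hence the mixture is of quadratic growth, as noted in the last sentence of Theorem~\ref{t:decomp}.

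Second, I would invoke Theorem~\ref{t:existenceGGMs}(a) with $c_V$ in place of $\alpha$. This gives at least one shift-invariant ergodic tempered gradient Gibbs measure for $V$ with zero tilt. Note that the relevant Gibbs specification depends only on $V$ (through $\e^{-V}$), not on the chosen decomposition. Therefore this is a Gibbs measure ``for that interaction'' in the sense of the corollary. The dimension restriction of Theorem~\ref{t:existenceGGMs} is $d\ge1$, which covers $d\ge2$.

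Third, let $\mu$ be any shift-invariant ergodic tempered gradient Gibbs measure for $V$ with zero tilt. Since $d\ge2$ and $V$ is a strongly log-concave log-mixture of quadratic growth, Theorem~\ref{t:scalinglimit} applies directly. It yields a symmetric positive-definite $q$ such that \eqref{e:scalinglimit} holds for every $f\in C_c^\infty(\R^d,\R^d)$. This is exactly the statement that $\mu$ scales to a (possibly anisotropic) continuum GFF.

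There is no real obstacle here. The only points needing care are, first, checking that the upper bound $V''\le C$ turns the $\mathfrak{g}$-controlled growth into quadratic growth. Second, observing that the notion of gradient Gibbs measure is intrinsic to $V$, so the choice of decomposition does not matter. Finally, the hypothesis $V''\ge-M$ is used only in order to apply Theorem~\ref{t:decomp}.
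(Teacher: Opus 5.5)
Your proposal is correct and is essentially the paper's argument: the paper obtains this corollary by combining Theorem~\ref{t:decomp} (where the bound $V''\le C$ turns the $\mathfrak{g}$-controlled growth into quadratic growth), Theorem~\ref{t:existenceGGMs}(a) for existence, and Theorem~\ref{t:scalinglimit} for the scaling limit. Your remark that the notion of gradient Gibbs measure depends only on $V$ and not on the chosen decomposition is also the right point to check.
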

\begin{remark}\label{r:BS}
This in particular recovers the results of \cite{MR2778801} for the case of Gaussian mixtures and of \cite{MR1461951} for a Ginzburg-Landau potential, but goes far beyond those results. Let us give two examples of classes of potentials that are covered by Corollary \ref{scalresalpha} but not by previous works.
\begin{itemize}
\item [(a)] The class of potentials that admit the representation \eqref{e:def_GM} is not closed under 
change of temperature, i.e., $\beta V$ may not be representable as in \eqref{e:def_GM} even if $V$ is.
Corollary \ref{scalresalpha} allows us to extend the results of \cite{MR2778801} from (the implicit) $\beta=1$ to the broader class of potentials given by
\begin{equation}\label{e:BK}
V(s)=-\beta\log\int\exp\bigg(-\frac{1}{2}\kappa s^2\bigg)\rho(\rmd\kappa),
\end{equation}
where $\rho$ is a positive measure with finite support in $(\alpha,\infty)$ for some $\alpha>0$, and $\beta>0$ is arbitrary.
\item [(b)] Another example is obtained by considering even log-mixtures of non-centred Gaussians. For example, potentials like  
\[
V(s)=-\beta\log(\e^{-\kappa_1s^2/2}+\e^{-\kappa_2(s-a)^2/2}+\e^{-\kappa_2(s+a)^2/2})
\]
are not covered by \cite{MR2778801}, while for suitable choices of the parameters (e.g. $a=1$, $\kappa_1=10,\kappa_2=1$) the potential $V$ will be $\alpha$-monotone and accordingly satisfy the assumptions of Corollary \ref{scalresalpha} for any $\beta>0$.

\end{itemize}
\end{remark}

Before we discuss potential extensions of our main results, let us describe in more detail relevant previous work on gradient interface models and their scaling limits.

\subsubsection*{State of the art on gradient models} 
There has been a lot of previous research on gradient interface models as in \eqref{eq:formal_grad_int_def} for various types of interactions $V$.
The case where the potential $V$ is uniformly strictly convex, that is, it satisfies $0<C_1\leq V''(s)\leq C_2$ for all $s\in \R$, has been extensively studied and it is well understood. One celebrated result proved in \cite{MR1463032} in this setting is that an ergodic gradient Gibbs measure is uniquely determined by the tilt $u\in \R^d$, where we define the tilt of a shift-invariant gradient measure $\mu$ by $\E_\mu(\nabla \phi(x))=u$
and where $\nabla\phi(x)\in\R^d$ denotes the discrete derivative, i.e., the vector with entries $\nabla_i\phi(x)=\phi(x+e_i)-\phi(x)=\eta((x, x+e_i))$.  Furthermore, the scaling limit of the model is the Gaussian Free Field (GFF) as shown by Naddaf and Spencer \cite{MR1461951} for zero tilt, and generalised to arbitrary tilt $u$ by Giacomin, Olla, and Spohn \cite{MR1872740}. Additionally, the covariance with respect to the unique measure $\mu$ has the same qualitative behaviour as the GFF; more precisely, there holds for $x, y\in \mathbb{Z}^d$, $1\leq i,j\leq d$
\begin{equation}\label{e:decay_covariances}
\left|\Cov_\mu(\nabla_i\phi(x), \nabla_j\phi(y))\right|\le \frac{C}{1\vee |x-y|^d},
\end{equation}
as proved in \cite{MR2198017}.

For non-uniformly strictly convex potentials much less is known and there are very few results, since most of the methods used for uniformly strictly convex potentials are no longer available. Regarding potentials where the uniform lower bound on $V''$ is relaxed, it has been shown in \cite{armstrongwu} for the continuous solid-on-solid model that scaling to the GFF still holds, by employing the convex decomposition of the potential $V$ from \cite{Brydges2012} (where some earlier results were proved for a class of subquadratic potentials). For degenerate convex $C^2$ potentials satisfying $0<\liminf_{s\to\infty}\frac{V''(s)}{|s|^{r-2}}\le\limsup_{s\to\infty}\frac{V''(s)}{|s|^{r-2}}<\infty$, where $r>2$, it was shown in \cite{DegenerateDario} and \cite{hydrodynDario} that localisation and hydrodynamic limit hold, by making use of techniques from quantitative homogenisation as in \cite{MR3648977}.
For strictly convex potentials where the uniform upper bound is relaxed (and in fact $V$ may take the value $+\infty$), in \cite{MR3395146} it was shown that the field in $d=2$ is still delocalised.

When $V$ is not convex, even less is known, and most of the available results are perturbative. For very high temperatures (i.e. very small $\beta$) it has been shown in \cite{Cotar2009, Cotar2012, MR3913274}, by an integration procedure designed to return to the uniformly strictly convex setting, that for all tilts $u$ and potentials of form $V=U+g$, with $U$ uniformly strictly convex and $g''\in L^q(\mathbb{R})$ for some $q\geq 1$ 
with sufficiently small norm,  the model behaves essentially as in the uniformly strictly convex case. That is, one can prove in this setting existence and uniqueness of gradient Gibbs measure for every tilt $u$, scaling limit to the GFF, and strict convexity of the surface tension. 

Meanwhile, for very low temperatures (corresponding to $\beta$ large) in \cite{adams2016strict, adams2019strict} a renormalisation group method in the spirit of \cite{MR3969983} was introduced to study  potentials $V$ which are subexponential, satisfy $V(s)\geq \eps|s|^2 $,$V(0)=0$, and fulfil some regularity assumptions, with equivalent conditions holding for the case of vector-valued fields. Via the methods developed there it was shown in \cite{hilger2016scaling,adams2019strict,adamskoller} that for very small tilts the surface tension is strictly convex and the scaling limit is the Gaussian Free Field. 

For moderate temperatures ($\beta=O(1)$), these perturbative techniques no longer apply. The main existing results in this setting are restricted to the class of potentials $V:\R\to\R$ that admit the log-mixture of Gaussians representation from \eqref{e:def_GM}. It was proved in \cite{MR2322690}, where this class was introduced, that for suitable $\rho$ and for the particular case of moderate temperature $\beta=1$, there exist at least two distinct ergodic gradient Gibbs measures with zero tilt (see also \cite{phasetransitionclass} for an alternative later proof of this result). This lack of uniqueness is unlike the high temperature case for the same class of potentials  \eqref{e:def_GM}, for which uniqueness of gradient Gibbs measures with tilt $u$ is preserved \cite{Cotar2012}. This establishes that new phenomena arise for non-convex potentials, showing that a uniqueness/non-uniqueness phase transition appears for some classes of gradient models with non-convex potentials. Note that the scaling limit to the Gaussian Free Field still holds at $\beta=1$ for the potentials in \eqref{e:def_GM}, as demonstrated in \cite{MR2778801} (cf. also Remark \ref{r:BS}). This was later extended in \cite{MR3982951} to a certain class of subquadratic non-convex potentials, again by means of the convex decomposition from \cite{Brydges2012}.
We also mention here that the uniqueness/non-uniqueness phase transition may disappear if suitable disorder is introduced in the potential, an example of this behaviour is given in \cite{aizenBC}.

Finally, recently Sellke \cite{sellke} has shown that for a wide class of potentials including all $\alpha$-monotone potentials, the interface is localised for $d\ge3$ and all temperatures. His proof technique, while also using decompositions of the potential, is very different from ours. Namely he writes $V$ as a sum of a log-mixture of Gaussians and a monotone remainder,  and uses the Gaussian correlation inequality to deal with that remainder. This works well to prove the upper bounds needed for localisation, but it cannot yield a scaling limit.

\subsubsection*{Outline of the proofs} 

As mentioned, our proof of Theorem \ref{t:scalinglimit} is based on a combination of the methods of \cite{MR2322690,MR2778801} (who were concerned with log-mixtures of Gaussians) and of \cite{MR1461951,MR1872740} (who were concerned with uniformly strictly convex potentials). In more detail, the basic idea of \cite{MR2778801} is to insert the decomposition \eqref{e:def_V} in each term of the Hamiltonian in \eqref{eq:formal_grad_int_def}. This allows to rewrite the probability measure (formally) as
\begin{equation}\label{e:formal_decomp}
\begin{split}&\frac{1}{Z}\Bigg(\prod_{\{x,y\}\in\ubzd}\int\exp\bigg(- V_{\kappa_{\{x,y\}}}(\phi(x)-\phi(y))\bigg)\rho(d\kappa_{\{x,y\}})\Bigg)\prod_{x\in\Z^d} \d\phi(x)\\
&=\int_{K^{\ubzd}}\Bigg(\frac{1}{Z_\kappa}\int_{\R^\zd}\exp\bigg(- \sum_{\{x,y\}\in\ubzd}V_{\kappa_{\{x,y\}}}(\phi(x)-\phi(y))\bigg)\prod_{x\in\Z^d} \d\phi(x)\Bigg)\frac{Z_\kappa}{Z}\prod_{\{x,y\}\in\ubzd}\rho(d\kappa_{\{x,y\}}),
\end{split}
\end{equation}
where we took $\beta=1$ for simplicity and where $\ubzd$ denotes the (undirected) nearest-neighbour edges in $\zd$. One can think of this measure as first sampling the variables $\kappa_{\{x,y\}}$ according to some probability measure 
\begin{equation}\label{e:mubar_formal}
\frac{Z_\kappa}{Z}\prod_{\{x,y\}\in\ubzd}\rho(d\kappa_{(x,y)})
\end{equation} on $K^{\ubzd}$, and then considering a disordered interface model with potential $V_{\kappa_{\{x,y\}}}$ along the edge $\{x,y\}$. This allows to represent the interface as an annealed mixture of interface models with strictly convex potentials, and one can hope to use homogenisation theory to find the scaling limit of the model.

Now \cite{MR2778801} considered mixtures of Gaussians, i.e. their $V_\kappa$ were quadratic, and so the representation \eqref{e:formal_decomp} lets them rewrite the law of the field as a mixture of Gaussian fields in random environment. Given the well-known connection between Gaussian fields and random walks, this let them represent the distribution of observables like \eqref{e:observable} via an annealed random walk in the static environment given by the $\kappa_{(x,y)}$, and standard qualitative annealed homogenisation results allowed them to obtain their scaling limit result.

In our case, we face the difficulty that our $V_\kappa$ are not quadratic. Yet in our situation they are $C^2$ with uniform upper and lower bounds on their second derivatives. That is, \eqref{e:formal_decomp} now becomes a representation of the law of the field as a mixture of Ginzburg-Landau interfaces in random environment.
While these are not Gaussian, they still have a random walk representation, the so-called Helffer-Sjöstrand representation \cite{MR1461951,MR1872740}. This is a representation of the law of observables of the field via an annealed random walk in time-dependent random environment, where the environment itself is given in terms of the natural Langevin dynamics associated with the field.
In our case this means that for each fixed $\kappa\in K^{\bzd}$ we have a representation in terms of an annealed random walk in random environment. Taking now the average over the $\kappa$ this leads to a representation of the entire interface in terms of an ``even more annealed'' random walk in time-dependent random environment, and using annealed homogenisation results for time-dependent random environment will allow us to prove Theorem \ref{t:scalinglimit}.

\medskip

Unfortunately, there are several challenges we need to face when implementing this strategy. First of all, we need to make \eqref{e:formal_decomp} rigorous by suitable finite volume approximations, and ensure that when starting from an ergodic Gibbs measure $\mu$ the marginal probability distribution on the $\kappa$ is ergodic as well. This is very similar to \cite{MR2778801}. Next we need to ensure that if $\mu$ has zero tilt, the same holds true for the disordered Gibbs measure. This part was easy in \cite{MR2778801} because a Gaussian measure is characterised by its mean and covariance. In our case that is not true anymore, so we need a more complicated argument. We will use a uniqueness result by the second author and Külske \cite{MR3383338}, which together with the observation that if the interface $\phi$ has zero tilt the same holds true for $-\phi$ implies that for a.e. $\kappa$ the disordered interface has zero tilt. All this will be done in Section \ref{extendedgradGibbs}.

At this point we have established a rigorous version of \eqref{e:formal_decomp}. The next step is to establish a Helffer-Sjöstrand representation for each fixed $\kappa$. Here we will follow the argument from \cite{MR1872740}. A main ingredient needed for this is that for a.e. $\kappa$ the natural Langevin dynamics is time-ergodic. In our setting this is a by-product of the uniqueness result \cite{MR3383338} (whose proof itself is based on coupling Langevin dynamics, as pioneered in \cite{MR1463032}), and will be shown in Section \ref{s:characterisation}. Taking the average of the Helffer-Sjöstrand representations over $\kappa$, we then obtain a representation of our field via a random walk in time-dependent random environment. The space-ergodicity of the $\kappa$ and the time-ergodicity of the dynamics will then imply that the random environment is space-time ergodic.
All this will be discussed in Section \ref{s:HS}.

Before we can prove the scaling limit, we will also need some a priori bounds on moments of the field. These will follow from so-called Brascamp-Lieb inequalities (to be discussed in more detail in Section \ref{applicstrength}). For each fixed $\kappa$, the disordered Ginzburg-Landau interface satisfies Brascamp-Lieb inequalities, and these carry over to our interface. We will collect all the inequalities we need in Section \ref{BLSection}. For many of our estimates in Section \ref{s:existence}, we will need the bounds to be uniform with respect to the finite volume. A useful statement in this direction will be Lemma \ref{l:expbrasc}, which will follow directly from \eqref{BLweakupperbound} via Green function bounds.

In Section \ref{s:applications} we will then establish Theorem \ref{t:scalinglimit} by using this random walk representation together with homogenisation results from the literature, namely the annealed heat kernel bounds of \cite{MR2198017} together with an invariance principle in probability. In our case the most convenient way to obtain such an invariance principle in probability will be to apply the quenched CLT from \cite{ACDSquenched}. As another application of the Helffer-Sjöstrand representation, we will also show in Theorem \ref{t:decaycovariances} the optimal decay of the covariances as in \eqref{e:decay_covariances}. 

It remains to prove Theorem \ref{t:existenceGGMs}. Here the basic strategy is based on the work \cite{MR2985173} of the second author and Külske, but we have the additional difficulty that we need not just existence of any tempered gradient Gibbs measure, but of one which is ergodic with zero tilt and satisfies the exponential bound \eqref{e:Gaussianmoments}. For that purpose we will want to use the Brascamp-Lieb inequality, but that is naturally a finite volume statement, so we cannot just quote the result from \cite{MR2985173}, but need to modify its proof while keeping track of the bounds like \eqref{e:Gaussianmoments} along the way. Furthermore, we need to show that every ergodic gradient Gibbs measure with zero tilt satisfies \eqref{e:Gaussianmoments} when $V$ is a strongly log-concave log-mixture with quadratic growth, for which purpose we will employ again the uniqueness result \cite{MR3383338}. This will be the topic of Section \ref{s:existence}.

\subsection{Further results and possible extensions}
\label{applicstrength}

\subsubsection*{Scaling limit in other settings}
Our scaling limit result in Theorem \ref{t:scalinglimit} is restricted to the infinite volume, zero tilt case. It is likely that the result also holds in finite volume, say on a torus or on a box with zero boundary data.  However, our method of proof does not easily extend to this case. This is for the same reason as in \cite{MR2778801}. Namely we do not have a good understanding of the properties of the random environment given by the $\kappa$. In infinite volume we know that this environment is ergodic and this is enough to use stochastic homogenisation, but in finite volume we have little control over the distribution of the $\kappa$ (and the fact that there might be several infinite volume Gibbs measures for the $\kappa$, cf. \cite{MR2322690,phasetransitionclass}, makes the study of the finite volume measure even more complicated).

The case of nonzero tilt seems also very hard. In that case one would need to understand the mean of the field conditioned on the $\kappa$. Already in the Gaussian case this was called a ``hard open problem'' in \cite{MR2778801}, and the non-Gaussian case here seems even harder. Additionally, for general non-zero tilt and non-convex $V$ it is not even clear whether there exists an ergodic gradient Gibbs measure with that tilt, and answering this question is another hard open problem in the area.

\subsubsection*{Extensions to other interface models}

While the class of $\alpha$-monotone potentials $V$ with $-M\le V''(s)\le C$ is already very general, it is natural to wonder what further applications our method might have. In this section we will discuss several possible extensions.

\paragraph{More degenerate potentials}
 First of all, the assumption $-M\le V''(s)$ is needed only for the decomposition in Theorem \ref{t:decomp}, and Theorems \ref{t:scalinglimit} and \ref{t:existenceGGMs} hold for general strongly log-concave log-mixtures of quadratic growth. However, the assumption $-M\le V''(s)$ is needed in Section \ref{s:decomp} to ensure that our recursive construction of the decomposition does not blow up in finite time, and so we do not currently see a way to avoid it. As our scaling limit result is in infinite volume only, there is also no easy way to remove this assumption by approximation. 

The assumption of $\alpha$-monotonicity is used in several places. It in particular ensures that the $V_\kappa''$ are bounded away from 0 uniformly. Recently in the study of Ginzburg-Landau models such uniform lower bounds have been relaxed \cite{hydrodynDario,DegenerateDario,armstrongwu}, and it might be possible to relax the assumptions in the uniqueness result of \cite{MR3383338} as well. 
However, a key difficulty is that we would need to have information on the tail behaviour of $\inf_s V_\kappa''(s)$ under the measure in \eqref{e:mubar_formal}. That marginals of the measure are strongly correlated, though, and already in the Gaussian case it is challenging to derive the necessary bounds (see e.g. \cite{MR3982951,armstrongwu}). So at present it seems difficult to relax the assumption of $\alpha$-monotonicity in any way.

Another exciting direction might be to relax the uniform upper bound $V''(s)\le C$. Theorem \ref{t:decomp} can be applied without such a bound, and it would yield a strongly log-concave log-mixture of $\mathfrak{g}$-controlled growth for some $\mathfrak{g}$ related to the growth of $V''$. Several parts of the proof of Theorem \ref{t:scalinglimit} would still apply assuming only such a bound on the strongly log-concave log-mixture: The Brascamp-Lieb inequalities of Section \ref{BLSection} do not require any upper bound on $V_\kappa''$ at all. Regarding the homogenisation results in Section 8, we could replace the results from \cite{MR2198017,ACDSquenched} with the recent bounds from \cite{deuschel2023gradient} that only require a finite first moment of the environment. This could be ensured by a very weak assumption on $\mathfrak{g}$, for example that $\mathfrak{g}$ grows at most exponentially fast. However, there is a major obstacle when trying to actually extend Theorem \ref{t:scalinglimit}. Namely both the uniqueness result in \cite{MR3383338} and the time ergodicity of the dynamics in Lemma \ref{l:langevin_time_mixing} are based on the coupling argument in \cite{MR1463032}. For such a coupling argument, it is essential to have strong solutions of the corresponding Langevin dynamics in infinite volume. When $V_\kappa''$ is not bounded above, the corresponding SDEs no longer have a Lipschitz-continuous drift. These SDEs still have weak solutions (as follows from \cite[Theorem 4.4]{MR1432591}), but proving existence of strong solutions seems very difficult. In the absence of that, it is unclear how to obtain time-ergodicity of the corresponding dynamics. Via \cite[Theorem 5.15]{MR1432591} it would follow if we knew that for a.e. $\kappa$ the disordered Gibbs measure is extremal, but extremality of Gibbs measures is a notoriously hard question as well (and as we need extremality for fixed $\kappa$ we are not in any translation-invariant setting, so the abstract theory of \cite{MR2251117} does not apply).

While we currently do not know how to resolve this problem, the discussion above served as our motivation to point out that if we have a strongly log-concave log-mixture of quadratic growth then the corresponding disordered Gibbs measures are in fact a.s. extremal (see Theorem \ref{th:extremality}). 

\paragraph{Perturbations of $\alpha$-monotone potentials}
As already mentioned in Section \ref{sec:intro_interface}, \cite{Cotar2009, Cotar2012, MR3913274} studied interface models whose interaction is the sum of a uniformly convex term and a small perturbation. The key idea was to integrate out the field on the odd sites, say, and to obtain an effective Ginzburg-Landau model on the even sites.

Our proof of Theorem \ref{t:scalinglimit} relies on rewriting the interface as an annealed mixture of Ginzburg-Landau models in random environment. So it is natural to wonder whether one can use the method of \cite{Cotar2009, Cotar2012, MR3913274} also in our setting, and thereby extend some of our results from $\alpha$-monotone potentials to small perturbations thereof.  If we can guarantee that  the negative part of $V''$ is integrable (but potentially unbounded)
it should be possible to extend the perturbative results in \cite{Cotar2009,Cotar2012}. To see how this could work, we would attempt to write $V=V_0+g$, with $V_0$ being $\alpha$-monotone and $g$ chosen with second derivative having a small $L^p$ norm. Then we would be able to write $V_0$ as a strongly log-concave log-mixture for all $0<\zeta<1$, and 
\[
\e^{-V(s)}=\sum_i \e^{-V_i(s)-g(s)},
\]
which gives rise to a mixture of gradient Gibbs measures, each with potential $V_i+g$. Crucially, if $V$ is of quadratic growth, each resulting gradient Gibbs measure in the mixture would satisfy the odd/even approach from \cite{Cotar2012}, allowing us to control the perturbation $g$ (at least at high/moderate temperature $\beta^{-1}$). The key property we would use is that by our decomposition, $V_i''\ge (1-\zeta)\alpha>0$ uniformly in all $i$; note that without this uniform lower bound assumption on all $V_i$, the argument breaks down.

\paragraph{Adding self-potentials}
We observe here that for some of our results for gradient Gibbs measures, we could consider the more general class of measures of the form
\begin{equation}\label{e:Hamiltonianselfpotential}
\frac{\exp\left(-\frac{\beta}{2}\sum_{x, y: |x-y|=1} V(\phi(x)-\phi(y))-\sum_x U(\phi(x))\right)}{Z}\prod_{x\in\Z^d} \d\phi(x),
\end{equation}
where $U:\R\to\R$ is called a self-potential. 

For instance, if  $U, V$ are $C(\mathbb{R})$ even functions, with one of $U$ or $V$ being $\alpha$-monotone and the other one being monotone (since Proposition \ref{p:trivialdecomp} naturally extends to monotone potentials), we can apply our techniques from Section \ref{s:existence} to show existence of ergodic Gibbs measures in this setting. 

As another application, we can use a result of \cite{AntAnt} to obtain exponential decay of correlations for a class of non-convex potentials $U,V$. Namely, it is proved in \cite[Theorem 1]{AntAnt}, that if $U, V$ are $C^2$ such that $U''\ge\alpha>0$ and is of no more than exponential growth, and $V$ is convex with the ratio 
\begin{equation}\label{e:antant}
\sup_{\phi(x), 
    \phi(y)\in\mathbb{R}}|V''(\phi(x)-\phi(y))|/\sqrt{U''(\phi(x))}\sqrt{U''(\phi(y))}<\infty,
\end{equation}then the correlation between $\phi(x)$ and $\phi(y)$ decays exponentially in $|x-y|$.
The result is stated and proved only in infinite volume, but the method therein works also in finite volume.
One can now apply the decomposition in Theorem \ref{t:decomp} to $V$ and possibly to $U$ as well, and then apply the aforementioned result to the individual potentials in the decomposition. Since by Theorem \ref{t:decomp}, we can choose the strongly log-concave log-mixture to be of $\mathfrak{g}$-controlled growth, it is sufficient for exponential decay of correlations of the fields in \eqref{e:Hamiltonianselfpotential}, both in finite and infinite volume, to have $U,V\in C^2(\R)$ with $U$ being $\alpha$-monotone satisfying the bound \eqref{e:antant}, and either both $U,V$ of quadratic growth, or $U$ convex of no more than exponential growth and $V$ $\alpha$-monotone. An example where this applies is the lattice $\Phi^4_d$ model with a sufficiently large mass (so that the sum of the double-well term and the mass term becomes monotone).

Finally, as pointed out in \cite[Remark 2.3]{MR1759509}, one can extend the Helffer-Sj\"ostrand representation to Ginzburg-Landau fields with a convex $C^2$ self potential. So if one combines this with our decomposition for both $U$ and $V$, one obtains a Helffer-Sj\"ostrand representation for measures of the form \eqref{e:Hamiltonianselfpotential}, provided that both $U,V$ are $\alpha$-monotone with bounded second derivative.

\subsubsection*{Poincar\'e and Brascamp-Lieb inequalities for general $\alpha$-monotone potentials}

A classical result for strongly log-concave distributions is the Brascamp-Lieb moment inequality, which states that for a convex potential $\mathcal{X}:\mathbb{R}^d\to\mathbb{R}$ with $D^2\mathcal{X}\geq \mathrm{Id}$,
the bound 
\begin{align}
\label{hargebl}
    \int_{\mathbb{R}^d} F(v\cdot \left(x-m_{\mathcal{X}}\right))\,\d\mu_{\mathcal{X}} ( x)\leq \int_{\mathbb{R}^d} F(x\cdot v)\,\d\Gamma_d(x), ~~\mbox{where}~~m_{\mathcal{X}}:=\int_{\mathbb{R}^d} x\,\d\mu_{\mathcal{X}} ( x),
\end{align}
holds for all lower bounded convex functions $F:\mathbb{R}\to\mathbb{R}$,
where $\Gamma_d$ denotes the law of a standard Gaussian in $\mathbb{R}^d$ and $\mu_{\mathcal{X}}$ is the probability measure with density $\e^{-\mathcal{X}(x)}/\int_{\mathbb{R}^d}\e^{-\mathcal{X}(x)}\rmd x$. The Brascamp-Lieb inequality was introduced by Brascamp and Lieb in \cite[Theorem 5.1]{MR0450480} for $F(s)=|s|^p, p\ge 1$, and it was later generalised to the form in \eqref{hargebl} and $F$ convex and bounded from below in Theorem 1.1 of Harg\'e \cite{Harge} and in Corollaries 6 and 7 from Section 5 of Caffarelli \cite{MR1800860}. 

An immediate interesting consequence of Proposition \ref{p:trivialdecomp} is Theorem \ref{comparenon-convex} below, stating a generalisation of the Brascamp-Lieb inequality to a certain class of non-log-concave measures on $\R^d$, $d\ge 1$, and a dimension-free Poincar\'e inequality for this class of non-log-concave measures. The proof of Theorem \ref{comparenon-convex} will be given in Section \ref{BLSection} below. 

For (strongly) log-concave measures, such results are well-known and there is a vast literature available due to their versatility in probability theory and their connections to many other areas such as optimal transport and statistics (see, for example \cite{BobLed}, or \cite{Ledoux}, for some of the probabilistic literature, and \cite{SaumWell} for a survey of log-concavity in statistics). For non-log-concave measures, there are fewer results and the constants obtained are in general not optimal. For some of these results, we refer the reader to \cite[ Corollary 1.6]{bak}, to \cite[Theorems 1.1 and 1.3]{fig} and \cite[Theorem 1]{fathi} (all three of which use Caffarelli-type optimal transport arguments as in \cite{MR1800860}, and imply Poincar\'e and log-Sobolev inequalities for the respective measures), to \cite[Corollary 2.1]{CGW}, to \cite{chewi}, and to the references therein. We also observe that the results from \cite{bak}, \cite{CGW}, \cite{fig} and \cite{fathi} would produce dimension-dependent Poincar\'e inequalities if applied in the corresponding statistical mechanics non-convex setting considered in Theorem \ref{genposbeta} and Theorem \ref{genposbetapoincare} below.

While Theorem \ref{comparenon-convex} below seems, to the best of our knowledge, to be new in this generality, we note that a similar  statement to the one in Theorem \ref{comparenon-convex}  (a)(i) and (b)(i) appeared in \cite[Theorem 3.6 and Corollary 4.1]{MR2286104} under more general conditions on $\mathcal{Y}$, but with more restrictions on $F$. Additionally, due to the radial symmetry assumption therein, the result does not extend to gradient-type models. A related result also appeared in \cite[Proposition A.2]{Hariya}, for a more restrictive class of functions $\mathcal{Y}$ than ours and for convex $F$. 

Our $\alpha$-monotonicity assumption on $\mathcal{Y}$ from Theorem \ref{comparenon-convex} corresponds to condition (2.3) from Corollary 2.1 in \cite{CGW}, which our $\alpha$-monotonicity assumption implies. Under that condition, \cite{CGW} obtains a $W_2H$ inequality and as corollaries $L^1$- and $L^2$-Poincar\'e inequalities, yet with dimension-dependent constants. In contrast, the constants in Theorem \ref{comparenon-convex} below are dimension-free.

We observe here also that the proof idea of \cite[Theorem 17]{BartKlar} applies to part (a)(ii), via the decomposition in Proposition \ref{p:trivialdecomp}.

\begin{theorem}
\label{comparenon-convex}
Fix $d\ge 1$ and $\alpha>0$. Let $\mathcal{Y}:\R^d\rightarrow\R$ be a continuous function such that for all $i=1, \ldots,d,$ there holds $\mathcal{Y}(x_1,\ldots, x_i,\ldots,x_d)=\mathcal{Y}(x_1,\ldots,-x_i,\ldots,x_d)~~\mbox{and}~~x_i\to\mathcal{Y}(x_1,\ldots, x_i, \ldots, x_d)-\alpha x_i^2/2$ is a monotone function on $(0,\infty)$.
In other words, $\mathcal{Y}$ is even in each coordinate and $\alpha$-monotone in each coordinate.  Let $\mathcal{X}:\R^d\rightarrow\R$ (potentially $\equiv 0$) be an even, convex function. Set $\mathcal{V}=\e^{-\mathcal{X}-\mathcal{Y}}$ and let $\rmd\mu_{\mathcal{X+Y}}(x):=\mathcal{V}(x)\rmd x/ \int_{\mathbb{R}^d}\mathcal{V}(x)\rmd x$.  
\begin{itemize} 
\item [(a)] Take $\mathcal{X}\equiv 0$ and let $\mathcal{Y}$ be as above. 
\begin{itemize}
\item [(i)] Then there holds for all $v\in {\mathbb R}^d$ and for all convex functions 
$F:\mathbb{R}\to\mathbb{R}$, bounded below, that 
\begin{equation}
\label{BLRn}
\int_{\mathbb{R}^d}F\left(v \cdot x\right)\rmd\mu_{\mathcal{Y}}(x)\leq \int_{\mathbb{R}^d}F\left(\frac{v}{\sqrt{\alpha}} \cdot x\right)\rmd\Gamma_d(x).
\end{equation}
\item [(ii)] Then there exist constants $0<C^1_{PI}\le ((\pi/2\alpha))^{1/2}, 0<C^2_{PI}\le \alpha^{-1}$, such that 
for all smooth functions $F:\R^d\rightarrow\R$ which are odd in at least one coordinate (and which in particular satisfy $\int_{R^d}F(x)\rmd\mu_{\mathcal{Y}}(x)=0$), there holds 
\begin{equation}
\label{poincare1} 
\int_{\R^d} |F|\rmd\mu_{\mathcal{Y}}\le C^1_{PI}\int_{\R^d} |\nabla F|\rmd\mu_{\mathcal{Y}}~~~\mbox{and}~~~\int_{\R^d} F^2\rmd\mu_{\mathcal{Y}}\le C^2_{PI}\int_{\R^d} |\nabla F|^2\rmd\mu_{\mathcal{Y}}.
\end{equation}
\end{itemize}
\item [(b)] Let $\mathcal{X}$ be as above and take $\mathcal {Y}(x):=\mathcal{W}(|x|)$, where $\mathcal{W}:[0, \infty)\rightarrow\R$ is continuous and $\alpha$-monotone.
\begin{itemize}
\item [(i)] Then there holds for all $v\in {\mathbb R}^d$ and for all convex functions 
$F:\mathbb{R}\to\mathbb{R}$, bounded below, that 
\begin{equation}
\label{BLRnb}
\int_{\mathbb{R}^d}F\left(v \cdot x\right)\rmd\mu_{\mathcal{X+Y}}(x)\leq \int_{\mathbb{R}^d}F\left(\frac{v}{\sqrt{\alpha}} \cdot x\right)\rmd\Gamma_d(x).
\end{equation}
\item [(ii)] 
Then there exist constants $0<C^1_{PI}\le (\pi/2\alpha)^{1/2}, 0<C^2_{PI}\le \alpha^{-1}$, such that 
for all smooth odd functions $F:\R^d\rightarrow\R$ (which in particular satisfy $\int_{R^d}F(x)\rmd\mu_{\mathcal{X+Y}}(x)=0$), there holds 
\begin{equation}
\label{poincare2} 
\int_{\R^d} |F|\rmd\mu_{\mathcal{X+Y}}\le C^1_{PI}\int_{\R^d} |\nabla F|\rmd\mu_{\mathcal{X+Y}}~~~\mbox{and}~~~\int_{\R^d} F^2\rmd\mu_{\mathcal{X+Y}}\le C^2_{PI}\int_{\R^d} |\nabla F|^2\rmd\mu_{\mathcal{X+Y}}.
\end{equation}
\end{itemize}
\end{itemize}
\end{theorem}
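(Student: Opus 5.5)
The plan is to use the layer-cake decomposition of Proposition~\ref{p:trivialdecomp}, applied coordinatewise in (a) and radially in (b). This writes $\mu_{\mathcal{X}+\mathcal{Y}}$ as a mixture of strongly log-concave measures restricted to convex symmetric sets. For each mixture component we then invoke the classical Brascamp--Lieb inequality \eqref{hargebl} (Harg\'e/Caffarelli) and the Bakry--\'Emery/Brascamp--Lieb Poincar\'e inequality, and integrate over the mixture.

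For (a), fix $x_{-i}$ and apply the proof of Proposition~\ref{p:trivialdecomp} to $x_i\mapsto \mathcal{Y}(x)-\alpha x_i^2/2$. This function is even and monotone in $x_i$, and the monotonicity gives $e^{-\tilde{\mathcal Y}}$ as a mixture of indicators of intervals $\{|x_i|<\kappa_i\}$. Doing this one coordinate at a time (Fubini, with the mixing measure depending measurably on the remaining coordinates) is not directly a product. Instead I would proceed as follows. Write $g(x)=e^{-\mathcal{Y}(x)+\alpha|x|^2/2}$, which is even in each coordinate and nonincreasing in each $|x_i|$. Its superlevel sets $\{g>t\}$ are then ``unconditional'' sets: symmetric under coordinate reflections and coordinatewise down-closed. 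The layer-cake formula gives
\begin{equation}
\mu_{\mathcal Y}=\int \gamma_\alpha|_{A_t}\,\nu(\d t),
\end{equation}
a mixture of the Gaussian $N(0,\alpha^{-1}\mathrm{Id})$ restricted to $A_t=\{g>t\}$. Such sets need not be convex, which is the main obstacle. For (i) I would therefore apply the coordinatewise decomposition iteratively. Conditioned on the other coordinates, the law of $x_i$ is a mixture of Gaussians restricted to symmetric intervals. Harg\'e's inequality in one dimension, or directly the fact that a centred Gaussian restricted to a symmetric interval is dominated in convex order by the full Gaussian, then lets us replace one coordinate at a time by an independent $N(0,\alpha^{-1})$, at each step increasing $\int F(v\cdot x)$.

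The subtle point is that after replacing coordinate $i$, the conditional law of coordinate $j$ must still have the required structure. I would handle this by noting that the replacement is a \emph{convex-order} domination conditional on everything else: $\E[F(v\cdot x)\mid x_{-i}]\le \E[F(v_ix_i'+\sum_{j\ne i}v_jx_j)\mid x_{-i}]$, where $x_i'$ is an independent Gaussian. The function $x_{-i}\mapsto \E_{x_i'}F(v_ix_i'+\cdot)$ is again convex in $v_{-i}\cdot x_{-i}$, so induction on $d$ with $\tilde F(s)=\E F(s+v_ix_i')$ applies. The marginal of $x_{-i}$ has density $\int e^{-\mathcal{Y}}\,\d x_i$, and we need it to again be even and $\alpha$-monotone in each remaining coordinate. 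Evenness is clear, but monotonicity of $x_j\mapsto -\log\int e^{-\mathcal Y}\d x_i-\alpha x_j^2/2$ follows because $\int e^{-(\mathcal Y-\alpha x_j^2/2)}\d x_i$ is an integral of functions nonincreasing in $|x_j|$. So the induction closes.

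For (ii), we use the mixture structure once more. In the conditional law of $x_i$ given $x_{-i}$, for $F$ odd in $x_i$, each component $\gamma_\alpha|_{(-\kappa,\kappa)}$ is log-concave with potential of Hessian $\ge\alpha$. The one-dimensional Poincar\'e inequalities with constants $\alpha^{-1}$ ($L^2$) and $(\pi/2\alpha)^{1/2}$ ($L^1$, the Gaussian Cheeger-type constant for mean-zero functions, which is valid here since $F$ is odd and hence has zero mean and zero median) then give $\int F^2\le \alpha^{-1}\int|\partial_iF|^2\le\alpha^{-1}\int|\nabla F|^2$. Integrating over the mixing measure and over $x_{-i}$ yields \eqref{poincare1}.

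For (b), $g(x)=e^{-\mathcal W(|x|)+\alpha|x|^2/2}$ is radial and nonincreasing, so its superlevel sets are centred balls $B_r$. Hence
\begin{equation}
\mu_{\mathcal X+\mathcal Y}=\int \mu_r\,\nu(\d r),\qquad \d\mu_r\propto e^{-\mathcal X-\alpha|x|^2/2}\1_{B_r}\,\d x .
\end{equation}
Each $\mu_r$ is a limit of measures with convex potential of Hessian $\ge\alpha\,\mathrm{Id}$, obtained by replacing the indicator by a smooth convex barrier. By evenness of $\mathcal X$ each $\mu_r$ has mean zero, so Harg\'e's inequality \eqref{hargebl} (after rescaling by $\sqrt\alpha$) gives \eqref{BLRnb} for each $\mu_r$. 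For (ii), odd $F$ have $\mu_r$-mean zero, and the Brascamp--Lieb/Bakry--\'Emery Poincar\'e inequality and the $L^1$ version from Caffarelli contraction onto $\gamma_\alpha$ give constants $\alpha^{-1}$ and $(\pi/2\alpha)^{1/2}$ uniformly in $r$. Integrating over $\nu$ finishes the proof. The main obstacle I expect is the non-convexity of the superlevel sets in (a). This is why the argument there has to be done coordinate by coordinate with the marginal-preservation check, rather than by a single global decomposition.
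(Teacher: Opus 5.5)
Your proposal is correct and follows essentially the paper's proof. Both use the layer-cake decomposition of Proposition~\ref{p:trivialdecomp}, applied coordinatewise in (a) and radially in (b). Each strongly log-concave component is then handled by Harg\'e's inequality, or by Caffarelli-type contraction together with the Gaussian $L^2$/$L^1$ inequalities, and the results are integrated over the mixture; (a)(i) is proved by induction on $d$, using that marginals remain even and $\alpha$-monotone.

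The differences are cosmetic. In the induction step you apply the one-dimensional result to the conditional law of $x_i$ and the induction hypothesis to the marginal, whereas the paper does the reverse. For (a)(ii) you write out the conditioning on the odd coordinate explicitly, where the paper cites Barthe--Klartag.
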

We conclude this section with a remark.
\begin{remark}
\label{poinkls}
Theorem \ref{comparenon-convex} can be extended via optimal transport tools to more general functions than $\alpha$-monotone, such as of form ${\mathcal W}+g$, where $\mathcal{W}$ is $\alpha$-monotone, $g$ is even and compactly supported on some set $K\subset\R^d$ and  $\nabla^2g\ge -C_g \mbox{Id}$, where $C_g>0$. To show how it works for the  Poincar\'e inequality, we write
\[
\e^{-\mathcal{X}(x)-\mathcal{Y}(x)}=\int \exp(-\mathcal{X}(x)-\mathcal{W}_\kappa(|x|)-g(x))\rho(\d\kappa)=:\int \exp(-\mathcal{X}(x)-\mathcal{Y}_\kappa(x))\rho(\d\kappa),
\]
where $\mathcal{W}''_\kappa\ge \alpha$ for all $\kappa$. We now apply Theorem 1.1 from \cite{fig} to each $\rmd\mu_{\kappa, g}(x):= \exp(-\mathcal{X}(x)-\mathcal{Y}_\kappa(x))/\int \exp(-\mathcal{X}(x)-\mathcal{Y}_\kappa(x))\rmd x$. Thus we write $\mu_{\kappa, g}=\Gamma_d\circ T_\kappa^{-1}$, where $T_\kappa$ is $C(K, \alpha, C_g)$-Lipschitz for some constant $C(K, \alpha, C_g)>0$.  Therefore the derivative of $T_\kappa$ is bounded, with $|D T_\kappa|_{\mbox{op}}\le C(K, \alpha, C_g)$, where $|\cdot|_{\mbox{op}}$ is the operator norm. Thus, we get for all $F$ odd
\[
\mathbb{E}_{\mu_\kappa, g}(F^2)=\Var_{\Gamma_d}(F\circ T_\kappa)\le \mathbb{E}_{\Gamma_d}\left(|\nabla(F\circ T_\kappa)|^2\right)\le \int \!\! |DT_\kappa|^2_{\text{op}}(|\nabla F|\circ T_\kappa)^2\rmd\Gamma_d\le C^2(K, \alpha, C_g)\int\!\!|\nabla F|^2\rmd\mu_{\kappa, g},
\]
where for the first inequality we applied the Poincar\'e inequality to the Gaussian measure $\Gamma_d$, and where $|\cdot|_{\mbox{op}}$ is the operator norm.  This concludes the argument. 
\end{remark}

\section{Decompositions of monotone potentials}\label{s:decomp}

The goal of this section is to show that a wide class of potentials can be written as strongly log-concave log-mixtures, and in particular to prove Theorem~\ref{t:decomp}. 
We will first state a slightly more concrete version 
of Theorem~\ref{t:decomp} which requires the following notation.
For a function $f:I\to \R$ we will use the standard notation $(f)_+=\max(f, 0)$ and $(f)_- = -\min(f,0)$   for the positive and negative parts and we will frequently use that $f=f_+-f_-$ and that $f_+$ and $f_-$ are continuous if $f$ is continuous.

\begin{theorem}\label{th:decomp}
Let $V$ be an even and $\alpha$-monotone $C^2$-potential. Suppose that $V''\geq -M$
for some $M>0$. Then, there exists for each $0<\zeta<1$ a collection of at most countably many  even $C^2$-functions $V_i$ such that 
\begin{equation}\label{eq:decomp_V}
\e^{-V(t)}=\sum_i \e^{-V_i(t)}
\end{equation}
and the $V_i$ are strictly convex 
and there is a constant $C>0$ (depending on $\alpha$,
$\zeta$, $M$, and 
$\inf \{t>0: V''(t)-(1-\zeta)\alpha <0\}$) such that
\begin{equation}\label{eq:estderV}
(1-\zeta)\alpha\leq V_i''(t)\leq C(V'')_+(t)+C.
\end{equation}
\end{theorem}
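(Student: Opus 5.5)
We will follow the iterative strategy outlined in Section~\ref{sec:intro_decomp}. The first step is a reduction. Set $\tilde\alpha=(1-\zeta)\alpha$. Let $t_0=\inf\{t>0: V''(t)<\tilde\alpha\}$, which is positive by continuity of $V''$ once we know $V''(0)\ge\alpha$. This holds because $V'(s)\ge\alpha s$ and $V'(0)=0$. Hence $V$ is $\tilde\alpha$-convex on $(-t_0,t_0)$.

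The core is a two-piece splitting lemma (Lemma~\ref{le:decomp_two}). Suppose $\V$ is even, $C^2$, $\alpha$-monotone, satisfies $\V''\ge -M'$, and satisfies $\V''\ge\tilde\alpha$ on $(-t_0,t_0)$. Then we will find $\W,\Vn$ with
\[
\e^{-\V}=\e^{-\W}+\e^{-\Vn}
\]
satisfying four properties:
\begin{itemize}
\item[(i)] $\W$ is even and $C^2$, with $\tilde\alpha\le \W''\le C(\V'')_++C$;
\item[(ii)] $\Vn$ is even, $C^2$, and still (almost) $\alpha$-monotone;
\item[(iii)] $\Vn''\ge\tilde\alpha$ on $(-t_1,t_1)$ with $t_1\ge t_0+\delta$, where $\delta>0$ depends only on $\alpha,\zeta,M$;
\item[(iv)] $\Vn''$ is bounded below by a constant that does not deteriorate.
\end{itemize}

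For the construction, take $\Vn=\V+h$, where $h\ge0$ is even, vanishes near $0$, increases on $[0,\infty)$, and is chosen so that $h''$ is large on the region $[t_0,t_0+\delta]$ where $\V$ fails to be convex. Then set
\[
\W=-\log(\e^{-\V}-\e^{-\Vn})=\V-\log(1-\e^{-h}).
\]
The computation in Section~\ref{sec:sing_inter}, namely that subtracting exponentials of a larger potential with smaller slope yields convexity, gives
\[
\W''=\V''+\frac{h''\e^{-h}}{1-\e^{-h}}\cdot(\dots)+\frac{(h')^2\e^{-h}}{(1-\e^{-h})^2}.
\]
We will choose $h$ so that this compensates $\V''\ge -M$. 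Where $h$ is small (near the origin), $\V$ itself is convex, and the singular term $-\log(1-\e^{-h})\to\infty$ must be handled. For this we will take $\W=+\infty$ outside the support (but we need $C^2$ on all of $\R$). So instead we choose $h$ with $h\ge c>0$ everywhere and $h'=0$ near $0$, making $\W$ a smooth strictly convex modification. Here $\alpha$-monotonicity of $\V$, in the form $\V'(s)\ge\alpha s$, is what lets $h'$ grow linearly so that $\W$ gains convexity at large $|s|$ while $\Vn-\tilde\alpha s^2/2$ remains monotone.

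The main obstacle is the quantitative bookkeeping in this lemma: making $\W''\le C(\V'')_++C$ while keeping $\Vn'' \ge -M$ with the same $M$, a uniform step $\delta$, and $\alpha$-monotonicity with only a controlled loss. We will spend a geometric fraction $\zeta 2^{-k}$ of the convexity budget at step $k$, so that the total loss is at most $\zeta\alpha$. I would try first an explicit $h$ built from $\int\!\!\int$ of a bump function scaled by $M+\alpha$ on $[t_0,t_0+\delta]$.

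Iterating gives $\V^{(0)}=V$, $\V^{(k+1)}=\Vn(\V^{(k)})$ and $V_k=\W(\V^{(k)})$, with $t_k\ge t_0+k\delta\to\infty$. Since each $h_k\ge c_k>0$ on a growing set, we get $\e^{-\V^{(k)}}\to0$ pointwise. The monotone convergence $\sum_{k<n}\e^{-V_k}=\e^{-V}-\e^{-\V^{(n)}}$ then yields \eqref{eq:decomp_V}. If some $\V^{(k)}$ becomes convex everywhere, we stop early and include it as the last term. The bounds \eqref{eq:estderV} hold termwise with constants independent of $k$ by the uniformity in the lemma, and the constants depend on $t_0$ only through the starting point.
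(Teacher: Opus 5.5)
Your proposal has a genuine gap, beginning with the direction of the construction. You take $h=V_{\mathrm{new}}-V\ge0$ increasing on $[0,\infty)$. Then $G:=W-V=-\log(1-\mathrm{e}^{-h})$ is positive and decreasing there. Subtracting $(1-\zeta)\alpha t^2/2$ from both $V$ and $W$ leaves $G$ unchanged, and $W$ must then be convex. So $W'$ is nondecreasing, which forces $W'(t)\le\inf_{s\ge t}V'(s)$, and therefore $0<G(t)\le G(0)-\int_0^t\big(V'(r)-\inf_{s\ge r}V'(s)\big)\,dr$.

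For a potential whose non-convex regions recur, such as Example~\ref{ex:function} (where $V'(t)=t/4+\sin t-(1-\zeta)\alpha t$), the integrand is of order one on a set of positive density. Hence no increasing $h$ can make $W$ globally convex. This choice also contradicts the mechanism you quote: a larger potential with smaller slope means $h'<0$. Moreover, taking $h''$ large on $[t_0,t_0+\delta]$ makes the term $-h''/(\mathrm{e}^{h}-1)$ in $W''$ strongly negative exactly where $W$ must be convex, and you do not say what compensates it.

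The paper goes the other way. There $\Delta=V_{\mathrm{new}}-V$ is decreasing: largest at the origin and exponentially small at infinity, so $V_{\mathrm{new}}$ carries the tail. On $[t_0,\infty)$ one prescribes $G''=(V'')_-$, which gives $W''=(V'')_+$ exactly. One then tunes $\mathrm{e}^{\Delta_0}-1=|\Delta_0'|/(16M)$ so that the positive term $(\Delta')^2/(1-\mathrm{e}^{-\Delta})$ in $\Delta''$ makes $V_{\mathrm{new}}$ convex just beyond $t_0$. The region $(0,t_0)$ needs separate constructions (Steps 3--5 of Lemma~\ref{le:decomp_two}).

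Second, what you call bookkeeping is the actual content, and the lemma you state is not what the construction yields.

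\textbf{Step size.} Because $V_{\mathrm{new}}$ must stay increasing, $|\Delta'(t_0)|\le V'(t_0)$. While $\Delta$ decreases, $\int_{t_0}^t\Delta''\le|\Delta'(t_0)|$, so the convexity added beyond $t_0$ is limited by the slope there. In the paper it persists only until $V'$ has grown by a fixed factor. The paper therefore does not get a uniform step $\delta(\alpha,\zeta,M)$. Instead it proves the dichotomy \eqref{eq:two_cases}: either a gain of $(16M)^{-1}$, or the slope at the new convexity boundary doubles while it is below $1$. Progress \eqref{eq:lower_ti} is then counted globally. Your geometric budget $\zeta2^{-k}$ does not address this.

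\textbf{Bounds on the pieces.} Your (i) only bounds $W_k''$ in terms of the current remainder, with a constant that involves the remainder's slope at $t_k$ (compare the factor $64M/\min(V'(t_0),1)$ in \eqref{eq:Vpp_upper2}). To reach \eqref{eq:estderV} uniformly in $k$ you need $(V^{(k)})''\le V''+C$ and $(V^{(k)})'(t_k)\ge c\,\eps$ for all $k$. The paper obtains both from the exponentially localized increments \eqref{eq:Wpp_upper2} and \eqref{eq:W2}, by grouping the $t_i$ into intervals of length $(16M)^{-1}$ (see \eqref{eq:telescopeW}, \eqref{eq:boundWj-W0}, \eqref{eq:Up_lower}). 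Adding at every step an $h_k$ with large $h_k''$ and no such decay lets $(V^{(k)})''$ accumulate.

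\textbf{Convergence.} $\mathrm{e}^{-V^{(k)}}\to0$ requires $\sum_k h_k=\infty$ pointwise, which ``$h_k\ge c_k>0$'' does not give. The paper uses \eqref{eq:W0}, \eqref{eq:W1} and $t_i\to\infty$.
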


\begin{figure}
    \centering
    \includegraphics[width=1.\linewidth]{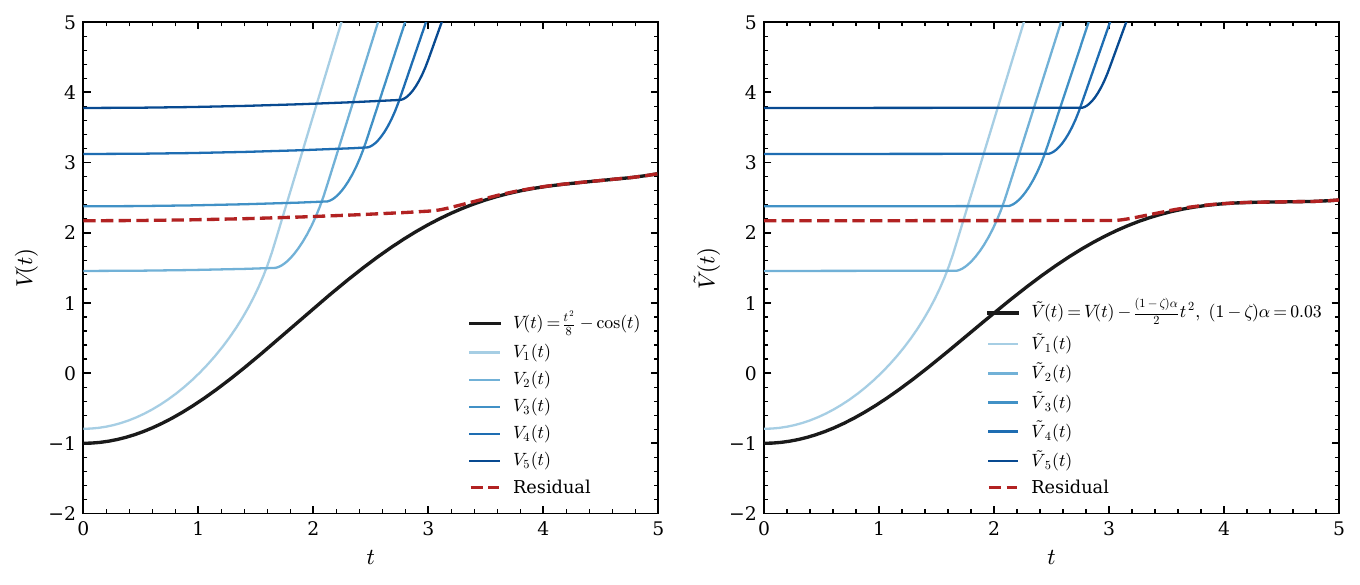}
    \caption{(left) We consider 
    the potential $V(t)=t^2/8-\cos(t)$ introduced in Example~\ref{ex:function} and sketch  the first strictly convex potentials $V_i$ for $1\leq i\leq 5$ of the decomposition constructed in Theorem~\ref{th:decomp}. 
    It can be seen that the residual (dashed red) is convex
    on a larger domain than the initial potential (black).
    (right)
    Decomposition of the potential
    $\tilde{V}(t)=V(t)-(1-\zeta)\alpha t^2/2$  
    (with $(1-\zeta)\alpha=0.03$)  into convex potentials $\tilde{V}_i$ as constructed in Theorem~\ref{th:decom_general}  (cf. the proof of Theorem \ref{th:decomp} for the introduction of $\tilde{V}$).
    }
    \label{fig:1}
\end{figure}
We note in passing that the theorem is trivial if the infimum is $\infty$ because we can then use a decomposition with the single term $V_1=V$.
Let us add a few remarks regarding this theorem.
\begin{remark}
\label{decompgenrem}
    \begin{enumerate}
\item Monotonicity of $V$ on $[0,\infty)$ is clearly necessary for such a decomposition to exist because all the convex and even potentials $V_i$ are non-decreasing on $(0,\infty)$.
\item Recall that in addition, $\alpha$-monotonicity is necessary if we want the $V_i$ to be strictly convex with $V_i''\geq \alpha$. 
Indeed, note that $V_i''(s)\geq \alpha$ implies that $V_i'(s)\geq \alpha s$ for $s\geq 0$. 
Then we find by differentiating \eqref{eq:decomp_V}
 that (to justify the exchange of sum and derivative we use difference quotients and  $V_i'(t)\geq 0$ for $t\geq 0$)
\begin{align}
-V'(t)\e^{-V(t)}
\leq \lim_{N\to \infty}\sum_{i=1}^N -V_i'(t)\e^{-V_i(t)}
\leq \lim_{N\to \infty}\sum_{i=1}^N -\alpha t \e^{-V_i(t)}=-\alpha t\e^{-V(t)}.
\end{align}
Thus we find $V'(t)\geq \alpha t$ which implies $\alpha$-monotonicity. Thus,  up to the factor $1-\zeta$ in Theorem~\ref{th:decomp}
 the $\alpha$-monotonicity is necessary.
\item 
The result in Theorem~\ref{th:decom_general} below shows that  a similar decomposition into convex potentials also exists for general strictly monotone even potentials such that $-V''(t)$ and $-V''(t)/V'(t)$  are upper bounded for $t> 0$. 
However, the bounds for $V_i''$ are weaker in this case. 
\item If $V$ is a potential such that $\{t\in \R\, :\, V''(t)<0\}$ is bounded, then
the decomposition contains only a finite number of terms.

\item The decomposition is essentially constructed 
for $t\geq 0$ and is then extended to even functions. One could similarly construct decompositions for not necessarily even functions where the only challenge is to ensure smoothness at the origin. However, we do not discuss this further as we are not aware of any applications. Indeed, we crucially rely on even potentials to control the mean of the gradient measures. 
\end{enumerate}
\end{remark}

We will next deduce Theorem \ref{th:decomp}
from a slightly more general and  technical version of it.
\begin{theorem}\label{th:decom_general}
Let $V$ be an even $C^2$ function which is  convex in a neighbourhood $(-t_0,t_0)$ of $0$ and
assume $V'(t_0)\geq \eps$ for some $0<\eps\leq 1/2$
and $V'(t)>0$ for $t>0$. Moreover we assume that 
$V(t)\to\infty$ as $t\to\infty$ and that there is an $M\geq 1$ such that for $t>0$
\begin{align}
    -V''(t)\leq M\min(1,V'(t)).
\end{align}
Then there is a potentially infinite sequence of convex and even $C^2$ potentials $V_i$ for $i=1,\ldots$ such that 
\begin{align}
    \e^{-V}=\sum_i \e^{-V_i}
\end{align}
and they satisfy the following upper bound on their second derivative
\begin{align}\label{eq:Vpp_final1}
    V_i''(t)\leq \frac{C_2M}{\eps}\e^{C_1Mt}(V'')_+(t)
    +\frac{C_3M^2}{\eps}\e^{C_1Mt}
\end{align}
where $C_1,C_2,C_3$ are absolute positive constants.
If, moreover, 
\begin{align}\label{eq:as_eps_lower}
    V'(t)\geq \eps
\end{align}
for all $t\geq t_0$
then such a decomposition exists with the stronger upper bound
\begin{align}\label{eq:Vpp_final2}
    V_i''(t)\leq C_2'\frac{M}{\eps}(V'')_+(t)+C_3'\frac{M^2}{\eps}
\end{align}
where $C_2'$ and $C_3'$ are again absolute constants.
\end{theorem}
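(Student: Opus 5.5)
The plan is to build the decomposition iteratively, as outlined in Section~\ref{sec:intro_decomp}, with one splitting step (the forthcoming Lemma~\ref{le:decomp_two}) as the basic tool. Take an even $C^2$ potential $V^{(k)}$ that is convex on $(-t_k,t_k)$ and satisfies $(V^{(k)})'(t_k)\ge\eps_k$. The lemma should produce an even convex $C^2$ function $W_k$ and an even $C^2$ remainder $V^{(k+1)}$ with
\begin{equation}
\e^{-V^{(k)}}=\e^{-W_k}+\e^{-V^{(k+1)}}.
\end{equation}
The remainder should be convex on a strictly larger interval $(-t_{k+1},t_{k+1})$ and should again satisfy the standing hypotheses $-(V^{(k+1)})''\le M'\min(1,(V^{(k+1)})')$ and $(V^{(k+1)})'(t_{k+1})\ge\eps'$, with controlled changes of $M$ and $\eps$. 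We then set $V_k:=W_k$ and iterate. The final statement follows once we know that $\e^{-V^{(k)}}\to0$ pointwise, that the bounds \eqref{eq:Vpp_final1} hold uniformly in $k$, and that $t_k\to\infty$ (or the process stops after finitely many steps once $V^{(k)}$ is globally convex).

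For the splitting step, the guiding model is the affine one. If $V<\Vn$ and $V'>\Vn'$, then $W=-\log(\e^{-V}-\e^{-\Vn})$ satisfies
\begin{equation}
W''=\frac{V''\e^{-V}-\Vn''\e^{-\Vn}}{\e^{-V}-\e^{-\Vn}}+\frac{(V'-\Vn')^2\e^{-V-\Vn}}{(\e^{-V}-\e^{-\Vn})^2},
\end{equation}
and the second term is strictly positive. Concretely, on $[0,t_0]$ I would choose $\Vn=V+h$ with $h\ge h_0>0$, $h'<0$ and $|h''|$ small, so that $\Vn$ is less steep than $V$. Beyond a point $t_1>t_0$ I would glue $\Vn$ smoothly to $V+\log$-type corrections so that $\e^{-W}$ carries all the mass near $0$ but decays fast. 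The natural construction is to make $\Vn'$ follow the tangent line of $V$ from around $t_0$, so that $\Vn$ is convex up to the next point where $V'$ recovers.

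The positivity of the second term lets us absorb the negative part of $V''$, which is of size at most $M\min(1,V')$. This requires $(V'-\Vn')^2\gtrsim M|\e^{-V}-\e^{-\Vn}|$-type margins, and that is what forces the factors $M/\eps$ in the bound on $W''$. The upper bound $W''\lesssim (M/\eps)(V'')_++M^2/\eps$ should follow because the denominator $\e^{-V}-\e^{-\Vn}$ is bounded below by a fixed fraction $(1-\e^{-h_0})\e^{-V}$, while the numerators are controlled by $|V''|$, $|\Vn''|$ and $(V'-\Vn')^2$.

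The growth factor $\e^{C_1Mt}$ in \eqref{eq:Vpp_final1} arises because, without \eqref{eq:as_eps_lower}, the slope $V'$ on the convexity interval can be as small as allowed by $(\log V')'\ge -M$ from $-V''\le MV'$. This gives $V'(t)\ge\eps\e^{-M(t-t_0)}$, so the effective $\eps_k$ degrades like $\e^{-Mt_k}$. Under \eqref{eq:as_eps_lower} we keep $\eps_k\ge\eps/2$ uniformly and obtain \eqref{eq:Vpp_final2}.

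The main obstacle is the bookkeeping under iteration. We must ensure that $t_{k+1}-t_k$ is bounded below, for instance by $c/M$, so that $t_k\to\infty$ and every bounded set sees only finitely many nontrivial steps; this gives $C^2$ convergence of the series. We must also ensure that $M$ and $\eps$ do not deteriorate multiplicatively: $M$ should stay comparable to the original $M$, and $\eps$ should only degrade through the factor $\e^{-M t}$. I would try to arrange that $V^{(k+1)}=V^{(k)}$, and hence $V^{(k+1)}=V$, outside $[0,t_{k+1}]$, so that the hypotheses there are inherited verbatim and only the modified window needs checking. Since $V\to\infty$, $\e^{-V^{(k)}}\le\e^{-V}$ restricted to $\{t\ge t_k\}$ plus a vanishing part, and this yields $\e^{-V}=\sum_i\e^{-V_i}$.
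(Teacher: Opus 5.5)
Your outline (peel off a convex $W$ using the positive term $(V'-\Vn')^2\e^{-V-\Vn}/(\e^{-V}-\e^{-\Vn})^2$, then iterate) is the paper's strategy. However, the splitting step, which is the heart of the proof, is only gestured at, and the one concrete simplification you propose cannot work. Write $U_k$ for your $V^{(k)}$. If $U_{k+1}=U_k$ outside $[0,t_{k+1}]$, then $\e^{-W_k}=\e^{-U_k}-\e^{-U_{k+1}}$ vanishes there, i.e.\ $W_k=+\infty$, which is not an admissible $C^2$ potential. Every $W_k$ must be finite and convex on all of $\R$, including far out where $V$ may still fail to be convex. So $\Delta=U_{k+1}-U_k$ must be positive everywhere and designed on the whole half-line $(t_k,\infty)$. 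Consequently the hypotheses on the remainder are not inherited from $V$ outside a window; they must be re-proved on all of $(t_k,\infty)$, and every bounded set is affected by infinitely many steps. The missing idea is the paper's Lemma~\ref{le:ode}. On $(t_k,\infty)$, prescribe $G:=W_k-U_k=-\log(1-\e^{-\Delta})$ by $G''=(U_k'')_-$, with $G'(t_k)=16M$ and $\Delta'(t_k)=-\tfrac12\min(1,U_k'(t_k))$. This gives $W_k''=(U_k'')_+$ exactly, exponential decay of $\Delta$, and $\Delta''\ge-M\Delta'\ge0$. Hence $-U_{k+1}''\le M\min(1,U_{k+1}')$ holds with the \emph{same} $M$, so no loss in $M$ accumulates over infinitely many steps. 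On $(0,t_k)$ a separate construction is needed: rescale $U_k''$ if $U_k'(t_k)\ge1$, make the remainder constant near $0$ if $U_k'(t_k)<1$, and add $C^2$ gluings. This yields $W_k''\le\frac{64M}{\min(U_k'(t_k),1)}U_k''+(64M)^2$ there, together with the lower bounds \eqref{eq:W0}--\eqref{eq:W1} on $U_{k+1}(0)$.

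The bookkeeping also does not go as you plan. A uniform advance $t_{k+1}-t_k\ge c/M$ is not what such a step delivers. The curvature it adds to the remainder near $t_k$ is only of order $M\min(1,U_k'(t_k))$ (cf.\ \eqref{eq:Wpp_upper2}). Meanwhile $(U_k'')_-$ can return at size $M$ just after $t_k$ once the slope has climbed above $1$, since nothing bounds $V''$ from above. The paper therefore only proves the dichotomy \eqref{eq:two_cases}: either $t$ advances by $(16M)^{-1}$ with the boundary slope at most halved, or the boundary slope at least doubles while below $1$. A counting argument then gives $t_i\ge(i-1+\log_2\eps)/(32M)$ and $U_i'(t_i)\ge\eps\,2^{-32Mt_i}$. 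This, and not Gronwall for $V'$, is the source of $\e^{C_1Mt}$: what matters is the slope of the remainder, not of $V$. It enters through the factor $64M/\min(U_i'(t_i),1)$, combined with $U_i''=0$ on $[0,t_{i-1}-(16M)^{-1}]$ when the slope is small. Because the $t_i$ can cluster, the corrections $48M\e^{-4M(t-t_i)}\min(U_i'(t_i),1)$ that each step adds to $U''$ must be summed by grouping the $t_i$ into intervals of length $(16M)^{-1}$. On each such interval $\sum\min(U_i'(t_i),1)\le3$, which gives $U_i''\le V''+1000M$. The identity $\e^{-V}=\sum_i\e^{-V_i}$ requires $U_i(0)\to\infty$. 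Your ``vanishing part'' on $[0,t_k]$ is exactly this, and it does not follow from $V\to\infty$ alone. The paper gets it from \eqref{eq:W0}--\eqref{eq:W1}: either $U_{k+1}(0)\ge U_k(0)+\frac1{64M}$ or $U_{k+1}(0)\ge V(t_k)-\frac1{32M}$. Finally, under \eqref{eq:as_eps_lower} nothing keeps $U_k'(t_k)\ge\eps/2$, since a step may halve the boundary slope. The paper shows $U_k'(t_k)\ge\eps/17$ using the telescoped bound $U_{j+1}(t_j)-V(t_j)\le U_j'(t_j)/(2M)$, convexity of $U_{j+1}$ on $(0,t_{j+1})$, and $V'\ge\eps$.
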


Our proof of Theorem \ref{th:decom_general} is constructive.
The general idea is to
find for a given potential $\V$ 
two other potentials $\Vn$ and $\W$
such that $\e^{-\V}=\e^{-\Vn}+\e^{-\W}$. Here $\W$ will be convex and can be added to the collection of $V_i$. Meanwhile $\Vn$ will have slightly better properties than $\V$, and in particular be convex in a larger neighbourhood of $0$ than $\V$. If we combine this with some bounds on the behaviour of $\Vn(s)$ for large $s$, we can hope to iterate the procedure, with $\Vn$ taking the place of $\V$. 

We will give details of this in the next two subsections. For now, let us show how Theorem \ref{th:decomp} (and thus Theorem~\ref{t:decomp}) follows from Theorem \ref{th:decom_general}.

\begin{proof}[Proof of Theorem~\ref{th:decomp}]
    First, we note that for an $\alpha$-monotone 
    potential $V$ the potential $\tilde V$ defined by $\tilde V(t)=V(t)-(1-\zeta)\alpha t^2/2$
    is $\zeta\alpha$-monotone, i.e., $\tilde V'(s)\geq \zeta\alpha s$. Since $\tilde{V}$ is even
    we know that $\tilde{V}'(0)=0$
    and therefore $\tilde{V}''(0)\geq \zeta \alpha$.
    Using continuity we conclude that $\tilde V$ is convex in a neighbourhood $(-t_0,t_0)$ of $0$
    and we can set 
    \begin{align}
    t_0=\inf \{t>0: V''(t)-(1-\zeta)\alpha <0\}.
    \end{align}
    If $t_0=\infty$ the decomposition with the single term $V$ can be used so we assume $t_0<\infty$ from now on.
    We  define $\eps =\min(1/2,\zeta\alpha t_0)$.
    Then,  $\tilde V'(t)\geq \zeta\alpha t\geq \zeta\alpha t_0\geq  \eps$ for $t\geq t_0$. 
    We also conclude that $\min(\tilde V'(t), 1)\geq \eps$ is lower bounded for $t\geq t_0$. Since $V''$ is lower bounded the same is true for $\tilde V''$. Hence, there is a constant $M$ 
    (depending on $\inf V''$, $\alpha$, $\zeta$, and $t_0$)
    such that 
    $-\tilde V''(t)\leq M\min(\tilde V'(t), 1)$.
    Now we apply Theorem~\ref{th:decom_general}
    and obtain a decomposition
    \begin{equation}\label{e:dec_Vtilde}
        \e^{-\tilde V}=\sum_i \e^{-\tilde V_i}
    \end{equation}
    where $\tilde V_i$ are $C^2$ and convex, and
    satisfy
    \begin{equation}\label{eq:estdertildeV}
        \tilde V_i''(t)\leq C(\tilde V'')_+(t)+C
    \end{equation}
    for some constant $C>0$ that depends on 
    $\eps$ and $M$ and thus $t_0$, $\alpha$, $\zeta$, and $\inf V''$.

    Recalling that $\tilde V(t)=V(t)-(1-\zeta)\alpha t^2/2$, \eqref{e:dec_Vtilde} implies that
    \begin{align}
        \e^{-V(t)}=\sum_i \exp\left(-\left(\tilde V_i(t)+\frac{(1-\zeta)\alpha t^2}{2}\right)\right).
    \end{align}
   Therefore, we define $V_i$ by $V_i(t)=\tilde V_i(t)+(1-\zeta)\frac{\alpha t^2}{2}$. This ensures that \eqref{eq:decomp_V} holds. Moreover, since the potentials $\tilde V_i$ are convex, we have $V_i''\geq (1-\zeta)\alpha$, and the upper bound on $V_i''$ in \eqref{eq:estderV} follows immediately from \eqref{eq:estdertildeV}.
\end{proof}

\subsection{A single iteration step}\label{sec:sing_inter}
As mentioned, we will prove Theorem \ref{th:decom_general} by an iterative construction. In this subsection we will describe one such iteration step. That is, given $\V$ we want to construct a function $\W$ such that $\Vn$, defined implicitly by $\e^{-\V}=\e^{-\Vn}+\e^{-\W}$ is convex on a larger interval around 0 than $\V$, while staying well-behaved outside that interval (in a sense to be made precise). It is convenient  to restrict potentials $V$ to $[0,\infty)$ here.
This is sufficient because those can be extended uniquely to an even function on $\R$. Therefore, we only consider potentials $V:[0,\infty)\to  \R$.

For the construction of the decomposition into two potentials it is helpful to think about this slightly differently, and to consider $\W$ as a function of $\V$ and $\Vn$. We will need to choose $\Vn$ such that in particular the function $\W$ will be convex.

We will assume right away that $\Vn>\V$ pointwise, and define
\begin{align}\label{eq:def_Delta}
    \Delta(s)=\Vn(s)-\V(s)>0.
\end{align} 
Next we define $\W$ such that 
\[
\e^{-\W}=\e^{-\V}-\e^{-\Vn}
\]
holds. Explicitly,
\begin{equation}\label{eq:decomp_vnew_delta}
\begin{split}
    \W(\V,\Vn)(s)&= - \log\left(\e^{-\V(s)}-\e^{-\Vn(s)}\right)=\V(s) - \log\left(1-\e^{\V(s)-\Vn(s)}\right)\\
&=\V(s)-\log\left(1-\e^{-\Delta(s)}\right).
\end{split}
\end{equation}
Clearly $\W$ is $C^2$ if $\V$ and $\Vn$ are twice continuously differentiable, and we can find the following expressions for its derivatives 
\begin{align}\label{eq:Wp}
    W'(s) &= V'(s) + \frac{-\Delta'(s)}{\e^{\Delta(s)}-1}\approx V'(s) - \frac{\Delta'(s)}{\Delta(s)},
    \\
    \label{eq:Wpp}
    W''(s)&= V''(s)-\frac{\Delta''(s)}{\e^{\Delta(s)}-1} + \frac{(\Delta')^2(s)}{(\e^{\Delta(s)}-1)(1-\e^{-\Delta(s)})}
    \approx 
    V''(s)-\frac{\Delta''(s)}{\Delta(s)} + \frac{(\Delta')^2(s)}{\Delta^2(s)},
\end{align}
where the approximations hold for small $\Delta$.

\begin{figure}
    \centering
    \includegraphics[width=0.5\linewidth]{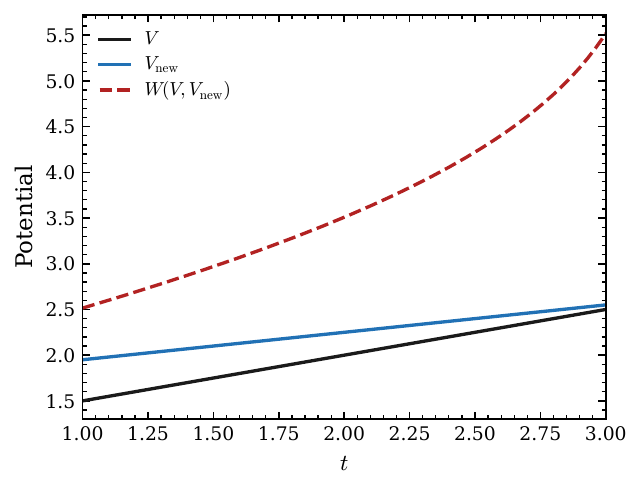}
    \caption{Sketch of affine $V$, $\Vn$
    with the resulting strictly convex $W(V,\Vn)$.}
    \label{fig:elementary}
\end{figure}
Note that if $V''=\Vn''$ (so $\Delta''=0$) 
we find $W''\approx V''+\Delta'^2/\Delta^2$, i.e., $W$ is more convex than $V$ (for $\Delta'\neq 0$) and $\Vn$
is  as convex as $V$. 
To illustrate this, we show in Figure~\ref{fig:elementary} the function $W$
for the case where $V$ and $\Vn$ are affine on an interval where we note that $W''$
increases as $V$ and $\Vn$ approach each other. Qualitatively, the behaviour is similar for general $V$ and $\Vn$.

We now provide a high-level overview of our 
construction, which is a bit technical and then implemented in the rest of the section.
We will assume that there is $t_0>0$ such that  $V$ is convex on $(0,t_0)$ 
and $M>0$ such that $V''\geq -M$
and want to show that we can find $\Vn$ and $W(V,\Vn)$ such that:
\begin{itemize}
    \item The potential $W$ is globally convex.
    \item The potential $\Vn$ is convex
    on $(0,t_1)$ for some $t_1>t_0$ and satisfies $\Vn''(t)\geq V''(t)$ for $t\geq t_0$. 
\end{itemize}
To achieve this, we set $\Vn(t)=\Vn(t_0)>V(t_0)$ for $0\leq t\leq t_0$ and note that then on $(0,t_0)$
all $V$, $\Vn$ and $W$ are convex. Indeed, for $V$ this was assumed, for  $\Vn$ it is clear and for $W$ we note that all terms in \eqref{eq:Wpp} are non-negative because $\Delta>0$ and $(-\Delta'')=V''\geq 0$.
Let us consider the second derivative of $W$ for $t\ge t_0$ and $t$ close to $t_0$ and we bound for $\Vn''(t)= 0$
\begin{align}
    W''(t)\gtrsim -M -\frac{M}{\Delta(t)}
    +\frac{(\Delta')^2(t)}{\Delta^2(t)}.
\end{align}
Moreover,  $\Delta(t)\approx \Vn(t_0)-V(t_0) $
and $\Delta'(t)\approx -V'(t_0)$ so if we choose $\Vn(t_0)$ such that $\Delta(t_0)$ is sufficiently small, $\Vn$ and $W$ will both be convex in an open neighbourhood of $t_0$. 
It remains to motivate that we can find $\Vn$ so that $W$ is globally convex. Here the idea is that $s\to \log(1-\e^{-s})$ is a bijection,  and we can therefore just define a function $G(s)$
suitably such that $V''(s)+G''(s)\geq 0$
for $s\geq t_0$, which then implicitly defines a function $\Delta$ so that $G(s)=-\log(1-\e^{-\Delta(s)})$, and then $W''=V''+G''$ by \eqref{eq:decomp_vnew_delta}. We will see below that 
we can do this such that in addition $\Vn''\geq - M$.  We construct $\Delta$ next, but let us first outline a couple of additional challenges that need to be addressed.
\begin{itemize}
    \item One main challenge is that we need to ensure that $t_1-t_0$ is not too small 
    so that we continue to make progress.
    While this is not generally true we can find a quantity (basically $V'(t_0)$
    and $\Vn'(t_1)$) so that either $t_1-t_0$
    is not too small or the quantity increases
    (see \eqref{eq:two_cases} below).
        \item We also need to keep track of the upper bounds on $\Vn''$ and $W''$.
    This is challenging because the number of terms in the decompositions can be infinite, but we are interested in uniform bounds. This is addressed by the tight bound \eqref{eq:Wpp_upper1b} in Lemma~\ref{le:decomp_two} which is an important ingredient when proving Theorem~\ref{th:decom_general}.
    \item While a key challenge of our construction is to 
    define the potentials in 
    $(t_0,\infty)$ to ensure that the terms at the next step are convex in $(t_0,t_1)$
    there are a couple of additional challenges related to defining the potentials in $(0,t_0)$ where the 
    initial potential was already convex.
    This construction is done in Steps 3 and 4 in the proof of Lemma~\ref{le:decomp_two} and, in particular, addresses:
    \begin{itemize}
    \item In order to control $\Vn'(t_1)$ we
    cannot use $\Vn$ constant on $(0,t_0)$.
    \item The upper bounds need to hold on $(0,t_0)$.
    \item We need to ensure that all functions are $C^2$ at the transition point $t_0$ of the two regimes.
    \end{itemize}
\end{itemize}

We now construct $\Vn$ where we initially define $\Vn$ on an interval $(t_0,\infty)$ (later $t_0$ will be chosen as the largest $t_0$
such that $V$ is convex on $(0,t_0)$).

As before, we denote the subtrahend $-\log\left(1-\e^{-\Delta(s)}\right)$ on the right-hand side of \eqref{eq:decomp_vnew_delta} by $G(s)$. Note that if we have chosen $G$, then $\Delta$, $\Vn$, and $\W$ are determined by this choice.

It is clear from \eqref{eq:decomp_vnew_delta} that $W$ will be convex if $G''(s)$ is pointwise bigger than $(\V'')_-$. Actually, this even suggests that an optimal choice of $G$ might satisfy $G''(s)=(\V'')_-$. 
This motivates the following definition of $G$. 
\begin{align}
\begin{split}\label{eq:def_G}
    G(t_0)= G_0>0, \quad G'(t_0)=G'_0>0,\\
    G''(t)=(\V'')_-(t), \quad \text{for $t\geq t_0$.}
    \end{split}
\end{align}
If $\V''$ is continuous this equation has a unique global solution given by
\begin{align}\label{eq:sol_Gp}
    G'(t)&=G'_0+\int_{t_0}^t (\V'')_-(s)\, \d s
    \\
    \label{eq:sol_G}
    G(t) &= G_0+(t-t_0) G'_0+\int_{t_0}^t
   \left(\int_{t_0}^s (\V'')_-(\tau)\, \d \tau\right)\,\d s.
\end{align}
Clearly, $G$ is a $C^2$ function on $(t_0,\infty)$, 
$G(t),G'(t)>0$ and $G''(t)=(\V'')_-(t)\geq 0$.

Untangling our definitions, we then have 
\begin{align}\label{eq:def_Delta2}
    \Delta(t)=
    -\log\left(1-\e^{-G(t)}\right)> 0,
\end{align}
and $\Delta(t)$ is a $C^2$ function on $(t_0,\infty)$. 

Moreover, if we now set $\Vn=\V+\Delta$ on $(t_0,\infty)$ then \eqref{eq:decomp_vnew_delta} becomes
\begin{equation}\label{V_{new}}
\W(\V,\Vn)(s)=\V(s) - \log\left(1-\e^{-\Delta(s)}\right)=\V(s) + G(s),
\end{equation}
and therefore 
\begin{align}
\label{V_{new}_convex}
\W''(\V,\Vn)(s)
=\V''(s)+G''(s) =
(\V'')_+(s)-(\V'')_-(s)+(\V'')_-(s)=(\V'')_+(s)\geq 0.
\end{align}
We can also find expressions for the derivatives of $\Delta$, namely
\begin{align}\label{eq:Deltap}
    \Delta'(t)=
    \frac{-G'(t)\e^{-G(t)}}{1-\e^{-G(t)}}
    =
    -\frac{G'(t)(1-\e^{-\Delta(t)})}{\e^{-\Delta(t)}}
    =
    -G'(t)(\e^{\Delta(t)}-1),
\end{align}
(so that in particular $\Delta'(t)<0$), 
and
\begin{align}\label{eq:Deltapp}
    \Delta''(t)=-G''(t)(\e^{\Delta(t)}-1)
    -G'(t)\Delta'(t)\e^{\Delta(t)}
    =-(\V'')_-(t)(\e^{\Delta(t)}-1)
    +\frac{\Delta'^2(t)}{1-\e^{-\Delta(t)}}.
\end{align}

In the following lemma we summarise the properties of $\Delta$ and $W$ that we have achieved with this construction.
\begin{lemma}\label{le:ode}
    Assume that $\V\colon[t_0,\infty)\to\R$ is a $C^2$ function such that $\V''(t)\geq -M$ for some constant $M>0$.
    Let $\Delta_0>0$ and $\Delta'_0<0$
    be two constants and set
    \begin{align}\label{eq:def_G0_G0prime}
        G_0=-\log(1-\e^{-\Delta_0})>0,\quad
        G_0'=\frac{-\Delta_0'}{\e^{\Delta_0}-1}>0.
    \end{align}
    Define $G$ as in \eqref{eq:def_G}
    and $\Delta$ as in \eqref{eq:def_Delta2}
    and set $\Vn=\V+\Delta$ and $\W=\W(\V,\Vn)$.
    Then:
    \begin{itemize}
        \item The function $\W$ is convex on $(t_0,\infty)$.
        \item The function $\Delta(t)$ decays exponentially, i.e., for $t\geq t_0$
        \begin{align}\label{eq:decay_delta}
           \Delta(t)
        \leq \Delta_0\e^{-G_0'(t-t_0)}.
        \end{align}
        \item The derivatives of $\Delta(t)$ decay exponentially and satisfy for $t\geq t_0$ the bounds
        \begin{align}\label{eq:deltap_bound}
            (-\Delta_0')\exp\left({ -\e^{\Delta_0}(t-t_0)G_0'-\e^{\Delta_0}(t-t_0)^2M/2}\right)\leq -\Delta'(t)\leq (-\Delta_0')\e^{-(t-t_0)G_0'}\left(1 +\frac{M(t-t_0)}{G'_0} \right).
        \end{align}
        \item Finally, the following bound holds for $t\geq t_0$
        \begin{align}
        \label{eq:key_bound_ode}
             G'_0&\leq  \frac{-\Delta'(t)}{\e^{\Delta(t)}-1}\leq G'_0+M(t-t_0).
        \end{align}
    \end{itemize}

\end{lemma}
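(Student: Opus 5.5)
Convexity of $\W$ is immediate from \eqref{V_{new}_convex}: on $(t_0,\infty)$ we have $\W=\V+G$, so $\W''=(\V'')_+\ge0$. The remaining bullets are properties of the scalar functions $G$ and $\Delta$. I would express everything through $G$, using $\Delta=-\log(1-\e^{-G})$, $\e^{\Delta}-1=1/(\e^{G}-1)$ and \eqref{eq:Deltap}.

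\emph{Key bound \eqref{eq:key_bound_ode}.} By \eqref{eq:Deltap}, $\frac{-\Delta'(t)}{\e^{\Delta(t)}-1}=G'(t)$. By \eqref{eq:sol_Gp}, $G'(t)=G_0'+\int_{t_0}^t(\V'')_-$. The integrand satisfies $0\le(\V'')_-\le M$, which gives $G_0'\le G'(t)\le G_0'+M(t-t_0)$. The choice \eqref{eq:def_G0_G0prime} is exactly what makes $G(t_0)=G_0$ and $\Delta(t_0)=\Delta_0$, $\Delta'(t_0)=\Delta_0'$.

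\emph{Decay of $\Delta$.} Since $\Delta'=-G'(\e^{\Delta}-1)\le -G_0'(\e^\Delta-1)\le -G_0'\Delta$ (using $\e^\Delta-1\ge\Delta$), Grönwall gives $\Delta(t)\le\Delta_0\e^{-G_0'(t-t_0)}$, which is \eqref{eq:decay_delta}.

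\emph{Bounds on $-\Delta'$.} Here I would write $-\Delta'=G'(\e^{\Delta}-1)$ and control $\e^{\Delta}-1$ via $\frac{d}{dt}\log(\e^{\Delta}-1)=\frac{\Delta'\e^\Delta}{\e^\Delta-1}=-G'\e^{\Delta}$.

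For the upper bound, $\e^\Delta\ge1$ and $G'\ge G_0'$ give $\e^{\Delta(t)}-1\le(\e^{\Delta_0}-1)\e^{-G_0'(t-t_0)}$. Multiplying by $G'(t)\le G_0'+M(t-t_0)$ and using $G_0'(\e^{\Delta_0}-1)=-\Delta_0'$ yields $-\Delta'(t)\le(-\Delta_0')\e^{-G_0'(t-t_0)}(1+M(t-t_0)/G_0')$.

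For the lower bound, $\Delta$ is decreasing, so $\e^{\Delta}\le\e^{\Delta_0}$. Together with $G'\le G_0'+M(s-t_0)$ this gives
\[
\log\frac{\e^{\Delta(t)}-1}{\e^{\Delta_0}-1}\ge -\e^{\Delta_0}\Big(G_0'(t-t_0)+\tfrac{M}{2}(t-t_0)^2\Big).
\]
Then $-\Delta'(t)=G'(t)(\e^{\Delta(t)}-1)\ge G_0'(\e^{\Delta(t)}-1)$, which gives the claimed lower bound after again using $G_0'(\e^{\Delta_0}-1)=-\Delta_0'$.

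No step is a real obstacle. The only care needed is the sign bookkeeping and the identity $\e^{\Delta}-1=1/(\e^G-1)$ linking $G_0'$ and $\Delta_0'$.
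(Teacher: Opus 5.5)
Your proposal is correct and follows essentially the paper's route: the key bound comes from $G'=-\Delta'/(\e^{\Delta}-1)$ together with $0\le(\V'')_-\le M$, and your quantity $\log(\e^{\Delta}-1)$ is exactly the paper's $\Delta-G$, whose derivative $-\e^{\Delta}G'$ drives both bounds on $-\Delta'$. The only local differences concern the decay of $\Delta$ and the estimate $\e^{\Delta(t)}-1\le(\e^{\Delta_0}-1)\e^{-G_0'(t-t_0)}$. You obtain these by ODE comparison, namely Gr\"onwall using $\e^{\Delta}-1\ge\Delta$ and $\e^{\Delta}\ge1$. The paper instead derives them from the lower bound $G(t)\ge G_0+(t-t_0)G_0'$, combined for the decay of $\Delta$ with the chord inequality for the convex map $x\mapsto-\log(1-x)$. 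Both routes give identical estimates.
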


\begin{proof}
    We have seen above that the first item holds by \eqref{V_{new}_convex}.
    Moreover, combining the assumption $\V''\geq -M$ 
     with \eqref{eq:sol_Gp} and \eqref{eq:sol_G} we directly get
    \begin{align}\label{eq:boundGp}
      G'_0&\leq   G'(t)=\frac{-\Delta'(t)}{\e^{\Delta(t)}-1}\leq G'_0+M(t-t_0)\\
      \label{eq:boundG}
      G_0+(t-t_0)G'_0&\leq G(t)\leq 
       G_0+(t-t_0)G'_0+\frac{(t-t_0)^2}{2}M.
    \end{align}
    We conclude in particular that \eqref{eq:key_bound_ode} holds. It remains to translate this bound into bounds for $\Delta$.
    To bound $\Delta $ we observe that convexity 
    of $x\to -\log(1-x)$ on $[0,1)$ then implies that
    for $0<y\leq x<1$ the bound 
    \begin{align}
        -\log(1-y)=-\log \left(1-\frac{y}{x}x-\left(1-\frac{y}{x}\right) 0\right)\leq \frac{y}{x}(-\log(1-x))+\left(1-\frac{y}{x}\right)\log 1=\frac{y}{x}(-\log(1-x))
    \end{align}
    holds. Combining this with the 
     relation between $G$ and $\Delta$
    we obtain (setting $x=\e^{-G_0}$ and $y=\e^{-G(t)}$) 
    \begin{align}
        \Delta(t)=-\log(1-\e^{-G(t)})
        \leq -\frac{\e^{-G(t)}}{\e^{-G_0}} \log(1-\e^{-G_0})
        \leq \Delta_0\e^{-G_0'(t-t_0)},
    \end{align}
  where for the last inequality we applied \eqref{eq:sol_G}. This proves \eqref{eq:decay_delta}. We move next to the proof of \eqref{eq:deltap_bound} and we start by bounding $\Delta'(t)$. Differentiating the relation 
    $\e^{-\Delta(t)}=1-\e^{-G(t)}$, we find
    \begin{align}\label{eq:deltap_first}
        -\Delta'(t)=G'(t)\e^{-G(t)+\Delta(t)}
        \geq G'_0\e^{-G(t)+\Delta(t)}
        =\frac{-\Delta'_0}{\e^{\Delta_0}-1}\e^{-G(t)+\Delta(t)}
        =-\Delta'_0\e^{\Delta(t)-G(t) - (\Delta_0-G_0)},
    \end{align}
 where for the inequality we used \eqref{eq:boundGp} and for the second equality we used the definition of $G_0'$ from \eqref{eq:def_G0_G0prime}.
    Next we observe that using \eqref{eq:Deltap}
    \begin{align}
        \Delta'(t)-G'(t)=-G'(t)\left(\e^{\Delta(t)}-1\right)-G'(t)
        =-\e^{\Delta(t)}G'(t)\geq -\e^{\Delta_0} G'(t),
    \end{align}
    where for the last step we used that $\Delta$ is monotonically decreasing by \eqref{eq:Deltap} and $G'(t)\ge 0$ by \eqref{eq:boundGp}.
    This implies that 
    \begin{align}
    \begin{split}\label{eq:aux_bound_Delta_G}
       \Delta(t)-G(t) - (\Delta_0-G_0) 
      & =\int_{t_0}^t\left(\Delta'(s)-G'(s)\right)\;\d s
       \geq -\e^{\Delta_0} \int_{t_0}^t G'(s) \d s
       =-\e^{\Delta_0}(G(t)-G(t_0))
       \\
       &\geq 
       -\e^{\Delta_0}(t-t_0)G'_0 -\e^{\Delta_0}\frac{(t-t_0)^2}{2}M,
       \end{split}
    \end{align}
  where for the inequality we applied \eqref{eq:boundG}.  Combining \eqref{eq:deltap_first} and \eqref{eq:aux_bound_Delta_G}
    we find the lower bound in \eqref{eq:deltap_bound}.
    On the other hand, we obtain the upper 
    bound as follows
    \begin{align}
    \begin{split}
         -\Delta'(t)&=\frac{G'(t)}{\e^{G(t)}-1}
         \leq \frac{G'_0+M(t-t_0)}{\e^{G_0+(t-t_0)G_0'}-\e^{(t-t_0)G_0'}}
         =
         \e^{-(t-t_0)G_0'}\frac{\e^{-G_0}}{1-\e^{-G_0}}(G_0'+M(t-t_0))
         \\
         &=
         -\Delta_0'\e^{-(t-t_0)G_0'}\left(1 +\frac{M(t-t_0)}{G'_0} \right),
         \end{split}
    \end{align}
where for the inequality we used the upper bound from \eqref{eq:boundGp} for the numerator, and for the denominator we used the lower bound from \eqref{eq:boundG} together with the bound $\e^{(t-t_0)G'_0}>1$ due to $G'_0>0$. For the last equality, we plugged in \eqref{eq:def_G0_G0prime}.    
\end{proof}

The previous lemma is the crucial ingredient to decompose a non-convex potential into two potentials, one of which is convex and the other slightly less non-convex than the initial potential. The next lemma extends the definition of the functions from $(t_0,\infty)$ to $(0, \infty)$ and provides careful bounds on the derivatives of $\W$ and $\Vn$.

\begin{lemma}\label{le:decomp_two}
    Let $\V:[0,\infty)\to\R$ be a  $C^2$ function with $V'(0)=0$
    and let $t_0$ be the largest value such that $V$ is convex on $(0,t_0)$.
    Assume that $0<t_0<\infty$
     and $\V'(t)>0$ on 
    $[t_0,\infty)$. 
    Assume that there is $M\geq 1$ such that $-\V''\leq M$
    and $-\V''/\V'\leq M$ on $(t_0,\infty)$. 
    Then there is an increasing $C^2$-function $\Vn\colon[0,\infty)\to \R$ 
 with $\Vn'(0)=0$ and $\Vn>V$ satisfying the following properties:
    \begin{itemize}
        \item There is $t_1>t_0$ such that
        $\Vn$ is convex on $[0,t_1)$ and (at least) one of the following two cases is true 
        \begin{align}\label{eq:two_cases}
           \left( t_1\geq t_0+(16M)^{-1}
            \text{ and } \Vn'(t_1)\geq \frac{\V'(t_0)}{2}
            \right)
            \quad \text{ or }\quad 
           \Big( \Vn'(t_1)\geq 2\V'(t_0)\text{ and }
           \V'(t_0)< 1\Big).
        \end{align}
        \item The function $\Vn$ is strictly monotone
        for $t\geq t_0$ and satisfies 
        \begin{align}\label{e:upperboundVn''Vn'}
            -\Vn''(t)\leq M\quad \text{and}
            \quad-\Vn''(t)/\Vn'(t)\leq M.
        \end{align}
        \item We can upper bound 
        \begin{align}\label{eq:Wpp_upper1}
            \Vn''(t) &\leq \V''(t)\quad& \text{for $t\leq t_0-(16M)^{-1}$ if $\V'(t_0)\geq 1$},\\
            \label{eq:Wpp_upper1b}
            \Vn''(t) &=0\quad& \text{for $t\leq t_0-(16M)^{-1}$ if $\V'(t_0)< 1$},\\
            \label{eq:Wpp_upper2}
            \Vn''(t) &\leq \V''(t)+48M\e^{-4M(t-t_0)}
            \min(\V'(t_0),1)
           \quad & \text{for $t\geq t_0-(16M)^{-1}$}.
        \end{align}
        \item The following estimates hold
        \begin{align}\label{eq:W0}
            \Vn(0)-\V(0)&\geq \frac{1}{64M}
            \quad &\text{if $\V'(t_0)\geq 1$}
            \\
            \label{eq:W1}
            \Vn(0)&\geq \V(t_0)-\frac{1}{32M}\quad 
            &\text{if $\V'(t_0)< 1$}
            \\
            \label{eq:W2}
            \Vn(t)-\V(t)&\leq \frac{\min(1,\V'(t_0))}{32M} \e^{-16M(t-t_0)}\quad &\text{for $t\geq t_0$}
        \end{align}
    \end{itemize}
    In addition, the function $\W=\W(\V,\Vn)$
    is a convex $C^2$-function with $W'(0)=0$ such that 
    \begin{align}\label{eq:Vpp_upper1}
       0 &\leq \W''(t)= (\V'')_+(t) \quad
        &\text{for $t\geq t_0$}
        \\ 
        \label{eq:Vpp_upper2}
        0&\leq \W''(t)\leq \frac{64M}{\min(\V'(t_0),1)}\V''(t)+(64M)^2
        \quad
       & \text{for $t\leq t_0$}.
    \end{align} 
    Moreover, if $\V$ is constant on $(0,t')$ for some $t'$
    then $\W''(t)=0$ for $t\leq t'$.
\end{lemma}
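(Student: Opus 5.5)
We will build $\Vn$ separately on $[t_0,\infty)$ and on $[0,t_0]$, and then check the listed properties in each region. On $[t_0,\infty)$ we apply Lemma~\ref{le:ode} with
\[
\Delta_0=\frac{\min(\V'(t_0),1)}{32M},\qquad \Delta_0'=-c\,\V'(t_0),
\]
where $c\in(0,1)$ is a constant to be fixed. Then $\Vn=\V+\Delta$ and $W=\V+G$ on $[t_0,\infty)$, so \eqref{eq:Vpp_upper1} is exactly \eqref{V_{new}_convex}.

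\textbf{Region $t\ge t_0$.} By \eqref{eq:decay_delta} we have $\Delta\le\Delta_0\e^{-G_0'(t-t_0)}$. Since $\Delta_0$ is small, $G_0'=-\Delta_0'/(\e^{\Delta_0}-1)\approx c\V'(t_0)/\Delta_0\ge 32cM$, and this gives \eqref{eq:W2} once $c$ is a suitable absolute constant. For $\Vn''=\V''+\Delta''$ we use \eqref{eq:Deltapp}, which gives
\[
\Delta''\le \frac{\Delta'^2}{1-\e^{-\Delta}}\approx G'(t)^2\Delta(t).
\]
Combining \eqref{eq:key_bound_ode} with \eqref{eq:decay_delta}, the product $G'^2\Delta$ is bounded by $48M\e^{-4M(t-t_0)}\min(\V'(t_0),1)$. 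The step that makes this work is that $G'_0\Delta_0\approx c\V'(t_0)$, so one factor $G'_0$ is absorbed into $\min(\V'(t_0),1)/(32M)$ and the remaining $G'_0$ is compensated by the exponential decay. This proves \eqref{eq:Wpp_upper2} for $t\ge t_0$.

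For the lower bounds in \eqref{e:upperboundVn''Vn'}, we have $\Delta''\ge -(\V'')_-(\e^\Delta-1)\ge -M\Delta$. We also have $\Vn'=\V'+\Delta'\ge \V'-G'\,(\e^\Delta-1)$, and by the lower bound in \eqref{eq:deltap_bound} together with $-\V''/\V'\le M$ (which gives $\V'(t)\ge \V'(t_0)\e^{-M(t-t_0)}$), the term $\Delta'$ stays a small fraction of $\V'$. Hence $\Vn'>0$ and $-\Vn''/\Vn'\le M$, after keeping a slack factor in the constants.

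\textbf{Choice of $t_1$ and \eqref{eq:two_cases}.} Near $t_0$ the function $\Vn$ is convex, because $\Delta''(t_0)\approx \Delta_0'^2/\Delta_0\gtrsim c^2\V'(t_0)\cdot 32M/\min(\V'(t_0),1)\ge M$ dominates $-\V''\ge -M$. Take $t_1$ maximal such that $\Vn''\ge0$ on $[t_0,t_1)$. If $\V'(t_0)\ge1$, then $\Delta''\gtrsim 32c^2M\e^{-O(M)(t-t_0)}$ stays above $M$ for $t-t_0\le(16M)^{-1}$, so the first alternative holds; here $\Vn'(t_1)\ge\V'(t_0)/2$ because $\V'$ drops by at most a factor $\e^{-1/16}$ and $\Delta'$ is a small fraction of it.

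If $\V'(t_0)<1$, we use that $\Vn$ is convex up to $t_1$ with $\Vn''\ge$ (a large multiple of $M$) on an initial stretch. Either that stretch has length at least $(16M)^{-1}$, or the accumulated growth $\int\Vn''$ already gives $\Vn'(t_1)\ge 2\V'(t_0)$. Calibrating $c$ so that exactly one of these two alternatives is guaranteed is the main obstacle. I would choose $c$ first to make $\Delta_0'^2/\Delta_0$ large (for example $c=1/2$) and then carry out the bookkeeping.

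\textbf{Region $[0,t_0]$.} Here we must interpolate $C^2$-smoothly from the jet $(\V+\Delta_0,\V'+\Delta_0',\V''+\Delta''(t_0))$ at $t_0$, and we require $W''\ge0$, $\Vn'(0)=0$, $\Vn>\V$. On $[t_0-(16M)^{-1},t_0]$ we glue using a cutoff at scale $(16M)^{-1}$, whose derivatives cost factors $(16M)^k$; this accounts for the $(64M)^2$ term and \eqref{eq:Wpp_upper2} on that interval.

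Below $t_0-(16M)^{-1}$ we split into two cases. If $\V'(t_0)<1$, set $\Vn$ constant; then \eqref{eq:Wpp_upper1b} holds, and \eqref{eq:W1} holds because the total drop of $\Vn$ is at most of order $\V'(t_0)/(16M)$. If $\V'(t_0)\ge1$, set $\Vn=\V+\Delta$ with $\Delta$ nonincreasing and concave, of size $\ge(64M)^{-1}$; this gives \eqref{eq:Wpp_upper1} and \eqref{eq:W0}.

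Convexity of $W$ on $[0,t_0]$ follows from \eqref{eq:Wpp}. There $\V''\ge0$ and $-\Delta''\ge0$ by construction, and the $C^2$ glue is controlled by the positive term $\Delta'^2/\Delta^2$. The upper bound \eqref{eq:Vpp_upper2} comes from bounding $-\Delta''/(\e^\Delta-1)\lesssim \V''/\Delta$ and $\Delta'^2/\Delta^2\lesssim (64M)^2$, using $\Delta\gtrsim \min(\V'(t_0),1)/(64M)$.

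Finally, if $\V$ is constant on $(0,t')$, then all these choices make $\Delta$ constant there, so $W''=0$ on $(0,t')$.
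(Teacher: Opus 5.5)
Your choice $\Delta_0'=-c\,\V'(t_0)$ has the wrong normalisation. It breaks \eqref{eq:Wpp_upper2} once $\V'(t_0)$ exceeds an absolute constant.

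Because $t_0$ is the right end of the convexity interval, $\V''(t_0)=0$. So \eqref{eq:Deltapp} gives $\Vn''(t_0)=\Delta''(t_0)=\Delta_0'^2/(1-\e^{-\Delta_0})\ge 32c^2M\,\V'(t_0)^2$ when $\V'(t_0)\ge1$. But \eqref{eq:Wpp_upper2} at $t=t_0$ demands $\Vn''(t_0)\le 48M$. Exponential decay cannot ``compensate the remaining $G_0'$'' at $t=t_0$; your bookkeeping closes only if $G_0'=O(M)$, i.e.\ only if $\Delta_0'$ is bounded. The same unbounded $|\Delta'|$ near $t_0$ also invalidates your estimate $\Delta'^2/\Delta^2\lesssim(64M)^2$, which you use for \eqref{eq:Vpp_upper2}. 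These uniform bounds are exactly what the telescoping argument in Theorem~\ref{th:decom_general} consumes, so this is not cosmetic.

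The paper takes $\Delta_0'=-\tfrac12\min(\V'(t_0),1)$ and defines $\Delta_0$ by $\e^{\Delta_0}-1=-\Delta_0'/(16M)$. This makes $G_0'=16M$ exactly. Then \eqref{eq:W2} follows directly from \eqref{eq:decay_delta}, and $\Vn'(t_1)\ge\Vn'(t_0)\ge\V'(t_0)/2$ follows from convexity of $\Vn$ on $(t_0,t_1)$, with no need to track the decay of $\V'$.

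Even after this fix, three steps are missing.

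(i) The second alternative in \eqref{eq:two_cases} cannot come from accumulated growth of $\Vn''$. Since $\Delta'<0$, you have $\Vn'(t_1)<\V'(t_1)$, so you must show that $\V'$ itself has more than doubled; the mechanism is pointwise at $t_1$. From \eqref{eq:Deltapp} and $G'\ge G_0'=16M$ one gets $\Vn''\ge\e^{\Delta}\big(16M(-\Delta')-(\V'')_-\big)$. When $\V'(t_0)<1$, the lower bound in \eqref{eq:deltap_bound} gives $-\Delta'\ge-\Delta_0'/3=\V'(t_0)/6$ on $[t_0,t_0+(16M)^{-1}]$. Combined with $(\V'')_-\le M\V'$, this yields $\Vn''(t)\ge\e^{\Delta}M\big(\tfrac83\V'(t_0)-\V'(t)\big)$ there. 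So if $t_1<t_0+(16M)^{-1}$, then $\Vn''(t_1)=0$ forces $\V'(t_1)\ge\tfrac83\V'(t_0)$, and hence $\Vn'(t_1)\ge2\V'(t_0)$.

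(ii) The bound $-\Vn''/\Vn'\le M$ must hold with the same $M$, because the lemma is iterated with $M$ fixed; there is no slack to keep. Your bound $\Delta''\ge-M\Delta$ does not give it. What is needed is $\Delta''\ge M(-\Delta')$, which follows from the identity $\Delta''=(-\Delta')\big(G'\e^{\Delta}-(\V'')_-/G'\big)$ together with $G'\ge16M$.

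(iii) On $[0,t_0]$, convexity of $\W$ is borderline. Since $\W''(t_0)=\V''(t_0)+G''(t_0)=0$, the positive term in \eqref{eq:Wpp} is exactly cancelled by $\Delta''/(\e^{\Delta}-1)$ at $t_0$. It therefore cannot absorb the error of a cutoff glue. Moreover, $\V''$ may vanish on a whole left neighbourhood of $t_0$, so a generic interpolation can make $\W''<0$ there.

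The paper instead builds the left extension by ODEs chosen so that $\W''\ge0$ can be checked directly. For $\V'(t_0)<1$, it continues $G$ linearly backwards (so $\W''=\V''$) until $\Vn'$ vanishes, sets $\Vn$ constant below that point, and then smooths the junction. For $\V'(t_0)\ge1$, it solves $\Vn''=\Delta''(t_0)-C(t_0-t)$ with $C$ large on a short interval $(t_0',t_0)$. Below $t_0'$ it uses $\Vn''=\V''\,\Vn'(t_0')/\V'(t_0')$. This yields $\Vn''\le\V''$, $\Vn'(0)=0$, and the bounded $|\Delta'|$ needed for \eqref{eq:Vpp_upper2}.
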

\begin{proof}
    We will define a suitable function $\Vn$.
    In the first step, we define it on the interval
    $(t_0,\infty)$  and show the claimed bounds on this interval and in the second step we extend it to 
     $(0,t_0)$.
\vspace{1mm}

     \textbf{Step 1: Definition of $\Vn$ on $(t_0,\infty)$ and of $t_1$}

\vspace{1mm}
    Here we essentially use the  construction from Lemma~\ref{le:ode} and derive additional bounds.
    We need to specify the initial values in \eqref{eq:def_G}. 
    Let 
    \begin{align}\label{eq:def_delta_0p}
    \Delta_0'=-\tfrac{1}{2}\min(1, \V'(t_0)) .
    \end{align}
    This definition ensures that $\Delta_0'$ is bounded (which is crucial for the upper bounds on $\Vn''$) and we can bound $\Vn'(t_0)=V'(t_0)+\Delta'(t_0)\geq V'(t_0)/2$ which allows us to control $\Vn'(t_1)$ later.
    Define $\Delta_0$ to be the unique solution of the equation 
    \begin{align}\label{eq:def_Delta0}
        \e^{\Delta_0}-1 = \frac{-\Delta_0'}{16M},
    \end{align}
    i.e., $\Delta_0=\log(1-(16M)^{-1}\Delta_0')$. This definition ensures that the last term in \eqref{eq:Wpp}
    is large for $s=t_0$ thereby ensuring convexity at $t_0$.
    This definition implies (recall $-\Delta_0'\leq 1/2$, $M\geq 1$) that 
    \begin{align}\label{eq:aprior_delta0}
        \e^{\Delta_0}\leq 1+\frac{1}{32}\leq 2,
    \end{align}
    which estimate will be needed later (for example in \eqref{112} below). Set $G_0$ and $G_0'$ as in \eqref{eq:def_G0_G0prime}, i.e., 
    \begin{align}
    \label{eq:G0_G0prime_concrete}
         G_0=-\log(1-\e^{-\Delta_0})>0,\quad
        G_0'=\frac{-\Delta_0'}{\e^{\Delta_0}-1}>0,
    \end{align}
    set $G$ as in \eqref{eq:def_G}, and finally define $\Delta$
    by $\Delta(t)=-\log(1-\e^{-G(t)})$ for $t\geq t_0$.
    We then define 
    \begin{align}
        \Vn(t) = \V(t)+\Delta(t)=
        \V(t)-\log(1-\e^{-G(t)}).
    \end{align}
    Note that then
    \begin{equation}\label{e:G0'}
        G_0'={16M}.
    \end{equation}

    We claim that $\Vn''(t_0)>0$. Assuming this claim for a moment, we can define 
    \[t_1=\sup\{t\ge t_0\colon \Vn''(s)>0 \text{\; for all $s\in [t_0,t)$} \}\in\R\cup\{\infty\},\]
    and we note that because $\Vn''$ is continuous, we must have $t_1>t_0$ and either $t_1=+\infty$ or $\Vn''(t_1)=0$.

Let us now supply the proof that $\Vn''(t_0)>0$.
    Using $\V''\geq -(\V'')_{-}$ and \eqref{eq:Deltapp}, we have for $t\geq t_0$ that 
    \begin{equation}
        \Vn''(t)=\V''(t)+\Delta''(t)\ge-(\V'')_-(t)+\frac{\Delta'(t)^2}{1-\e^{-\Delta(t)}} 
        -(\e^{\Delta(t)}-1) (\V'')_-
        =\e^{\Delta(t)}\left(\frac{\Delta'(t)^2}{\e^{\Delta(t)}-1} - (\V'')_-\right).
    \end{equation}
    Using the lower bound in \eqref{eq:key_bound_ode} as well as \eqref{e:G0'}, together with the assumption $-(\V'')_-\ge -M\min(1,\V'(t))$, we obtain the estimate
    \begin{equation}
    \label{W''convex}
           \Vn''(t) \geq \e^{\Delta(t)} \left(G_0'(-\Delta'(t))
        -M\min(1,\V'(t))\right)=\e^{\Delta(t)} \left(16M(-\Delta'(t))
        -M\min(1,\V'(t))\right).
        \end{equation}
This, together with the definition of $\Delta'_0$ in \eqref{eq:def_delta_0p},  already implies that $\Vn''(t_0)$ is strictly positive.
   \vspace{1mm}

\textbf{Step 2: Verification of lemma estimates on $(t_0,\infty)$ }

\vspace{1mm}
After having defined $\Vn$ on $(t_0,\infty)$, we will now verify all claimed estimates on that interval. 

We begin by establishing \eqref{eq:two_cases}. To do so, we  begin with the observation that convexity of $\Vn$ on $(t_0,t_1)$ implies that 
    \begin{equation}\label{e:estVnew'}
        \Vn'(t_1)\geq \Vn'(t_0)\geq \V'(t_0)/2,
    \end{equation}
    where we also used \eqref{eq:def_delta_0p}. This still holds if $t_1=+\infty$ if we use the convention $\Vn'(+\infty)=\lim_{t\to\infty}\Vn'(t)$.

Next, we need a quantitative lower bound for $t_1-t_0$.
    We apply the lower bound of \eqref{eq:deltap_bound} and find that for 
    $t\leq t_0+(16M)^{-1}=t_0+G_0'^{-1}$ the bound 
    \begin{align}
    \label{Delta'bd}
        -\Delta'(t)\geq -\Delta'_0\exp\left(-\e^{\Delta_0}\left(1+\frac{1}{2\cdot 16^{2}M}\right)\right)
        \geq \frac{-\Delta'_0}{3}
    \end{align}
    holds (here the last step can be checked by explicit calculation by using $\e^{\Delta_0}\le 1+1/32$ and $M>1$). 
    
    Consider first the case that $\V'(t_0)\geq 1$. In this case, by definition, 
    $\Delta'_0=-\tfrac{1}{2}$ and thus for $t \leq t_0+(16M)^{-1}$ we have from \eqref{W''convex} and \eqref{Delta'bd}
    \begin{align}
        \Vn''(t)\geq \e^{\Delta(t)} \left(16M\frac{-\Delta'_0}{3}
        -M\right)=\e^{\Delta(t)} \left(\frac83M
        -M\right)>0,
    \end{align}
    and so we must have $t_1>t_0+(16M)^{-1}$. Therefore, in view of \eqref{e:estVnew'} the first case of \eqref{eq:two_cases} occurs.
    
    Next consider the case that $\V'(t_0)\leq 1$. Then
    $-\Delta'_0=\V'(t_0)/2$ and thus for $t \leq t_0+(16M)^{-1}$
     \begin{equation}\label{e:lowerboundVnew''}
      \Vn''(t)\geq \e^{\Delta(t)} \left(16M \frac{-\Delta_0'}{3}
        -M\V'(t)\right)
        =\e^{\Delta(t)}M\left(\frac{8}{3}\V'(t_0)-\V'(t)\right).
    \end{equation}
    If $t_1\ge t_0+(16M)^{-1}$ then again the first case of \eqref{eq:two_cases} occurs (recall \eqref{e:estVnew'}).
    If $t_1<t_0+(16M)^{-1}$, though, then $\Vn''(t_1)=0$ and by \eqref{e:lowerboundVnew''} we have    
    \begin{align}\label{eq:increaseV'}
        \V'(t_1)\geq \frac{8}{3}\V'(t_0).
    \end{align}
    Our goal is to translate this into a similar bound for $\Vn'$. Because $\Vn=\V+\Delta$, \eqref{eq:increaseV'} implies that
    \begin{align}
         \Vn'(t_1)=\V'(t_1)+\Delta'(t_1)
         \geq \frac{8}{3}\V'(t_0)-\frac{\V'(t_0)}{2}\geq 2\V'(t_0).
    \end{align}
    and so the second case of \eqref{eq:two_cases} holds.

    We have now shown that always at least one of the cases in \eqref{eq:two_cases} occurs. Next we will verify that $\Vn$ is strictly monotone for $t\ge t_0$, and that \eqref{e:upperboundVn''Vn'} holds.

For the former statement, we will use that $\Vn=\V+\Delta$,
    where we control the derivative of both terms.
    Note that Gronwall's inequality combined with
    the bound $-\V''/\V'\leq M$ implies that
    \begin{align}
    \label{eq:increaseV'1}
        \V'(t)\geq \e^{-M(t-t_0)}\V'(t_0).
    \end{align}
    
    On the other hand, we observe that the upper bound of \eqref{eq:deltap_bound} combined with $G_0'=16M$ and $M\ge1$ implies that
    \begin{align}\label{eq:bound_deltap_special}
      -\Delta'(t)\leq -\Delta_0'\e^{-16M(t-t_0)}\left(1+\frac{1}{16}(t-t_0)\right)\leq -\Delta_0'\e^{-16M(t-t_0)}\e^{\frac{t-t_0}{16}}
      \leq -\Delta_0'\e^{-15M(t-t_0)}.
    \end{align}
    We conclude from \eqref{eq:increaseV'1} and \eqref{eq:bound_deltap_special} that for $t\geq t_0$
    \begin{equation}\label{e:Vn'}
        \Vn'(t)=\V'(t)+\Delta'(t)
        \geq \e^{-M(t-t_0)}\V'(t_0) +\Delta'_0\e^{-15M(t-t_0)}
        \geq \frac{\V'(t_0)}{2}\e^{-M(t-t_0)}.
    \end{equation}
    This implies that $\Vn$ is strictly monotone on $(t_0,\infty)$.
   
    Next, we show \eqref{e:upperboundVn''Vn'}.
    First we note that for $t> t_0$ by \eqref{eq:Deltapp} and the fact that $(\V'')_-\le M$ we have 
     \begin{equation}
    \begin{split}\label{eq:lower_boundDelta''}
    \Delta''(t)&=\frac{\Delta'(t)^2}{1-\e^{-\Delta(t)}} 
        -(\e^{\Delta(t)}-1) (\V'')_-\\
        &
        \geq 
        (-\Delta'(t))\left(\e^{\Delta(t)}\frac{-\Delta'(t)}{\e^{\Delta(t)}-1} -M\frac{\e^{\Delta(t)}-1}{-\Delta'(t)}\right)
       \\
       &\geq (-\Delta'(t))\left(\e^{\Delta(t)}G_0'-\frac{M}{G_0'}\right)\\
       &\geq  (-\Delta'(t))\left(16M-1\right)\\
       &\ge -M\Delta'(t)\ge0.
       \end{split}
    \end{equation}
    Here in the third step we used
    that $ax-b/x$ is monotone in $x$ for positive $a,b$ and $x$, combined with the lower bound of \eqref{eq:key_bound_ode}, and in the fourth step we used $\Delta(t)\ge0$ and $G_0'=16M$. This then implies that
    \begin{align}
       \Vn''(t)=\V''(t)+\Delta''(t)\geq -M.
    \end{align}

    In order to bound $-\Vn''/\Vn'$ we note that we have already seen in \eqref{e:Vn'} that $\Vn'$ is positive. Thus the bound we want to show is equivalent to 
    \begin{align}
        M\Vn'=M(\V'+\Delta')\geq -\V''-\Delta''=-\Vn''.
    \end{align}
    Combining \eqref{eq:lower_boundDelta''} with the assumption
    \begin{align}
        M\V'\geq -\V''
    \end{align}
    the claim follows. This completes the proof of \eqref{e:upperboundVn''Vn'}.
    
    Finally, we upper bound $\Vn''$ and establish \eqref{eq:Wpp_upper2} (for $t\ge t_0)$).
    We find for $t\geq t_0$ (using that the first summand in the equation \eqref{eq:Deltapp} for $\Delta''$ is negative since $-(V'')_{-}(t)\le 0$), by combining the upper bounds of
    \eqref{eq:key_bound_ode}, \eqref{eq:aprior_delta0} and \eqref{eq:bound_deltap_special}, that 
    \begin{align}
    \begin{split}\label{eq:Vnpp_right}
        \Vn''(t)=\V''(t)+\Delta''(t)&\leq \V''(t) +\frac{\Delta'(t)^2}{1-\e^{-\Delta(t)}}
        \\
&= \V''(t) + \e^{\Delta(t)}(-\Delta'(t))
        \frac{-\Delta'(t)}{\e^{\Delta(t)}-1}
     \\
     &\leq
         \V''(t) + 2(-\Delta'_0)\e^{-15M(t-t_0)}
         (G_0'+M(t-t_0))
     \\
     &\leq 
          \V''(t) + 32M(-\Delta'_0)\e^{-15M(t-t_0)}
         \left(1+\frac{1}{16}(t-t_0)\right)
         \\
         &\leq \V''(t) + 32M(-\Delta'_0)\e^{-14M(t-t_0)}.
        \end{split}
    \end{align}
In the fourth step, we used that $G_0'=16M$.  This proves the bound \eqref{eq:Wpp_upper2}
    for $t\geq t_0$ (the larger constants in \eqref{eq:Wpp_upper2} will be necessary later on so that the bound will also hold for $t<t_0$). 

    We also remark that combining the definition of $\Delta_0$ in \eqref{eq:def_Delta0}
    (implying $\Delta_0\leq -\Delta_0'/(16M)$) with the bound \eqref{eq:decay_delta}
    we directly find \eqref{eq:W2}. 
    
    To finish the estimates for the interval $(t_0,\infty)$  it remains to prove the bounds for $\W$ on this interval, where we recall here the definition of $\W$ from \eqref{V_{new}}.
    Then by  
    \eqref{V_{new}_convex} we have $0\leq  \W''(t)=(\V'')_+(t)$ for $t\geq t_0$, and so \eqref{eq:Vpp_upper1} follows. 
    This completes the verification of all properties of $\Vn$ and $\W$ for $t\ge t_0$.

In the next two steps we will extend $\Vn$ to $(0,t_0]$ and verify the estimates on that interval as well. We distinguish  two cases for the definition of $\Vn$ on $(0,t_0)$, depending on whether $\V'(t_0)\ge 1$ or $\V'(t_0)<1$, and we first focus on the case $\V'(t_0)\ge 1$.

\textbf{Step 3: Extending $\Vn$ to $(0,t_0)$ if $\V'(t_0)\geq 1$}

\vspace{1mm}

 Recall that in \eqref{eq:def_delta_0p} we defined $\Delta_0'=-1/2$
and thus \eqref{eq:def_Delta0} becomes now
\begin{align}\label{eq:edelta0case1}
    \e^{\Delta_0}-1=\frac{1}{32M}.
\end{align}
In this case, the key idea is to essentially define
 $\Vn$ on $(0,t_0)$ by the (backward) ODE
    \begin{align}\label{eq:defWpplargeVp}
    \begin{split}
        \Vn''(t) = \V''(t) \frac{\Vn'(t_0)}{\V'(t_0)}
        \\
            \Vn(t_0)=\V(t_0)+\Delta_0,
            \quad \Vn'(t_0)=\V'(t_0)+\Delta_0'.
        \end{split}
    \end{align}
    This would ensure that $0\leq \Vn''\leq V''$
    and it is also sufficient to derive the bounds for $W$.
    However, this definition is not $C^2$ in $t_0$ so we make the following minor modification. We define $\Vn$ on 
    the interval $(t'_0,t_0)$ 
    where $t'_0$ will be specified below by the backward ODE 
    \begin{align} \begin{split}  \label{eq:ode_discont}  \Vn(t_0)=V(t_0)+\Delta_0,\quad\Vn'(t_0)=V'(t_0)+\Delta_0'
        \\
        \Vn''(t)=V''(t_0)+\Delta''(t_0)-C(t_0-t)
        =\Delta''(t_0)-C(t_0-t)
        \end{split}
    \end{align}
    using $V''(t_0)=0$
    (this follows because $t_0$ is the largest value so that $V$ is convex on $(0,t_0)$). 
    Here $C>0$ is a large constant 
    and $t'_0$ is the largest solution of $\Vn''(t)=V''(t)\Vn'(t)/V'(t)$.
    On $(0,t'_0)$ we finally define
    $\Vn$ by the backward ODE 
    \begin{align}
    \label{extendVn}
        \Vn''(t)=V''(t)\frac{\Vn'(t'_0)}{V'(t'_0)},
    \end{align}
    where the initial values at $t'_0$ here are chosen such that $\Vn$ is $C^2$. 
    We first claim that $t'_0$ exists for $C$ large enough. Note that $
    \Vn''(t_0)=V''(t_0)+\Delta''(t_0)=\Delta''(t_0)>0$ (see, e.g.,
    \eqref{eq:lower_boundDelta''}).  Solving the ODE, we find on $(t'_0,t_0)$ from \eqref{eq:ode_discont}
    \begin{align}
       \frac{ \Vn''(t)}{\Vn'(t)}
       =\frac{V''(t_0)+\Delta''(t_0)-C(t_0-t)}
       {V'(t_0)+\Delta_0'-(V''(t_0)+\Delta''(t_0))(t_0-t)+C(t_0-t)^2/2}.
    \end{align}
    For $\bar{t}=t_0-C^{-2/3}$ and as $C\to\infty$ we find for $t\in (\bar{t}, t_0)$
    \begin{align}\label{eq:lim_C}
     \Vn'(t)=  V'(t_0)+\Delta_0'-(V''(t_0)+\Delta''(t_0))(t_0-t)+C(t_0-t)^2/2\to  V'(t_0)+\Delta_0'
    \end{align}
    and in particular
    the denominator eventually stays positive in $(\bar{t}, t_0)$. 
    We then find that $\Vn''(\bar{t})/\Vn'(\bar{t})\to -\infty$.
    By continuity and the intermediate value theorem we can indeed find a solution of $\Vn''(t)/\Vn'(t)=V''(t)/V'(t)$ in 
    $(\bar{t},t)$ for $C$ sufficiently large.
    Moreover, the solution $t'_0=t'_0(C)$ will satisfy $t'_0(C)\to t_0$ as $C\to \infty$
    so that $V'(t'_0(C))\to V'(t_0)$ and also
    $\Vn'(t'_0(C))\to \Vn'(t_0)$ and similarly for $V$ and $\Vn$  by \eqref{eq:lim_C}.
    Therefore, $\Vn'(t'_0)/V'(t'_0)\to \Vn'(t_0)/V'(t_0)< 1$ as $C\to \infty$.
    Let us now fix $C>0$ such that on
    $(t'_0,t_0)$
    \begin{align}\label{eq:Vn'}
    \begin{split}
        \Vn'(t)/V'(t)&<1,\quad
     |   V''(t)- V''(t_0)|\leq M,\quad
        \e^{\Delta(t)}-1\geq
        \frac{\e^{\Delta_0}-1}{2^{1/4}},
        \quad
        1- \e^{-\Delta(t)}\geq
        \frac{1- \e^{-\Delta_0}}{2^{1/4}},
        \\
        |\Delta'(t)|&\leq 2^{1/4}|\Delta'(t_0)|
        \quad
         \text{and moreover }
        t'_0\geq t_0-(16M)^{-1}.
        \end{split}
    \end{align}
   Here the second condition can be ensured by the  continuity of $V''$.
    We now show the claimed bounds for $\Vn$
    and then for $W$.
    Convexity of $\Vn$ follows because $V''(t)\geq 0$ and $\Vn'(t'_0)/V'(t'_0)> 0$. Next we show the upper bound on $\Vn''$
   Using \eqref{eq:Vnpp_right} (for $t=t_0$) we find on $(t'_0,t_0)$
    \begin{align}
        \Vn''(t)\leq \Vn''(t_0)
        \leq V''(t_0)+16M\leq 
        V''(t)+17M\leq V''(t)+48M\e^{-4M(t-t_0)}\min(V'(t_0),1).
    \end{align}
    For $t<t_0'$ we have $\Vn''(t)=V''(t)\Vn'(t'_0)/V'(t'_0)\leq V''(t)$
    so that \eqref{eq:Wpp_upper2} also holds
    on $(t_0-(16M)^{-1},t'_0)$
    and \eqref{eq:Wpp_upper1} holds. 
    The relation $\Vn''(t)=V''(t)\Vn'(t'_0)/V'(t'_0)$
    implies 
    \begin{align}\label{eq:vnp}
        \Vn'(t)=V'(t)\frac{\Vn'(t'_0)}{V'(t'_0)}
    \end{align}
    and therefore $\Vn'(0)=0$ and $\Vn'(t)\leq V'(t)$ on $(0,t'_0)$. Since this is also true on $(t'_0,t_0)$ by \eqref{eq:Vn'}
    we find that $\Vn(t)-V(t)$ is decreasing
    on $(0,t_0)$ and thus by \eqref{eq:edelta0case1} (for $M\geq 1$)
    \begin{align}
        \Vn(0)-V(0)\geq \Vn(t_0)-V(t_0)
        =\Delta_0=\log(1+(32M)^{-1})\geq \frac{1}{64M}
    \end{align}
    which is \eqref{eq:W0}.
    
    Finally, we show the bounds for $W$.
   Recall that by \eqref{eq:Wpp}
    \begin{align}
         W''(t)&= V''(t)-\frac{\Delta''(t)}{\e^{\Delta(t)}-1} + \frac{(\Delta')^2(t)}{(\e^{\Delta(t)}-1)(1-\e^{-\Delta(t)})}.
    \end{align}
We now show convexity of $W$ on $(t'_0,t_0)$. We can bound the above by using $V''(s)\geq 0$ and that $\Vn''(s)=\V''(s)+\Delta''(s)\ge\Delta''(s)$ followed by \eqref{eq:ode_discont} in the second step
\begin{align}
     W''(t)&\geq  -\frac{\Vn''(t)}{\e^{\Delta(t)}-1} + \frac{(\Delta')^2(t)}{(\e^{\Delta(t)}-1)(1-\e^{-\Delta(t)})}
     =\frac{1}{\e^{\Delta(t)}-1}
     \left( -\Delta''(t_0)+C(t_0-t)
     + \frac{(\Delta')^2(t)}{1-\e^{-\Delta(t)}}
     \right).
\end{align}
By construction the term in brackets vanishes 
at $t=t_0$ because $W''(t_0)=V''(t_0)=0$.
Thus, it is sufficient to show that the derivative of the bracket with respect to $t$ is negative on $(t'_0,t_0)$. We bound
\begin{align}
\begin{split}
  \left| \left(\frac{(\Delta')^2(t)}{1-\e^{-\Delta(t)}}\right)'\right|
   &=\left| 2\Delta''(t)\frac{(\Delta')(t)}{1-\e^{-\Delta(t)}}
   -\frac{(\Delta')^3(t)}{(\e^{\Delta(t)}-1)(1-\e^{-\Delta(t)})}\right|
   \\
   &\leq 4(|V''(t_0)|+M+|\Delta''(t_0)|+C|t-t_0|)\frac{|\Delta'(t_0)|}{1-\e^{-\Delta(t_0)}}
   +2\frac{|(\Delta')^3(t_0)|}{(\e^{\Delta(t)}-1)(1-\e^{-\Delta(t)})}.
   \end{split}
\end{align}
Note that $|t-t_0|\leq C^{-2/3}$ for $t\in (t_0',t_0)$.
So the expression in the last display scales as $C^{1/3}$ and we  conclude that on $(t_0',t_0)$
\begin{align}
 \left( -\Delta''(t_0)+C(t_0-t)
     + \frac{(\Delta')^2(t)}{1-\e^{-\Delta(t)}}
     \right)'\leq -C+K_1+K_2C^{1/3}\leq 0
\end{align}
where $K_1$ and $K_2$ are two constants.
This implies $W''(t)\geq 0$ on $(t_0',t_0)$ for $C>0$ sufficiently large.
On $(0,t'_0)$ we infer $W''(t)\geq 0$ directly from
$V''(t)\geq 0$ and $\Delta''(t)=\Vn''(t)-V''(t)\leq 0$. For the upper bound, we observe
that on $(t'_0,t_0)$ using $\Vn''(t)\geq 0$
\begin{align}
\begin{split}
    W''(t)-V''(t)
    &\leq \frac{V''(t)}{\e^{\Delta(t)}-1}
    +\frac{(\Delta'(t))^2}{(\e^{\Delta(t)}-1)(1-\e^{-\Delta(t)})}
    \\
    &\leq 
    2\frac{V''(t)}{\e^{\Delta(t_0)}-1}
    +\frac{2^{2/4}(\Delta'(t_0))^2}{2^{-2/4}(\e^{\Delta(t_0)}-1)(1-\e^{-\Delta(t_0)})}
    \\
    &\leq 
     2^{1/4}(32M)V''(t)
    +\frac{2\cdot 2^{-2}\e^{\Delta_0}}{(\e^{\Delta(t_0)}-1)^2}
    \leq 40MV''(t)+(32M)^2
    \end{split}
\end{align}
where we used $\e^{\Delta_0}\leq 2$ in the last step (and \eqref{eq:edelta0case1}).
Moreover, using $\Vn''(s)\leq V''(s)$ and
$\Vn'(s)\leq V'(s)$ for $0\leq s\leq t'_0$
we find that $\Delta$ and $\Delta'<0$ are decreasing on this interval and we can
bound 
\begin{align}\begin{split}\label{eq:Wpp_case1}
    W''(t)-V''(t)
    &\leq \frac{V''(t)}{\e^{\Delta(t)}-1}
    +\frac{(\Delta'(t))^2}{(\e^{\Delta(t)}-1)(1-\e^{-\Delta(t)})}
    \\
    &\leq \frac{V''(t)}{\e^{\Delta(t'_0)}-1}
    +\frac{(\Delta'(t'_0))^2}{(\e^{\Delta(t'_0)}-1)(1-\e^{-\Delta(t'_0)})}
    \\
    &
    \leq 40MV''(t)+(32M)^2.
    \end{split}
\end{align}
This shows \eqref{eq:Vpp_upper2} in this case.
\vspace{1mm}

\textbf{Step 4: Extending $\Vn$ to $(0,t_0)$ if $\V'(t_0)< 1$ as a piecewise $C^2$ function}

\vspace{1mm}
    Finally, we consider the case where $\V'(t_0)< 1$ holds. Here we again split the interval $(0,t_0)$ into two intervals
    $(0,t_0')$ and $(t_0',t_0)$
    where we define $\Vn$ on $(t_0',t_0)$
    so that $\Vn'(t'_0)=0$ and then let $\Vn$ be constant on $(0,t'_0)$. We will first give a version of the construction that might not be $C^2$ at $t_0'$, and in the next step we will explain how to modify it to ensure that $\Vn$ is globally $C^2$.

    To define $\Vn$ on $(t_0',t_0)$ we, as before, define it through $G$.
    We extend $G$ in the backward direction as in \eqref{eq:def_G} by
    \begin{equation}
    \label{extGback}
        G''(t)=(\V'')_-=0,\quad
        G(t_0)=G_0>0,\quad G'(t_0)=G_0'>0,
    \end{equation}
    with $G_0, G_0'$ as in \eqref{eq:def_G0_G0prime}. Explicitly, $G(t)=G_0-(t_0-t)G_0'$. 
    For $t> t_0-G_0/G_0'$ the function $G$ is positive and so, given any $t_0'\geq t_0-G_0/G_0'$ (fixed below), we can define $\Delta=-\log(1-\e^{-G})$ and
    $\Vn=\V+\Delta$ on the interval $(t_0',t_0)$. 
    By \eqref{eq:Deltapp} 
    the function $\Vn$
    satisfies on the interval $(t_0',t_0)$ the backward ODE
    \begin{align}
    \label{backode1}
       \Vn''(t) = V''(t)+\Delta''(t)=\V''(t)+\frac{\Delta'(t)^2}{1-\e^{-\Delta(t)}} \\
       \Vn(t_0)=\V(t_0)+\Delta_0,
       \quad \Vn'(t_0)=\V'(t_0)+\Delta_0'.
    \end{align}
    We claim that we can find 
    \begin{align}\label{eq:t0prime2}
        t_0'
    > \max(t_0-(16M)^{-1},0) \quad (>t_0-G_0/G'_0)
    \end{align} 
    such that $\Vn'(t_0')=0$.
    We note that  $t_0-(16M)^{-1}>t_0-G_0/G'_0$
    which follows from $G_0'=16M$ and $G_0=\Delta_0-\log(-\Delta_0'/(16M))\geq\log(2)+\log(16)\geq 1$
    (see \eqref{eq:G0_G0prime_concrete}).
    Therefore 
     $\Vn$ is well defined for $t_0-(16M)^{-1}\leq t\leq t_0$. Using \eqref{eq:Deltap}, we get for 
$t_0-G_0/G_0'\leq t\leq t_0$
\begin{align}
    \frac{-\Delta'(t)}{-\Delta'(t_0)}
    = \frac{\frac{G'(t)}{\e^{G(t)}-1}}{\frac{G'(t_0)}{\e^{G(t_0)}-1}}
    =
    \frac{\e^{G(t_0)}-1}{\e^{G(t_0)-(t_0-t)G'_0}-1}
    \geq \frac{\e^{G(t_0)}}{\e^{G(t_0)-(t_0-t)G'_0}}=\e^{(t_0-t)G'_0}=\e^{16M(t_0-t)},
\end{align}
where for the second equality we applied the definition of the extended $G$, and for the last equality we used that $16M=G_0'$.
We conclude (using convexity of $V$ on $(0,t_0)$ and $-\Delta'_0=V'(t_0)/2$)
\begin{align}
    \Vn'(t)=V'(t)+\Delta'(t)
    \leq V'(t_0)+\e^{16M(t_0-t)}\Delta'(t_0)
    =V'(t_0)\left(1-\frac12\e^{16M(t_0-t)}\right).
\end{align}
This is negative for $t<t_0-(16M)^{-1}$
so by continuity we can indeed find $t_0'$
satisfying \eqref{eq:t0prime2} such that $\Vn'(t_0')=0$.
    It remains to be shown that $t_0'>0$. Note that $\Vn'(t_0)<V'(t_0)$ and $\Vn''(t)>V''(t)$
    on $(t_0',t_0)$ so $\Vn'(t)<V'(t)$
    and then $V'(0)=0$ implies $t_0'>0$.

    Extend $\Vn$
     to $(0,t_0')$  as a  constant function.
     The function $\Vn$ defined like this is $C^1$ and twice differentiable  everywhere except for the point $t_0'$.

    Note that $\Vn$  is clearly convex in $(0,t_0)$.
    Similarly, $\W$ is convex. 
    Indeed, by \eqref{V_{new}}
    we get 
    $\W''=\V''+G''=\V''$ on $(t_0',t_0)$
    (see \eqref{extGback} for the second step). On   $(0,t_0')$ we use $\Vn''=0\leq V''$ and therefore $\Delta''<0$
    to conclude from \eqref{eq:Wpp} that $\W''\geq \V''\geq 0$. 
    
    It remains to show the  bounds
    for $\Vn$ and $W$ on $(0,t_0)$, more concretely we need to prove 
    \eqref{eq:Wpp_upper1b}, \eqref{eq:Wpp_upper2}, \eqref{eq:W1}, and
    \eqref{eq:Vpp_upper2}. 
    Since $t_0'\geq t_0-(16M)^{-1}$ we conclude that $\Vn$ is constant on $(0, t_0-(16M)^{-1})$ and we directly find that \eqref{eq:Wpp_upper1b} is satisfied. 
   By convexity and using $\Vn'(t_0)=\V'(t_0)/2<1/2$ and
   $t_0-t_0'\leq (16M)^{-1}$
    \begin{align}
    \label{delta'bd1}
        \Vn(0)=\Vn(t_0')\geq
        \Vn(t_0)-(t_0-t_0')\Vn'(t_0)\geq \V(t_0)- (32M)^{-1}
    \end{align}
    which proves \eqref{eq:W1}.
    The next step is to upper bound $\Vn''$ on the interval    $(t_0',t_0)$
    in order to prove \eqref{eq:Wpp_upper2}. First, we find in $(0,t_0)$ 
    \begin{align}\label{eq:deltap_trivial}
         -\Delta'(t)=\V'(t)-\Vn'(t)
        \leq \V'(t_0),
    \end{align}
    where we used convexity of $\V$ and $\Vn'\geq 0$ in the last step.
    This implies (by integration and using the bounds $t_0-t_0'\le (16 M)^{-1}$, $V'(t_0)\le 1$ and $\Delta_0\le \log(1+(32)^{-1})$ by \eqref{eq:def_Delta0}) that $\Delta(t)\leq \Delta_0+(16M)^{-1}\V'(t_0)\leq 1/2+1/16\leq  2/3$, so that 
    \begin{align}
        \e^{\Delta(t)}\leq \e^{2/3}\leq 2.
    \end{align}
    Then we find using \eqref{backode1}  together with the last display,  \eqref{eq:deltap_first}, and \eqref{eq:Deltap}
    \begin{align}
        \Vn''(t)\leq \V''(t)+(-\Delta'(t))\e^{\Delta(t)}\frac{-\Delta'(t)}{\e^{\Delta(t)}-1} 
        \leq \V''(t)+ \V'(t_0)\cdot 2\cdot G_0'
        \leq \V''(t)+32M \V'(t_0).
    \end{align}
    This finishes the proof of \eqref{eq:Wpp_upper2}.
    The last step is to bound $\W''$ by showing \eqref{eq:Vpp_upper2}.
    On $(t_0',t_0)$ we have $\W''=\V''$
    so it remains to prove \eqref{eq:Vpp_upper2}
    on $(0, t_0')$.
    For $t<t_0'$ we note that $\Delta(t)>\Delta_0$, 
    and thus (starting from \eqref{eq:Wpp} with 
    $\Vn''=\Vn'=0$) 
    \begin{align}
    \label{112}
    \begin{split}
       \W''(t)&=V''(t)-\frac{\Delta''(t)}{\e^{\Delta(t)}-1} + \frac{(\Delta')^2(t)}{(\e^{\Delta(t)}-1)(1-\e^{-\Delta(t)})}
       \\
       &=\frac{1}{1-\e^{-\Delta(t)}}\left(\V''(t)+\frac{(\V'(t))^2}{\e^{\Delta(t)}-1}\right)
       \leq \frac{1}{1-\e^{-\Delta_0}}
       \left(\V''(t)+\frac{(\V'(t_0))^2}{\e^{\Delta_0}-1}
       \right)
       \\
       &= \frac{\e^{\Delta_0}}{\e^{\Delta_0}-1}
       \left(\V''(t)+2\V'(t_0)\frac{-\Delta_0'}{\e^{\Delta_0}-1}
       \right)
       \leq \frac{64M}{\V'(t_0)}\left(\V''(t)+32M\V'(t_0)\right)
       \\
       &\leq \frac{64M}{\V'(t_0)}\V''(t)+(64M)^2,
       \end{split}
    \end{align}
    where for the penultimate inequality we applied \eqref{eq:def_delta_0p}, \eqref{eq:def_Delta0}, and \eqref{eq:aprior_delta0}. The last display together with \eqref{eq:Wpp_case1} imply that \eqref{eq:Vpp_upper2} holds in all cases.
    We  also remark that $\W''(t)=0$
    if $\V''(t')=0$ for all $0\leq t'\leq t\leq t_0'$.
\vspace{1mm}

\textbf{Step 5: Extending $\Vn$ to $(0,t_0)$ if $\V'(t_0)< 1$ as a $C^2$ function}
\vspace{1mm}

 The functions $\Vn$, $W$ from the previous step have all required properties with the one exception that they might not be $C^2$ at $t_0'$. In order to fix this, we will modify the construction of $\Vn$ near $t_0'$. To avoid confusion, we will denote the modified objects with a tilde.
    Our goal is to find $\tilde t_0'$ close to $t_0'$, and $\tVn$ close to $\Vn$ such that $\tVn'(\tilde t_0')=\tVn''(\tilde t_0')=0$. Then we can extend $\tVn$ piecewise constant on $(0,\tilde t_0')$ and the resulting function will be genuinely $C^2$. This modification is slightly technical, as we need to preserve all properties of $W,\Vn$. However, the estimates are similar to the previous step, and so we will be brief here.
    
    Let $\tilde t_0'>0$ and $\eps>0$ be parameters to be fixed later, and let $\theta\in C^\infty(\R)$ be such that $0\le\theta(t)\le1$, $\theta(t)=1$ for $t\le0$ and $\theta(t)=0$ for $t\ge1$. Define functions $\tilde G$ and $\tilde\Delta$ on $(\tilde t_0',t_0)$ such that 
    \begin{equation}\label{e:tildeGDelta}
        \e^{-\tilde\Delta(t)}+\e^{-\tilde G(t)}=1
    \end{equation}
     and such that they satisfy
    \begin{align}
        \tilde G''(t)&=\theta\left(\frac{t-\tilde t_0'}{\eps}\right)\left(\frac{V''(t)}{\e^{\tilde \Delta(t)}-1}-\frac{\tilde G'(t)\tilde \Delta'(t)}{1-\e^{-\tilde \Delta(t)}}\right),\label{e:ODEtildeG}
        \\
        \tilde G(t_0)&=G_0>0,\quad \tilde G'(t_0)=G_0'>0.
    \end{align}
    As $\tilde\Delta$ is a function of $\tilde G$, \eqref{e:ODEtildeG} is a backward ODE for $\tilde G$, explicitly
    \begin{equation}\label{e:ODEtildeG2}
         \tilde G''(t)=\theta\left(\frac{t-\tilde t_0'}{\eps}\right)\left((\e^{\tilde G(t)}-1)V''(t)+\frac{\e^{\tilde G(t)}}{\e^{\tilde G(t)}-1}(\tilde G'(t))^2\right). 
    \end{equation}
    This ODE has locally Lipschitz coefficients and hence a unique solution as long as $\tilde G(t)>0$ on $(\tilde t_0',t_0)$. 
    For $\tilde t_0'$ close to $t_0'$ and $\eps\ll1$, \eqref{e:ODEtildeG2} is a perturbation of \eqref{extGback}, and the solution $G$ of \eqref{extGback} 
is strictly positive on $(t_0',t_0$ by \eqref{eq:t0prime2}. So for $(\eps,\tilde t_0')$ close enough to $(0,t_0')$, also \eqref{e:ODEtildeG2} has a well-defined, strictly positive solution on $(\tilde t_0',t_0)$. Moreover, for fixed $\tilde t_0'$, $\tVn$ and $\tVn'$ converge to $\Vn$ and $\Vn'$ uniformly on $(\tilde t_0',t_0)$.

Having defined $\tilde G$, define $\tilde\Delta$ via \eqref{e:tildeGDelta}, and set $\tVn=V+\tilde\Delta$, $\tilde W=V+\tilde G$.
The key point now is that using the analogue of \eqref{eq:Deltapp} for $\tilde G,\tilde\Delta$ as well as \eqref{e:ODEtildeG} and the fact that $\theta=1$ near 0, we compute that
\[\tVn''(\tilde t_0')=V''(\tilde t_0')+\tilde\Delta''(\tilde t_0')=V''(\tilde t_0')-\tilde G''(\tilde t_0')(\e^{\tilde\Delta(\tilde t_0')}-1)
    -\tilde G'(\tilde t_0')\tilde\Delta'(\tilde t_0')\e^{\tilde \Delta(\tilde t_0')}=V''(\tilde t_0')-V''(\tilde t_0')=0.
    \]
We also know that $\Vn'$ has a zero at $t_0'$ and that zero is non-degenerate by \eqref{backode1}, and so we can find $(\eps,\tilde t_0')$ arbitrarily close to $(0,t_0')$ such that $\tVn'(\tilde t_0')=0$. Then if we extend $\tVn$ as a constant to $(0,\tilde t_0')$, the resulting function will be in $C^2$. We claim that for $\eps$ small enough $\tVn$ will be the desired modification of $\Vn$, and it has all required properties.
First of all, convexity of $\tVn$ on $[0,t_0)$ follows from
\begin{align*}
    \tVn''(t)&=V''(\tilde t)-\tilde G''(t)(\e^{\tilde\Delta(t)}-1)
    -\tilde G'(t)\tilde\Delta'(t)\e^{\tilde \Delta(t)}\\
    &=\left(1-\theta\left(\frac{t-\tilde t_0'}{\eps}\right)\right)\left(V''(t)-\tilde G'(t)\tilde\Delta'(t)\e^{\tilde\Delta(t)}\right)\\
    &=\left(1-\theta\left(\frac{t-\tilde t_0'}{\eps}\right)\right)\left(V''(t)+(\tilde\Delta'(t))^2\frac{\e^{\tilde\Delta(t)}}{\e^{\tilde\Delta(t)}-1}\right)\ge0,
\end{align*}
for $t\in(\tilde t_0',t_0)$, where we used \eqref{e:ODEtildeG} and the analogues of \eqref{eq:Deltap}, \eqref{eq:Deltapp}. Next, the uniform convergence ensures that for $\eps$ small enough we have the analogue of \eqref{eq:t0prime2}, and hence also the analogue of \eqref{eq:Wpp_upper1b}. Estimate \eqref{eq:W1} for $\tVn$ follows from the corresponding (strict) inequalities for $\Vn$ shown above and the uniform convergence. The estimate \eqref{eq:W2} directly carries over from $\Vn$ to $\tVn$. Estimate \eqref{eq:Wpp_upper2} follows from the same argument as in the previous step. Finally, convexity of $\tilde W$ follows from $\tilde W''=V''+\tilde G''$ on $(\tilde t_0',t_0)$, and for \eqref{eq:Vpp_upper2} we can use an explicit computation very similar to \eqref{112}.

    
\end{proof}

\subsection{Proof of the general decomposition result}

To prove Theorem \ref{th:decom_general}, we have to show how the previous lemma can be used to construct a decomposition of a non-convex potential into convex functions. Here we essentially iterate the construction from the lemma. 
The main difficulty, however, is to ensure that we continue to make progress, i.e., that the remainder terms $\W$ from the application of Lemma~\ref{le:decomp_two}
do not converge to some finite function and to 
show that the bounds do not blow up too much under the iteration.

\begin{proof}[Proof of Theorem \ref{th:decom_general}]

    The strategy to prove this theorem is to repeatedly apply Lemma~\ref{le:decomp_two}, thereby subtracting off a convex potential in each step.
    Let us set $U_0=V$.
    Given a non-convex potential $U_i$ we define $U_{i+1}$
    such that the statement in Lemma~\ref{le:decomp_two} holds where we set $\Vn$ equal to $U_{i+1}$ and $V$ equal to $U_i$ and we  define 
    \begin{align}
        V_{i+1}=W(U_i, U_{i+1}).
    \end{align}
    If $U_i$ is convex we set $V_{i+1}=U_i$ and terminate the decomposition.
    Then the definition of $\Vn$ and a simple induction implies that
    \begin{equation}\label{e:dec_uptoN}
        \e^{-U_0} = \sum_{i=1}^{N} \e^{-V_i}+\e^{-U_N}.
    \end{equation}
\textbf{Step 1: Inductive construction, and properties of the $U_i$}\\

    Our goal is to show that we can define $U_i$ for all $i$ (or there is a convex $U_i$) and that we have bounds on their derivatives. More precisely, we claim that the functions $U_i$ have the following 
    properties which we prove by induction.
    \begin{itemize}
        \item The functions $U_i$ are well-defined, increasing on $(0,\infty)$, and
        satisfy $U_{i+1}\geq U_i$.
         \item The functions $U_i$ satisfy for $t\geq t_i$
        the bounds
        \begin{align}
            -U''_i(t)\leq M \min(U_i'(t),1).
        \end{align}
        \item Let $t_i$ be the largest real number such that the function $U_i$ is convex in $(0,t_i)$.
        Recall that $U_0=V$ is convex on $(-t_0,t_0)$ and $U_0'(t_0)\geq \eps$
        with $1/2\geq \eps>0$. We claim $t_i> t_{i-1}>0$ for $i\geq 1$ and 
        \begin{align}\label{eq:lower_ti}
            t_i\geq \frac{i-1+\log_2(\eps)}{32M}.
        \end{align}
        \item The potential $U_i$ satisfies $U_i'(t)>0$ on $(t_i,\infty)$ and 
        \begin{align}\label{eq:lowerWip}
            U'_i(t_i)\geq \eps\e^{-\log(2)32Mt_i}.
        \end{align}
       
        \item The functions $U_i$ satisfy the upper bounds
        \begin{align}\label{eq:Wipp_upper}
            U_i''(t)\leq U_0''(t)+1000M.
        \end{align}
    \end{itemize}
    First we note that these conditions hold
    for $i=0$. Suppose they hold for some $i$. Then $U_i$ satisfies the assumptions of Lemma~\ref{le:decomp_two} and we define
    $U_{i+1}$ as $\Vn$ in the statement.
    Then we directly find that the first and second item hold for $U_{i+1}$.
    
    Next we show the bounds for $t_i$ and $U_i'(t_i)$ where we assume the bounds for $j<i$. 
    Recall that by \eqref{eq:two_cases} in  Lemma~\ref{le:decomp_two} 
    we find that each pair $(U_j, U_{j+1})$ for $j\geq 0$ satisfies at least one of the two cases, i.e.,
       \begin{align}\label{eq:two_cases2}
           \left( t_{j+1}\geq t_{j}+(16M)^{-1}
            \text{ and } U_{j+1}'(t_{j+1})\geq \frac{U_{j}'(t_{j})}{2}
            \right)
            \quad \text{ or }\quad 
           \Big( U_{j+1}'(t_{j+1})\geq 2U_{j}'(t_{j})\text{ and }
           U_{j}'(t_{j})\leq 1\Big).
        \end{align}
     If for each $0\leq j < i$ the first case occurs for $(U_j, U_{j+1})$, then we have $t_i\ge \frac{i}{16M}$ and $U_i'(t_i)\ge 2^{-i}U_0'(t_0)\ge 2^{-i}\eps$, which easily imply \eqref{eq:lower_ti} and \eqref{eq:lowerWip}. So the interesting case is when there is some $j< i$ for which the second case (and not the first) occurs for $(U_j,U_{j+1})$. Let $j_0< i$ be the largest among these indices $j$. 

Then for indices $j$ with $j_0<j< i$ the first case from \eqref{eq:two_cases} occurs, and hence 
\begin{align}
    t_i-t_{j_0}\ge \frac{i-j_0-1}{16M}
\end{align}and $U_i'(t_i)\ge 2\cdot2^{-(i-j_0-1)}U_{j_0}'(t_{j_0})=2^{-(i-j_0-2)}U_{j_0}'(t_{j_0})$. Moreover, as the second case occurs for $U_{j_0}$, we know $U'_{j_0}(t_{j_0})\leq 1$. Suppose that out of the pairs $(U_j,U_{j+1})$ with $j<j_0$, the first case occurs for $k$ of them
(so that $U'_{j+1}(t_{j+1})\geq U_{j}'(t_{j})/2$ for those $k$ values)
and  only the second case for the remaining $j_0-k$ indices (so that $U'_{j+1}(t_{j+1})\geq 2U_{j}'(t_{j})$ for those).  Therefore, we get
\begin{equation}\label{e:estU_j0'}
     1\geq    U'_{j_0}(t_{j_0})\geq 2^{j_0-k}2^{-k}U_0'(t_0)\ge 2^{j_0-2k}\eps,
 \end{equation}
and hence 
\[k\ge \frac{j_0+\log_2(\eps)}{2}.\]
Since the first case occurs for $k$ indices below $j_0$, we have $t_{j_0}-t_0\ge k/16M$ and there holds
\begin{equation}\label{e:estt_i}
t_i=(t_i-t_{j_0})+(t_{j_0}-t_0)\ge \frac{i-j_0-1}{16M}+\frac{k}{16M}=\frac{i+k-j_0-1}{16M}.
\end{equation}
The last two displays now on the one hand imply
\[t_i\ge\frac{2i-j_0-2+\log_2(\eps)}{32M}\ge\frac{i-1+\log_2(\eps)}{32M},\]
    which is \eqref{eq:lower_ti}, and on the other hand together with \eqref{e:estU_j0'} they imply
\[U_i'(t_i)\ge 2^{-(i-j_0-2)}U_{j_0}'(t_{j_0})\ge 2^{-(i-j_0-2)}2^{j_0-2k}\eps=2^{2j_0+2-i-2k}\eps\ge 2^{-2(i+k-j_0-1)}\eps\ge 2^{-32Mt_i}\eps,\]
    which yields \eqref{eq:lowerWip}. 

    Finally, we consider the upper bound on $U_j''(t)$
    for some fixed $j$. To provide some intuition, we first note that using the telescoping expansion and \eqref{eq:Wpp_upper2} we get
    \begin{align}\label{eq:Wtelescope_mot}
        U_j''(s)-U_0''(s)&=
        \sum_{i=0}^{j-1} U''_{i+1}(s)-U''_i(s)
        \leq 48M \sum_{i=0}^{j-1} \e^{-4M(s-t_i)} \min(U_i'(t_i),1) \bs{1}_{s\geq t_i-(16M)^{-1}} .
    \end{align}
    Now the sum would be summable due to the exponential decay if the $t_i$ do not accumulate too much close to $s$ (e.g., if they are well separated). While this is not necessarily true, we can show that if the $t_i$ accumulate then the factors $U_i'(t_i)$ become exponentially small and the sum can still be controlled. We now make this rigorous.

    We fix an $s>0$ and then we consider the intervals 
    $I_k$ for $k\geq 0$, where 
    \begin{align}\label{eq:def_Ik}
        I_k=(s+(16M)^{-1}-(k+1)(16M)^{-1},
        s+(16M)^{-1}-k(16M)^{-1}]
        \eqdef (s_{k+1},s_k].
    \end{align}
    Denote by $i_k$ the largest $i$ such that $t_{i_k}\leq s_k$, i.e., the last $t_{i_k}\in I_k$ (or the largest $i_k$ such that $t_{i_k}$ is to the left of $I_k$
    if no $t_i$ is in $I_k$). Note that $I_{k+1}$ is to the left of $I_k$ and thus 
    $t_{i_k}>t_{i_{k+1}}$ is decreasing but this is irrelevant in the following.
    We claim next that 
    \begin{align}\label{eq:bound_by3}
        \sum_{i:t_i\in I_k} \min(U_i'(t_i),1)
        \leq 3\min(U'_{i_k}(t_{i_k}),1)\leq 3.
    \end{align}
    If $I_k$ has at most one element, this estimate is trivial. Otherwise, note that the definition of $I_k$ and the case distinction in \eqref{eq:two_cases2} imply that the first case can only occur for $(U_{i_k},U_{{i_k}+1})$ but not for any smaller $i\in I_k$ (otherwise $t_{i+1}\geq t_i+(16M)^{-1}\notin I_k$).  In particular, this implies
    $U'_i(t_i)\leq 2^{-1}U'_{i+1}(t_{i+1})$ if $i\in I_k$ and $i<i_k$. Moreover, because the second case occurs for $i_k-1$, we know $U_{i_k-1}'(t_{i_k-1})\leq 1$ and then
    \begin{align}
   U'_i(t_i)\le 2^{-(i_k-1-i)}U'_{i_k-1}(t_{i_k-1})\leq 2^{-(i_k-1-i)}\min(2^{-1}U'_{{i_k}}(t_{i_k}),1)\leq 2^{-(i_k-1-i)}\min(U'_{{i_k}}(t_{i_k}),1)
    \end{align}
     if $i\le i_k-1$ is in $I_k$. 
     We conclude from the above that 
     \begin{align}
         \sum_{i:t_i\in I_k} \min(U_i'(t_i),1)
         \leq  \min(U_{i_k}'(t_{i_k}),1)+ \sum_{j=0}^\infty 2^{-j}\min(U_{i_k}'(t_{i_k}),1)=3\min(U_{i_k}'(t_{i_k}),1)\leq 3.
     \end{align}
     We now start from \eqref{eq:Wtelescope_mot},
     use that $s\geq t_i-(16M)^{-1}$ iff
     $t_i\in I_k$ for some $k$ to rewrite as a sum over $I_k$ 
     and then apply \eqref{eq:bound_by3} and  find the bound 
    \begin{align}
    \begin{split}\label{eq:telescopeW}
        U_j''(s)-U_0''(s)
        &
        \leq 48M \sum_{i=0}^{j-1} \e^{-4M(s-t_i)} \min(U_i'(t_i),1) \bs{1}_{s\geq t_i-(16M)^{-1}}
        \\
        &\leq 
        48M \sum_{k=0}^\infty 
         \e^{-4M(s-s_k)} \sum_{i\in I_k}\min(U_i'(t_i),1) 
         \leq 3\cdot 48M \sum_{k=0}^\infty \e^{4M((16M)^{-1}-(16M)^{-1}k}
         \\&
         \leq 144 M \e^{1/4}\sum_{k=0}^\infty \e^{-k/4}\leq 1000 M.
        \end{split}
    \end{align}
     This ends the proof of \eqref{eq:Wipp_upper}, and completes the inductive construction of the $U_i$.

     \textbf{Step 2: Bounds for $V_i''$}\\
     The next step is to bound the derivatives of the functions $V_i$.  If $U_i'(t_i)\geq \eps$
     (recall that we assumed $U'_0(t_0)=V'(t_0)\geq \eps$ and $\eps\leq 1/2$ in the statement of the theorem), then 
     we find from \eqref{eq:Vpp_upper2} the bound (and noting that by \eqref{eq:Vpp_upper1} the bound holds for all $t$ when considering the positive part of the second derivative) 
     \begin{align}
     \label{121bound}
     0\leq   V_{i+1}''(t) \leq 
     \frac{64M}{\eps} (U_i'')_+(t)+
    {(64M)^2}\leq \frac{64M}{\eps} (V'')_+(t) + C\frac{M^2}{\eps}
     \end{align}
     where the last step used \eqref{eq:Wipp_upper} and $C= 64^2+64000$ is a constant.
     Now suppose that $U'_i(t_i)<\eps\leq 1/2$ so by assumption $i\neq 0$. 
     Then $U_{i-1}'(t_{i-1})<2U_i'(t_i)< 1$ and thus by \eqref{eq:Wpp_upper1b} 
     \begin{align}\label{eq:Wipp_zero}
     U''_i(t)=0\quad \text{for $t<t_{i-1}-(16M)^{-1}$.}
     \end{align}
    Now we can estimate  using $\eqref{eq:lowerWip}$ for $t\geq t_{i-1}-(16M)^{-1}$
    \begin{align}
        U'_{i}(t_i)\geq \frac{U'_{i-1}(t_{i-1})}{2}
        \geq \frac{\eps}{2}\e^{-\log(2)32Mt_{i-1}}    
        \geq \frac{\eps}{2}\e^{-\log(2)32M(t+(16M)^{-1})}
        \geq \frac{\eps}{8}\e^{-\log(2)32Mt}.
    \end{align}
     Combining this with the bound \eqref{eq:Vpp_upper2} followed by \eqref{eq:Wipp_upper} we obtain 
     for $t\geq t_{i-1}-(16M)^{-1}$
     \begin{align}
     \begin{split}
      V_{i+1}''(t)&\leq \frac{ 64M}{U'_{i}(t_{i})}U_i''(t)+(64M)^2
      \\
      &\leq \frac{ 8\cdot 64M}{\eps} \e^{\log(2)32Mt}(U_0''(t)+1000M)+(64M)^2
         \\
         &\leq 
         \frac{C_2M}{\eps}\e^{C_1Mt}(V'')_+(t)
         +\frac{C_3M^2}{\eps}\e^{C_1Mt}.
         \end{split}
     \end{align}
     But since $V_{i+1}''(t)=0$ for $t\leq t_{i-1}-(16M)^{-1}$ the last display is actually true for all $t$ and together with \eqref{121bound} we thus proved \eqref{eq:Vpp_final1}.

\textbf{Step 3: Convergence}\\
     Next, we prove that the identity 
     $\e^{-V}=\e^{-U_0}=\sum_i \e^{-V_i}$ holds, i.e. that we can pass to the limit $N\to\infty$ in \eqref{e:dec_uptoN}. For that purpose we need to show that $U_i(t)\to \infty$ as $i\to \infty$ pointwise for all $t$.
     Since by the first property of the $U_i$, we know that  $U_i$ is monotone increasing on $(0,\infty)$ and $U_{i+1}\geq U_i$
     so 
     it is sufficient to show that $U_i(0)$ is unbounded. 
     Assume that $U_i'(t_i)\geq 1$ holds infinitely often.
     Then along a subsequence $i_j$ of such $i$ we find by \eqref{eq:W0} that
     \begin{align}
         U_{i_j+1}(0)\geq U_{i_j}(0)+\frac{1}{64M}.
     \end{align}
     This implies (by monotonicity of $U_i(0)$) that $U_i(0)\to \infty$.
     On the other hand,  if $U_i'(t_i)< 1$
     holds infinitely often 
     we can apply  \eqref{eq:W1}  along this subsequence $i_j$ to find
     \begin{align}
         U_{i_j+1}(0)\geq U_{i_j}(t_{i_j})-\frac{1}{32M}
         \geq U_0(t_{i_j})-\frac{1}{32M}.
     \end{align}
     This (together with the monotonicity of $U_i(0)$) implies that $U_{i+1}(0)\to \infty$
     because \eqref{eq:lower_ti} implies that $t_i\to\infty $, and we know that $\lim_{t\to \infty} U_0(t)=\lim_{t\to \infty} V(t)=\infty$.

\textbf{Step 4: Improved bounds for $V_i''$}\\   
    Finally, we prove the stronger upper bound on $V_i''$ under the additional  assumption \eqref{eq:as_eps_lower} on $V$.
    The key idea is to show that $U'_j(t_j)\ge c\eps$
    which then implies the desired bound.
    We
    first control the difference $U_{j+1}(t_j)-U_0(t_j)$. This is very similar to our derivation of the bound for $U_j''$, i.e., we decompose into a telescoping sum, apply \eqref{eq:W2} to bound all terms and then group the terms to control the sum. 
    Similar to \eqref{eq:def_Ik}  we now consider
    for $k\geq 0$ the intervals 
    \begin{align}
        J_k=(t_j-(k+1)(16M)^{-1},
        t_j-k(16M)^{-1}].
    \end{align}
    As before we define $i_k$ to be the largest index such that $t_{i_k}\leq \max(J_k)$
    and again $t_{i_k}$ is decreasing since $J_{k+1}$ is to the left of $J_k$. 
    We now claim that for $l\geq k$
    \begin{align}\label{eq:claimWi}
        U'_{i_{l}}(t_{i_{l}})\leq 
        2^{l-k}U'_{i_k}(t_{i_k}).
    \end{align}
    It is sufficient to show the claim for $l=k+1$ because the general case follows by induction.
    Note that for $i_{k+1}<i<i_k$ we have $t_{i+1},t_i\in J_k$ and thus $t_{i+1}-t_i<(16M)^{-1}$ and therefore \eqref{eq:two_cases2} implies that
    $U_i'(t_i)\leq U_{i+1}'(t_{i+1})/2$.
    Combining this with $ U'_{i_{k+1}}(t_{i_{k+1}})
    \leq 2 U'_{i_{k+1}+1}(t_{i_{k+1}+1})$ (this holds in either  case of \eqref{eq:two_cases2})
    we find  $ U'_{i_{k+1}}(t_{i_{k+1}})\leq 
        2U'_{i_k}(t_{i_k}) $ and \eqref{eq:claimWi} follows.
    Note that \eqref{eq:claimWi} implies using $i_0=j$ that
    \begin{align}\label{eq:claimWi2}
        U'_{i_k}(t_{i_k})\leq 2^k U'_{i_0}(t_{i_0})=2^k U'_{j}(t_j).
    \end{align}
        
    As in \eqref{eq:bound_by3} we find that
    \begin{align}\label{eq:bound_by3_2}
      \sum_{i:t_i\in J_k} \min(U_i'(t_i),1)
        \leq 3\min(U'_{i_k}(t_{i_k}),1).
    \end{align}
    The same approach as in \eqref{eq:telescopeW}
    implies using the bound \eqref{eq:W2}  that
    \begin{align}\label{eq:boundWj-W0}
    \begin{split}
        U_{j+1}(t_j)-U_0(t_j)
        &=\sum_{i=0}^{j} U_{i+1}(t_j)-U_{i}(t_j)
        \leq \sum_{i=0}^{j} \frac{\min(1,U_i'(t_i))}{32M}\e^{-16M(t_j-t_i)}
        \\
        &\leq (32 M)^{-1}
        \sum_{k\geq 0}\e^{-16M(t_j-(t_j-k/(16M))}\sum_{i: t_i\in J_k} {\min(1,U_i'(t_i))}
        \\
        &\leq (32 M)^{-1}
        \sum_{k\geq 0}\e^{-k}3\min(U_{i_k}'(t_{i_k}),1)
        \\
        &\leq \frac{3}{32 M}
        \sum_{k\geq 0}\e^{-k}2^k U_{j}'(t_j)
        \leq \frac{1}{2M}U_{j}'(t_j).
    \end{split}
    \end{align}
    Here we used \eqref{eq:bound_by3_2} to remove the sum over $J_k$ in the third inequality, and then applied \eqref{eq:claimWi2}.
    In the last step we evaluated the geometric sum and bounded the resulting constant.
    
    We now show how this estimate can  be used to control $U_{j}'(t_{j})$, more concretely, we show by induction that
    \begin{align}\label{eq:Up_lower}
        U_{j}'(t_j)\geq \frac{\eps}{17}.
    \end{align}

    By assumption this bound holds for $j=0$
    establishing the induction base. 
    Now we assume the bound holds for $i\leq j$
    and show it for $j+1$.
    If the second case in \eqref{eq:two_cases2}
    holds for $(U_j,U_{j+1})$, we infer that then
    $U_{j+1}'(t_{j+1})\geq 2U_j'(t_j)\geq \eps/17$ which is the desired bound.
    
    It remains to consider the case where the first statement of \eqref{eq:two_cases2} holds.
     Then convexity of $U_{j+1}$ on $(0,t_{j+1})$ for the first inequality below, followed by $U_{j+1}>U_0$ for the second inequality, 
     and the monotonicity assumption \eqref{eq:as_eps_lower} applied to $U_0$ for the last inequality, imply that
    \begin{align}\label{eq:lower_boundWp2}
    \begin{split}
        U_{j+1}(t_j)&\geq U_{j+1}(t_{j+1})
        -U'_{j+1}(t_{j+1})(t_{j+1}-t_j)
        \geq U_0(t_{j+1}) -U'_{j+1}(t_{j+1})(t_{j+1}-t_j)
        \\
        &\geq U_0(t_{j}) -U'_{j+1}(t_{j+1})(t_{j+1}-t_j)
        +\eps(t_{j+1}-t_j).
        \end{split}
    \end{align}
Combining \eqref{eq:boundWj-W0} and \eqref{eq:lower_boundWp2} we obtain
\begin{align}
\begin{split}\label{eq:dtbound}
  \eps(t_{j+1}-t_j)&\leq   U'_{j+1}(t_{j+1})(t_{j+1}-t_j)+U_{j+1}(t_{j})-U_0(t_j)
   \\
   &\leq U'_{j+1}(t_{j+1})(t_{j+1}-t_j)+\frac{1}{2M}U_j'(t_j).
    \end{split}
\end{align}

Since the first case in \eqref{eq:two_cases2} holds for $(U_j,U_{j+1})$, we find $t_{j+1}-t_j\geq (16M)^{-1}$ and $U'_j(t_j)\leq 2U_{j+1}'(t_{j+1})$.
Dividing \eqref{eq:dtbound} by $(t_{j+1}-t_j)$, we get 
\begin{align}
\begin{split}
    \eps &\leq U'_{j+1}(t_{j+1})+\frac{1}{2M(t_{j+1}-t_j)}U_j'(t_j)
    \leq U'_{j+1}(t_{j+1})+\frac{2}{2M(t_{j+1}-t_j)}U'_{j+1}(t_{j+1})
    \\
    &\leq
     U'_{j+1}(t_{j+1})+16U_{j+1}'(t_{j+1}).
    \end{split}
\end{align}
   We conclude that indeed $U'_{j+1}(t_{j+1})\geq \eps/17$
   in this case finishing the proof of \eqref{eq:Up_lower}.
   
    Proving \eqref{eq:Vpp_final2} with \eqref{eq:Up_lower} at hand is a direct consequence of \eqref{eq:Vpp_upper2}
    and \eqref{eq:Wipp_upper},
    which imply that
    \begin{align}
        V_{i+1}''(t)\leq
        \frac{17\cdot 64M}{\eps}(U_{i}'')_+(t)+
        (64M)^2
         \leq  \frac{17\cdot 64M}{\eps}(U_{0}'')_+(t)+
         \frac{CM^2}{\eps}.
    \end{align}
    This establishes \eqref{eq:Vpp_final2} and ends the proof.
\end{proof}

\section{Extended gradient Gibbs measures}
\label{extendedgradGibbs}
In this section we introduce the required background and notation for interface models. In particular, Section~\ref{s:definitions} introduces formal definitions of gradient Gibbs measures, while Section~\ref{sec:extended_Gibbs} shows how decompositions of potentials as in Theorem \ref{t:decomp} give rise to so-called extended Gibbs measures. Additionally, we relate these extended Gibbs measures to disordered Gibbs measures in Section~\ref{sec:disordered_Gibbs}.
\subsection{General definitions and notation for gradient models}\label{s:definitions}

Let $\Lambda$ be a finite set in $\zd$ with boundary 
\[
\partial\Lambda:=\{x\notin\Lambda,~|x-y|=1~\mbox{for some}~y\in\Lambda\},~\mbox{where}~|x-y| =\sqrt{\sum (x_i-y_i)^2}~\mbox{for}~x,y\in\Z^d,
\]
and with given boundary condition $\psi$ such that $\phi(x)=\psi(x)$ for $x\in\partial\Lambda$; a special case of such boundary conditions is the {\it tilted} boundary conditions, with $\psi(x)=x\cdot u$ for all $x\in\partial\Lambda$, and where $u\in\RR^d$ is fixed. 

Let $V\in C^2(\R)$ be even (i.e. $V(-s)=V(s)\ \forall s\in\R)$. We define the Hamiltonian $H_{\Lambda,\psi}$ on $\Lambda$, of gradient type, by 
\begin{equation}
\label{eqn00}
H_{\Lambda,\psi}(\phi)=\frac{1}{2}\sum_{\substack{x,y\in\Lambda\\ |x-y|=1}}V(\phi(x)-\phi(y))+\sum_{\substack{x\in\Lambda, y\in\partial\Lambda\\ |x-y|=1}}V(\phi(x)-\psi(y)).
\end{equation}
The factor $\frac12$ in front of the bulk term accounts for the double-counting of the internal edges.

Other normalisations are common in the literature as well (e.g. summing just once over each undirected edge), but these just correspond to multiplying $V$ by a factor.
\medskip

Let $\bar\Lambda:=\Lambda\cup\partial\Lambda$ and let $\d\phi_{\Lambda}=\prod_{x\in\Lambda}\d\phi(x)$ be the Lebesgue measure over $\RR^{\Lambda}$. 
For $A\subset\zd$, we shall denote by $\F_{A}$ the $\sigma$-field generated by $\{\phi(x):x\in A\}$. 

\begin{definition}  {\bf (Finite volume $\phi$-Gibbs measure on $\zd$)}
\label{finitegibbs}
For a finite region $\Lambda\subset\zd$, {\it the finite volume Gibbs measure $\nu_{\Lambda,\psi}$ on $\RR^{\zd}$ with boundary condition $\psi\in \RR^{\zd}$ }for the field of \textit{height variables} $(\phi(x))_{x\in\zd}$ over $\Lambda$ is defined for $\beta>0$ by
\begin{equation}
\label{finitevolgibbs}
\nu_{\Lambda,\psi}(\ormd\phi)=\frac{1}{Z_{\Lambda,\psi}}
\exp\left(-\beta H_{\Lambda,\psi}(\phi)\right)\d
\phi_\Lambda\delta_\psi(\d\phi_{\zd\setminus\Lambda}),
\end{equation}
where
\[Z_{\Lambda,\psi}=\int_{\RR^\zd}\exp\left\{-\beta H_{\Lambda,\psi}(\phi)\right\}\d\phi_\Lambda\delta_\psi(\d\phi_{\zd\setminus\Lambda}),\]
and where $\delta_\psi(\d\phi_{\zd\setminus\Lambda})=\prod_{x\in\zd\setminus\Lambda}\delta_{\psi(x)}(\d\phi(x))$ determines the boundary condition.
\end{definition}

\begin{definition}  {\bf (Infinite volume $\phi$-Gibbs measure on $\zd$)}
\label{infgibbs}
A probability measure $\nu\in P(\RR^{\zd})$ is called a Gibbs measure for the $\phi$-field with given Hamiltonian $H:=(H_{\Lambda, \psi})_{\Lambda\subset\zd, \psi \in\RR^{\zd}}$ ($\phi$-Gibbs measure for short), if its
conditional probability given $\F_{\Lambda^c}$ satisfies the DLR equation
$$\nu(\,\cdot\,|\F_{\Lambda^c})(\psi)=\nu_{\Lambda,\psi}(\cdot),~~\nu-\mbox{a.e. }\psi,$$
for every finite $\Lambda\subset\zd$.
\end{definition}

It is known that an infinite volume $\phi$-Gibbs measure exists under condition $0<c_V<V''(s)$ for every $s\in\mathbb{R}$ in dimension $d\ge3$,
but not for $d=1,2$, where the field ``delocalises'' as $\Lambda\nearrow\zd$. For a proof of this statement for strictly convex potentials satisfying $V''\le C$, see for example Section 1.2.3 from \cite{MR2216964}; for a much more general delocalisation result, see Theorem 1.1 from \cite{MR3395146}. One way to bypass this problem is to consider the so-called gradient Gibbs measures instead, which exist in all dimensions $d\ge 1$. To define them, we need to set up more notation.

\subsubsection*{Notation for the Bond Variables on $\zd$}

For most of the paper we will work with directed bonds. Accordingly, we define \[\bzd:=\{b=(x_b,y_b)~|~x_b,y_b\in\zd,|x_b-y_b|=1,b~\mbox{directed from}~x_b~\mbox{to}~y_b\};\] 
note that each undirected bond appears twice in $\bzd$. Let 
\[\bzdl:=\bzd\cap (\Lambda\times\Lambda),~\partial\bzdl:=\{b=(x_b,y_b)~|~x_b\in\zd\setminus\Lambda,y_b\in\Lambda,|x_b-y_b|=1\}\]
and 
\[\bzdlb:=\{b=(x_b,y_b)\in\bzd~|~x_b\in\Lambda~\mbox{or}~y_b\in\Lambda\}.\]

At one point (when introducing the $\kappa$-disorder variables) it will be more convenient to work with undirected bonds. So we also define \[\ubzd=\left\{\{x_b,y_b\}\mid b=(x_b,y_b)\in\bzd\right\}\]
and analogously for $\ubzdl,\ubzdlb$.

For $\phi=(\phi(x))_{x\in\zd}$ and $b=(x_b,y_b)\in\bzd$, we define the \textit{height differences} $\nabla\phi(b):=\phi(y_b)-\phi(x_b)$.
The height variables $\phi=\{\phi(x);x\in\zd\}$ on $\zd$ automatically determine a field of height differences
$\nabla\phi=\{\nabla\phi(b);b\in\bzd\}$. One can therefore consider the distribution $\mu$ of the $\nabla\phi$-field
under the $\phi$-Gibbs measure $\nu$. We shall call $\mu$ the $\nabla\phi$-Gibbs measure. In fact, it is possible to define
the $\nabla\phi$-Gibbs measures directly by means of the DLR equations and, in this sense, $\nabla\phi$-Gibbs measures exist for
all dimensions $d\ge1$.

A sequence of bonds $\C=\{b^{(1)},b^{(2)},\ldots,b^{(n)}\}$ is called a \textit{chain} connecting $x$ and $y$, $x,y\in\zd$, if $x_{b_1}=x,y_{b^{(i)}}=x_{b^{(i+1)}}$ for $1\le i\le n-1$ and $y_{b^{(n)}}=y$. The chain is called a \textit{closed loop} if $y_{b^{(n)}}=x_{b^{(1)}}$. A \textit{plaquette} is a closed loop $\A=\{b^{(1)},b^{(2)},b^{(3)},b^{(4)}\}$ such that $\{x_{b^{(i)}},i=1,\ldots,4\}$ consists of four different points. 

The field $\eta=\{\eta(b)\}\in\RR^{\bzd},b\in\bzd,$ is said to satisfy \textit{the plaquette conditions} if
\[
\eta(b)=-\eta(-b)~\mbox{for all}~b\in\bzd~\mbox{and}~\sum_{b\in\A}\eta(b)=0~\mbox{for all plaquettes}~\A~\mbox{in}~\zd,
\]
where $-b$ denotes the reversed bond of $b$. Let 
\begin{equation}
\chi=\{\eta\in\RR^{(\zd)^*}~\mbox{which satisfy the plaquette 
condition}\}
\end{equation}
Then $\nabla\phi=\{\nabla\phi(b)\}$ satisfies the plaquette condition. Conversely, the heights $\phi^{\eta,\phi(0)}\in\RR^\zd$ can be constructed from height differences $\eta$ and the
height variable $\phi(0)$ at $x=0$ as 
\begin{equation}
\label{19}
\phi^{\eta,\phi(0)}(x):=\sum_{b\in\C_{0,x}}\eta(b)+\phi(0),
\end{equation} 
where $\C_{0,x}$ is an arbitrary chain connecting $0$ and $x$. Note that $\phi^{\eta,\phi(0)}$ is well-defined if
$\eta=\{\eta(b)\}\in\chi$.

Given some fixed $r>0$ let $L_r^2$ be the set of all $\eta\in\RR^{\bzd}$ such that $|\eta|^2_r:=\sum_{b\in\bzd}|\eta(b)|^2\e^{-2r|x_b|}<\infty$. Let $\chi_r:=\chi\cap L_r^2$ equipped with the norm $|\cdot|_r$. For $\phi=(\phi(x))_{x\in\zd}$ and $b\in\bzd$, we define $\eta^\phi(b):=\nabla\phi(b)$. We also define $C_b(\chi)$ as the space of continuous bounded functions on $\chi$, and $C^2_{b,\text{loc}}$ as the space of local twice continuously differentiable functions with bounded derivatives up to order 2.

\subsubsection*{Definition of $\nabla\phi$-Gibbs measures}

We next define the finite volume $\nabla\phi$-Gibbs measures. For every $\xi\in\chi$ and finite $\Lambda\subset\zd$ the space of all
possible configurations of height differences on $\bzdlb$ for given
boundary condition $\xi$ is defined as
$$\chi_{\bzdlb,\xi}=\{\eta=(\eta(b))_{b\in\bzdlb};\eta\vee\xi\in\chi\},$$
where $\eta\vee\xi\in\chi$ is 
determined by $(\eta\vee\xi)(b)=\eta(b)$ for $b\in\bzdlb$ and $=\xi(b)$ for
$b\not\in\bzdlb$.

\begin{remark}
\label{equivzd}
When $\zd\setminus\Lambda$ is connected, $\chi_{\bzdlb,\xi}$ is an affine space such that $\dim \chi_{\bzdlb,\xi}=|\Lambda|$. Indeed, fixing a point $x_0\notin\Lambda$, we consider the map $\chi_{\bzdlb,\xi}\rightarrow\RR^{\Lambda}$, such that $\eta\rightarrow\phi=\{\phi(x)\}\in\RR^{\Lambda}$, with $\phi(x)$ defined by
$$\phi(x)=\sum_{b\in C_{x_0,x}}(\eta\vee\xi)(b)$$
for a chain $C_{x_0,x}$ connecting $x_0$ and $x\in\Lambda$. This map is then well-defined and an invertible affine transformation. 
\end{remark}

\begin{definition}{\bf(Finite volume $\nabla\phi$-Gibbs measure)}
\label{finvolgrad}
The finite volume $\nabla\phi$-Gibbs measure in $\bzdlb$ with associated Hamiltonian $H:=(H_{\Lambda, \xi})_{\Lambda\subset\zd,\,\xi \in
\chi}$ and 
with boundary condition $\xi$ is defined by
\[
\mu_{\Lambda,\xi}(\ormd\eta)=\frac{1}{Z_{\Lambda,\xi}}\exp\bigg(-\frac{\beta}{2}\sum_{b\in\bzdlb}V(\eta(b))\bigg)\ormd\eta_{\Lambda,\xi}
\in P(\chi_{\bzdlb,\xi}),
\]
where $\ormd\eta_{\Lambda,\xi}$ denotes the Hausdorff measure on the affine space $\chi_{\bzdlb,\xi}$ and $Z_{\Lambda,\xi}$ is
the normalisation constant.
\end{definition}

 Let $P({\chi})$ be the set of all probability measures on ${\chi}$ and let $P_2({\chi})$ be those 
$\mu\in P({\chi})$ satisfying $\E_{\mu}[|\eta(b)|^2]<\infty$ for each $b\in\bzd$.
\begin{remark}
\label{equivzd1}
For every $\xi\in\chi$ and $a\in\RR$, let $\psi=\phi^{\xi,a}$ be defined by \eqref{19} and consider the measure $\nu_{\Lambda,\psi}$. Then $\mu_{\Lambda,\xi}$ is the image measure of $\nu_{\Lambda,\psi}$ under the map $\{\phi(x)\}_{x\in\Lambda}\rightarrow\{\eta(b):=\nabla(\phi\vee\psi)(b)\}_{b\in\bzdlb}$ and where we defined $(\phi\vee\psi)(x):=\phi(x)$ for $x\in\Lambda$ and $(\phi\vee\psi)(x):=\psi(x)$ for $x\notin\Lambda$.
Note that the image measure is determined only by $\xi$ and is independent of the choice of $a$. Let ${K}^{\psi}_{\Lambda}:\{\phi(x)\}_{x\in\zd}\rightarrow\{\eta(b)\}_{b\in\bzd}$, with $\eta(b):=\nabla(\phi\vee\psi)(b)$.
\end{remark}

\begin{definition} {\bf (Infinite volume $\nabla\phi$-Gibbs measure on $(\zd)^*$)} 
\label{nablaphigib}
The probability measure $\mu\in P(\chi)$ is called a Gibbs measure for the height differences with given Hamiltonian $H:=(H_{\Lambda, \xi})_{\Lambda\subset\zd, \xi \in\chi}$ ($\nabla\phi$-Gibbs measure for short),
if it satisfies the DLR equation
\begin{equation}
\label{dlr}
\mu(\,\cdot\,|\F_{\bzd\setminus\bzdlb})(\xi)=\mu_{\Lambda,\xi}(\cdot),~~\mu-\mbox{a.e. }\xi,
\end{equation}
for every finite $\Lambda\subset\zd$, where $\F_{\bzd\setminus\bzdlb}$ stands for the $\sigma$-field of $\chi$ generated by
$\{\eta(b),b\in\bzd\setminus\bzdlb\}$.
\end{definition}

\begin{remark}
Proving the DLR equation \eqref{dlr} is equivalent to proving that for every finite $\Lambda\subset\zd$ and for all $F\in C_b(\chi)$ we have
\begin{equation}
\label{dlrgrad}
\int_\chi\mu(\d\xi)\int_{\chi_{\bzdlb,\xi}} \mu_{\Lambda,\xi}(\d\eta)F(\eta)=\int_\chi\mu(\d\eta)F(\eta).
\end{equation}
In practice, it is often easier to work with simple functions in \eqref{dlrgrad}. 
\end{remark}

\begin{remark}
Throughout the rest of the paper, we will use the notation $\phi,\psi$ to denote height variables and $\eta,\xi$ to denote height differences.
\end{remark}

\subsubsection*{Temperedness, shift-invariance and ergodicity}

We say that a gradient Gibbs measure is tempered if it is in $P_2(\chi)$, i.e. if $\int\eta(b)^2\mu(\d\eta)<\infty$, $\forall b\in\bzd$. This in particular implies that if $\mu$ is a shift-invariant tempered gradient Gibbs measure, it satisfies $\mu(\chi_r)=1$.

For $x\in\zd$, we define by abuse of notation the shift operators: $\tau_{x}:\RR^{\zd}\rightarrow\RR^{\zd}$ for the heights by $\tau_{x}\phi(y)=\phi(y-x)~\mbox{for}~y\in\zd~\mbox{and}~\phi\in\RR^{\zd}$, and $\tau_{x}:\RR^{\bzd}\rightarrow\RR^{\bzd}$ for the bonds by $(\tau_{x}\eta)(b)=\eta(b-x)$, for $b\in\bzd~\mbox{and}~\eta\in\chi$, where $(b-x)=(x_b-x, y_b-x)\in (\zd)^*$. Then \textbf{shift-invariance} and \textbf{ergodicity} for $\mu$ (with respect to $\tau_x$ for all $x\in \zd$) is defined in the usual way (see for example Definition 2.3 on page 122 in \cite{MR2228384}). We say that the shift-invariant $\mu\in P_2({\chi})$ has a given \textbf{tilt} $u\in\RR^d$ if 
$\mathbb{E}_{\mu}(\eta(b))= \langle u, y_b-x_b \rangle$ for all bonds $b=(x_b,y_b)\in\bzd$.

\begin{remark}
To simplify the notation we will take $\beta=1$ for the rest of the paper, or equivalently replace $V$ by $\beta V$. This is no loss of generality: Note that
if $V$ is $\alpha$-monotone, then $\beta V$ is $\alpha\beta$-monotone, and so $V$ satisfies the assumptions of Theorem~\ref{t:decomp} if and only if $\beta V$ does.

We emphasize, however, that when applying Theorem~\ref{t:decomp} to $\beta V$, the resulting decomposition has a non-trivial $\beta$ dependence, and we are not aware of a construction where a change of $\beta$ results in tractable transformations of the decomposition terms $V_i$.

\end{remark}

\subsection{Definition and basic properties of extended gradient Gibbs measures}
\label{sec:extended_Gibbs}

The key property of mixtures is that, given a (gradient) Gibbs measure, we can treat the parameters $\kappa$ as random variables in their own right, and define an extended (gradient) Gibbs measure whose first marginal is the original Gibbs measure, and whose second marginal is some measure on the $\kappa$. This will allow us, in Sections \ref{s:HS} and \ref{SectScaling} below, to use stochastic homogenisation techniques to obtain results for the extended measure, which apply also to the marginal gradient Gibbs measure. This key idea was introduced in \cite{MR2322690,MR2778801}, and we base our presentation on these works. 

We therefore fix a strongly log-concave log-mixture potential $V$ throughout this section. We want the $\kappa$ to satisfy $\kappa(b)=\kappa(-b)$, and to achieve this, it is convenient to define the extended gradient Gibbs measure in terms of the undirected bonds first. 

Given a gradient Gibbs measure $\mu$ on $\chi$, we can define the extended gradient Gibbs measure $\extmu$ on $\chi\times K^{\ubzd}$ given by 
\begin{align}\label{e:def_ext}
\extmu_\Lambda(A\times B)
=
\int_B \prod_{b\in \ubzdlb}\rho(\d\kappa(b)) \int_A \prod_{b\in \ubzdlb} \exp\left(-V_{\kappa(b)}(\eta(b))+V(\eta(b))\right)\mu(\d\eta),
\end{align}
for $A \in \F_{\ubzdlb}$, $B\in \Kcal^{\otimes \ubzdlb}$ and $\Lambda\subset\zd$ finite, where we recall that $\ubzd$ and $\ubzdlb$ denote the undirected bonds. 

One can check that this is a consistent family of measures in the sense of Kolmogorov's extension theorem, and hence can be extended to a measure $\extmu$ on $\chi\times K^{\ubzd}$. 
Having defined $\extmu$ as a gradient Gibbs measure on $\chi\times K^{\ubzd}$, we can trivially extend it to a gradient Gibbs measure on $\chi\times K^{\bzd}$, which we will continue to denote by $\extmu$. This gradient Gibbs measure is then supported on configurations $\eta,\kappa$ that satisfy $\kappa(b)=\kappa(-b)$ for all $b\in\bzd$. 

The name ``extended gradient Gibbs measure'' is justified by the fact that $(\eta,\kappa)$ is Gibbsian for the (formal) Hamiltonian 
\[H_{\Lambda, \xi}[\kappa](\eta)=\frac{1}{2}\sum_{b\in\bzdlb}V_{\kappa(b)}(\eta(b)).\]
More precisely, similarly to the explanation given in Remark 2.1 from \cite{MR2322690},  $\tilde{\mu}$ is itself a gradient Gibbs measure with associated Hamiltonian $(H_{\Lambda, \xi, \bar\kappa})_{\Lambda\subset\zd, \xi\in\chi, \bar\kappa\in K^{\bzd}}$, and corresponding finite volume gradient Gibbs measure 
  $\tilde \mu_{\Lambda, \xi,\bar\kappa}$ defined 
  for ${\xi} \in \chi$ and $\bar{\k}\in K^{\ubzd}$
  by 
  \begin{equation}\label{eq:specification_gamma_tilde}
  \tilde{\mu}_{\Lambda, \xi, \bar\kappa}(\d \eta,\d \kappa)
 =\frac{\exp\left(-\sum_{b\in\ubzdlb} V_{\kappa(b)}(\eta(b)\right)}{Z_{\Lambda,{\xi}, \bar\kappa}} \d\eta_{\Lambda, \xi}\prod_{b\in\ubzdlb}\rho(\d\kappa(b))\prod_{b\in \ubzd\setminus\ubzdlb}\delta_{\bar\kappa(b)}(\d\kappa(b)),
   \end{equation}
and extended to $\kappa,\bar\kappa\in K^\bzd$ by again requiring $\kappa(b)=\kappa(-b)$ for all $b\in\bzd$.

  Note that the distribution $(\d\eta,\d\kappa)_{\bzdlb}$  in fact only depends on $\xi_{\bzdlb}$ and is independent of $\bar\kappa$,
and therefore the partition function $Z_{\Lambda, \xi,\bar{\k}}$ also only depends on $\xi$, it is in fact equal to $Z_{\Lambda, \xi}$ from Definition \ref{finvolgrad}.
 Observe also that integrating out the $\kappa$ variables, we indeed recover the finite volume Gibbs measure of the gradient field with potential $V$.  We also observe that $\mu$ is the $\eta$-marginal of $\tilde\mu$.

Let $\Fkappa=\sigma(\{\kappa(b)\colon b\in\bzd\})$ and $\Feta=\sigma(\{\eta(b)\colon b\in\bzd\})$. We also denote by $\bar\mu$ the $\kappa$-marginal of $\extmu$. 

As it turns out, many properties of $\mu$ carry over to $\extmu$ and $\bar\mu$. To begin with, we have the following easy lemma.

\begin{lemma}\label{l:extension_tinv_erg}
Let $\mu$ be a shift-invariant ergodic gradient Gibbs measure. Then the associated extended gradient Gibbs measure $\extmu$ is shift-invariant ergodic as well. The same holds true for its $\kappa$-marginal $\bar\mu$.
\end{lemma}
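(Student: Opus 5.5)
Shift-invariance comes straight from \eqref{e:def_ext}, and ergodicity comes from the fact that, given $\eta$, the $\kappa(b)$ are conditionally independent with a law depending only on $\eta(b)$. This makes $\extmu$ a factor of an i.i.d.\ extension of $\mu$.

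\textbf{Shift-invariance.} Take cylinder events $A\times B$ with $A\in\F_{\ubzdlb}$ and $B\in\Kcal^{\otimes\ubzdlb}$. Shifting by $x$ maps $\ubzdlb$ to $\ubzd_{\Lambda+x}$ and replaces the integrand by its shifted version. Shift-invariance of $\mu$ then gives $\extmu(\tau_x(A\times B))=\extmu(A\times B)$. Such cylinder sets form a $\pi$-system generating the product $\sigma$-algebra, so $\extmu$ is shift-invariant. The same then holds for the marginal $\bar\mu$.

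\textbf{Ergodicity.} By \eqref{e:def_ext}, conditionally on $\eta$ the $\kappa(b)$, $b\in\ubzd$, are independent, and $\kappa(b)$ has law
\[p_{\eta(b)}(\d k)=\e^{-V_k(\eta(b))+V(\eta(b))}\rho(\d k),\]
which is a probability measure by \eqref{e:def_V}. I would realise this as a measurable map. Take i.i.d.\ uniform variables $(U_b)_{b\in\ubzd}$, independent of $\eta$, and a jointly measurable $g\colon\R\times[0,1]\to K$ such that $g(s,U)$ has law $p_s$ for $U$ uniform. Since $K$ is countable in the case of Theorem~\ref{t:decomp}, $g$ can be written down via a quantile construction; for general measurable $K$ one uses a standard Borel assumption or works directly with kernels. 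Then $\extmu$ is the image of $\mu\otimes\mathrm{Unif}^{\otimes\ubzd}$ under
\[(\eta,U)\mapsto\bigl(\eta,(g(\eta(b),U_b))_b\bigr).\]
This map commutes with shifts. The product of an ergodic shift and an i.i.d.\ (Bernoulli, hence mixing) shift is ergodic, because mixing times ergodic is ergodic (weak mixing suffices). Factors of ergodic systems are ergodic, so $\extmu$ is ergodic. Projections are factors too, so $\bar\mu$ is ergodic.

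\textbf{Main obstacle.} The delicate point is the ergodicity of $\mu\otimes\mathrm{Unif}^{\otimes\ubzd}$ for a $\Z^d$-action. I would prove it directly rather than rely only on the citation. Let $F$ be bounded and shift-invariant. Approximate $F$ in $L^2$ by local functions $F_n(\eta,U)$. For each fixed $\eta$, the function $U\mapsto F(\eta,U)$ is then almost invariant under shifts, which I expect yields that it is a.s.\ constant in $U$ via the Hewitt--Savage / Kolmogorov zero-one law. The resulting $\eta$-function is shift-invariant, hence constant by ergodicity of $\mu$. Alternatively I would cite the standard fact that an ergodic system times a weakly mixing system is ergodic.

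A second, minor, point is measurability of $g$ when $K$ is a general measurable space. Avoiding $g$, one can instead show directly that $\extmu$-invariant sets are $\mu$-trivial by the conditional independence, using the same zero-one argument.
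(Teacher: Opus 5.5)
Your argument is correct, and it rests on the same key fact as the paper's: given $\eta$, the $\kappa(b)$ are independent with laws $p_{\eta(b)}$. The routes differ in how this fact is used.

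\textbf{The paper's route.} It works with the kernel itself. It identifies $\extmu(\d\kappa\mid\Feta)$ as the explicit product measure, observes that this is mixing, and combines that with ergodicity of $\mu$ as in \cite[Lemma 3.2]{MR2778801}. Ergodicity of $\bar\mu$ is then inherited as a marginal, exactly as in your proof.

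\textbf{Your route.} You realise the kernel through the shift-equivariant map $(\eta,U)\mapsto(\eta,(g(\eta(b),U_b))_b)$. This reduces everything to textbook ergodic theory: an ergodic $\Z^d$-system times a mixing one is ergodic, and factors and projections of ergodic systems are ergodic. It also avoids any hands-on handling of conditional expectations.

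The price is the representation step. A jointly measurable $g$ with $g(s,U)\sim p_s$ exists for countable $K$ (the case produced by Theorem~\ref{t:decomp}) or for standard Borel $K$. It need not exist for an arbitrary measure space $(K,\Kcal,\rho)$, which Definition~\ref{d:def_V} permits. In that generality the kernel-based argument you list as a fallback is the one that works. Since $V$ and the $V_k$ are even, $p_s=p_{-s}$, so the orientation of undirected bonds in $g(\eta(b),U_b)$ does not matter.

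\textbf{Correction to your direct sketch.} For fixed $\eta$, the map $U\mapsto F(\eta,U)$ is not even approximately shift-invariant. Invariance reads $F(\tau_x\eta,\tau_xU)=F(\eta,U)$, so the shift moves $\eta$ as well. What the local approximation actually gives is the following.
\begin{itemize}
\item $F$ is within $\|F-F_n\|_{L^2}$ of $F_n\circ\tau_x$, which depends on $U$ only through the window $W_n+x$.
\item Letting $x$ move away from any fixed finite $W$ shows that $F$ is, up to null sets, $\sigma(\eta,U_{W^c})$-measurable for every finite $W$.
\item Because $U$ is independent of $\eta$, the intersection of these $\sigma$-algebras is $\sigma(\eta)$ up to null sets, by Kolmogorov's zero-one law.
\item So $F$ is a shift-invariant function of $\eta$ alone, and ergodicity of $\mu$ finishes the argument.
\end{itemize}
Alternatively, you can simply cite the ergodic-times-mixing fact, or prove it via Cesàro averages of correlations of product functions $f_1(\eta)f_2(U)$. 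Either way, the main line of your proof stands.
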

\begin{proof}
Shift-invariance of $\tilde\mu$ is clear from the definition since by \eqref{e:def_ext}, it is uniquely defined in terms of $\mu$, which is shift-invariant with respect to $\eta$. Furthermore, if we take the infinite product over the $\kappa$ then this is shift-invariant with respect to $\kappa$, which suffices to conclude. We leave the details to the interested reader.

To see that $\extmu$ is ergodic, one can proceed as in \cite[Lemma 3.2]{MR2778801}. The crucial observation is that one can show as in \cite[Lemma 3.1]{MR2778801} that $\extmu(\cdot\mid\Feta)$ defines a regular conditional probability, with
\[\extmu(\d\kappa\mid\Feta)(\eta)=\prod_{b\in \ubzd}\exp\left(V(\eta(b))-V_{\kappa(b)}(\eta(b))\right)\rho(\d\kappa(b))\]
(extended to directed bonds as before).
This is a product measure and so it is in particular mixing. From this and using the fact that $\mu$ is ergodic, ergodicity of $\extmu$ easily follows by the same arguments as in \cite[Lemma 3.2]{MR2778801}. 

Now, if $\extmu$ is shift-invariant ergodic, the same holds true for its marginal $\bar\mu$.
\end{proof}

As part of the proof of Lemma \ref{l:extension_tinv_erg}, we computed the law of $\extmu$ conditional on the $\eta$. Next, we will try to characterise the law of $\extmu$ conditional on $\kappa$. However, in order to be able to state the result, we first need to introduce disordered gradient Gibbs measures, relying on the definitions in Section \ref{s:definitions}.

\subsection{Disordered gradient Gibbs measures}
\label{sec:disordered_Gibbs}
Given $\kappa\in K^{\bzd}$ arbitrary, consider the family of Hamiltonians 
\begin{equation}
\label{e:disordered_H}
H_{\Lambda,\psi}[\kappa](\phi)=\frac{1}{2}\sum_{\substack{x,y\in\Lambda\\|x-y|=1}}V_{\kappa((x,y))}(\phi(x)-\phi(y))+\frac{1}{2}\sum_{\substack{x\in\Lambda, y\in\partial\Lambda\\ |x-y|=1}}V_{\kappa((x,y))}(\phi(x)-\psi(y))+\frac{1}{2}\sum_{\substack{x\in\Lambda, y\in\partial\Lambda\\ |x-y|=1}}V_{\kappa((y,x))}(\phi(x)-\psi(y)),
\end{equation}
where
\[Z_{\Lambda,\psi}[\kappa]=\int_{\RR^\zd}\exp\left\{-H_{\Lambda,\psi}[\kappa](\phi)\right\}\d\phi_\Lambda\delta_\psi(\d\phi_{\zd\setminus\Lambda}),\]
and where $\delta_\psi(\d\phi_{\zd\setminus\Lambda})=\prod_{x\in\zd\setminus\Lambda}\delta_{\psi(x)}(\d\phi(x))$ determines the boundary condition.

\begin{definition}\label{d:finitevoldisgibbs}
For a finite region $\Lambda\subset\zd$, {\it the finite volume disordered Gibbs measure $\nu_{\Lambda,\psi}[\kappa]$ on $\RR^{\zd}$ with boundary condition $\psi$ } outside $\Lambda$ is defined by
\begin{equation}
\label{finitevoldisgibbs}
\nu_{\Lambda,\psi}[\kappa](\ormd\phi)=\frac{1}{Z_{\Lambda,\psi}[\kappa]}
\exp\left(-H_{\Lambda,\psi}[\kappa](\phi)\right)\d
\phi_\Lambda\delta_\psi(\d\phi_{\zd\setminus\Lambda}).
\end{equation}
As in Section \ref{s:definitions}, this finite volume disordered Gibbs measure allows us to define infinite volume Gibbs measures $\nu[\kappa]$, finite volume disordered gradient Gibbs measures and infinite volume gradient Gibbs measures $\mu[\kappa]$. 
This allows us to define shift-covariant random/disordered (gradient) Gibbs measures.
\end{definition}
 
\begin{definition} {\bf(shift-covariant disordered (gradient) Gibbs measures)}
\label{shiftcov1}
\begin{itemize}
\item [(a)] Let $\P$ be a probability measure on $(K^{\bzd},\Kcal^{\otimes\bzd})$. 
A measurable map $\kappa\rightarrow\nu[\kappa]$ is called {\em a translation-} or {\em shift-covariant disordered Gibbs
measure} if $\nu[\kappa]$ is a $\phi$-Gibbs measure for $\P$-almost every $\kappa$, and for all $v\in\zd$ and for all $F\in C_b(\R^{\zd})$, we have
$$\int\nu[\tau_v\kappa](\ormd\phi)F(\phi)=\int\nu[\kappa](\ormd\phi)F(\tau_v\phi).$$

To define the notion of measurability for a measure-valued function we use the 
evaluation sigma-algebra in the image space, which is the smallest sigma-algebra such that
the evaluation maps $\mu\mapsto \mu(A)$ are measurable for all events $A$ (for details, see \cite[p. 129]{MR2807681}).

\item [(b)] Let $\P$ be a probability measure on $(K^{\bzd},\Kcal^{\otimes\bzd})$. A measurable map $\kappa\rightarrow\mu[\kappa]$ is called {\em a shift-covariant disordered gradient Gibbs measure} if $\mu[\kappa]$ is a $\nabla\phi$-Gibbs measure for $\P$-almost every $\kappa$, and for all $v\in\zd$ and for all $F\in C_b(\chi)$, we have
$$\int\mu[\tau_v\kappa](\ormd\eta)F(\eta)=\int\mu[\kappa](\ormd\eta)F(\tau_v\eta).$$
\end{itemize}
\end{definition}
\begin{remark}
We pause here to observe that we will sometimes use the shorter term $\kappa$-disordered gradient Gibbs measures for measures satisfying Definition \ref{shiftcov1}. Additionally, if $\P$ is shift-invariant, then the averaged measure $\nu$ defined by $\nu^{\mbox{av}}(\d\phi):=\int\nu[\kappa](\d\phi)\d\mathbb{P}(\kappa)$, respectively the averaged measure $\mu$ defined by $\mu^{\mbox{av}}(\d\eta):=\int \mu[\kappa](\d\eta)\d\mathbb{P}(\kappa)$, are shift-invariant probability measures by Definition \ref{shiftcov1}.  

Finally, we note that if $\kappa\to\mu[\kappa]$ is a shift-covariant disordered gradient Gibbs measure such that $\mu^{\mbox{av}}$ is tempered, we have $\mu^{\mbox{av}}(\chi_r)=1$ and hence also $\mu[\kappa](\chi_r)=1$ for $\mathbb{P}$- a.e. $\kappa$.
\end{remark}
We introduce one final definition before we proceed with the proofs.
\begin{definition}
 Fix $u\in\mathbb{R}^d$. Let $\P$ be a probability measure on $(K^{\bzd},\Kcal^{\otimes\bzd})$, and let $\mathbb{E}$ be the corresponding expectation. We say that a shift-covariant gradient Gibbs measure $\kappa\rightarrow \mu[\kappa]$ has expected tilt $u$ if it satisfies
\begin{equation}
\label{tilt1}
\int\mu^{\mbox{av}}(\ormd\eta)\eta(b)= \int\left(\int\mu[\kappa](\ormd\eta)\eta(b)\right)\d\P(\kappa)=\langle u,y_b-x_b \rangle, ~\mbox{for all bonds}~b=(x_b,y_b)\in(\Z^d)^*.
\end{equation}
\end{definition}
 In order to characterise the law of $\extmu$ conditional on the $\kappa$, we need to use a uniqueness result for $\kappa$-disordered gradient Gibbs measures with given distribution on the $\kappa$. The setting we will focus on is with the $V_\kappa''$ uniformly bounded above, which is precisely the main result in \cite[Theorem 1.10]{MR3383338},  stated in Theorem \ref{t:CK}(b). The theorem follows by means of \cite[Theorem 4.1 and Theorem 4.5]{MR3383338}, stated in Theorem \ref{t:CK}(a). 

\begin{theorem}
\label{t:CK}
Fix $d\ge 1$. Let $\P$ be a shift-invariant ergodic probability measure on $(K^{\bzd},\Kcal^{\otimes\bzd})$ such that $\P$-a.s. $\kappa(b)=\kappa(-b)$ for all $b\in\bzd$. Assume that for all $b\in (\zd)^*$, the map $(\kappa(b),s)\mapsto V_{\kappa(b)}(s)$ is jointly measurable, and for each $\kappa(b)$ the function $V_{\kappa(b)}(\cdot)$ is in $C^2(\R)$ and even. Assume also that there exist $c_V, C_V>0$ such that $C_V\ge V''_{\kappa(b)}\ge c_V>0$ for all $b\in (\zd)^*$ and $\kappa\in K^\bzd$.
\begin{itemize}
\item [(a)] Let for all $j\in\{1,2,\ldots,d\}$ 
\begin{equation}
\label{directiontilt}
E_j:=\{ \eta~|~\lim_{n\rightarrow\infty}\frac{1}{|\Lambda_n|}\sum_{x\in\Lambda_n}\eta(b_{x, j})=0\},
\end{equation}
along the sequence of volumes $\Lambda_n:=[-n,n]^d\cap\zd$, where $b_{x,j}:=(x, x+e_j)\in (\zd)^*$. Then there exists \textbf{exactly one} (up to $\mathbb{P}$-null sets) shift-covariant gradient Gibbs measure $\kappa\rightarrow \mu[\kappa]$ with associated family of Hamiltonians $(\kappa\to H_{\Lambda,\xi}[\kappa])_{\Lambda\subset\zd,\xi\in\chi}$ defined as in \eqref{e:disordered_H}, such that $\mu[\kappa]$ satisfies
\begin{equation}
\label{directiontilteq}
\mu[\kappa](E_j)=1,~j\in\{1,2,\ldots,d\}
\end{equation}
for $\P$-a.s. $\kappa$, and the annealed measure $\mu^{\text{av}}(\d\eta):=\left(\int\d\P(\kappa)\mu[\kappa]\right)(\ormd\eta)$ satisfies the integrability condition 
\begin{equation}
\label{intcond1a}
\sup_{b\in (\zd)^*}\int\eta^2(b)\mu^{\text{av}}(\d\eta)<\infty.
\end{equation}
Additionally, $\mu^{\text{av}}$ can be shown to be ergodic under the shifts.
\item [(b)]
Then there exists \textbf{exactly one} (up to $\mathbb{P}$-null sets) shift-covariant gradient Gibbs measure $\kappa\rightarrow \mu[\kappa]$ with associated family of Hamiltonians $(\kappa\to H_{\Lambda,\xi}[\kappa])_{\Lambda\subset\zd,\xi\in\chi}$ defined as in \eqref{e:disordered_H}, such that the annealed measure $\mu^{\text{av}}$ has zero tilt, is ergodic under the shifts and satisfies the integrability condition \eqref{intcond1a}.
\end{itemize}
\end{theorem}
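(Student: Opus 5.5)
The plan is to prove part (a) in the spirit of \cite{MR3383338}, following the Funaki--Spohn coupling strategy \cite{MR1463032} adapted to the quenched setting, and then to deduce part (b) from (a) via the ergodic theorem.

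\textbf{Existence in (a).} For a.e.\ fixed $\kappa$, I take finite volume disordered gradient Gibbs measures $\mu_{\Lambda_n,0}[\kappa]$ with zero boundary data on boxes (or on periodised boxes, to obtain covariance), and average over spatial shifts to restore shift-covariance in $\kappa$. Since $V_\kappa''\ge c_V$ uniformly, the Brascamp--Lieb inequality gives a bound on $\E[\eta(b)^2]$ by $c_V^{-1}$ times a Green function gradient, uniformly in $n$ and in $\kappa$. This yields tightness in $\chi_r$ and the integrability condition \eqref{intcond1a} for any limit. Limit points satisfy the DLR equations for $H_{\Lambda,\xi}[\kappa]$ because the specification is Feller, as $V_\kappa$ is continuous and $V_\kappa''\le C_V$. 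Each limit point is even in $\eta$, since $V_\kappa$ is even and the boundary data is zero. Together with shift-covariance, the ergodic theorem under the ergodic $\P$ then gives \eqref{directiontilteq}. Measurability in $\kappa$ is arranged by selecting the limit along a fixed subsequence in an averaged (Ces\`aro) sense, as in Cotar--Külske. Ergodicity of $\mu^{\text{av}}$ will come out of uniqueness, since an extremal decomposition would otherwise give two distinct measures.

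\textbf{Uniqueness in (a).} This is the main obstacle. Given two such shift-covariant families $\mu^1[\kappa],\mu^2[\kappa]$, I couple the infinite volume Langevin dynamics
\[
\d\phi_t(x)=-\sum_{y\sim x}V'_{\kappa((x,y))}(\phi_t(x)-\phi_t(y))\,\d t+\sqrt2\,\d W_t(x),
\]
driving both with the same Brownian motions and starting from $\mu^1[\kappa]\otimes\mu^2[\kappa]$ (or any coupling of them). The upper bound $V_\kappa''\le C_V$ gives a Lipschitz drift, hence strong solutions in weighted spaces $\chi_r$. Both measures are stationary for the dynamics because they satisfy the DLR equations.

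For the difference $\bar\phi=\phi^1-\phi^2$, I compute $\frac{\d}{\d t}\tfrac12\sum_x \bar\phi_t(x)^2 e^{-\epsilon|x|}$. Using $V''_\kappa\ge c_V$, this yields
\[
\int_0^T \E\Big(\sum_b |\nabla\bar\phi_t(b)|^2 e^{-\epsilon|x_b|}\Big)\,\d t
\]
bounded by boundary terms. Dividing by the volume and using shift-covariance together with the annealed $L^2$ bound, one gets that the spatial average of $\E|\nabla\bar\phi|^2$ vanishes. The delicate part is controlling $\bar\phi$ itself, not just its gradient, through the zero-tilt condition $E_j$. Following the Funaki--Spohn argument, the sublinearity of $\bar\phi$ implied by $E_j$ and ergodicity lets the boundary terms be absorbed. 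This forces $\nabla\phi^1_t=\nabla\phi^2_t$ in the limit, so $\mu^1[\kappa]=\mu^2[\kappa]$ for $\P$-a.e.\ $\kappa$. All annealed quantities here are controlled by \eqref{intcond1a}.

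\textbf{Deduction of (b).} Suppose $\mu^{\text{av}}$ is ergodic, has zero tilt and satisfies \eqref{intcond1a}. By Birkhoff's theorem applied to $\eta(b_{x,j})$, which is $L^1$, the averages in \eqref{directiontilt} converge $\mu^{\text{av}}$-a.s.\ to $\E_{\mu^{\text{av}}}\eta(b_{0,j})=0$. Hence $\mu^{\text{av}}(E_j)=1$, so $\mu[\kappa](E_j)=1$ for a.e.\ $\kappa$, and uniqueness in (a) applies.

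Conversely, the measure constructed in (a) has an ergodic $\mu^{\text{av}}$. By the ergodic theorem combined with $\mu[\kappa](E_j)=1$, its mean $\E_{\mu^{\text{av}}}\eta(b_{0,j})$ equals the a.s.\ limit, which is $0$. So it has zero tilt, which gives existence in (b).
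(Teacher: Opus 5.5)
Your overall architecture is the one behind the result the paper cites from \cite{MR3383338}; the paper gives no proof of its own. It consists of three steps: shift-averaged zero-boundary finite-volume measures with a Koml\'os-type selection for measurability, uniqueness via the Funaki--Spohn coupling of Langevin dynamics, and (b) reduced to (a) via the ergodic theorem. The reduction of (b) to (a) is fine. The genuine gap is in the existence part of (a), where you derive $\mu[\kappa](E_j)=1$ from evenness of $\mu[\kappa]$, shift-covariance and ergodicity of $\P$. Evenness only gives that the quenched means $\E_{\mu[\kappa]}\eta(b)$ vanish; it says nothing about almost sure vanishing of the spatial averages. The ergodic theorem for the jointly shift-invariant law $\P(\d\kappa)\mu[\kappa](\d\eta)$ only identifies the limit with a conditional expectation on the invariant $\sigma$-field. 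Ergodicity of $\P$ does not make that joint law ergodic. You also cannot use ergodicity of $\mu^{\text{av}}$ at this point: you intend to obtain it from uniqueness, which already presupposes \eqref{directiontilteq}.

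Here is a counterexample to the inference. Take $K$ a single point, so $\P$ is a Dirac mass and trivially ergodic, with $V$ even and $c_V\le V''\le C_V$. Let $\mu=\frac12(\mu_u+\mu_{-u})$, where $\mu_{\pm u}$ are the Funaki--Spohn measures of tilt $\pm u$. This $\mu$ is even, shift-invariant, tempered and Gibbs, yet $\mu(E_j)=0$ whenever $u_j\neq 0$.

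What is missing is a quantitative decorrelation estimate that survives the limit; this is how the paper obtains $E_j$ (Theorem~\ref{existergod}, invoked in Lemma~\ref{c:unique_disordered_Gibbs}). Apply Brascamp--Lieb with $F(s)=s^2$ to the linear functional $|\Lambda_n|^{-1}\sum_{x\in\Lambda_n}\eta(b_{x,j})$ under $\mu_{\Lambda_m+w,\xi_0}[\kappa]$. Together with the mixed second-derivative Green function bound \eqref{e:mixedsecondder}, this bounds the second moment by $C\log n/(c_V n^d)$, uniformly in $m$, $w$ and $\kappa$. By lower semicontinuity the bound passes to the Ces\`aro limit $\mu[\kappa]$. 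Combined with almost sure existence of the limit (from the multiparameter ergodic theorem for the shift-invariant joint law), it forces the limit to be $0$ almost surely. Once this is in place, your remaining steps go through along the lines of \cite{MR3383338}: ergodicity of $\mu^{\text{av}}$ via ergodic decomposition plus uniqueness, and the coupling argument. In the coupling argument, the $L^2$-sublinearity of $\bar\phi$ deduced from $E_j$ still needs its own lemma.
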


We note here that a more detailed characterisation of the disordered gradient Gibbs measures appearing in Theorem \ref{t:CK} will be given in Lemma \ref{c:unique_disordered_Gibbs} (with an upgraded exponential-moment bound instead of \eqref{intcond1a}) and in Section \ref{s:characterisation} below.  Here we only explain how we apply Theorem \ref{t:CK}(b) to the setting of our extended gradient Gibbs measure $\tilde\mu$. More precisely, using Theorem \ref{t:CK}(b), we can state the counterpart to Lemma \ref{l:extension_tinv_erg} that gives a characterisation of $\extmu$ conditional on the $\kappa$ that is unique almost surely.

\begin{lemma}\label{l:extension_disordered}
Suppose that $V\colon\R\to\R$ is a strongly log-concave log-mixture of quadratic growth. Suppose that $\mu$ is a shift-invariant tempered ergodic gradient Gibbs measure with zero tilt, and let $\extmu$ be its extension with associated Hamiltonian $(H_{\Lambda, \xi, \bar\kappa})_{\Lambda\subset\zd, \xi\in\chi, \bar\kappa\in K^{\bzd}}$. Then $\kappa\to\extmu(\d\eta\mid\Fkappa)(\kappa)$ is a (unique up to $\bar\mu$ null-sets) disordered gradient Gibbs measure with associated family of Hamiltonians $\kappa\to (H_{\Lambda, \xi}[\kappa])_{\Lambda\subset\zd, \xi\in\chi}$ such that its annealed measure (equal to $\mu$) is ergodic, has zero tilt and is tempered. Furthermore, $\tilde\mu$-a.s. we have $\E_\extmu(F(\eta)\mid\Fkappa)=0$ for any odd function $F\in L^1(\mu[\kappa])$, and in particular $\E_\extmu(\eta(b)\mid\Fkappa)=0$ for all $b\in\bzd$.
\end{lemma}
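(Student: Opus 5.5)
Here is the plan. The conditional law $\mu[\kappa]:=\extmu(\d\eta\mid\Fkappa)(\kappa)$ exists as a regular conditional probability, because $\chi_r$ is Polish and $\mu(\chi_r)=1$. We will check that $\kappa\mapsto\mu[\kappa]$ satisfies every hypothesis of Theorem~\ref{t:CK}(b), with $\P=\bar\mu$. Uniqueness then comes directly from that theorem.

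\textbf{Step 1: the hypotheses on $\P$.} By Lemma~\ref{l:extension_tinv_erg}, $\bar\mu$ is shift-invariant and ergodic. By construction it is supported on configurations with $\kappa(b)=\kappa(-b)$. The potentials $V_\kappa$ satisfy $c_V\le V_\kappa''\le C_V$ because $V$ is a strongly log-concave log-mixture of quadratic growth.

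\textbf{Step 2: the Gibbs property of $\mu[\kappa]$.} Let $\Lambda$ be finite, $F\in C_b(\chi)$ local, and $G$ a bounded $\Fkappa$-measurable function. We want the DLR identity
\[
\E_\extmu\big(G(\kappa)F(\eta)\big)=\E_\extmu\Big(G(\kappa)\int F\,\d\mu_{\Lambda,\eta}[\kappa]\Big).
\]
It suffices to take $G$ depending on finitely many bonds, so first enlarge $\Lambda$ to some $\Lambda'$ such that $\ubzd_{\Lambda'}$ contains the support of $G$. We then use the extended specification \eqref{eq:specification_gamma_tilde}. Conditioned on $\eta$ and $\kappa$ outside $\overline{\Lambda'^*}$, the joint law of $(\eta,\kappa)$ inside is proportional to $\prod \e^{-V_{\kappa(b)}(\eta(b))}\d\eta\,\rho(\d\kappa)$. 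Conditioning further on the $\kappa$ inside gives exactly the disordered specification for $\eta$, and this restricts consistently to $\Lambda\subset\Lambda'$. Shift-covariance of $\mu[\kappa]$ follows from shift-invariance of $\extmu$ and the uniqueness of conditional expectations. We take a measurable version, using a countable determining class of $F$.

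\textbf{Step 3: the annealed measure.} Its annealed measure is $\int\mu[\kappa]\,\bar\mu(\d\kappa)=\mu$. This is tempered, so \eqref{intcond1a} holds by shift-invariance, and it is ergodic with zero tilt by assumption. Theorem~\ref{t:CK}(b) therefore gives uniqueness up to $\bar\mu$-null sets.

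\textbf{Step 4: oddness.} This is the main obstacle, since zero tilt of the annealed measure does not obviously give a zero conditional mean for each $\kappa$. We use the symmetry trick. Since every $V_\kappa$ is even, the map $\eta\mapsto-\eta$ preserves every disordered specification. So $\kappa\mapsto\mu^-[\kappa]$, the pushforward of $\mu[\kappa]$ under $\eta\mapsto-\eta$, is again a shift-covariant disordered gradient Gibbs measure. Its annealed measure is the pushforward of $\mu$, which is still ergodic and tempered, with tilt $-0=0$. By the uniqueness in Theorem~\ref{t:CK}(b), $\mu^-[\kappa]=\mu[\kappa]$ for $\bar\mu$-a.e. $\kappa$. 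Hence $\int F\,\d\mu[\kappa]=-\int F\,\d\mu[\kappa]=0$ for every odd $F\in L^1(\mu[\kappa])$. In particular, taking $F=\eta(b)$, which is integrable because $\mu$ is tempered (so $\mu[\kappa]$ has finite second moments a.s.), we get $\E_\extmu(\eta(b)\mid\Fkappa)=0$.
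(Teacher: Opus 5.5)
Your proposal is correct and follows essentially the same route as the paper: you realize $\mu[\kappa]$ as the regular conditional law given $\Fkappa$, verify the DLR equations, shift-covariance and the required properties of the annealed measure $\mu$ so that Theorem~\ref{t:CK}(b) gives uniqueness, and then apply that uniqueness to the reflected family $\mu[\kappa](-\d\eta)$ to get vanishing odd expectations. The only cosmetic difference is that you check the DLR property by enlarging $\Lambda$ and using consistency of the disordered specification, whereas the paper applies the finite-volume disintegration identity \eqref{finvoldisint} directly; both amount to the same computation.
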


\begin{remark}
Lemma \ref{l:extension_disordered} in particular says that each shift-invariant ergodic tempered gradient Gibbs measure with zero tilt is already determined by the $\kappa$-marginal of its extension.
\end{remark}

\begin{proof}[Proof of Lemma \ref{l:extension_disordered}]
Consider the family $\mu[\kappa]$ denoted by $\mu[\kappa](\d\eta)=\extmu(\d\eta\mid\Fkappa)(\kappa)$, which defines a regular conditional probability measure (see \cite[Section 1.2]{MR2807681} for more in-depth properties). 
We recall here that, as a regular conditional probability of $\tilde\mu$ given $\Fkappa$, then $\mu[\cdot](\cdot)$ is a probability kernel from $(K^{\bzd},\Fkappa)$ to $(\chi,\Feta)$, such that $\tilde\mu$-a.s. we have
\begin{equation}
\label{regprob}
\mu[\cdot](F)=\E_{\tilde\mu}(F\mid\Fkappa)(\cdot),~\mbox{for every}~\Feta-\mbox{measurable}~F~\mbox{which is}~\mu[\kappa]-\mbox{integrable}.
\end{equation}

We first check that $\kappa\to\mu[\kappa]$ is almost surely a $\kappa$-disordered gradient Gibbs measure. This is a straightforward calculation.

Firstly, $\kappa\to\mu[\kappa]$ is measurable by definition of $\extmu(\d\eta\mid\Fkappa)$ as a regular conditional expectation.

We now prove that $\mu[\kappa]$ is a gradient Gibbs measure $\tilde\mu$-a.s., that is, we will show that \eqref{dlrgrad} holds for $\mu[\kappa]$. In view of \eqref{regprob}, this is equivalent to showing that for $\tilde\mu$-a.s. $\kappa$ we have
\begin{equation}
\int_\chi\mu[\kappa](\d\xi)\int_{\chi_{\bzdlb,\xi}} \!\!\!\!\!F(\eta)\mu_{\Lambda,\xi}[\kappa](\d\eta)=\E_{\tilde\mu}(F\mid\Fkappa)(\kappa),~\forall F\in C_b(\chi),
\end{equation}
where 
\begin{equation}
\label{quenchedham}
\mu_{\Lambda, \xi}[\kappa](\d \eta)=\frac{\exp\left(-\frac{1}{2}\sum_{b\in\bzdlb} V_{\kappa(b)}(\eta(b)\right)}{Z_{\Lambda,{\xi}}[\kappa]} \d\eta_{\Lambda, \xi},
\end{equation}
with the obvious notation for the normalising constant $Z_{\Lambda,{\xi}}[\kappa]$. By the disintegration theorem, it is sufficient to show for all $A\in\Fkappa$, $B\in\Feta$, that
\[
\int_{K^{\bzd}}\1_A(\kappa)\bar\mu(\d{\kappa})\int_\chi \mu[\kappa](\d\xi)\int_{\chi_{\bzdlb,\xi}} \!\!\!\!\!\1_B(\eta)\mu_{\Lambda,\xi}[\kappa](\d\eta)=\int_{\chi\times K^{\bzd}} \tilde\mu(\d\eta, \d\kappa)\1_A(\kappa)\1_B(\eta).
\]

Firstly we claim that for $A\in\Fkappa$, $B\in\Feta$ and $\Lambda\subset\zd$ we have
\begin{equation}
\label{finvoldisint}
\int_{\chi_{\bzdlb,\xi}\times K^{\bzd}}\1_A(\tilde\kappa)\1_B(\eta)\tilde\mu_{\Lambda, \xi, 
\kappa}(\d\eta, \d\tilde\kappa)=\int_{\chi_{\bzdlb,\xi}\times K^{\bzd}}\1_A(\tilde\kappa)\tilde\mu_{\Lambda, \xi, 
\kappa}(\d\tilde\xi, \d\tilde\kappa)\int_{\chi_{\bzdlb,\tilde\xi}} \!\!\!\!\!\1_B(\eta)\mu_{\Lambda, \tilde\xi}[\tilde\kappa](\d\eta).
\end{equation}
This is a finite-volume identity, and so it can be checked using the explicit formula \eqref{eq:specification_gamma_tilde} and Definition \ref{d:finitevoldisgibbs}. When doing so, the integral over $\tilde\xi$ will precisely cancel with the partition function $Z_{\Lambda,\xi}[\tilde\kappa]$.

We now have
\begin{align*}
&\int_{K^{\bzd}}\1_A(\kappa)\bar\mu(\d{\kappa})\int_\chi \mu[\kappa](\d\xi)\int_{\chi_{\bzdlb,\xi}} \!\!\!\!\!\1_B(\eta)\mu_{\Lambda,\xi}[\kappa](\d\eta)\\
&=\int_{\chi\times K^{\bzd}}\tilde\mu(\d\xi,\d{\kappa})\1_A(\kappa)\int_{\chi_{\bzdlb,\xi}} \!\!\!\!\!\1_B(\eta)\mu_{\Lambda,\xi}[\kappa](\d\eta)\\
&=\int_{\chi\times K^{\bzd}}\tilde\mu(\d{\xi}, \d{\kappa})\int_{\chi_{\bzdlb}\times K^{\bzd}}\1_A(\tilde\kappa)\tilde\mu_{\Lambda, \xi, 
\kappa}(\d\tilde\xi, \d\tilde\kappa)\int_{\chi_{\bzdlb,\tilde\xi}} \!\!\!\!\!\1_B(\eta)\mu_{\Lambda,\tilde\xi}[\tilde\kappa](\d\eta)\\
&=\int_{\chi\times K^{\bzd}}\tilde\mu(\d{\xi}, \d{\kappa}) \int_{\chi_{\bzdlb,\xi}\times K^{\bzd}}\1_A(\tilde\kappa)\1_B(\eta)\tilde\mu_{\Lambda, \xi, 
\kappa}(\d\eta, \d\tilde\kappa)\\
&=\int_{\chi\times K^{\bzd}} \1_A(\kappa)\1_B(\eta)\tilde\mu(\d\eta, \d\kappa),
\end{align*}
where for the first equality we used that $\tilde\mu$ has marginal $\bar\mu$, for the second equality we applied that $\tilde\mu_{\Lambda, \xi, 
\kappa}$ is a specification for $\tilde\mu$, for the third equality we applied \eqref{finvoldisint}, and for the fourth equality we used again that $\tilde\mu_{\Lambda, \xi, 
\kappa}$ is a specification for $\tilde\mu$. This implies the claim.

Next, since $\tilde\mu$ is shift-invariant, we easily get that $\kappa\to\mu[\kappa]$ is shift-covariant by applying a similar reasoning as above to show that $\tilde\mu$-a.s., we have $\mu[\tau_v\kappa]=\mu[\kappa]\circ\tau_v^{-1}$, for all $v\in\zd$. 

Finally, to verify that $\kappa\to\mu[\kappa]$ has expected zero tilt, we write for all bonds $b\in \bzd$
\[
\int_{K^{\bzd}} \bar\mu(\d\kappa)\int_{\chi}\eta(b)\mu[\kappa](\d\eta)
=\int_{\chi} \mu(\d\eta)\eta(b)=0,
\]
since $\tilde\mu$ has marginal $\mu$, which has zero tilt. Additionally, the integrability assumption \eqref{intcond1a} is a direct consequence of the fact that $\mu$ is tempered, via a similar argument as above. Finally, ergodicity of the annealed measure is immediate since $\mu^{\mbox{av}}=\mu$. 

Therefore, $\mu[\kappa]$ satisfies the assumptions of Theorem \ref{t:CK}(b) so a.s. uniqueness follows. It remains to show that a.s. $\E_{\mu[\kappa]}(F(\eta))=0$ for any odd function $F\in L^1(\mu[\kappa])$. To see this, define $\mu'[\kappa](\d\eta)=\mu[\kappa](-\d\eta)$. Then the family $\mu'[\kappa]$ can also be shown to satisfy the assumptions of Theorem \ref{t:CK}. So Theorem \ref{t:CK} implies that we must have $\mu[\kappa]=\mu'[\kappa]$ for $\bar\mu$-a.e. $\kappa$.
This in turn means that we must have
\[\int \mu[\kappa](\d\eta)F(\eta)=\int \mu[\kappa](-\d\eta)\F(\eta)=-\int \mu[\kappa](\d\eta)F(\eta),\]
for $\bar\mu$-a.e. $\kappa$. Thus, $\E_{\mu[\kappa]}(F(\eta))=0$, and in particular (taking $F(\eta)=\eta(b)$)  $\E_{\mu[\kappa]}(\eta(b))=0$.
\end{proof}

\section{Brascamp-Lieb inequalities for \texorpdfstring{$\alpha$}{alpha}-monotone potentials}
\label{BLSection}

There are two related inequalities in the literature known under the name of Brascamp-Lieb inequality. The first one, stated in \eqref{BL111}, is a concentration of measure around the mean inequality, whereas the second one, stated in \eqref{poinc}, is a reinforcement of the Poincar\'e inequality. While we will only use the generalisation of \eqref{BL111} to $\alpha$-monotone potentials to obtain bounds on the moments of the gradient fields in Lemma \ref{l:expbrasc}, which are needed for the scaling limit, both inequalities are of independent interest. Among many uses, \eqref{poinc} can be used to show concentration of measure, covariance inequalities and to analyse phase transitions and uniqueness of the measure. Furthermore, for gradient models \eqref{poinc} can be seen as an instance of the Helffer-Sjöstrand representation in the particular case of the variance, and is used to control higher moments of the field (see for example \cite[Lemma 2.5]{JDDRodrig}).

We will first introduce these two inequalities in the (strongly) log-concave setting, and then we will focus on our main results in Theorems \ref{genposbeta} and \ref{genposbetapoincare}; we stress here that versions of these theorems can also be derived in the infinite volume setting of Section \ref{s:existence}. 

\subsection{Brascamp-Lieb inequalities for gradient models}

\subsubsection*{Concentration around the mean Brascamp-Lieb inequality}

This version of the inequality is the formulation from \eqref{hargebl} above (as proved in \cite[Theorem 1.1]{Harge}). Translated to the notation of gradient Gibbs measures from Section \ref{s:definitions}, and assuming that $V(s)- \alpha s^2/2, s\in\mathbb{R}$, is convex, \eqref{hargebl} gives for $v\in\mathbb{R}^{|\Lambda|}$ 
\begin{equation}\begin{split}\label{BL111}
&\nu_{\Lambda,\psi} \left(F\big(v \cdot (\phi -\nu_{\Lambda,\psi}(\phi))\big)\right)\leq \nu^{\Delta_{\alpha}}_{\Lambda,\psi}\left(F(v \cdot (\phi-\nu^{\Delta_{\alpha}}_{\Lambda,\psi}(\phi)))\right),
\end{split}
\end{equation}
where by $ \nu^{\Delta_{\alpha}}_{\Lambda,\psi}$ we denote the massless Gaussian Free Field Gibbs measure with conductance $\alpha$, that is the gradient Gibbs measure associated with $V(s)=\frac{\alpha s^2}{2}$.

In the special case $\psi=\psi_0=0$, we have for all $x\in\mathbb{Z}^d$ that 
\begin{equation}\label{lalagibbs}
\nu_{\Lambda, 0}\left(\phi(x)\right)=0.
\end{equation} Indeed, the symmetry of $V$ and the change of variables $\phi(y)\rightarrow-\phi(y)$ imply
$\nu_{\Lambda, 0}(\phi(x))=-\nu_{\Lambda, 0}(\phi(x))$.
In particular, \eqref{BL111} becomes for $\psi=0$ 
\begin{equation}\label{BL111zero}
\nu_{\Lambda, 0}\left(F\big(v \cdot \phi)\right)\leq \nu^{\Delta_{\alpha}}_{\Lambda,0}\left(F(v \cdot \phi)\right).
\end{equation}

We note that a converse of \eqref{BL111zero} is true as well: If $C_Vs^2/2-V(s)$ is convex, then
\begin{equation}\label{BL111zero_converse}
\nu^{\Delta_{C_V}}_{\Lambda,0}\left(F(v \cdot \phi)\right)\leq\nu_{\Lambda, 0}\left(F\big(v \cdot \phi)\right).
\end{equation}
Indeed, this inequality follows from \cite[Theorem 1.2]{Harge} together with the observation that every log-convex function is convex. Furthermore, there are even sharper versions of \eqref{BL111zero} and \eqref{BL111zero_converse}, as shown in \cite[Theorem 1.1]{Hariya}.

\subsubsection*{Brascamp-Lieb inequality as reinforcement of the Poincar\'e inequality}

Consider a $C^2$ function $V:\mathbb{R}^n\to\mathbb{R}$ such that for all $x\in\mathbb{R}^n$, there holds $\mbox{Hess}\,V(x)\succ 0$ and $\int \e^{-V(z)}\rmd z<\infty.$

Let
$$\rmd\mu_V(x):=\frac{\e^{-V(x)}\rmd x}{\int \e^{-V(z)}\rmd z}.$$
Then for all $F\in C^1(\mathbb{R}^n)\cap L^2(\mu_V)$, the following inequality was first proved in Theorem 4.1 from \cite{MR0450480} (in the special case where $F$ is a linear combination of $x_i$, where $x=(x_1, \ldots, x_n)$).
\begin{equation}
\label{poinc}
{\Var}_{\mu_V}(F)\le\int \langle\nabla F, (\mbox{Hess V})^{-1}\nabla F\rangle\rmd\mu_V.
\end{equation}
For an alternative proof of the above for (gradient) Gibbs measures with uniformly strictly convex potentials $V$, by means of the Helffer-Sj\"ostrand representation, see for example Theorem 4.8 from \cite{MR2228384}.

In the next section, we will extend these two types of inequalities to $\alpha$-monotone even potentials, in the setting of (gradient) Gibbs measures.

\subsection{Brascamp-Lieb inequalities for \texorpdfstring{$\alpha$}{alpha}-monotone potentials}

\subsubsection*{Concentration around the mean Brascamp-Lieb inequality}

Defining $\xi_0$ as $\xi_0(b):=\nabla\psi_0(b)$ for all $b\in (\zd)^*$, we will prove 
\begin{theorem}
\label{genposbeta}
Let $d\ge 1$. Assume that $V\in C(\mathbb{R})$ is even and $\alpha$-monotone for some $\alpha>0$. Then for all convex functions $F:\mathbb{R}\rightarrow\mathbb{R}$ bounded below and for all $v\in\mathbb{R}^{|\Lambda|}$, respectively $v^*\in\mathbb{R}^{|\bzdlb|}$, it holds  that 
\begin{equation}
\label{BLweakupperbound}
\int_{\mathbb{R}^{|\Lambda|}}\!\!\!\!\!\!\!F(v\cdot \phi)\nu_{\Lambda,0}(\d\phi)\le  \int_{\mathbb{R}^{|\Lambda|}}\!\!\!\!\!\!\!F(v\cdot \phi)  \nu^{\Delta_{\alpha}}_{\Lambda,0} (\d\phi),~~\mbox{and}~~ \int_{\chi_{\bzdlb,\xi_0}}\!\!\!\!\!\!\!\!\!F(v^*\cdot \eta)\mu_{\Lambda, \xi_0}(\d\eta)\le  \int_{\chi_{\bzdlb,\xi_0}}\!\!\!\!\!\!\!\!\!\!F(v^*\cdot \eta)  \mu^{\Delta_{\alpha}}_{\Lambda,\xi_0} (\d\eta).
\end{equation}
\end{theorem}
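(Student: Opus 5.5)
We prove Theorem~\ref{genposbeta} by decomposing $V$ with Proposition~\ref{p:trivialdecomp}, applying the classical Brascamp--Lieb inequality \eqref{BL111zero} to each term of the resulting mixture, and averaging. Since the second inequality (for $\mu_{\Lambda,\xi_0}$) is the image of the first under the affine map of Remark~\ref{equivzd1}, we only need the statement for $\nu_{\Lambda,0}$. Indeed, $v^*\cdot\eta$ is a linear functional of $\phi_\Lambda$ when $\xi_0$ comes from $\psi_0=0$, so it equals $v\cdot\phi$ for some $v\in\R^{|\Lambda|}$.

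\textbf{Step 1: decomposition and mixture.} By Proposition~\ref{p:trivialdecomp},
\[
\e^{-V(s)}=\int_{[0,\infty]}\e^{-V_\kappa(s)}\rho(\d\kappa),\qquad V_\kappa(s)=\frac{\alpha s^2}{2}+\infty\,\1_{[\kappa,\infty)}(|s|).
\]
Insert this into each edge factor of $\exp(-H_{\Lambda,0}(\phi))$. The factor $\tfrac12$ on interior bonds is handled by applying the decomposition to $V/2$, which is $\alpha/2$-monotone on each directed bond; alternatively, group the two directed copies into one undirected bond carrying potential $V$. By Tonelli (all integrands are nonnegative),
\[
\nu_{\Lambda,0}(\d\phi)=\int \nu_{\Lambda,0}[\kappa](\d\phi)\,w(\d\kappa),\qquad w(\d\kappa)=\frac{Z_{\Lambda,0}[\kappa]}{Z_{\Lambda,0}}\prod_{b}\rho(\d\kappa(b)),
\]
where $\nu_{\Lambda,0}[\kappa]$ has density proportional to $\prod_b\exp(-V_{\kappa(b)}(\nabla\phi(b)))$ and $w$ is a probability measure on the set where $Z_{\Lambda,0}[\kappa]>0$. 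For $\kappa$ with some $\kappa(b)=0$ we have $Z_{\Lambda,0}[\kappa]=0$, so these carry no weight.

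\textbf{Step 2: Brascamp--Lieb for each $\kappa$.} Fix $\kappa$. The measure $\nu_{\Lambda,0}[\kappa]$ is the GFF measure $\nu^{\Delta_\alpha}_{\Lambda,0}$ conditioned on the convex, symmetric set
\[
D_\kappa=\{\phi:|\nabla\phi(b)|<\kappa(b)\ \forall b\}.
\]
Its log-density is therefore $\mathcal X_0+\mathcal X_\kappa$, where $\mathcal X_0$ is the GFF quadratic form (with Hessian the Dirichlet Laplacian with conductance $\alpha$, which is positive definite on $\R^\Lambda$) and $\mathcal X_\kappa$ is the convex indicator of $D_\kappa$. Hargé's theorem \cite[Theorem 1.1]{Harge}, i.e. \eqref{hargebl}, applies to a Gaussian measure multiplied by a log-concave function, so extended-valued convex potentials are allowed. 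If one prefers to avoid this, approximate the indicator by smooth convex functions $\mathcal X_\kappa^{(n)}\uparrow\mathcal X_\kappa$ and pass to the limit by monotone convergence, using that $F$ is bounded below. Since $D_\kappa$ is symmetric under $\phi\mapsto-\phi$, the mean of $\nu_{\Lambda,0}[\kappa]$ vanishes, exactly as in \eqref{lalagibbs}. This gives
\[
\nu_{\Lambda,0}[\kappa]\bigl(F(v\cdot\phi)\bigr)\le\nu^{\Delta_\alpha}_{\Lambda,0}\bigl(F(v\cdot\phi)\bigr).
\]

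\textbf{Step 3: averaging.} Integrate the inequality of Step 2 against the probability measure $w$. This is valid because $F$ is bounded below, so each integral is well defined in $(-\infty,\infty]$.

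The main point needing care is the normalisation in Step 1. The Hamiltonian counts interior directed bonds with weight $\tfrac12$ and boundary bonds with weight $1$. We must check that the decomposition can be applied bond by bond with every resulting potential having second derivative at least $\alpha$ per undirected bond, so that the comparison Gaussian is exactly $\nu^{\Delta_\alpha}_{\Lambda,0}$. Summing the two directed terms into one undirected term $V$ resolves this. The other delicate point is justifying Hargé's inequality for the non-smooth, infinite-valued potentials, which the smooth-approximation argument in Step 2 handles.
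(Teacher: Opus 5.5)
Your proposal is correct and follows the paper's argument. You decompose $V$ via Proposition~\ref{p:trivialdecomp}, write $\nu_{\Lambda,0}$ as the $Z_{\Lambda,0}[\kappa]$-weighted mixture of the disordered measures $\nu_{\Lambda,0}[\kappa]$, apply the Brascamp--Lieb (Harg\'e) inequality to each one (centred by evenness), and average. Your extra remarks fill in details the paper leaves implicit: grouping directed bonds so each undirected bond carries $V$, justifying Harg\'e's inequality for the $+\infty$-valued potentials, and deducing the gradient inequality from the height inequality via the linear map of Remark~\ref{equivzd1}.
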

\begin{proof}
We will only prove below how the estimate for the Gibbs measure in \eqref{BLweakupperbound} follows from \eqref{BL111zero}, as the argument for the gradient Gibbs measure will be analogous.

By Proposition \ref{p:trivialdecomp},  $V$ can be written as
\[
V(s)=-\log\int\exp(-V_\kappa(s))\rho(\rmd \kappa),
\]
where $V_\kappa\in C^2(I_\kappa)$ for some open interval $I_\kappa\subset\R$, $V_\kappa''(s)\ge\alpha$ for all $s\in I_\kappa$, and $V_\kappa=+\infty$ on $\R\setminus I_\kappa$.

Coupling the above with \eqref{e:disordered_H}, we write
\begin{equation}
\label{disordBL}
\begin{split}
&\int_{\mathbb{R}^{|\Lambda|}}F(v\cdot \phi)\nu_{\Lambda,0}(\d\phi)\\
&=\frac{1}{Z_{\Lambda,0}} \int_{ K^{\ubzdlb}}\bigg(\int_{\RR^\zd}F(v\cdot \phi)\exp\left\{- H_{\Lambda,0}[\kappa](\phi)\right\}\rmd\phi_\Lambda\delta_0(\d\phi_{\zd\setminus\Lambda})\bigg)\prod_{b=(x,y)\in \ubzdlb}\rho(\d\kappa(b))\\
&=\frac{1}{Z_{\Lambda,0}} \int_{ K^{\ubzdlb}}Z_{\Lambda,0}[\kappa]\bigg(\frac{1}{Z_{\Lambda,0}[\kappa]}\int_{\RR^\zd}F(v\cdot \phi)\exp\left\{-H_{\Lambda,0}[\kappa](\phi)\right\}\rmd\phi_\Lambda\delta_0(\d\phi_{\zd\setminus\Lambda})\bigg)\prod_{b=(x,y)\in \ubzdlb}\rho(\d\kappa(b))\\
&=\frac{1}{Z_{\Lambda, 0}} \int_{ K^{\ubzdlb}}Z_{\Lambda, 0}[\kappa]\mathbb{E}_{\nu_{\Lambda, 0}[\kappa]}\left(F(v\cdot \phi)\right)\prod_{b=(x,y)\in \ubzdlb}\rho(\d\kappa(b)),
\end{split}
\end{equation}
with the obvious definition for $\nu_{\Lambda, 0}[\kappa]$, and where $Z_{\Lambda,0}=\int_{ K^{\ubzdlb}} Z_{\Lambda,0}[\kappa]\prod_{b=(x,y)\in \ubzdlb}\rho(\d\kappa(b)).$
Applying now \eqref{BL111}, we have for all $\kappa$ that
\[\mathbb{E}_{\nu_{\Lambda, 0}[\kappa]}\left(F(v\cdot \phi)\right)\le \int_{\mathbb{R}^{|\Lambda|}}F(v\cdot \phi)  \nu^{\Delta_{\alpha}}_{\Lambda,0} (\d\phi).\]
Combining this with \eqref{disordBL}, we conclude the proof.
\end{proof}

\subsubsection*{Brascamp-Lieb inequality as reinforcement of the Gaussian Poincar\'e inequality}

Before we state this type of Brascamp-Lieb inequality, we will introduce the discrete Laplacian $\Delta_{\Lambda,\psi}$ for $\Lambda\subset\mathbb{Z}^d$ with boundary conditions $\psi\in\mathbb{R}^{\partial\Lambda}$ 
\begin{equation}
\label{laplacop}
\Delta_{\Lambda,\psi}\phi(x):=\sum_{y\in \bar{\Lambda}:|y-x|=1} \left((\phi\vee\psi)(y)-\phi(x)\right),~~~x\in\Lambda,
\end{equation}
where $(\phi\vee\psi)=\phi(x)$ if $x\in\Lambda$, and $(\phi\vee\psi)=\psi(x)$ if $x\in\partial\Lambda$.

For a function $F\in C^1(\chi)$ we define the derivative \[\partial_xF(\eta):=\frac{d}{dt}\Big|_{t=0}F(\eta+t\nabla\1_x)\]
and note that if $F$ is the restriction of a function $\tilde F\in C^1(\R^\bzd)$ to $\chi$, then this is the same as
\[\partial_xF(\eta):=-\sum_{\substack{b'\in\bzd\\ x_{b'}=x}}\frac{\partial}{\partial \eta(b')}\tilde F(\eta)+\sum_{\substack{b'\in\bzd\\ y_{b'}=x}}\frac{\partial}{\partial \eta(b')}\tilde F(\eta).\]
(cf. \cite[(3.1)]{MR1872740}).

The following theorem is an extension of \cite[Theorem 4.8]{MR2228384} to our setting (see also \cite[Lemma 2.3]{JDDRodrig}).
\begin{theorem}
\label{genposbetapoincare}
Let $d\ge 1$. Assume that $V\in C(\mathbb{R})$ is an $\alpha$-monotone even potential. Then for all odd functions $F\in C^1(\chi)\cap L^{2}(\mu_{\Lambda, \xi_0})$ we have that 
\begin{equation}
\Var_{\mu_{\Lambda,\xi0}}(F(\eta))\le \sum_{x, y\in\Lambda}\mathbb{E}_{\mu_{\Lambda,\xi_0}}\left(\partial_x F(\eta)\left(-\alpha\Delta_{\Lambda,0}\right)^{-1}(x,y)\partial_y F(\eta)\right),
\end{equation}
with a similar inequality holding for $\nu_{\Lambda,0}$, with $\partial_xF(\phi)$ and $\partial_y F(\phi)$ used instead in the formula.
\end{theorem}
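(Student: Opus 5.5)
We follow the proof of Theorem \ref{genposbeta}: decompose $V$ with Proposition \ref{p:trivialdecomp}, apply the classical Brascamp-Lieb inequality \eqref{poinc} conditionally on $\kappa$, and average over $\kappa$. The point of the oddness assumption is that it makes the conditional means vanish, so averaging conditional variances produces no extra term.

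\textbf{Step 1: decomposition and disintegration.} By Proposition \ref{p:trivialdecomp}, write $\e^{-V}=\int \e^{-V_\kappa}\rho(\d\kappa)$. Here $V_\kappa(s)=\alpha s^2/2$ for $|s|<\kappa$ and $V_\kappa=+\infty$ otherwise, and each $V_\kappa$ is even. As in \eqref{disordBL}, this gives
\[\mu_{\Lambda,\xi_0}(\d\eta)=\int w(\kappa)\,\mu_{\Lambda,\xi_0}[\kappa](\d\eta)\prod_{b\in\ubzdlb}\rho(\d\kappa(b)),\qquad w(\kappa)=Z_{\Lambda,\xi_0}[\kappa]/Z_{\Lambda,\xi_0}.\]
Here $\mu_{\Lambda,\xi_0}[\kappa]$ is the Gaussian free field with conductance $\alpha$, restricted to the convex symmetric set $\{|\eta(b)|<\kappa(b)\ \forall b\}$. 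This set is convex because each constraint is a strip, so the intersection is convex.

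\textbf{Step 2: conditional means vanish.} With zero boundary data, $\eta\mapsto-\eta$ maps $\chi_{\bzdlb,\xi_0}$ to itself (it corresponds to $\phi\mapsto-\phi$). It preserves each $\mu_{\Lambda,\xi_0}[\kappa]$ because every $V_\kappa$ is even. Since $F$ is odd, $\E_{\mu_{\Lambda,\xi_0}[\kappa]}F=0$ for every $\kappa$, and also $\E_{\mu_{\Lambda,\xi_0}}F=0$. The law of total variance then reduces to
\[\Var_{\mu_{\Lambda,\xi_0}}(F)=\int w(\kappa)\,\Var_{\mu_{\Lambda,\xi_0}[\kappa]}(F)\prod\rho(\d\kappa(b)).\]
This is exactly where oddness is needed. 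Without it, the variance of the conditional means would appear, and it cannot be controlled.

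\textbf{Step 3: Brascamp-Lieb for fixed $\kappa$.} Parametrise $\chi_{\bzdlb,\xi_0}$ by $\phi\in\R^\Lambda$ as in Remark \ref{equivzd}. Then $\partial_x F$ is the partial derivative in $\phi(x)$. The measure $\mu_{\Lambda,\xi_0}[\kappa]$ has density proportional to $\exp(-\tfrac{\alpha}{2}\langle\phi,-\Delta_{\Lambda,0}\phi\rangle)\1_{C_\kappa}$ with $C_\kappa$ convex. (The factor $\frac12$ and the double counting in the Hamiltonian must be matched to $\alpha$; one checks that the Hessian is $\alpha(-\Delta_{\Lambda,0})$ with the normalisation used in \eqref{BL111}.)

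The main obstacle is that \eqref{poinc} is stated for smooth potentials on all of $\R^n$, whereas this measure is restricted to a convex domain. Two routes are available:

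\begin{enumerate}
\item Approximate $\1_{C_\kappa}$ by $\exp(-n\,\mathrm{dist}(\phi,C_\kappa)^2)$, or by a smooth convex penalty, so that the Hessian is at least $\alpha(-\Delta_{\Lambda,0})$. Apply \eqref{poinc} together with the operator-monotonicity of matrix inversion, $(\mathrm{Hess})^{-1}\preceq(-\alpha\Delta_{\Lambda,0})^{-1}$. Then pass to the limit $n\to\infty$ by dominated convergence, using $F\in L^2$ and $\nabla F$ locally bounded. If $F$ is not regular enough for this limit, first truncate $F$, keeping it odd, e.g.\ $F\cdot\chi(|\phi|/R)$.
\item Avoid approximation altogether and invoke the Brascamp-Lieb inequality for log-concave measures of the form $\e^{-U}\1_C\,\d x$ with $\mathrm{Hess}\,U\succeq A$. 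This holds because such measures satisfy the Brascamp-Lieb inequality with $A^{-1}$, for instance via Caffarelli contraction or the $L^2$ (Hörmander) method on convex domains.
\end{enumerate}

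Either route gives
\[\Var_{\mu_{\Lambda,\xi_0}[\kappa]}(F)\le\sum_{x,y}\E_{\mu_{\Lambda,\xi_0}[\kappa]}\bigl(\partial_xF\,(-\alpha\Delta_{\Lambda,0})^{-1}(x,y)\,\partial_yF\bigr).\]

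\textbf{Step 4: average over $\kappa$.} Integrating the Step 3 bound against $w(\kappa)\prod\rho$ turns the right-hand side into the expectation under $\mu_{\Lambda,\xi_0}$, which proves the claim. The statement for $\nu_{\Lambda,0}$ follows by the same argument in the $\phi$ variables.
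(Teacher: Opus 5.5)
Your proposal is correct and follows the paper's proof essentially step by step. Both decompose $V$ via Proposition~\ref{p:trivialdecomp}, use oddness to kill the conditional means so that the variance becomes the $\kappa$-average of the conditional variances as in \eqref{disordBL}, apply a Brascamp--Lieb inequality for each fixed $\kappa$ with $\mathrm{Hess}\,H_{\Lambda,0}[\kappa]\ge -\alpha\Delta_{\Lambda,0}$, and average; your route~2 is exactly what the paper does, citing the Brascamp--Lieb inequality on convex domains from \cite{KolesMil} and \cite{Livsh}.
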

\begin{proof}
By Proposition \ref{p:trivialdecomp},  $V$ can be written as
\[
V(s)=-\log\int\exp(-V_\kappa(s))\rho(\rmd\kappa),
\]
where $V_\kappa\in C^2(I_\kappa)$ for some open interval $I_\kappa\subset\R$, $V_\kappa''(s)\ge\alpha$ for all $s\in I_\kappa$, and $V_\kappa=+\infty$ on $\R\setminus I_\kappa$. Using the same notation as in \eqref{disordBL}, and since $F$ is an odd function, we have that 
$$\mathbb{E}_{\mu_{\Lambda,\xi_0}}(F(\eta))=0~~~~\mbox{and}~~~~\mathbb{E}_{\mu_{\Lambda, \xi_0}[\kappa]}\left(F(\eta)\right)=0,~~\mbox{for all}~~\kappa\in K^\ubzdl.$$
By making use once again of \eqref{disordBL}, we obtain
\begin{equation}
\label{disordpoinc}
\Var_{\mu_{\Lambda,\xi_0}}(F(\eta))=\frac{1}{Z_{\Lambda, 0}} \int_{ K^{\ubzdlb}}Z_{\Lambda,0}[\kappa]\Var_{\mu_{\Lambda, \xi_0}[\kappa]}\left(F(\eta)\right)\prod_{b=(x,y)\in \ubzdlb}\rho(\rmd\kappa(b)).   
\end{equation}
While our $V_\kappa$ functions do not satisfy the assumptions of the Brascamp-Lieb inequality as stated in \eqref{poinc}, they do fulfil the conditions from the version of the inequality stated in Theorem 1.2 (1) from \cite{KolesMil}, respectively in Theorem 1.5 (1) from \cite{Livsh}. Therefore, for each $\Var_{\mu_{\Lambda, 0}[\kappa]}\left(F(\eta)\right)$ we write 
\begin{equation}
\label{poinckappa}
\Var_{\mu_{\Lambda, \xi_0}[\kappa]}\left(F(\eta)\right)\le \mathbb{E}_{\mu_{\Lambda,\xi_0}[\kappa]}\left(\nabla F (\mbox{Hess}~H_{\Lambda,0}[\kappa])^{-1}\nabla F\right),
\end{equation}
where $\mbox{Hess}~H_{\Lambda,0}[\kappa]$ is positive semidefinite and therefore has a pseudo-inverse, and $\mbox{Hess}~H_{\Lambda,0}[\kappa]\ge (-\alpha\Delta_{\Lambda,0})$ in the sense of symmetric operators (similarly to the argument of Lemma 4.7 (a) and (c) from \cite{MR2228384} involving $Q_{\Lambda,0}$ and $\Delta_{\Lambda,0}$ therein). Plugging this inequality in \eqref{poinckappa} produces
$$\Var_{\mu_{\Lambda, \xi_0}[\kappa]}\left(F(\eta)\right)\le \mathbb{E}_{\mu_{\Lambda, \xi_0}[\kappa]}\left(\nabla F (-\alpha\Delta_{\Lambda,0})^{-1}\nabla F\right),$$
which coupled with \eqref{disordpoinc} gives the desired result.
\end{proof}

\subsubsection*{Some special cases}
For the proof of Theorem \ref{t:existenceGGMs} we will need some special cases of the Brascamp-Lieb inequality that we will collect in the following lemma. 

For each $x_0\in\zd$, $m\in\mathbb{N}$, let $\Lambda_m+x_0:=x_0+[-m,m]^d\cap \zd$.

\begin{lemma}
\label{l:expbrasc}
 Set $d\ge 1$.  Let $V\in C(\mathbb{R})$ be an $\alpha$-monotone even function.  Then for all $0<\delta<\frac{\alpha}{2}$, we have 
\begin{equation}
\label{logmixtureexp}
\sup_{b\in(\zd)^*, x_0\in\zd,m\in\N}\mathbb{E}_{\mu_{\Lambda_m+ x_0, \xi_0}}\exp((\alpha/2-\delta)\eta^2(b))\le\frac{\sqrt{\alpha}}{\sqrt{2\delta}}.
\end{equation}
Suppose now that $f\in C_c^\infty(\R^d,\R^d)$, fix $\epsilon>0$ and take $F_\epsilon$ as in \eqref{e:observable}.
Then for all $\lambda\in\R$, there holds for all $\Lambda_m\subset\zd$  and for some $\epsilon$-independent $C(d, f)>0$ that
\begin{equation}
\label{logmixtureexplinear}
\sup_{x_0\in\zd,m\in\N}\mathbb{E}_{\mu_{\Lambda_m+x_0,\xi_0}}\exp(\lambda F_\eps(\eta))\le\exp\left(\frac{C(d,f)\lambda^2}{\alpha}\right).
\end{equation}
\end{lemma}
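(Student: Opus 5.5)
Both bounds will come from the gradient version of Theorem~\ref{genposbeta}, which compares $\mu_{\Lambda_m+x_0,\xi_0}$ with the Gaussian gradient measure $\mu^{\Delta_\alpha}_{\Lambda_m+x_0,\xi_0}$ for convex, bounded-below test functions. After that, everything reduces to Gaussian computations and a Green function estimate that is uniform in the volume.

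For \eqref{logmixtureexp}, fix $b$, $x_0$, $m$ and take $F(s)=\exp((\alpha/2-\delta)s^2)$. This is convex and bounded below. Let $v^*$ be the indicator of the bond $b$; if $b\notin\overline{(\Lambda_m+x_0)^*}$ then $\eta(b)=0$ and the bound is trivial. Theorem~\ref{genposbeta} then gives
\[
\E_{\mu_{\Lambda_m+x_0,\xi_0}}F(\eta(b))\le \E_{\mu^{\Delta_\alpha}_{\Lambda_m+x_0,\xi_0}}F(\eta(b)).
\]
Under the Gaussian measure, $\eta(b)=\phi(y_b)-\phi(x_b)$ is a centred Gaussian with variance $\sigma^2=\alpha^{-1}\,(e_{y_b}-e_{x_b})\cdot(-\Delta_{\Lambda,0})^{-1}(e_{y_b}-e_{x_b})$, with the paper's normalisation of the Hamiltonian; I have not checked the exact factor. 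The key input is the bound $\sigma^2\le 1/\alpha$. This is the effective-resistance bound: the Dirichlet energy of the unit-potential function across one edge, compared with the single conductance $\alpha$ on that edge, shows the effective resistance between adjacent vertices is at most the edge's own resistance $1/\alpha$. Equivalently, by Rayleigh monotonicity, adding the rest of the network and the Dirichlet boundary only lowers resistance. The Gaussian integral is then
\[
\E\exp\bigl((\alpha/2-\delta)\eta^2\bigr)=\bigl(1-2(\alpha/2-\delta)\sigma^2\bigr)^{-1/2}\le \bigl(1-(\alpha-2\delta)/\alpha\bigr)^{-1/2}=\sqrt{\alpha}/\sqrt{2\delta}.
\]
The main obstacle is pinning down the normalisation factor in the Gaussian reference measure. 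If the constant comes out off by a factor, I will redo the resistance comparison with the exact quadratic form $\frac12\sum_{b\in\bzdlb}\frac{\alpha}{2}\eta(b)^2$, which counts each undirected edge twice.

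For \eqref{logmixtureexplinear}, take $F(s)=e^{\lambda s}$, which is convex and positive. Let $v^*$ be the coefficient vector of $F_\eps$, restricted to bonds in $\overline{(\Lambda_m+x_0)^*}$; the others vanish under $\xi_0$. Theorem~\ref{genposbeta} gives
\[
\E\exp(\lambda F_\eps)\le \exp\bigl(\tfrac{\lambda^2}{2}\Var_{\mathrm{Gauss}}(F_\eps)\bigr).
\]
Summation by parts turns $F_\eps$ into $v\cdot\phi$ with $v(x)=-\eps^{d/2}\sum_i \nabla_i^* f_i(\eps\cdot)(x)$, a discrete divergence of order $\eps^{d/2+1}$ supported on a region of size $\eps^{-d}$. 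The Gaussian variance is then $\alpha^{-1} v\cdot(-\Delta_{\Lambda,0})^{-1}v$ up to the same constant. Since $v=\nabla^* g$ with $g(b)=\eps^{d/2}f_i(\eps x)$, the variance equals $\alpha^{-1}\|\Pi g\|^2$, where $\Pi$ is the orthogonal projection onto gradients in the Dirichlet space. This is bounded by $\alpha^{-1}\|g\|_{\ell^2}^2=\alpha^{-1}\eps^d\sum_x|f(\eps x)|^2\le C(d,f)/\alpha$, uniformly in $\eps$, $m$ and $x_0$.

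This projection argument avoids Green function asymptotics entirely and works in every $d\ge1$. With it, both estimates hold uniformly in the volume as claimed.
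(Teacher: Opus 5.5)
Your proof is correct. For \eqref{logmixtureexp} it is exactly the paper's argument: comparison via Theorem~\ref{genposbeta}, followed by the variance bound \eqref{e:vareta}, which comes from Rayleigh monotonicity. Your normalisation worry resolves with factor $1$. With $V(s)=\alpha s^2/2$ the weight $\exp(-\frac12\sum_{b\in\bzdlb}\frac{\alpha}{2}\eta(b)^2)$ equals $\exp(-\frac{\alpha}{2}\langle\phi,-\Delta_{\Lambda,0}\phi\rangle)$, so $\phi$ has covariance exactly $\alpha^{-1}(-\Delta_{\Lambda,0})^{-1}$.

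For \eqref{logmixtureexplinear} you take a genuinely different route.

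\emph{The paper's route.} It sums by parts in one variable and invokes the pointwise gradient bound \eqref{e:firstder} for the Dirichlet Green function. That bound is proved in Lemma~\ref{l:greensfunctionestimates} by reflecting into a torus, and the result is a constant of order $R^{d+1}\|f\|_\infty\|\nabla f\|_\infty$.

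\emph{Your route.} You write the Gaussian variance as $\alpha^{-1}\langle g,\Pi g\rangle=\alpha^{-1}\|\Pi g\|^2$, where $\Pi=\nabla(\nabla^*\nabla)^{-1}\nabla^*$ is the orthogonal projection onto Dirichlet gradients. This argument is purely Hilbert-space. It needs no Green function estimates and no derivative of $f$. It gives the sharper constant $C(d,f)=\frac12\sup_{\eps}\eps^d\sum_x|f(\eps x)|^2$, which is essentially $\frac12\|f\|_{L^2}^2$. It also contains the input for \eqref{logmixtureexp} as the special case $g=\1_b$, since $\|\Pi\1_b\|^2\le1$ is precisely the effective-resistance bound.

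\emph{What each buys.} The paper's route yields pointwise decay of $\nabla\nabla'G$, and the paper reuses that kind of estimate in \eqref{BLsplit}. Even there, your projection bound would give $\alpha^{-1}|\Lambda_n|^{-1}$, which is better and suffices.

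One caveat applies equally to the paper's proof. The quantity $\eps^d\sum_x|f(\eps x)|^2$ is bounded uniformly only for $\eps$ in a bounded range, say $\eps\le1$. As $\eps\to\infty$ it behaves like $\eps^d|f(0)|^2$. The statement itself fails there: for quadratic $V$ the left-hand side is $\exp(\frac{\lambda^2}{2}\Var(F_\eps))$, and this variance grows like $\eps^d$. So ``uniformly in $\eps$'' should read ``uniformly in $\eps\in(0,1]$''.
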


For the proof we will need estimates for the Green's function of the discrete Laplacian. 

We let $G_{\Lambda_m+x_0}=(-\Delta_{\Lambda_m+x_0, 0})^{-1}$  be the Green function for the discrete Laplacian $\Delta_{\Lambda_m+x_0, 0}$ with boundary condition $0$, as defined in \eqref{laplacop}. We also let
\[\nabla G_{\Lambda_m+x_0}((x,x+e_i),y)=G_{\Lambda_m+x_0}(x+e_i,y)-G_{\Lambda_m+x_0}(x,y)\]
and
\[\nabla\nabla'G_{\Lambda_m+x_0}((x,x+e_i),(y,y+e_j))=G_{\Lambda_m+x_0}(x+e_i,y+e_j)-G_{\Lambda_m+x_0}(x+e_i,y)-G_{\Lambda_m+x_0}(x,y+e_j)+G_{\Lambda_m+x_0}(x,y)\]
be the first and mixed second derivative of $G_{\Lambda_m+x_0}$. Then we have the following estimates.
\begin{lemma}\label{l:greensfunctionestimates}
Set $d\ge 1$.
\begin{itemize}
\item [(a)]
There is $C=C(d)<\infty$ such that for all $x_0\in\zd$, $m\in\N$, $x,y\in\Lambda_m+x_0$, $i,j\in\{1,\ldots,d\}$ we have
\begin{align}
|\nabla G_{\Lambda_m+x_0}((x,x+e_i),y)|&\le \frac{C}{(1\vee|x-y|)^{d-1}}\label{e:firstder}\\
|\nabla\nabla'G_{\Lambda_m+x_0}((x,x+e_i),(y,y+e_j))|&\le \frac{C}{(1\vee|x-y|)^d}\label{e:mixedsecondder}.
\end{align}
\item [(b)] For all $x_0\in\zd$, $m\in\N$, $b\in\overline{(\Lambda_m+x_0)^*}$ we have
\begin{equation}\label{e:vareta}
|\nabla\nabla'G_{\Lambda_m+x_0}(b,b)|\le 1.
\end{equation}
\end{itemize}
\end{lemma}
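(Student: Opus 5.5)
Part (b) is the easier half, so I will do it first. By the symmetry of $\nabla\nabla'G(b,b)$ in its two edge arguments,
\[
\nabla\nabla'G_{\Lambda_m+x_0}(b,b)=\langle \nabla\1_{b},G\,\nabla\1_{b}\rangle,
\]
which is the variance of $\nabla\phi(b)$ under the discrete GFF on $\Lambda_m+x_0$ with zero boundary data. Here $\nabla\1_b=\1_{y_b}-\1_{x_b}$, restricted to $\Lambda$ (vertices outside $\Lambda$ carry zero boundary values). The quantity is therefore nonnegative, and I will bound it by a variational argument.

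Write $G=(-\Delta)^{-1}$ with $-\Delta=D^*D$, where $D$ is the gradient onto the edges of $\bzdlb$, counted once per undirected edge. For any vector $g$ on vertices we have the dual characterisation
\[
\langle g,(-\Delta)^{-1}g\rangle=\sup_{\phi}\bigl(2\langle g,\phi\rangle-\|D\phi\|^2\bigr).
\]
Taking $g=\nabla\1_b$ gives $\langle g,\phi\rangle=\nabla\phi(b)$. Since $b$ is itself one of the edges, $\nabla\phi(b)^2\le \|D\phi\|^2$, so
\[
2\nabla\phi(b)-\|D\phi\|^2\le 2t-t^2\le 1,\qquad t:=\nabla\phi(b).
\]
This gives $|\nabla\nabla'G(b,b)|\le1$.

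An alternative route to the same conclusion is Rayleigh monotonicity. The quantity $\langle \nabla\1_b,G\nabla\1_b\rangle$ is the effective resistance between the endpoints of $b$ in the network where all vertices outside $\Lambda$ are shorted together into one node. This network contains the unit-resistance edge $b$ itself, so the effective resistance is at most $1$. Either argument works uniformly in $x_0$, $m$ and $d$.

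For part (a), the estimates are the standard gradient bounds for the Dirichlet Green function of a box. By translation invariance I may take $x_0=0$. I will rely on known results rather than reprove elliptic regularity. Interior bounds follow by comparison with the whole-space Green function (or the potential kernel when $d\le2$), using the known asymptotics: gradients decay like $|x|^{1-d}$ and second gradients like $|x|^{-d}$. Boundary bounds in the convex domain $[-m,m]^d$ follow by reflection, since the Dirichlet Green function of a box is an alternating sum over reflected images of the periodic or whole-space kernel. The concrete plan is as follows.
\begin{enumerate}
\item Write $G_{\Lambda_m}$ via the reflection principle: the box is a product of intervals, so the Dirichlet Green function is obtained from the whole-lattice kernel restricted to odd functions under the reflections across the box faces.
\item Use the standard discrete estimates on the whole-lattice kernel.
\item Sum the images. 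This sum is not absolutely convergent for the kernel itself, so I will organise it as a differentiated, alternating sum. Differentiating gives the extra decay needed, and the image points lie at distance at least $|x-y|$ from $x$, so the alternating sums of $\nabla G$ and $\nabla\nabla' G$ converge with the claimed bounds.
\end{enumerate}
Alternatively, I will simply cite the literature on discrete Green functions in boxes, for example the estimates used in the Helffer–Sjöstrand literature \cite{MR2198017}, or results of Lawler–Limic type.

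The main obstacle is the uniformity of the mixed second derivative bound near the boundary and for $d=1,2$, where the image sum converges only conditionally. I expect to handle it by pairing images so that each pair forms a discrete derivative of the whole-space kernel, which yields an absolutely summable series. If this becomes too messy, I will fall back on citing an established boundary regularity estimate for convex lattice domains.
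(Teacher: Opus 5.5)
Part (b) is correct and matches the paper's argument. The paper identifies $\nabla\nabla'G_{\Lambda_m+x_0}(b,b)$ with the variance of $\eta_b$ under the zero-boundary GFF, i.e.\ an effective resistance, and bounds it by $1$ via Rayleigh monotonicity. Your Dirichlet-principle computation is a self-contained proof of the same fact.

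For (a) you take a different route, and your difficulty lies exactly where the routes differ. The paper translates the box to $\{1,\dots,2m+1\}^d$ and uses the zero-average Green's function of the torus $T_m$ of side $4m+4$. On this torus the reflections in $x_i=0$ and in $x_i=2m+2$ coincide. The method of images is then an exact \emph{finite} identity,
\[G_{\Lambda_m+x_0}(x,y)=\sum_{\varepsilon\in\{-1,1\}^d}\varepsilon_1\cdots\varepsilon_d\,G_{T_m}(x,\varepsilon\odot y),\]
where the constant $-(4m+4)^{-d}$ from the normalisation cancels in the alternating sum. Since $y$ is the image closest to $x$, \eqref{e:firstder}--\eqref{e:mixedsecondder} follow at once from uniform gradient bounds for $G_{T_m}$ (Fourier analysis, or \cite[Lemma 3.3]{MR3177848}). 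No convergence question arises.

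With the whole-lattice kernel, the images include all translates by $(4m+4)k$, $k\in\zd$. Your claim that differentiating supplies the needed decay is false as stated: $|\nabla\nabla'G_{\zd}(z)|\sim|z|^{-d}$, let alone $|\nabla G_{\zd}(z)|\sim|z|^{1-d}$, is not summable over a $d$-dimensional lattice of images.

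You must group the $2^d$ images with the same $k$ and use the signs to form differences across distances $2y_i\le 4m+2$. This cancels only in coordinates other than the direction $e_j$ of the $y$-derivative. Reflecting in the $j$-th coordinate turns $e_j$ into $-e_j$, the two sign changes cancel, and those images add rather than subtract.

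For \eqref{e:mixedsecondder}, differencing in the other $d-1$ coordinates bounds the groups with $|k|_\infty\ge2$ by $\sum_{|k|_\infty\ge 2}L^{d-1}(L|k|)^{1-2d}\lesssim L^{-d}$, where $L=4m+4$. The $6^d$ near images are bounded termwise by $C(1\vee|x-y|)^{-d}$. You would still need to prove that the regrouped series is the Dirichlet Green function. In $d=2$ even the fully grouped, undifferentiated potential-kernel series is not absolutely convergent, and $d=1$ is easiest from the explicit formula.

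So your plan can be completed, but the obstacle you flag is the whole content of (a), and the double-size torus is what removes it. Finally, \cite{MR2198017} concerns heat kernels on all of $\zd$, not Dirichlet Green functions of boxes, so it would not serve as your fallback reference.
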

\begin{proof}
Estimates \eqref{e:firstder} and \eqref{e:mixedsecondder} are folklore, but we are not aware of a precise reference in the literature, and so we provide the short proof for completeness. By translation-invariance it suffices to consider the case $x_0=(m+1,\ldots,m+1)$, so that the lower left corner of $\partial(\Lambda_m+x_0)$ becomes the origin.

We will use reflections to compare $G_{\Lambda_m+x_0}$ with a suitable torus Green's function. For that purpose, let $T_m$ be the discrete torus of side length $4m+4$, which we identify with $\{-2m-1,\ldots,2m+2\}^d\subset\zd$, and let $G_{T_m}$ be the zero-average Green's function on $T_m$, uniquely defined by $\sum_{x\in T_m}G_{T_m}(x,y)=0$ for all $y$, and 
$-\Delta_{T_m}G_{T_m}(\cdot,y)(x)=\1_{x=y}-\frac{1}{(4m+4)^d}$.
We will need the analogues of \eqref{e:firstder} and \eqref{e:mixedsecondder} for $G_{T_m}$, that is the estimates
\begin{align}
|\nabla G_{T_m}((x,x+e_i),y)|&\le \frac{C}{(1\vee|x-y|_{T_m})^{d-1}}\label{e:firstdertorus}\\
|\nabla\nabla'G_{T_m}((x,x\pm e_i),(y,y\pm e_j))|&\le \frac{C}{(1\vee|x-y|_{T_m})^d}\label{e:mixedseconddertorus}.
\end{align}
Here $|z|_{T_m}=\inf_{w\in (4m+4)\zd}|z+w|$ denotes the natural Euclidean distance on the torus. The estimates \eqref{e:firstdertorus} and \eqref{e:mixedseconddertorus} can easily be proven by using Fourier analysis on $T_m$. Alternatively, they follow as a special case of \cite[Lemma 3.3]{MR3177848} upon taking the random environment there to be deterministically equal to the identity matrix. 

Now consider for $y\in\Lambda_m+x_0$ the function 
\[\tilde G_{\Lambda_m+x_0}(\cdot,y):=\sum_{\varepsilon_1,\ldots,\varepsilon_d\in\{-1,1\}}\varepsilon_1\cdots\varepsilon_dG_{T_m}(\cdot,\epsilon\odot y)\]
on $\overline{\Lambda_m+x_0}$, where we abbreviate $\epsilon\odot y:=(\varepsilon_1y_1, \ldots, \varepsilon_d y_d)$.

By symmetry we have $\tilde G_{\Lambda_m+x_0}(x,y)=0$ on $\partial(\Lambda_m+x_0)$. For example, along the hyperplane $x_1=0$ the summands with $\pm\varepsilon_1$ cancel each other out. Moreover, we have
\[-\Delta_{\Lambda_m+x_0}\tilde G_{\Lambda_m+x_0}(\cdot,y)(x)=\1_{x=y}-\sum_{\varepsilon_1,\ldots,\varepsilon_d\in\{-1,1\}}\varepsilon_1\cdots\varepsilon_d\frac{1}{(4m+4)^d}=\1_{x=y}\]
as the sum vanishes, again by symmetry. 

But this means that $\tilde G_{\Lambda_m+x_0}(\cdot,y)$ solves the same equation with the same boundary values as $G_{\Lambda_m+x_0}(\cdot,y)$, and so we must have $\tilde G_{\Lambda_m+x_0}(\cdot,y)= G_{\Lambda_m+x_0}(\cdot,y)$. In other words, we have shown that
\begin{equation}\label{e:reptorusgreenfct}
G_{\Lambda_m+x_0}(x,y)=\sum_{\varepsilon_1,\ldots,\varepsilon_d\in\{-1,1\}}\varepsilon_1\cdots\varepsilon_dG_{T_m}(x,\epsilon\odot y),
\end{equation}
for any $x,y\in\Lambda_m+x_0$. In fact, the same equality holds for $x,y\in\overline{(\Lambda_m+x_0)}$, as both sides vanish when one of $x,y$ is on $\partial(\Lambda_m+x_0)$.

Now \eqref{e:reptorusgreenfct} implies that
\[\nabla\nabla'G_{\Lambda_m+x_0}((x,x+ e_i),(y,y+ e_j))=\sum_{\varepsilon_1,\ldots,\varepsilon_d\in\{-1,1\}}\varepsilon_1\cdots\varepsilon_d\nabla\nabla'G_{T_m}((x,x+ e_i),(\epsilon\odot y,\epsilon\odot (y+e_j))),
\]
and hence from \eqref{e:mixedseconddertorus} we deduce that
\begin{align*}
|\nabla\nabla'G_{\Lambda_m+x_0}((x,x+ e_i),(y,y+ e_j))|
&\le\sum_{\varepsilon_1,\ldots,\varepsilon_d\in\{-1,1\}}\left|\nabla\nabla'G_{T_m}((x,x+ e_i),(\epsilon\odot y,\epsilon\odot (y+e_j)))\right|\\
&\le C\sum_{\varepsilon_1,\ldots,\varepsilon_d\in\{-1,1\}}\frac{1}{(1\vee|x-\epsilon\odot y|_{T_m})^d}.
\end{align*}
It remains to observe that among the mirror image points $\epsilon\odot y$ for $\varepsilon_1,\ldots,\varepsilon_d\in\{-1,1\}$, the one closest to $x$ is necessarily $y$. So we deduce that
\[|\nabla\nabla'G_{\Lambda_m+x_0}((x,x+ e_i),(y,y+ e_j))|\le
 2^d C\frac{1}{(1\vee|x-y|_{T_m})^d}
\]
and \eqref{e:mixedsecondder} follows because we also have $|x-y|_{T_m}=|x-y|$. The argument for \eqref{e:firstder} is similar.

For \eqref{e:vareta} note that
\[
\nabla\nabla'G_{\Lambda_m+x_0}(b,b) =\mathbb{E}_{\nu^{\Delta_{1}}_{\Lambda_m+x_0, 0}}(\eta_b^2).
\]
However, this expectation is the effective resistance
between $x_b$ and $y_b$ (see eqn (2.21) in \cite{MR3616205}), and this resistance is
decreasing in the graph by Rayleigh's monotonicity principle (page 35 in \cite{MR3616205}). So we can get an upper bound by considering only the
two-vertex graph with vertices $x_b$ and $y_b$ and an edge between them, and
there the bound for $\mathbb{E}_{\nu^{\Delta_{1}}_{\Lambda_m+x_0, 0}}(\eta_b^2)$ is a trivial computation.

\end{proof}
Now we can return to the proof of Lemma \ref{l:expbrasc}.

\begin{proof}[Proof of Lemma \ref{l:expbrasc}]
By Theorem \ref{genposbeta} for the convex function $F(\eta):=\exp\left((\alpha/2-\delta)\eta^2(b)\right)$ we have that

\[
\mathbb{E}_{\mu_{\Lambda_m+x_0, \xi_0}}\left(\exp((\alpha/2-\delta)\eta^2(b))\right)\le \mathbb{E}_{\mu_{\Lambda_m+x_0, \xi_0}^{\Delta_{\alpha}}} \left(\exp((\alpha/2-\delta)\eta^2(b))\right).   
\]
Under the measure $\mu_{\Lambda_m+x_0, \xi_0}^{\Delta_{\alpha}}$, $\eta_b$ is a centred Gaussian random variable, and by \eqref{e:vareta}, its variance is at most ${\alpha}^{-1}$. So the bound \eqref{logmixtureexp} follows from an explicit computation with the one-dimensional Gaussian density.

We focus now on showing \eqref{logmixtureexplinear}. By applying once more Theorem \ref{genposbeta}, this time to the convex function $\exp(\lambda F_\epsilon(\eta))$, we have 
\[\mathbb{E}_{\mu_{\Lambda_m+x_0, \xi_0}} \left( \exp(\lambda F_\epsilon(\eta))\right)
\le\mathbb{E}_{\mu_{\Lambda_m+x_0, \xi_0}^{\Delta_{\alpha}}} \left(\exp(\lambda F_\epsilon(\eta))\right)=\exp\left(\frac12\lambda^2 \Var_{\mu^{\Delta_{\alpha}}_{\Lambda_m+x_0, \xi_0}}\left(F_\epsilon(\eta)\right)\right),\]
and so it remains to bound the variance. 

By assumption $f\in C_c^\infty(\R^d,\R^d)$, and so in particular $f$ is bounded and there is $R\ge 1$ such that $f(x)=0$ whenever $|x|\ge R$. Using summation by parts in the $y$-variable, we find that
\begin{align*}
&\Var_{\mu^{\Delta_{\alpha}}_{\Lambda_m+x_0, \xi_0}}\bigg(\eps^{d/2}\sum_{x\in\zd}\sum_{i=1}^d f_i(\eps x)\eta((x,x+e_i))\bigg)\\
&=\frac{\eps^d}{\alpha}\sum_{x, y\in\zd}\sum_{i=1}^d\sum_{j=1}^d f_i(\eps x)f_j(\eps y)\nabla\nabla'{G}_{\Lambda_m+x_0}((x,x+e_i),(y,y+e_j))\\
&\le\frac{\eps^{d+1}}{\alpha}\sum_{|x|,|y|\le (R+1)/\epsilon}\sum_{i=1}^d\sum_{j=1}^d |f_i(\eps x)|\frac{|f_j(\eps (y-e_j))-f_j(\epsilon y)|}{\eps}|\nabla{G}_{\Lambda_m+x_0}((x,x+e_i),y)|.
\end{align*}
Now we can use \eqref{e:firstder} to see that
\begin{align*}
&\Var_{\mu^{\Delta_{\alpha}}_{\Lambda_m+x_0, \xi_0}}\bigg(\eps^{d/2}\sum_{x\in \zd}\sum_{i=1}^d f_i(\eps x)\eta((x,x+e_i))\bigg)\\
&\le
C\frac{\eps^{d+1}}{\alpha}\|f\|_{L^\infty(\R^d)}\|\nabla f\|_{L^\infty(\R^d)}\sum_{|x|,|y|\le (R+1)/\epsilon}\sum_{i=1}^d\sum_{j=1}^d \frac{1}{(1\vee|x-y|)^{d-1}}\\
&\le \frac{CR^{d+1}}{\alpha}\|f\|_{L^\infty(\R^d)} \|\nabla f\|_{L^\infty(\R^d)}.
\end{align*}
Here in the last step we used that there are only $C\frac{R^d}{\eps^d}$ choices for $y$, and for each fixed $y$ the sum over $x$ is bounded by $C\frac{R}{\eps}$. This completes the proof of \eqref{logmixtureexplinear}.

\end{proof}

\subsection{Application to strengthened concentration inequalities}
We will prove in this section Theorem \ref{comparenon-convex}.

\begin{proof}[Proof of Theorem \ref{comparenon-convex}]
The idea of the proof is straightforward: We will use the decomposition from Proposition \ref{p:trivialdecomp} and combine it with standard functional inequalities for the individual potentials.

\textbf{Step 1: Proof of (a)(i) and (b)(i)}\\
We begin with part (a) (i). The proof follows by induction on $d\ge 1$: the key idea is that at each induction step, we end up with an $\alpha$-monotone potential in one dimension, so we can apply the decomposition together with the Brascamp-Lieb inequality formulas from the previous induction step. 

We start with the result for $d=1$. By definition, $\mathcal{Y}$ is an $\alpha$-monotone and even continuous function. Then by Proposition \ref{p:trivialdecomp}, there holds
\[
\mathcal{V}(x)=\e^{-\mathcal{Y}(x)}=\int \exp(-\mathcal{Y}_\kappa(x))\rho(\rmd\kappa),
\]
where $\mathcal{Y}_\kappa\in C^2(I_\kappa)$ for some open interval $I_\kappa\subset\R$, $\mathcal{Y}_\kappa''(s)\ge\alpha$ for all $s\in I_\kappa$, and $\mathcal{Y}_\kappa=+\infty$ on $\R\setminus I_\kappa$.

Coupling $\int_{\mathbb{R}} \left(v \cdot x\right) \exp(-\mathcal{Y}_\kappa(x))\rmd x=0, \forall\kappa,$ with Theorem 1.1 from Harg\'e \cite{Harge} applied to each $\rmd\mu_{\mathcal{Y}_\kappa}(x):=\exp(-\mathcal{Y}_\kappa(x))\rmd x/\int_{\mathbb{R}}\exp(-\mathcal{Y}_\kappa(x))\rmd x$ in the mixture, the rest of the proof follows by the same arguments as the proof of Theorem \ref{genposbeta} above and will be omitted. We thus get
\[
\int_{\mathbb{R}}F\left(vx\right)\rmd\mu_{\mathcal{Y}}(x)\le \int_{\mathbb{R}} F\bigg(\frac{vx}{\sqrt{\alpha}}\bigg)\rmd\Gamma_1(x).
\]
We assume now that Theorem \ref{comparenon-convex} holds for $d-1\ge 0$, and we will prove it for $d\ge 2$. For simplicity of calculations, we will assume that $\int_{\mathbb{R}^d}\mathcal{V}(x)\rmd x=1$. We then write
\begin{equation}
\begin{split}
\label{inductionstepd}
 \int_{\mathbb{R}^d}F\left(v \cdot x\right)\rmd\mu_{\mathcal{Y}}(x)
 &=\int_{\mathbb{R}}\rmd x_d\frac{\int_{\mathbb{R}^{d-1}} F\bigg(v_dx_d+\sum_{i=1}^{d-1}v_i x_i\bigg)\mathcal{V}(x)\prod_{i=1}^{d-1}\rmd x_i}{\int_{\mathbb{R}^{d-1}}\mathcal{V}(x)\prod_{i=1}^{d-1}\rmd x_i}\int_{\mathbb{R}^{d-1}}\mathcal{V}(x)\prod_{i=1}^{d-1}\rmd x_i\\
 &\le \int_{\mathbb{R}}\rmd x_d\bigg(\int_{\mathbb{R}^{d-1}} F\bigg(v_dx_d+\sum_{i=1}^{d-1}\frac{v_i}{\sqrt{\alpha}} x_i\bigg)\rmd\Gamma_{d-1}(x_1, \ldots, x_{d-1})\bigg)\int_{\mathbb{R}^{d-1}}\mathcal{V}(x)\prod_{i=1}^{d-1}\rmd x_i\\
\end{split}
\end{equation}
where for the inequality we applied the induction hypothesis
in dimension $(d-1)$. Next, the function
\[
x_d\to \int_{\mathbb{R}^{d-1}} F\bigg(v_d x_d+\sum_{i=1}^{d-1}\frac{v_i}{\sqrt{\alpha}} x_i\bigg)\rmd\Gamma_{d-1}(x_1, \ldots, x_{d-1})
\]
is a convex function in $x_d$. Furthermore, the potential $U(x_d):=-\log \int_{\mathbb{R}^{d-1}}\mathcal{Y}(x)\prod_{i=1}^{d-1}\rmd x_i$ is $\alpha$-monotone and even by hypothesis.  We can therefore apply to \eqref{inductionstepd} the induction Step 1, and \eqref{BLRn} immediately follows. 

Next, we turn to part (b)(i). By Proposition \ref{p:trivialdecomp},  we have $\mathcal{W}(|x|)=-\log\int \exp(-\mathcal{W}_\kappa(|x|))\rho(\rmd\kappa)$, where $\mathcal{W}_\kappa\in C^2(I_\kappa)$ for some open interval $I_\kappa\subset\R$, $\mathcal{W}_\kappa''(s)\ge\alpha$ for all $s\in I_\kappa$, and $\mathcal{W}_\kappa=+\infty$ on $\R\setminus I_\kappa$. Furthermore, the derivatives  of the radial function  $\mc{W}_{\kappa}$ are given for $|x|\in I_\kappa\setminus\{0\}$ by
\begin{align}
    \nabla \mc{Y}_{\kappa}(x)=\frac{x}{|x|}\mc{W}'_{\kappa}(|x|),
    \nabla^2 \mc{Y}_{\kappa}(x)=
    \frac{\mc{W}'_{\kappa}(|x|)}{|x|}
    \left(\mathrm{Id}_d-\frac{x x^\top}{|x|^2}\right)
    +\mc{W}''_{\kappa}(|x|)\frac{xx^\top}{|x|^2}.
\end{align}
Using the bounds $\mc{W}'_{\kappa}(|x|)\geq \alpha |x|$ and $\mc{W}''_{\kappa}(|x|)\geq \alpha$
and that $xx^\top$ and $\mathrm{Id}_d-|x|^{-2}x x^\top$ are positive semidefinite, we find for $|x|\in I_\kappa\setminus\{0\}$ that
\begin{align}
    \nabla^2 \mc{Y}_{\kappa}(x)\geq
    \alpha\left(\mathrm{Id}_d-\frac{x x^\top}{|x|^2}\right)+\alpha\frac{xx^\top}{|x|^2}=\alpha \cdot\mathrm{Id}_d.
\end{align}
Extending the result to $\{0\}$ by continuity, there holds that $\mathcal{Y}_\kappa(x)- \alpha |x|^2/2$ is convex for all $\kappa$. The rest of the proof follows the same arguments as for $d=1$ from (a)(i), and will be omitted. 

\textbf{Step 2: Proof of (a)(ii) and (b)(ii)}\\
We start again with part (a)(ii). In view of Proposition \ref{p:trivialdecomp} and since $F$ is an odd function in at least one coordinate, the proof for the $L^2$ inequality follows by similar arguments to \cite[Theorem 17]{BartKlar} and will be omitted. To show the $L^1$ inequality, we can use for each term in the decomposition the $L^1$ inequality. That inequality for convex potentials that may take the value $+\infty$ follows for example from \cite[Theorem 1]{FaGoPro} coupled with the $L^1$ inequality for the standard Gaussian.

To show (b)(ii), we write again as in the proof of Theorem \ref{comparenon-convex} (b)(i)
\[
\mathcal{V}(x)=\e^{-\mathcal{X}(x)-\mathcal{Y}(x)}=\int \exp(-\mathcal{X}(x)-\mathcal{Y}_\kappa(x))\rho(\rmd\kappa).
\]
Define $\rmd\mu_\kappa(x):=\exp(-\mathcal{X}(x)-\mathcal{Y}_\kappa(x))\rmd x/\int_{\mathbb{R}^d}\exp(-\mathcal{X}(x)-\mathcal{Y}_\kappa(x))\rmd x$. Since $F$ is an odd function, we have $\int_{R^d}F(x)\rmd\mu_\kappa(x)=0$, for all $\kappa$. We have $L^2$ and $L^1$-Poincaré inequalities for each $\mu_\kappa$ (as follows again from \cite[Theorem 1]{FaGoPro} and the  $L^2$ and $L^1$ inequalities for the standard Gaussian), and then the rest of the proof follows by the same arguments as the proof of Theorem \ref{genposbetapoincare} above and will be omitted.

\end{proof}

\section{Existence of ergodic gradient Gibbs measures for \texorpdfstring{$\alpha$}{alpha}-monotone potentials}\label{s:existence}

The goal of this section is to prove Theorem \ref{t:existenceGGMs}, i.e. the existence of an ergodic tempered gradient Gibbs measure with zero tilt and satisfying the a priori bound \eqref{e:Gaussianmoments}.

For that purpose we proceed step by step, and first prove the existence of a shift-invariant tempered gradient Gibbs measure with zero tilt (but which is not necessarily ergodic) in Theorem \ref{existgibbs}, and then prove in Theorem \ref{existextr} that we can extract an ergodic component from it which is still a tempered gradient Gibbs measure with zero tilt. The proof of the two theorems is based on the strategy in \cite{MR2985173}, where existence of shift-covariant gradient Gibbs measures with zero tilt was proved for shift-covariant gradient Gibbs measures with interaction potentials which are strongly log-concave log-mixtures of quadratic growth. Here we improve this result by removing the upper bound on $V_\kappa''$, which requires only minor changes.

The main difficulty in this section is to show that the measures just constructed satisfy the stronger exponential moment bound \eqref{e:Gaussianmoments}, and even more, that every ergodic gradient Gibbs measure $\mu$ with zero tilt satisfies \eqref{e:Gaussianmoments} since there may be more than one such measure. In case of Theorem \ref{existgibbs} these are a straightforward consequence of Lemma \ref{l:expbrasc}, but the case of Theorem \ref{existextr} is considerably more challenging, as it is not clear whether extracting an ergodic component preserves all estimates simultaneously. So we proceed indirectly. The key technical statement, Lemma \ref{c:unique_disordered_Gibbs}, states that for each shift-invariant probability measure $\P$ on $(K^{\bzd},\Kcal^{\otimes\bzd})$ there is a shift-covariant disordered gradient Gibbs measure $\kappa\to\mu[\kappa]$ that satisfies the bounds \eqref{e:Gaussianmoments}. Applying that result with $\P$ equal to the $\kappa$ marginal $\bar\mu$ of the measure from Theorem \ref{existextr}, the uniqueness result of Theorem \ref{t:CK} then implies that the two shift-covariant gradient Gibbs measures are the same, i.e. that the ergodic measure from Theorem \ref{existextr}, or more generally the arbitrary ergodic measures from Theorem \ref{t:existenceGGMs}(b), already satisfied the bounds \eqref{e:Gaussianmoments}.

\subsection{Existence of shift-invariant tempered gradient Gibbs measures at zero tilt}
\label{constr}

To begin with, we recall the construction of an infinite volume shift-invariant gradient Gibbs measure. Assume the boundary condition $\psi(x)=0$, with $\xi_0(b)$ being the corresponding gradient one. Consider the gradient Gibbs measure $\mu_{\Lambda, \xi_0}$ and let us define the \textit{spatially averaged} measure $\hat\mu^0_{\Lambda}$ on gradient configurations given by 
\begin{equation}
\label{heldfixed}
\hat\mu^{0}_{\Lambda}:=\frac{1}{|\Lambda|}\sum_{w\in \Lambda}\mu_{\Lambda+w, \xi_0},
\end{equation}
where we defined $\Lambda+w:=\{z+w:z\in\Lambda\}$. We note that in \eqref{heldfixed}, the volumes $\Lambda+x$ 
are shifted around. 

This is the construction of shift-invariant Gibbs measures given in \cite{MR2807681}, in formula (5.20) from Chapter $5.2$; the construction was used in \cite{MR2985173} to prove existence of shift-covariant tempered gradient Gibbs measures when $A_1s^2+B _1\le V(s)\le A_2s^2+B _2$, for all $s, B_1,B_2\in\mathbb{R}$ and $A_1,A_2>0.$ Below we state the existence result for shift-invariant measures without this quadratic growth assumption, and under the stronger needed exponential moment bounds of \eqref{expexist}. We provide a short proof sketch for completeness.

\begin{theorem}[Existence for $\alpha$-monotone potentials of shift-invariant tempered gradient Gibbs measures at zero tilt]
\label{existgibbs}

Let $d\ge 1$. Assume that $V\in C(\mathbb{R})$ is an $\alpha$-monotone even potential. Then there exists a subsequence $(m_i)_{i\ge 1}$ such that $\hat\mu^{0}_{\Lambda_{m_i}}$
converges weakly to a measure $\mu$ as $i\to \infty$, which is a shift-invariant gradient Gibbs measure. 

Fix now $0<\delta<\alpha/2$ and $\epsilon>0$ arbitrarily. Let $f\in C_c^\infty(\R^d,\R^d)$ and take $F_\epsilon$ as in \eqref{e:observable}. Then for all $\lambda\in\R$
\begin{equation}
\label{expexist}
\sup_{b\in(\zd)^*}\mathbb{E}_{\mu}(\exp((\alpha/2-\delta)\eta^2(b)))\le\frac{\sqrt{\alpha}}{\sqrt{2\delta}}~~\mbox{and}~~\mathbb{E}_{\mu}(\exp(\lambda F_\eps(\eta)))\le \e^{\frac{C(d, f)}{\alpha}\lambda^2},~~\mbox{where}~~C(d, f)>0.
\end{equation}
\end{theorem}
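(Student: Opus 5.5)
The plan is to use tightness and a compactness argument, with the uniform exponential bounds of Lemma~\ref{l:expbrasc} as the key input. Fix a bond $b$. For $w\in\Lambda_m$, the measure $\mu_{\Lambda_m+w,\xi_0}$ is a finite-volume gradient measure on the translated box. By \eqref{logmixtureexp}, applied with $x_0=w$, we have $\mathbb{E}_{\mu_{\Lambda_m+w,\xi_0}}\exp((\alpha/2-\delta)\eta^2(b))\le\sqrt{\alpha}/\sqrt{2\delta}$ uniformly in $w$, $m$ and $b$. For bonds not in $\overline{(\Lambda_m+w)^*}$ we have $\eta(b)=0$, so the bound is trivial there. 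Averaging over $w$, the same bound holds for $\hat\mu^0_{\Lambda_m}$, uniformly in $m$. Hence each one-dimensional marginal is tight. Since $\chi$ sits inside the product space $\R^{\bzd}$ with the product topology and is closed there, the family $\{\hat\mu^0_{\Lambda_m}\}_m$ is tight (for instance in $\chi_r$ equipped with $|\cdot|_r$, using the uniform second moments and the weights $\e^{-2r|x_b|}$). Prokhorov's theorem then gives a weakly convergent subsequence with limit $\mu$.

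Next I would verify that $\mu$ has the required properties. Shift-invariance follows from the standard averaging estimate: $\hat\mu^0_{\Lambda_m}\circ\tau_e^{-1}-\hat\mu^0_{\Lambda_m}$ is a boundary term whose total variation is of order $|\partial\Lambda_m|/|\Lambda_m|\to0$. For the exponential bounds \eqref{expexist}, note that $\eta\mapsto\exp((\alpha/2-\delta)\eta^2(b))$ and $\exp(\lambda F_\eps(\eta))$ are continuous, nonnegative and local (the latter because $f$ has compact support, so $F_\eps$ depends on finitely many bonds). By the portmanteau theorem (Fatou for weak convergence), $\mathbb{E}_\mu G\le\liminf_i\mathbb{E}_{\hat\mu^0_{\Lambda_{m_i}}}G$ for such lower semicontinuous $G\ge0$. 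This transfers \eqref{logmixtureexp} and \eqref{logmixtureexplinear} to $\mu$, after averaging over $w$ in the $x_0$-uniform bounds. In particular $\mu$ is tempered.

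The main obstacle is the DLR equation \eqref{dlrgrad} for the limit. Fix a finite $\Lambda$ and $F\in C_b(\chi)$ local. For every $w$ with $\Lambda\subset\Lambda_m+w$ at distance at least $2$ from the boundary, the finite-volume consistency of specifications gives $\int\mu_{\Lambda_m+w,\xi_0}(\d\xi)\int\mu_{\Lambda,\xi}(\d\eta)F(\eta)=\int F\,\d\mu_{\Lambda_m+w,\xi_0}$. The proportion of $w\in\Lambda_m$ for which this fails tends to $0$. So it suffices to show that $\xi\mapsto\int\mu_{\Lambda,\xi}(\d\eta)F(\eta)$ is continuous and bounded on $\chi$, and then pass to the limit along $m_i$. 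Continuity in $\xi$ is where the lack of regularity of $V$ (only continuous, possibly with non-quadratic tails) matters. Via Remark~\ref{equivzd} I would write $\mu_{\Lambda,\xi}$ as a density on $\R^\Lambda$ depending on the finitely many boundary values, namely $\e^{-H}$ normalised. Continuity then follows by dominated convergence, provided there is a uniform integrable majorant for $\xi$ in a neighbourhood. Here $\alpha$-monotonicity helps: $V(s)\ge V(0)+\alpha s^2/2$, which gives a Gaussian lower bound on $H$ locally uniformly in the boundary data. Combined with the continuity of $V$, this yields both the majorant and the positivity of the partition function. With this step done, $\mu$ is a shift-invariant tempered gradient Gibbs measure satisfying \eqref{expexist}.
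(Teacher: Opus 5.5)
Your proposal is correct and follows the paper's route. The uniform Brascamp--Lieb-based bounds of Lemma~\ref{l:expbrasc} give tightness of the spatially averaged measures $\hat\mu^0_{\Lambda_m}$; the paper uses the equivalent second-moment/Chebyshev form of these bounds. Prokhorov's theorem then yields a limit, and the bounds \eqref{expexist} pass to it by lower semicontinuity. Your arguments for shift-invariance (the $|\Lambda_m\triangle(\Lambda_m+v)|/|\Lambda_m|$ boundary estimate) and for the DLR equation (finite-volume consistency, plus continuity of $\xi\mapsto\mu_{\Lambda,\xi}(F)$ via the Gaussian lower bound $V(s)\ge V(0)+\alpha s^2/2$) are the standard details the paper omits by referring to earlier work.
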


\begin{proof}
We will start by showing tightness of the measure $\hat\mu^{0}_{\Lambda_m}$. By \cite[Corollary 4.13]{MR2807681}, tightness will be implied by showing 
\begin{equation}
\label{tightcrit}
\limsup_{N\to\infty}\sup_{m\ge 1} {\hat\mu}^0_{\Lambda_m}\left(\eta: |\eta(b)|\ge N\right)=0.
\end{equation}
For this purpose, and since by Chebyshev's inequality we have 
\begin{equation}
\label{Chebtight}
{\hat\mu}^0_{\Lambda_m}\left(\eta: |\eta(b)|\ge N\right)\le \frac{{\hat\mu}^0_{\Lambda_m}(\eta^2(b))}{N^2},
\end{equation}
we claim that it is sufficient to find an upper bound for all $m$ of type
\begin{equation}
\label{unifbalpha}
\int \bigg(\sum_{\substack{x,y\in\mathbb{Z}^d\\|x-y|=1}}(\phi(x)-\phi(y))^2
\bigg)\mu_{\Lambda_m, \xi_0}(\d\phi)\le |\Lambda_m| C(\alpha,d).
\end{equation}
For simplicity of notation, take $j=1$ and $x=0$. Then we have
 \begin{equation}
 \label{equaltiltusqr}
 \begin{split}
{\hat\mu}^0_{\Lambda_m}\left(\left(\eta(b_{0,1})\right)^2\right)
&=\frac{1}{|\Lambda_m|}\sum_{w\in\Lambda_m}\mu_{{\Lambda}_{m}+w, \xi_0}\left(\left(\phi(e_1)-\phi(0)\right)^2\right)=\frac{1}{|\Lambda_m|}\sum_{w\in\Lambda_m}\mu_{{\Lambda}_{m}, \xi_0}\left(\left(\phi(e_1-w)-\phi(-w)\right)^2\right)\\
&=\frac{1}{|\Lambda_m|}\sum_{x\in\Lambda_m}\mu_{{\Lambda}_{m}, \xi_0}\left(\left(\phi(e_1+x)-\phi(x)\right)^2\right)\le\frac{1}{|\Lambda_m|}\mu_{{\Lambda}_{m}, \xi_0}\bigg(\sum_{x, y\in\zd, |x-y|=1}\big(\phi(x)-\phi(y)\big)^2\bigg),
\end{split}
 \end{equation}
 where for the second equality we shifted all the parameters $\phi(x)\to \phi(x-w)$.

 Considering \eqref{Chebtight} and \eqref{equaltiltusqr}, \eqref{unifbalpha} implies tightness of ${\hat\mu}^0_{\Lambda_m}$, which in turn means that there exists a subsequence $(m_i)_{i\ge 1}$ such that $\hat\mu^{0}_{\Lambda_{m_i}}$ converges as $m_i\to \infty$ weakly to $\mu$. We move now to the proof of \eqref{unifbalpha}.

In view of Theorem \ref{genposbeta}, we write
\begin{equation*}
\label{disordBLexist}
\int \bigg(\sum_{\substack{x,y\in\mathbb{Z}^d\\|x-y|=1}}(\phi(x)-\phi(y))^2
\bigg)\mu_{\Lambda_m, \xi_0}(\d\phi)\le \mathbb{E}_{\mu^{\Delta_{\alpha}}_{\Lambda_m, \xi_0}}\bigg(\sum_{\substack{x,y\in\mathbb{Z}^d\\|x-y|=1}}(\phi(x)-\phi(y))^2
\bigg)<C(\alpha, d)|\Lambda_m|,
\end{equation*}
where for the last inequality we applied \eqref{logmixtureexp}. This shows \eqref{unifbalpha}.

The shift-invariance and DLR property follow similarly to the proofs of Lemma 3.9 and Lemma 3.10 from \cite{MR3383338} and to the more involved setting from Lemma \ref{c:unique_disordered_Gibbs} below, so their proofs will be omitted. Furthermore, for all $0<\delta< \alpha/2$, we have by \eqref{logmixtureexp}
\[\sup_{b\in(\zd)^*}\mu(\exp((\alpha/2-\delta)\eta^2(b)))\le\sup_{\substack{b\in(\zd)^*\\ \Lambda_{m_i}\subset\zd}}\liminf_{i\to\infty}{\hat\mu}_{\Lambda_{m_i}}^0(\exp((\alpha/2-\delta)\eta^2(b)))\le\frac{\sqrt{\alpha}}{\sqrt{2\delta}}.\]
The second inequality in  \eqref{expexist} can be shown similarly. This concludes the claims.
\end{proof}

\begin{corollary}
\label{directilt}
Let $d\ge 1$. Assume that $V\in C(\mathbb{R})$ is an $\alpha$-monotone even potential. For all $j\in\{1,2,\ldots, d\}$ and $x\in\zd$, let $b_{x,j}:=(x, x+e_j)\in (\zd)^*$. Then $\mu$ constructed in Theorem \ref{existgibbs} has tilt zero, that is, it satisfies
\begin{equation}
\label{tiltu}
\mu(\eta(b_{x,j}))=0, ~~j=1,\ldots,d.
\end{equation}
\end{corollary}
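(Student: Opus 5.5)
The plan is to show the zero-tilt property already at the level of the finite-volume measures $\mu_{\Lambda_m+w,\xi_0}$, and then pass it to the limit along the subsequence $(m_i)$ from Theorem~\ref{existgibbs}. For this we need uniform integrability, which the exponential moment bound of Lemma~\ref{l:expbrasc} provides.

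\textbf{Step 1 (finite volume symmetry).} Fix a finite box $\Lambda=\Lambda_m+w$ with zero boundary condition $\psi\equiv0$. Since $V$ is even, the Hamiltonian $H_{\Lambda,0}$ in \eqref{eqn00} is invariant under the map $\phi\mapsto-\phi$ on $\R^\Lambda$. This map preserves both the Lebesgue measure and the zero boundary data. Hence $\nu_{\Lambda,0}$ is invariant under $\phi\mapsto -\phi$, exactly as in \eqref{lalagibbs}. Via Remark~\ref{equivzd1}, $\mu_{\Lambda,\xi_0}$ is then invariant under $\eta\mapsto-\eta$. Since $\eta(b_{x,j})$ is integrable (by \eqref{logmixtureexp}), this gives $\mu_{\Lambda,\xi_0}(\eta(b_{x,j}))=0$ for every bond. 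This holds also for bonds outside $\bzdlb$, where $\eta\equiv\xi_0=0$. Averaging over $w\in\Lambda_m$ yields $\hat\mu^0_{\Lambda_m}(\eta(b_{x,j}))=0$ for all $m$.

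\textbf{Step 2 (passage to the limit).} The function $\eta\mapsto\eta(b_{x,j})$ is continuous but unbounded, so weak convergence $\hat\mu^0_{\Lambda_{m_i}}\to\mu$ does not immediately transfer expectations. I would truncate: write $\eta(b)=T_N(\eta(b))+R_N(\eta(b))$ with $T_N(s)=\max(-N,\min(N,s))$ bounded and continuous. Then $\hat\mu^0_{\Lambda_{m_i}}(T_N(\eta(b)))\to\mu(T_N(\eta(b)))$ as $i\to\infty$.

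For the remainder, $|R_N(s)|\le |s|\1_{|s|\ge N}\le s^2/N$. By \eqref{logmixtureexp}, applied with say $\delta=\alpha/4$ and averaged over shifts, $\sup_m\hat\mu^0_{\Lambda_m}(\eta(b)^2)\le C(\alpha)$. The same bound holds under $\mu$, by \eqref{expexist} or by lower semicontinuity. Hence both remainders are at most $C(\alpha)/N$ uniformly in $i$. Letting first $i\to\infty$ and then $N\to\infty$ gives $\mu(\eta(b_{x,j}))=\lim_i\hat\mu^0_{\Lambda_{m_i}}(\eta(b_{x,j}))=0$.

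\textbf{Main obstacle.} The only real subtlety is the uniform integrability in Step~2, and it is fully handled by the Brascamp--Lieb based bound of Lemma~\ref{l:expbrasc}. That bound is uniform in $m$ and in the shift $x_0$, which is exactly what is needed for the averaged measures $\hat\mu^0_{\Lambda_m}$.

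One should also note that the symmetry in Step~1 uses only evenness of $V$ and the zero boundary condition, so it requires no convexity.
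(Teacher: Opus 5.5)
Your proof is correct and follows essentially the paper's argument: an odd truncation of $\eta(b_{x,j})$ has zero mean under the finite-volume measures by evenness of $V$ and the zero boundary condition, this passes to $\mu$ by weak convergence, and the truncation error is bounded by $C/N$ using the uniform second-moment control from \eqref{logmixtureexp} and \eqref{expexist}. Your clipping $T_N$ is slightly cleaner than the paper's indicator truncation, which needs a continuous modification before weak convergence applies; since $T_N$ is already odd, $\hat\mu^0_{\Lambda_m}(T_N(\eta(b)))=0$ directly, so the untruncated finite-volume identity and the remainder bound under $\hat\mu^0_{\Lambda_m}$ are not actually needed.
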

\begin{proof}
Fix $N>0$. Since $V$ is even, we have similarly to \eqref{lalagibbs} for all sets $\Lambda_{m_i}\subset\zd$
\[
{\hat\mu}_{\Lambda_{m_i}}^0(\eta(b_{x,j}) \mathds{1}_{|\eta(b_{x,j})|<N})=0.
\]
Using for the second inequality below Chebyshev's inequality and using \eqref{expexist} for the third, we get
\begin{align*}
|\mu(\eta(b_{x,j}))|&\le |\mu(\eta(b_{x,j}) \mathds{1}_{|\eta(b_{x,j})|<N})|+|\mu(\eta(b_{x,j}) \mathds{1}_{|\eta(b_{x,j})|\ge N})|\\
&\le\lim_{i\to\infty}|{\hat\mu}_{\Lambda_{m_i}}^0(\eta(b_{x,j}) \mathds{1}_{|\eta(b_{x,j})|<N})| + \frac{\mu(\eta^2(b_{x,j}))}{N}\le\frac{C}{N},
\end{align*}
where for the second inequality we applied a continuous truncation of the indicator function. Taking now $N\to\infty$ completes the argument.
\end{proof}

\subsection{Existence of shift-invariant ergodic gradient Gibbs measure with zero tilt}
In this subsection we will upgrade the result from Theorem \ref{existgibbs} to prove existence of an ergodic gradient Gibbs measure with the same properties.

Existence of ergodic gradient Gibbs measures with given tilt is a much more subtle question than that of existence of shift-invariant gradient Gibbs measures with that tilt. It does not follow from tightness of measures, but requires more delicate estimates via the Brascamp-Lieb inequality. Before we prove the main result from Theorem \ref{existextr}, we will need the result of Theorem \ref{existergod} of existence of gradient Gibbs measures with direction-averaged tilt zero. The property of direction-averaged tilt will ensure that the constructed ergodic Gibbs measure selected in Theorem \ref{existextr} by the ergodic decomposition satisfies the correct zero tilt, which is the crucial step in the proof.
\begin{theorem}
\label{existergod}
Fix $d\ge 1$. Let for all $j\in\{1,2,\ldots,d\}$ 
$$E_j:=\{ \eta~|~\lim_{n\rightarrow\infty}\frac{1}{|\Lambda_n|}\sum_{x\in\Lambda_n}\eta(b_{x, j})=0\},$$
along the sequence of volumes $\Lambda_n$, where $b_{x,j}:=(x, x+e_j)\in (\zd)^*$. 

Assume that $V\in C(\mathbb{R})$ is an $\alpha$-monotone even potential, or more generally, that $V$ is even and satisfies a Brascamp-Lieb inequality as in \eqref{BL111}. Then $\mu$ constructed in Theorem \ref{existgibbs} has direction-averaged tilt zero, that is
\begin{equation}
\label{firstlimit11}
\mu(E_j)=1,~j\in\{1,2,\ldots,d\}.
\end{equation} 
\end{theorem}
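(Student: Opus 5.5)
We show that the spatial averages $S_n(\eta):=|\Lambda_n|^{-1}\sum_{x\in\Lambda_n}\eta(b_{x,j})$ have second moment under $\mu$ tending to zero fast enough to get almost sure convergence. The key point is that the variance bound comes from the Brascamp--Lieb inequality, which is uniform in the finite volume, so it passes to the weak limit.

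\textbf{Step 1: finite-volume bound.} Fix $n$ and $j$. For the box $\Lambda_m+w$ with zero boundary data, the measure $\mu_{\Lambda_m+w,\xi_0}$ is symmetric under $\eta\mapsto-\eta$, so $S_n$ is centred. Apply Theorem~\ref{genposbeta} with the convex function $F(s)=s^2$ and the linear functional $v^*\cdot\eta=S_n(\eta)$. This gives
\[
\mathbb{E}_{\mu_{\Lambda_m+w,\xi_0}}(S_n^2)\le \frac{1}{\alpha|\Lambda_n|^2}\sum_{x,y\in\Lambda_n}\nabla\nabla'G_{\Lambda_m+w}(b_{x,j},b_{y,j}).
\]
Under the more general hypothesis one uses \eqref{BL111} directly instead of Theorem~\ref{genposbeta}.

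The double sum is the variance of the sum of $\eta(b_{x,j})$ under the GFF with conductance $1$. Summing first over $x$ along lines in direction $e_j$ telescopes to differences of $\nabla G$ between the two faces of $\Lambda_n$ orthogonal to $e_j$. By \eqref{e:firstder}, each such difference is at most $C(1\vee|\cdot|)^{-(d-1)}$. Summing over the $O(n^{d-1})$ transverse starting points and over $y\in\Lambda_n$ then gives the GFF variance bound $C(d)\,n^{d}\cdot n$ (or better) for the double sum. Hence
\[
\mathbb{E}_{\mu_{\Lambda_m+w,\xi_0}}(S_n^2)\le C(d)\alpha^{-1}n^{-(d-1)}\quad\text{uniformly in }m,w .
\]
This decays polynomially for $d\ge2$. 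The case $d=1$ requires separate treatment; see the obstacle paragraph.

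\textbf{Step 2: pass to the limit.} Averaging over $w$, the same bound holds for $\hat\mu^0_{\Lambda_m}$. The variable $S_n$ is a finite linear combination of coordinates. Using the exponential moment bound \eqref{logmixtureexp} for uniform integrability, or simply lower semicontinuity of $\mathbb{E}(S_n^2)$ under weak convergence after truncation, we obtain $\mathbb{E}_\mu(S_n^2)\le C n^{-(d-1)}$ for the limit $\mu$ from Theorem~\ref{existgibbs}.

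\textbf{Step 3: from $L^2$ to almost sure convergence.} Along the dyadic subsequence $n_k=2^k$, the bound from Step 2 is summable, so Borel--Cantelli gives $S_{n_k}\to0$ $\mu$-a.s. To fill in intermediate $n$, either invoke the pointwise ergodic theorem for the shift-invariant $\mu$, or control the difference directly. The ergodic theorem route works because $\mu$ is tempered and shift-invariant, so $S_n$ converges a.s. to $\mathbb{E}_\mu(\eta(b_{0,j})\mid\mathcal I)$, where $\mathcal I$ is the invariant $\sigma$-field. The limit along the subsequence identifies this conditional expectation as $0$ a.s. Hence $\mu(E_j)=1$.

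\textbf{Main obstacle.} The delicate point is the Green function estimate in Step 1, which must be uniform in $m$ and $w$, including boxes $\Lambda_n$ that are not contained in $\Lambda_m+w$ or sit near its boundary. Bonds outside $\Lambda_m+w$ equal zero, so those are harmless. For the remaining bonds, I expect to handle this via the reflection representation \eqref{e:reptorusgreenfct}, which makes \eqref{e:firstder} uniform.

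In $d=1$, Step 1 only gives an $O(1)$ bound. There I would instead use the variance identity directly: the variance of the sum of consecutive increments equals the effective resistance of a segment with grounded ends, which is at most $n/4$. This again gives $\mathbb{E}(S_n^2)\le C/(\alpha n)$. Applying Borel--Cantelli along $n_k=k^2$ and then the ergodic theorem concludes the argument as before.
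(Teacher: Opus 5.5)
Your proposal is correct and is essentially the paper's argument: a Brascamp--Lieb comparison with the conductance-$\alpha$ GFF, uniform over the shifted boxes, then a Green's function bound, lower semicontinuity of $\mathbb{E}(S_n^2)$ under the weak limit, and the multiparameter ergodic theorem for $\mu$-a.s.\ existence of the limit; the only differences are that you telescope in $x$ and use \eqref{e:firstder} where the paper sums $|\nabla\nabla'G|$ via \eqref{e:mixedsecondder} (obtaining $C\log n/|\Lambda_n|$), and that you use Borel--Cantelli along a subsequence where the paper applies Fatou to the a.s.\ limit. Your count in Step~1 is off by a factor of $n$: the telescoped sum is $O(n^{d-1}\cdot n)=O(n^{d})$, so $\mathbb{E}(S_n^2)=O(n^{-d})$ already for $d=1$, hence the separate one-dimensional argument is unnecessary, and in fact no rate is needed at all, since Fatou together with $\mathbb{E}_\mu(S_n^2)\to 0$ already identifies the a.s.\ limit as $0$.
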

\begin{proof}
We will use in our proof the construction of the infinite volume shift-invariant gradient Gibbs measure, as detailed in Subsection \ref{constr} above. The key idea  to show \eqref{firstlimit11} is to reduce the calculations to those for the GFF, for which purpose we will use the Brascamp-Lieb inequality. 
 
\vspace{2mm}

Fix $j\in \{1,2,\ldots, d\}$. In order to show \eqref{firstlimit11}, it suffices to show that 
\begin{equation}
\label{firstlimit1ainit}
\lim_{n\rightarrow\infty}\bigg|\frac{1}{|\Lambda_n|}\sum_{x\in\Lambda_n}\eta(b_{x, j})\bigg|=0~~~\mu ~~\mbox{a.s.}.
\end{equation}
However, for the purposes of our computations below, we will find it easier to prove the following statement which will imply \eqref{firstlimit1ainit}.
\begin{equation}
\label{firstlimit1a}
\mu\bigg(\lim_{n\rightarrow\infty}\bigg(\frac{1}{|\Lambda_n|}\sum_{x\in\Lambda_n}\eta(b_{x, j})\bigg)^2\bigg)=0.
\end{equation}
Note that $\mu$-a.s. existence of the limit $\frac{1}{|\Lambda_n|}\sum_{x\in\Lambda_n}\eta(b_{x, j})$ follows from a multiparameter version of Kingman's  subadditive ergodic theorem \cite[Theorem 2.4]{Akcoglu-Krengel}.
By Fatou's lemma, it follows that to show \eqref{firstlimit1a} it is enough to prove that
\begin{equation}
\label{firstlimit120a}
\liminf_{n\rightarrow\infty}\mu\bigg(\bigg(\frac{1}{|\Lambda_n|}\sum_{x\in\Lambda_n}\eta(b_{x,j})\bigg)^2\bigg)=0.
\end{equation}
By the lower semicontinuity of $\big|\frac{1}{|\Lambda_n|}\sum_{x\in\Lambda_n}\eta(b_{x,j})\big|$ and by the weak convergence of $\hat{\mu}_{\Lambda_{m_i}, 0}$ to $\mu$ as $i\to\infty$, for a subsequence $(m_i)_{i\ge 1}$, we then have
\begin{equation}
\label{boundliminf}
\begin{split}
\mu\bigg(\bigg(\frac{1}{|\Lambda_n|}\sum_{x\in\Lambda_n}\eta(b_{x, j})\bigg)^2\bigg)
&\le\liminf_{i\to\infty}\hat{\mu}_{\Lambda_{m_i}, 0}\bigg(\bigg(\frac{1}{|\Lambda_n|}\sum_{x\in\Lambda_n}\eta(b_{x, j})\bigg)^2\bigg)\\
&=\liminf_{i\rightarrow\infty}\frac{1}{|{\Lambda}_{m_i}|}\sum_{w\in {\Lambda}_{m_i}}\mu_{{\Lambda}_{m_i}+w, \xi_0}\bigg(\bigg(\frac{1}{|\Lambda_n|}\sum_{x\in\Lambda_n}\eta(b_{x, j})\bigg)^2\bigg),
\end{split}
\end{equation}
where we recall that $\xi_0$ was defined as $\xi_0(b):=\nabla\phi_0(b)$ for all $b\in (\zd)^*$.  

Fix $m_i,n\in\mathbb{N}$. By means of Theorem \ref{genposbeta}, we have
\begin{equation}
\label{BLsplit}
\begin{split}
&\mu_{{\Lambda}_{m_i}+w,\xi_0}\bigg(\bigg(\frac{1}{|\Lambda_n|}\sum_{x\in\Lambda_n}\eta(b_{x, j})\bigg)^2\bigg)\\
&\le\mu^{\Delta_{\alpha}}_{{\Lambda}_{m_i}+w, \xi_0}\bigg(\bigg(\frac{1}{|\Lambda_n|}\sum_{x\in\Lambda_n}\eta(b_{x,j})\bigg)^2\bigg)=\mu^{\Delta_{\alpha}}_{{\Lambda}_{m_i}, \xi_0}\bigg(\bigg(\frac{1}{|\Lambda_n|}\sum_{x\in\Lambda_n-w}\eta(b_{x,j})\bigg)^2\bigg)\\
&=\frac{1}{|\Lambda_n|^2}\sum_{\{x,y\in \left(\Lambda_n-w\right): b_{x, j}, b_{y, j}\in {\bar\Lambda}^*_{m_i}\}}\mu^{\Delta_{\alpha}}_{{\Lambda}_{m_i}, \xi_0}\left(\eta(b_{x,j})\eta(b_{y,j})\right)\\
&\le \frac{1}{\alpha|\Lambda_n|^2}\sum_{\{x,y\in \left(\Lambda_n-w\right): b_{x, j}, b_{y, j}\in {\bar\Lambda}^*_{m_i}\}}\left|\nabla\nabla' G_{\Lambda_{m_i}}((x, x+e_j), (y, y+e_j))\right|\\
&\le \frac{C(d)}{\alpha}\frac{1}{|\Lambda_n|}\int_1^n\frac{r^{d-1}}{r^{d}}\d r\le \frac{C_1(d)}{\alpha}\frac{\log n}{|\Lambda_n|},
\end{split}
\end{equation}
where for the second inequality we applied \eqref{e:mixedsecondder}, and where $C(d), C_1(d)>0$.

Plugging now \eqref{BLsplit} in \eqref{boundliminf}, we obtain
\[
\frac{1}{|\Lambda_{m_i}|}\sum_{w\in\Lambda_{m_i}}\mu_{{\Lambda}_{m_i}+w,\xi_0}\bigg(\bigg(\frac{1}{|\Lambda_n|}\sum_{x\in\Lambda_n}\eta(b_{x,j})\bigg)^2\bigg)\le \frac{C_1(d)}{\alpha}\frac{\log n}{n^d}.
\]
Taking now $i\to\infty, n\to\infty$ in the above produces the desired result.
\end{proof}

\begin{theorem}{(Existence of ergodic gradient Gibbs measure with zero tilt)}
\label{existextr}
\\
Fix $d\ge 1$. Assume that $V\in C(\mathbb{R})$ is an $\alpha$-monotone even potential, or more generally, that $V$ is even and satisfies a Brascamp-Lieb inequality as in \eqref{BL111}. Then there exists at least one shift-invariant ergodic gradient Gibbs measure $\gamma$ with 
\begin{equation}
\label{firstlimit111}
\gamma(E_j)=1,~j\in\{1,2,\ldots,d\}.
\end{equation} 
Furthermore, $\gamma$ satisfies \eqref{tiltu} and it is tempered. 
\end{theorem}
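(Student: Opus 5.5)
We take the shift-invariant gradient Gibbs measure $\mu$ constructed in Theorem \ref{existgibbs}. It satisfies $\mu(E_j)=1$ for all $j$ by Theorem \ref{existergod}, and it obeys the uniform moment bound $\sup_b\mu(\eta(b)^2)\le C(\alpha)$ coming from \eqref{expexist}. From $\mu$ we extract an ergodic component via the ergodic decomposition. Concretely, the set of shift-invariant gradient Gibbs measures is a simplex whose extreme points are the ergodic ones, so we write
\[
\mu=\int \gamma\,w(\d\gamma),
\]
where $w$ is a probability measure on shift-invariant ergodic gradient Gibbs measures. That the ergodic components are again Gibbs follows from the standard result of Georgii \cite{MR2807681}, Chapter 14, applied to the gradient specification. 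This is exactly where the DLR property of $\mu$ is needed.

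Next we transfer the properties of $\mu$ to $w$-almost every component. Since $1=\mu(E_j)=\int\gamma(E_j)\,w(\d\gamma)$ and $\gamma(E_j)\le1$, we get $\gamma(E_j)=1$ for $w$-a.e.\ $\gamma$ and all $j$ simultaneously, because there are finitely many $j$. For temperedness we use Fubini: $\int \gamma(\eta(b)^2)\,w(\d\gamma)=\mu(\eta(b)^2)<\infty$, so $\gamma(\eta(b)^2)<\infty$ for $w$-a.e.\ $\gamma$. Taking a countable intersection over $b\in\bzd$, and using shift-invariance to reduce to finitely many bonds, we conclude that $w$-a.e.\ $\gamma$ is tempered. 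Any such $\gamma$ lying in this full-measure set is the one we pick.

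Finally, we deduce zero tilt, i.e.\ \eqref{tiltu}, for the chosen $\gamma$. Since $\gamma$ is tempered and ergodic, the multiparameter ergodic theorem gives, $\gamma$-a.s. and in $L^1(\gamma)$,
\[
\frac{1}{|\Lambda_n|}\sum_{x\in\Lambda_n}\eta(b_{x,j})\to\gamma(\eta(b_{0,j})).
\]
On the other hand, $\gamma(E_j)=1$ says that this limit is $0$ almost surely. Hence $\gamma(\eta(b_{0,j}))=0$, and by shift-invariance $\gamma(\eta(b_{x,j}))=0$ for all $x$ and $j$, which is \eqref{tiltu}.

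The main obstacle is the first step: showing that the ergodic decomposition of a gradient Gibbs measure is supported on gradient Gibbs measures. This is not entirely routine, because the state space $\chi$ is non-compact and the specification is only defined on configurations satisfying the plaquette condition. We would handle it by working on the Polish space $\chi_r$, which carries full $\mu$-measure thanks to temperedness. There we verify that the specification is quasilocal (Feller) and shift-invariant, and then invoke \cite[Theorem 14.10 / 14.15]{MR2807681}, after which the remaining steps are immediate.
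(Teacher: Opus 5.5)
Your proposal is correct and follows essentially the same route as the paper: take the measure from Theorem \ref{existgibbs}, use the ergodic decomposition \cite[Theorem 14.17]{MR2807681}, transfer $\gamma(E_j)=1$ and temperedness to almost every ergodic component by averaging, and then obtain zero tilt from the multiparameter ergodic theorem. Two small remarks: identifying the ergodic limit directly with $\gamma(\eta(b_{0,j}))$ via ergodicity is slightly cleaner than the paper's interchange of limit and expectation; and what the Gibbs property of the components really requires is that shift-invariant events are tail-measurable modulo null sets, not quasilocality of the specification.
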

\begin{proof}
Let $\mu$ be the measure constructed in Theorem \ref{existgibbs}. By the ergodic decomposition of shift-invariant Gibbs measures, as explained in \cite[Theorem 14.17]{MR2807681}, we write
\begin{equation}
\label{ergdecomp}
\mu=\int \gamma \d{\mathcal M}(\gamma),
\end{equation}
 where ${\mathcal M}$ is a measure supported on ergodic gradient Gibbs measures. Then we have by means of Theorem \ref{existergod} that
$\mu(\cup_{j=1}^d E^c_j)=0$ and hence ${\mathcal M}(\gamma:\gamma(\cup_{j=1}^dE^c_j)\neq 0)=0$. But by ergodicity for $\mathcal{M}$-a.e. $\gamma$ we have $\gamma(\cup_{j=1}^dE^c_j)\in\{0,1\}$. 
Therefore we must have ${\mathcal M}(\gamma:\gamma(\cup_{j=1}^dE^c_j)=0)=1$. In particular the set $\{\gamma:\gamma(\cup_{j=1}^dE^c_j)=0\}$ is nonempty, so ${\mathcal M}$-a.s. every ergodic component $\gamma$ satisfies $\gamma(E_j)=1$ for all $j=1,\ldots, d$.  To show temperedness, we use that in view of \eqref{ergdecomp} and \eqref{expexist}, ${\mathcal M}$-a.s. every ergodic component $\gamma$ is tempered, simultaneously for the countable set of bonds. It follows that we can choose from the intersection of these sets a tempered $\gamma$ satisfying \eqref{firstlimit111}.

Finally, we show that \eqref{firstlimit111} implies \eqref{tiltu}. To start with, $\lim_{n\rightarrow\infty}\frac{1}{|\Lambda_n|}\sum_{x\in\Lambda_n}\eta(b_{x,j})$ can be shown to exist $\gamma$-a.s. by subadditivity arguments. Additionally, by \eqref{expexist}, $\gamma\big(\frac{1}{|\Lambda_n|}\sum_{x\in\Lambda_n}|\eta(b_{x,j})|\big)<\int\gamma(\d\eta)|\eta(b_{0,j})|<\infty$. Therefore, we have by means of  $\gamma(E_j)=1$  and of the $L^1$ multiparameter ergodic theorem that
\[
0=\gamma\bigg(\lim_{n\rightarrow\infty}\frac{1}{|\Lambda_n|}\sum_{x\in\Lambda_n}\eta(b_{x,j})\bigg)=\lim_{n\rightarrow\infty}\gamma\bigg(\frac{1}{|\Lambda_n|}\sum_{x\in\Lambda_n}\eta(b_{x,j})\bigg)=\gamma(\eta(b_{0,j})).
\]
\end{proof}

\begin{proof}[Proof of Theorem \ref{t:existenceGGMs}(a)]
From Theorem \ref{existgibbs} and Corollary \ref{directilt}, we have existence of at least one gradient Gibbs measure $\mu$ satisfying \eqref{e:Gaussianmoments}. From Theorem \ref{existextr}, we can construct an ergodic and tempered measure with zero tilt. Part (b) of the theorem is proved after Lemma \ref{c:unique_disordered_Gibbs} below, which is used in the proof.
\end{proof}
\begin{remark}
The arguments above can be adapted to also show, for $\alpha$-monotone continuous and even potentials $V$, existence of ergodic Gibbs measures $\nu$ with zero tilt in $d\ge 3$.  
\end{remark}

\subsection{Exponential bound for shift-covariant gradient Gibbs measures}

The main result of this section is the proof of Theorem \ref{t:existenceGGMs}(b) below, used in the proof of Theorem \ref{t:existenceGGMs}, and it is a crucial ingredient for the derivation of the exponential moment bound needed in the proof of the scaling limit.

To show Theorem \ref{t:existenceGGMs}(b), we will apply the following lemma of a.s. existence of shift-covariant gradient Gibbs measures satisfying an exponential-moment bound. The proof of the lemma is based on the ideas in \cite{MR2985173}, where a.s. existence of shift-covariant gradient Gibbs measures satisfying temperedness was proved under more restrictive assumptions on the potential. For clarity and since the lemma is used also in the proof of Theorem \ref{th:extremality}, we provide a sketch of the arguments below.

\begin{lemma}
\label{c:unique_disordered_Gibbs}
Let $\P$ be a shift-invariant probability measure on $(K^{\bzd},\Kcal^{\otimes\bzd})$ such that $\P$-a.s. $\kappa(b)=\kappa(-b)$  for all $b\in\bzd$. Assume that for all $b\in (\zd)^*$, the map $(\kappa(b),s)\mapsto V_{\kappa(b)}(s)$ is jointly measurable, and for each $\kappa(b)$ the function $V_{\kappa(b)}$ is in $C(\R)$ and satisfies $V_{\kappa(b)}(s)=V_{\kappa(b)}(-s)$ for all $s\in\R$. Assume also that there exists $c_V>0$ such that $s\to V_{\kappa(b)}(s)- c_Vs^2/2$ is convex, for all $b\in (\zd)^*$ and $\kappa\in K^\bzd$. 

Then there exists \textbf{at least one} (up to $\mathbb{P}$-null sets) shift-covariant disordered gradient Gibbs measure $\kappa\rightarrow\mu[\kappa]$, $\mu[\kappa] \in {\cal P}(\chi)$,  such that the annealed measure $\mu^{\text{av}}(\rmd\eta):=\int\mu[\kappa](\d\eta)\d\mathbb{P}(\kappa)$ has tilt $0$, that is, it satisfies \eqref{tiltu}. 

Recall now the definition of $E_j, j\in\{1,2,\ldots,d\}$, as given in Theorem \ref{existergod}. Then the measure $\mu[\kappa]$ satisfies for a.e. $\kappa$
\begin{equation}
\label{firstlimit12b}
\mu[\kappa](E_j)=1,~j\in\{1,2,\ldots,d\}.
\end{equation} 
Fix $0<\delta<c_V/2$ and $\eps>0$ arbitrarily. Let $f\in C_c^\infty(\R^d,\R^d)$ and take $F_\epsilon$ as in \eqref{e:observable}. Then the measure $\mu[\kappa]$ satisfies for all $\lambda\in\R$ and for $\mathbb{P}$-almost all $\kappa\in K^\bzd$, for some $\epsilon$-independent $C(d, f)>0$
\begin{equation}
\label{logmixtureexpinfrand}
\sup_{b\in (\zd)^*}\mathbb{E}_{\mu[\kappa]}(\exp((c_V/2-\delta)\eta^2(b)))\le \frac{\sqrt{c_V}}{\sqrt{2\delta}}~~\mbox{and}~~\mathbb{E}_{\mu^{\text{av}}}(\exp(\lambda F_\eps(\eta)))\le \e^{\frac{C(d, f)}{c_V}\lambda^2}.
\end{equation}

Assume additionally that $\P$ is an ergodic probability measure such that the function $V_{\kappa(b)}$ is in $C^2(\R)$ and there exists $C_V>0$ with $V''_{\kappa(b)}\le C_V$, for all $b\in (\zd)^*$ and $\kappa\in K^\bzd$. Then, by Theorem \ref{t:CK}(a), $\kappa\to\mu[\kappa]$ is a.s. unique such that it satisfies \eqref{firstlimit12b} and $\mu^{\text{av}}$ satisfies \eqref{intcond1a}. Finally, it can be shown that $\mu^{\text{av}}$ is ergodic.
\end{lemma}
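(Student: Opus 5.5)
We will construct $\mu[\kappa]$ by the same spatially averaged finite-volume procedure as in Theorem~\ref{existgibbs}, but now quenched in $\kappa$ and combined with the shift-covariance trick of \cite{MR2985173}. For fixed $\kappa$ and $w\in\zd$ let $\mu_{\Lambda_m+w,\xi_0}[\kappa]$ be the finite-volume disordered gradient Gibbs measure with zero boundary data. Instead of averaging only over $\eta$, we average the joint law of $(\kappa,\eta)$. Concretely, on $K^{\bzd}\times\chi$ we set
\[
\hat Q_m(\d\kappa,\d\eta):=\frac{1}{|\Lambda_m|}\sum_{w\in\Lambda_m}\P(\d\kappa)\,\mu_{\Lambda_m+w,\xi_0}[\kappa](\d\eta).
\]
By shift-invariance of $\P$, the $\kappa$-marginal of $\hat Q_m$ is $\P$ for every $m$. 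Hence tightness of $(\hat Q_m)$ only requires tightness of the $\eta$-marginals.

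The key a priori input is a quenched version of Lemma~\ref{l:expbrasc}. For each fixed $\kappa$ the Hamiltonian $H_{\Lambda,0}[\kappa]$ is a sum of potentials $V_{\kappa(b)}$ with $V_{\kappa(b)}-c_Vs^2/2$ convex. Harg\'e's inequality \eqref{BL111} then applies directly to $\mu_{\Lambda_m+w,\xi_0}[\kappa]$, and evenness gives zero mean as in \eqref{lalagibbs}. The proof of Lemma~\ref{l:expbrasc}, with the Green function bounds of Lemma~\ref{l:greensfunctionestimates}, gives uniformly in $\kappa,m,w,b$:
\begin{itemize}
\item the bound $\sqrt{c_V}/\sqrt{2\delta}$ on $\E\exp((c_V/2-\delta)\eta(b)^2)$;
\item the bound $\exp(C(d,f)\lambda^2/c_V)$ on $\E\exp(\lambda F_\eps)$;
\item the $(\log n)/|\Lambda_n|$ bound \eqref{BLsplit} on the second moment of direction averages.
\end{itemize}
These yield tightness, so along a subsequence $\hat Q_{m_i}\to Q$ weakly. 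We then define $\mu[\kappa]:=Q(\d\eta\mid\kappa)$ as a regular conditional distribution. Measurability comes for free from this definition, and the annealed measure is $\mu^{\rm av}=Q(\d\eta)$.

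Next we verify the required properties in order.
\begin{enumerate}
\item \emph{Shift-covariance.} This follows because $\hat Q_m$ is almost invariant under the joint shift $(\kappa,\eta)\mapsto(\tau_v\kappa,\tau_v\eta)$, with an error of order $|\Lambda_m\triangle(\Lambda_m+v)|/|\Lambda_m|\to0$.
\item \emph{Zero tilt.} The measures $\mu[\kappa]$ are symmetric because each finite-volume measure is, and this passes to $Q$ via the truncation argument of Corollary~\ref{directilt}.
\item \emph{The bounds \eqref{logmixtureexpinfrand}.} These pass to the limit by lower semicontinuity. For the annealed $F_\eps$ bound this is direct. For the quenched pointwise bound in $\kappa$, we test against $\1_A(\kappa)$ for $A\in\Fkappa$: the bound holds for $\hat Q_m(\1_A(\kappa)\,\cdot)$ with constant $\P(A)\sqrt{c_V}/\sqrt{2\delta}$, and this gives the $\P$-a.s. statement for the countably many $b$.
\item \emph{Property \eqref{firstlimit12b}.} This follows as in Theorem~\ref{existergod}. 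The quenched bound gives $Q\big((|\Lambda_n|^{-1}\sum\eta(b_{x,j}))^2\big)\le C\log n/(c_V|\Lambda_n|)$, and then Fatou's lemma together with existence of the limit gives $\mu^{\rm av}(E_j)=1$, hence $\mu[\kappa](E_j)=1$ a.s.
\end{enumerate}

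The main obstacle is the DLR property of $\mu[\kappa]$ for a.e. $\kappa$. Weak convergence of joint laws does not obviously commute with conditioning on $\kappa$, and $\kappa\mapsto\mu_{\Lambda,\xi}[\kappa]$ need not be continuous, since $K$ is only a measure space. Following \cite{MR2985173} and Lemmas~3.9--3.10 of \cite{MR3383338}, we would proceed in three steps:
\begin{enumerate}
\item Fix finite $\Lambda$ and test functions $\1_A(\kappa)F(\eta)$, where $A$ depends on finitely many $\kappa(b)$ and $F\in C_{b}$ is local.
\item Observe that for $\Lambda+w'\subset\Lambda_m+w$ the DLR identity holds exactly for $\mu_{\Lambda_m+w,\xi_0}[\kappa]$.
\item Pass to the limit using that $\xi\mapsto\mu_{\Lambda,\xi}[\kappa](F)$ is continuous in $\xi$ for fixed $\kappa$, since it is a finite-dimensional integral of a continuous density.
\end{enumerate}
Dependence on $\kappa$ is only through finitely many coordinates with fixed marginal $\P$. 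We handle it by approximating $\kappa\mapsto\mu_{\Lambda,\xi}[\kappa](F)$ in $L^1(\P)$ by functions continuous in $\xi$ and simple in $\kappa$, with errors controlled uniformly in $m$ via the exponential moment bounds. Boundary layers have vanishing density and are handled the same way.

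Finally, under the extra ergodicity assumption and the upper bound $V''_{\kappa}\le C_V$, temperedness \eqref{intcond1a} follows from the exponential bound, together with \eqref{firstlimit12b}. Theorem~\ref{t:CK}(a) then gives a.s. uniqueness and ergodicity of $\mu^{\rm av}$, which completes the statement.
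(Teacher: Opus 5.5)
Your construction takes a genuinely different route from the paper's. The paper stays quenched. For each fixed $\kappa$ it forms the averaged measures $\hat\mu^0_{\Lambda_m}[\kappa]$. To avoid a $\kappa$-dependent Prokhorov subsequence, it uses Koml\'os' theorem and a diagonal argument to find one deterministic sequence $(m_r)$ along which the Ces\`aro means $\frac1\ell\sum_{r\le\ell}\hat\mu^0_{\Lambda_{m_r}}[\kappa]$ converge weakly for $\P$-a.e.\ $\kappa$. Measurability of $\kappa\mapsto\mu[\kappa]$ is then inherited from an a.s.\ limit. DLR, shift-covariance, symmetry and the Brascamp--Lieb bounds pass to the limit $\kappa$ by $\kappa$, using only continuity of $\xi\mapsto\mu_{\Lambda,\xi}[\kappa](F)$.

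You instead also average over $\kappa$, take a limit $Q$ of the joint laws, and disintegrate (a metastate-type construction). This gives measurability for free and avoids Koml\'os. The price is that every a.s.\ statement must be extracted by testing against $\1_A(\kappa)$. Also, DLR then requires integrands $\1_A(\kappa)\mu_{\Lambda,\xi}[\kappa](F)$ that are only measurable in $\kappa$, which is exactly the obstacle you flagged.

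One step needs to be stated correctly. Since $(K,\Kcal)$ carries no topology, ``tightness'' and ``$\hat Q_{m_i}\to Q$ weakly'' on $K^{\bzd}\times\chi$ are undefined as written. What you actually use is stable (Young-measure) convergence: convergence of $\int g(\kappa)h(\eta)\,\d\hat Q_{m_i}$ for all bounded measurable $g$ and all $h\in C_b(\chi)$. Extracting such a subsequence from laws with fixed first marginal $\P$ and tight second marginals requires a Prokhorov theorem for Young measures, itself typically proved via Koml\'os or Dunford--Pettis. With that theorem in hand, your approximation of Carath\'eodory integrands is the standard extension.

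A cheaper fix is to push $\kappa$ forward by $\Theta(\kappa):=(V_{\kappa(b)})_{b\in\bzd}\in C(\R)^{\bzd}$, which is Polish.
\begin{itemize}
\item All objects depend on $\kappa$ only through $\Theta(\kappa)$, and $\Theta\circ\tau_v=\tau_v\circ\Theta$.
\item Evenness and convexity of $V_{\kappa(b)}-c_Vs^2/2$ give $V_{\kappa(b)}(s)\ge V_{\kappa(b)}(0)+c_Vs^2/2$. By dominated convergence, $(\theta,\xi)\mapsto\mu_{\Lambda,\xi}[\theta](F)$ is then jointly continuous.
\item Ordinary Prokhorov therefore applies, and DLR passes to the limit directly, up to the vanishing boundary layer.
\item You then set $\mu[\kappa]:=Q(\d\eta\mid\theta)|_{\theta=\Theta(\kappa)}$.
\end{itemize}

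A minor point: the $\kappa$-marginal of $\hat Q_m$ equals $\P$ trivially, without shift-invariance. Shift-invariance of $\P$ is what you need for the approximate invariance of $\hat Q_m$ under the joint shift.
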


\begin{proof}
Similarly to \eqref{heldfixed}, let us define for all $\xi_0$ and all $\kappa$ the measure $\hat\mu^0_{\Lambda}[\kappa]$ on gradient configurations by
\begin{equation}
\label{heldfixedrand}
\hat\mu^0_{\Lambda}[\kappa]:=\frac{1}{|\Lambda|}\sum_{x\in \Lambda}\mu_{\Lambda+x, \xi_0}[\kappa].
\end{equation}
where we recall that $(\tau_v\eta)(b)=\eta(b-v)$ for all $b\in\bzd$.

\textbf{Step 1.} We will first show that the sequence $\hat\mu^0_{\Lambda_m}[\kappa]$ is tight and that it converges weakly to a random measure $\kappa\to\mu[\kappa]$. As a first step, it can be shown similarly to the proof of \eqref{unifbalpha} from Theorem \ref{existgibbs} that there exists $C(c_V,d)>0$ such that for all $\Lambda_m\subset\zd$, $b\in (\zd)^*$ and $\kappa$, there holds

\begin{equation}
\label{unifbalpharand}
\hat\mu^0_{\Lambda_m}[\kappa]\left(\eta^2(b)\right)=\frac{1}{|\Lambda_m|}\sum_{x\in \Lambda_m}\mu_{\Lambda_m+x, \xi_0}[\kappa]\left(\eta^2(b)\right)\le C(c_V,d).
\end{equation}
We move now to the proof of tightness.  The main technical point here is to construct the limit in such a way that it is $\mathbb{P}$-a.s. measurable; this takes great care since an application of Prokhorov's theorem is not guaranteed to produce for almost all $\kappa$ the same deterministic subsequence $(m_r)_{r\in\mathbb{N}}$ along which $\hat\mu_{\Lambda_{m_r}}[\kappa]$ converges weakly to a limiting random measure. Constructing such a deterministic sequence will be crucial for establishing the measurability of the map $\kappa\to\mu[\kappa]$, and will be facilitated by \cite[Theorem 1a]{KOM}, which we state below. 
\begin {proposition}
\label{kom}
If $(\zeta_n)_{n\in\mathbb{N}}$ is a sequence of real-valued random variables with $\sup_n\E(|\zeta_n|)<\infty$, there exists a subsequence $\{\theta_n\}_{n\in\mathbb{N}}$ of the sequence $\{\zeta_n\}_{n\in\mathbb{N}}$ and an integrable random variable $\theta$ such that for any arbitrary subsequence $\{\tilde{\theta}_n\}_{n\in\mathbb{N}}$ of the sequence $\{\theta_n\}$, we have almost surely that
$$\lim_{n\rightarrow\infty}\frac{\tilde{\theta}_1+\tilde{\theta}_2+\ldots+\tilde{\theta}_n}{n}=\theta.$$
\end{proposition}

Let $(f_i)_{i\in\N}$ be a countable collection of functions in $C_b(\chi)$, such that a sequence of probability measures $\mu_n \in P(\chi)$ converges weakly to $\mu\in P(\chi)$ if and only if $\mu_n(f_i)\to \mu(f_i)$ for all $i\in\N$; note that such a countable family $(f_i)_{i\in\N}$ in $C_b(\chi)$ is explicitly given for example in the general setting of separable and complete metric spaces in Lemma 1.1 from \cite{Kal}. It will therefore suffice for our purposes to show that there exists a deterministic sequence $(m_i)_{i\in\mathbb{N}}$ in $\mathbb{N}$ such that for $\P$-almost every $\kappa$
\begin{equation}
\label{komlostight}
\tilde\mu_{\ell}[\kappa]:=\frac{1}{\ell}\sum_{r=1}^\ell {\hat\mu}^0_{\Lambda_{m_{r}}}[\kappa]
\end{equation}
satisfies $\tilde\mu_{\ell}[\kappa](f_i) \to \hat\mu[\kappa](f_i)$ almost surely for each $i\in\N$. This will imply weak convergence of $\tilde\mu_{\ell}[\kappa]$ to a measurable random gradient measure $\kappa\to\mu[\kappa]$. For all $m\in\N$ and $b\in (\Z^d)^*$, define 
\begin{equation}\begin{split}\label{heldfixed2}
&X_{m, b}[\kappa]:=\hat\mu_{\Lambda_m}^0[\kappa]\left(\left(\eta(b)\right)^2\right).
\end{split}
\end{equation}

Since \eqref{unifbalpharand} holds and since $f_i, i\in \N,$ are bounded functions, we can apply Proposition \ref{kom} to the sequences $(X_{m, b}[\kappa])_{m, b}$ and $(\hat\mu_{\ell}[\kappa](f_i))_{\ell, i}$, and argue via a Cantor diagonalisation argument as in Theorem 3.8 from \cite{MR2985173}, to show that there exist a deterministic sequence $(m_r)_{r\in\N}$ in $\N$ and random variables $(\omega_b[\kappa])_{b\in (\Z^d)^*}$ and $(\iota_i[\kappa])_{i\in\N}$ such that for $\P$-almost every $\kappa$, for all $b\in (\Z^d)^*$ and all $i\in\N$, we have
\begin{equation}
\label{limkom}
\lim_{\ell\uparrow \infty}\frac{1}{\ell}\sum_{r=1}^\ell X_{m_{r}, b}[\kappa]=\omega_{b}[\kappa]~~\mbox{and}~~\lim_{\ell\uparrow \infty}\frac{1}{\ell}\sum_{r=1}^\ell\hat\mu_{\Lambda_{m_r}}^0[\kappa](f_i)=\iota_i[\kappa],
\end{equation} 
where $\sup_{\ell}\frac{1}{\ell}\sum_{r=1}^\ell X_{m_{r}, b}[\kappa]\le C(c_V, d)<\infty$. The first limit permits to show via \eqref{tightcrit} that for $\P$-a.e. $\kappa$, there exists a (possibly) \textit{random} subsequence $(\ell'[\kappa])$ of $(\ell)_{\ell\in\N}$ such that $\big({\tilde\mu}_{\ell'[\kappa]}[\kappa]\big)_{\ell'[\kappa]}$ is tight and converges weakly to a random measure $\mu[\kappa]$. Moreover, we have ${\tilde\mu}_{\ell'[\kappa]}[\kappa](f_i)\rightarrow\mu[\kappa](f_i)$ for all $i\in\N$. The random subsequence $(\ell'[\kappa])$ is used only for tightness, and will become non-random as we return now as follows to the deterministic sequences $(\ell)_{\ell\in\N}$ and $(m_r)_{r\in\N}$.  Due to the second limit in \eqref{limkom}, and by the uniqueness of the limit point, we get $\iota_i[\kappa]=\mu[\kappa](f_i)$ for all $i$. As $\tilde\mu_{\ell}[\kappa](f_i)\rightarrow \mu[\kappa](f_i)$, it follows that $\tilde\mu_\ell[\kappa]$ converges a.s. to a random measure $\mu[\kappa]$.

\textbf{Step 2.} We show here that $\kappa\to\mu[\kappa]$ is shift-covariant, that is, for all $v\in\Z^d$ and for all $F\in C_b(\chi)$ 
\[
\mu[\kappa](F\circ\tau_v)=\mu[\tau_v\kappa](F).
\]

Fix $v\in\zd$. Then we have, similarly to the proof of Lemma 3.10 from \cite{MR2985173}
\begin{equation}
\label{siebenunddreissig1}
\begin{split}
\mu[\kappa](F\circ\tau_{v})-\mu[\tau_v\kappa](F)&=\lim_{\ell\to\infty}\frac{1}{\ell}\sum_{i=1}^\ell
\frac{1}{|\Lambda_{m_i}|}\Bigl(\sum_{x\in \Lambda_{m_i}}\mu_{\Lambda_{m_i}+x, \xi_0}[\kappa](F\circ\tau_{v})
-\sum_{x\in \Lambda_{m_i}}\mu_{\Lambda_{m_i}+x, \xi_0}[\tau_v\kappa](F)\Bigr)\\
&=\lim_{\ell\to\infty}\frac{1}{\ell}\sum_{i=1}^\ell
\frac{1}{|\Lambda_{m_i}|}\Bigl(\sum_{x\in \Lambda_{m_i}}\mu_{\Lambda_{m_i}+x, \xi_0}[\kappa](F\circ\tau_{v})
-\sum_{x\in \Lambda_{m_i}}\mu_{\Lambda_{m_i}+x-v, \xi_0}[\kappa](F\circ \tau_v)\Bigr)\\
&=\lim_{\ell\to\infty}\frac{1}{\ell}\sum_{i=1}^\ell
\frac{1}{|\Lambda_{m_i}|}\Bigl(\sum_{x\in \Lambda_{m_i}}\mu_{\Lambda_{m_i}+x, \xi_0}[\kappa](F\circ\tau_v)
-\sum_{x\in \Lambda_{m_i}-v}\mu_{\Lambda_{m_i}+x, \xi_0}[\kappa](F\circ \tau_v)\Bigr).
\end{split}
\end{equation}
The second equality in \eqref{siebenunddreissig1} holds because we can make the change of variables $\eta(b)\to\eta(b-v)$ in the second term of the first equality, without changing the value of the integral; in view of \eqref{e:disordered_H}, this allows to show that $\mu_{\Lambda_{m_i}+x, \xi_0}[\tau_v\kappa](F)=\mu_{\Lambda_{m_i}+x-v, \xi_0}[\kappa](F\circ \tau_v)$, for all $x\in\Lambda_{m_i}$. We observe now that most terms on the right-hand side of \eqref{siebenunddreissig1} cancel. Therefore, for a bounded function $F$ such that $\Vert F\Vert_{\infty} \leq C(F)$ for some $C(F)>0$, we have
\begin{equation}
\label{2}
|\mu[\kappa](F\circ\tau_{v})-\mu[\tau_v\kappa](F)|\leq 
\lim_{\ell\uparrow\infty}\frac{C(F)}{\ell}\sum_{i=1}^\ell\frac{|{\Lambda}_{m_i} \triangle ({\Lambda}_{m_i} -v)|}{|{\Lambda}_{m_i}|},
\end{equation}
where we denoted by $\Delta$ the symmetric difference of the sets $\Lambda$ and $\Lambda-v$. Since for a fixed $v$, $|\Lambda_{m_i} \triangle (\Lambda_{m_i}-v)|$  
goes to zero when divided by $|\Lambda_{m_i}|$, this implies that \eqref{2}
goes to zero also. This proves the shift-covariance.

\textbf{Step 3.} Wrapping up the proof

Next on the list is to show that $\mu[\kappa]$ is a gradient Gibbs measure for $\mathbb{P}$-almost all $\kappa$, that is, $\mu[\kappa]$ satisfies the DLR equations. The argument follows similarly to the one of Step 2 above and by the same reasoning as in Lemma 3.9 from \cite{MR2985173}, by using the DLR property in finite volume and that for each fixed
finite volume, the fraction of translated volumes whose boundary intersects it vanishes. Therefore the Gibbs kernels pass to the limit.

Next, since $V_{\kappa(b)}$ is even for all $b\in (\zd)^*$ and $\kappa\in K^\bzd,$ $\mu^{\mbox{av}}$ has expected zero tilt by a similar argument to the one from Corollary \ref{directilt}.

We will briefly turn now to the last two items on the list and give a proof of \eqref{firstlimit12b} and \eqref{logmixtureexpinfrand}.

To show \eqref{firstlimit12b}, it is sufficient to prove
\[\int\mu[\kappa](E_j)\d\P(\kappa)=1,~j\in\{1,2,\ldots,d\}.\]
Since $s\to V_{\kappa(b)}(s)- c_Vs^2/2$ is convex for all $b\in (\zd)^*$ and $\kappa\in K^\bzd$, $\mu_{\Lambda, \xi_0}[\kappa]$ satisfies the Brascamp-Lieb inequality for all $\Lambda\subset\zd$, so the proof follows by the same arguments as in Theorem \ref{existergod}, and will be omitted. 

Finally, we will show \eqref{logmixtureexpinfrand}. Recalling \eqref{komlostight}, we have for a.s $\kappa$ and $0<\delta<c_V/2$
\[
\mu[\kappa](\exp((c_V/2-\delta)\eta^2(b)))\le\lim\inf_\ell\tilde\mu_{\ell}[\kappa](\exp((c_V/2-\delta)\eta^2(b))).
\]
Applying now \eqref{logmixtureexp} to each summation term in $\tilde\mu_{\ell}[\kappa]$ with edge-dependent family $V_{\kappa(b)}$, we write for all $m_i$, $x\in\Lambda_{m_i}$, and $0<\delta<c_V/2$
\[
\mu_{\Lambda_{m_i}+x, \xi_0}[\kappa](\exp((c_V/2-\delta)\eta^2(b)))\le \frac{\sqrt{c_V}}{\sqrt{2\delta}}.
\]
This implies the first inequality in \eqref{logmixtureexpinfrand}. The second inequality in \eqref{logmixtureexpinfrand} can be argued similarly to the above, in view of \eqref{logmixtureexplinear} holding for $\mu_{\Lambda_{m_i}+x, \xi_0}[\kappa]$. This allows us to conclude the proof. 
\end{proof}

\begin{proof}[Proof of Theorem \ref{t:existenceGGMs}(b)]
 Take an arbitrary shift-invariant ergodic gradient Gibbs measure $\mu$ for $V$ with zero tilt. Assume that $\tilde\mu$ is the associated extended gradient Gibbs measure, as introduced in Section \ref{extendedgradGibbs}, and it is therefore uniquely defined in terms of $\mu$. We will only prove the first inequality as the second one follows similarly. Recalling the definition of $\kappa\to\mu[\kappa]$ from Lemma \ref{l:extension_disordered}, and since $\tilde\mu$ has $\mu$ as a marginal, we write
 \[
 \E_{\mu}\left(\exp((\alpha/2-\delta)\eta^2(b))\right)=\E_{\tilde\mu}\left(\exp((\alpha/2-\delta)\eta^2(b))\right)=\E_{\bar\mu}\E_{\mu[\kappa]}\left(\exp((\alpha/2-\delta)\eta^2(b))\right).
 \]
 We observe now that $\kappa\to\extmu(\d\eta\mid\Fkappa)(\kappa)$ plays the role of the a.s unique family from Lemma \ref{c:unique_disordered_Gibbs}, that $\bar\mu$ plays the role of $\P$ therein, and $\mu$ plays the role of $\mu^{\mbox{av}}$ and is ergodic, so in view of the ergodic theorem for co-cycles (see for example \cite[Theorem 4]{MR1101082} or \cite[Lemma 2.8]{Sepplecture}), it satisfies \eqref{firstlimit12b}. We can now apply the a.s. uniqueness statement from Lemma \ref{c:unique_disordered_Gibbs} to conclude that $\mu$ satisfies \eqref{logmixtureexpinfrand}, and therefore \eqref{e:Gaussianmoments} holds.
\end{proof}

\section{Ergodicity and extremality of \texorpdfstring{$\kappa$}{kappa}-disordered measures}\label{s:characterisation}
\subsection{Time-ergodicity for \texorpdfstring{$\kappa$}{kappa}-disordered measures}\label{ss:dynamics}

The uniqueness result from Theorem \ref{t:CK} (which is taken from \cite{MR3383338}) is proven using a key idea going back to Funaki and Spohn \cite{MR1463032}. Namely one uses the Langevin dynamics associated to gradient Gibbs measures in order to construct couplings between different measures. In order to obtain a Helffer-Sjöstrand representation, we will need an additional property of the Langevin dynamics, namely that they are ergodic in time. 

Consider a collection of independent two-sided Brownian motions $W_t(x)$ indexed by $x\in\zd$ and consider the system of stochastic differential equations
\begin{equation}\label{e:Lang_dyn}
\d\eta_t(b)=-\sum_{\substack{b'\in\bzd\\ x_b=x_{b'}}}V_{\kappa(b')}'(\eta_t(b'))\d t+\sum_{\substack{b'\in\bzd\\ y_b=x_{b'}}}V_{\kappa(b')}'(\eta_t(b'))\d t+\sqrt{2}\d W_t(b),\quad b\in\bzd
\end{equation}
where $W_t(b):=W_t(y_b)-W_t(x_b)$. 

The formal generator of this diffusion is given by
\begin{equation}
\label{e:langevin}
\Lcal_{[\kappa]}F(\eta)=\sum_{x\in\zd}\Bigg(\partial_x^2F(\eta)+\sum_{\substack{b'\in\bzd\\ x_{b'}=x}}V_{\kappa(b')}'(\eta(b'))\partial_xF(\eta) \Bigg) 
\end{equation}
where we recall that
\[\partial_xF(\eta):=\frac{d}{dt}\Big|_{t=0}F(\eta+t\nabla\1_x).\]

The first question of interest is the well-posedness of the system of SDEs \eqref{e:Lang_dyn}.
For the purpose of proving Theorem \ref{t:CK}, it would be enough to have existence of strong solutions for times $[0,\infty)$. But in the proof of Theorem \ref{t:scalinglimit}, we will need to consider the space-time ergodicity of $\eta_\cdot$, and so we need to construct solutions for times $(-\infty,\infty)$, which is slightly more subtle.

Before we state the result, we point the reader to the definitions of stationarity and reversibility, as stated for example in Definition 9.1 from \cite{MR2228384}. 

\begin{lemma}\label{l:existence_langevin}
Let $\kappa\in K^\bzd$ such that $\kappa(b)=\kappa(-b)$ for all $b\in\bzd$. Assume that for all $b\in (\zd)^*$, $V_{\kappa(b)}$ is even and in $C^2(\R)$, and there exists $0<c_V\le C_V<\infty$ such that $c_V\le V''_{\kappa(b)}\le C_V$ for all $b\in (\zd)^*$. Let $\mu[\kappa]$ be a $\kappa$-disordered Gibbs measure such that $\mu[\kappa](\chi_r)=1$.

Then there is a diffusion process $\eta_\cdot\in C(\R,\chi_r)$ with the following properties:
\begin{itemize}
    \item[(i)] $\mu[\kappa]$ is stationary and reversible with respect to $\eta_\cdot$
    \item[(ii)] For any $t_0\in\R$ the process $(\eta_{t})_{t\ge t_0}$ is a solution of \eqref{e:Lang_dyn}. This solution is a strong solution in the sense that there is a Borel-measurable map $\Phi$ (independent of $t_0$) such that $(\eta_{t+t_0})_{t\ge 0}=\Phi\big(\eta_{t_0},(W_{t+t_0})_{t\ge 0}\big)$.
    \item[(iii)] $\eta_\cdot$ is invariant in law under time-shifts in the sense that the law of $(\eta_{t_0+t})_{t\in\R}$ does not depend on $t_0$.
\end{itemize}
\end{lemma}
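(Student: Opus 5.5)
The plan has three stages: first build a forward strong solution in the weighted space $\chi_r$, then show it preserves $\mu[\kappa]$ reversibly, and finally extend to two-sided time by stationarity and Kolmogorov.

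For the forward solution, the drift in \eqref{e:Lang_dyn} is a linear combination of the $V'_{\kappa(b')}(\eta(b'))$ over finitely many neighbouring bonds. Since $|V''_{\kappa(b)}|\le C_V$ uniformly, the drift is globally Lipschitz as a map on $L^2_r$. The constant depends only on $d$, $C_V$ and $r$, because the weights $\e^{-2r|x_b|}$ of neighbouring bonds are comparable up to the factor $\e^{2r}$. The noise $W_t(b)=W_t(y_b)-W_t(x_b)$ has paths in $L^2_r$, since $\sum_b \e^{-2r|x_b|}<\infty$. So standard theory for SDEs in a Hilbert space with Lipschitz drift, as in Funaki's treatment \cite{MR2228384} (Section 9) of the uniformly convex homogeneous case, gives the following for every $\eta_0\in\chi_r$:
\begin{itemize}
\item a pathwise unique strong solution in $C([0,\infty),\chi_r)$;
\item a Borel map $\Phi$ with $\eta_\cdot=\Phi(\eta_0,W_\cdot)$, the same map for every starting time by time-homogeneity of the equation.
\end{itemize}
The plaquette condition is preserved because the increments are gradients of the site process. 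The $\kappa$-dependence only enters through edge-dependent coefficients with uniform bounds, so nothing changes relative to \cite{MR2228384} or \cite{MR3383338}.

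Next I show that $\mu[\kappa]$ is stationary and reversible for this Markov process. The plan is to approximate by finite volumes. For $\Lambda_n$, consider the dynamics in which only the sites of $\Lambda_n$ move and the outside configuration is frozen. This is a finite-dimensional reversible diffusion whose invariant measures are the Gibbs kernels $\mu_{\Lambda_n,\xi}[\kappa]$, by the usual integration by parts with generator \eqref{e:langevin}. By the DLR equation, $\mu[\kappa]$ is then invariant and reversible for the $\Lambda_n$-dynamics. Then let $n\to\infty$:
\begin{itemize}
\item Lipschitz continuity gives $L^2_r$-convergence of the finite-volume processes to the infinite-volume one, locally uniformly in time, with the same Brownian motions and initial data drawn from $\mu[\kappa]$.
\item This uses $\mu[\kappa](\chi_r)=1$, together with the integrability of $|\eta|_r^2$ needed for the Gronwall estimate in expectation. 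If only $\mu[\kappa](\chi_r)=1$ is available, I would do the estimate pathwise for fixed $\eta_0$ and pass to the limit in probability.
\item Reversibility, i.e.\ $\E[F(\eta_0)G(\eta_t)]=\E[G(\eta_0)F(\eta_t)]$ for bounded continuous local $F,G$, then passes to the limit.
\end{itemize}
This gives (i).

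For two-sided time, I sample $\eta_0\sim\mu[\kappa]$ independent of the forward Brownian increments and set $\eta_t=\Phi(\eta_0,W)$ for $t\ge0$. Stationarity shows that the finite-dimensional distributions $(\eta_{s_1+h},\dots,\eta_{s_k+h})$ do not depend on $h$. So the family of laws of $(\eta_{t})_{t\ge -T}$, obtained by starting at time $-T$ from $\mu[\kappa]$, is consistent. By Kolmogorov extension this gives a process on $\R$ with continuous paths, and we couple it with a two-sided Brownian motion. Concretely, on a path space carrying $(\eta_{-T},(W_{t}-W_{-T})_{t\ge -T})$ for all $T$, consistency comes from the Markov property and the strong-solution representation. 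It is cleanest to construct the law of the pair $(\eta_\cdot,W_\cdot)$ projectively. Each $(\eta_{t})_{t\ge t_0}$ then equals $\Phi(\eta_{t_0},W_{t_0+\cdot}-W_{t_0})$, which gives (ii). Time-shift invariance of the joint law gives (iii).

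I expect the main obstacle to be this last projective step. One must ensure that $\eta_{t_0}$ is independent of the future increments $W_{t_0+\cdot}-W_{t_0}$ for every $t_0$ simultaneously, and that the resulting object is a single process, not a family of processes. I would handle it by defining the joint law of $(\eta_{-n},\text{increments after }-n)$ for each $n$, checking consistency via the strong-solution map and stationarity, and then applying Kolmogorov's theorem on $C(\R,\chi_r)\times C(\R,\R^{\zd})$.
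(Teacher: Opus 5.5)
Your proof is correct. The forward strong solution (Lipschitz drift on $L^2_r$) and reversibility (frozen finite-volume dynamics, DLR, then a Gronwall limit) are essentially what the paper obtains by citing \cite{MR1463032} and \cite{MR1872740}. The genuine difference is how you extend to two-sided time.

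\textbf{The paper's route.} It does not build the two-sided process by hand. It takes from \cite[4.15]{MR512335} an abstract Markov process $\tilde\eta$ with generator \eqref{e:langevin}, continuous paths, time-shift invariant law and marginals $\mu[\kappa]$. Pathwise uniqueness on each $[t_0,\infty)$ then identifies $\tilde\eta$ with the strong solution, and time-shift invariance shows that the solution maps $\Phi_{t_0}$ do not depend on $t_0$.

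\textbf{Your route.} You construct the joint law of $\eta_\cdot$ and the Brownian increments as a projective limit of forward strong solutions started from $\mu[\kappa]$ at times $-n$. Consistency holds for two reasons:
\begin{itemize}
\item Under the level-$(n+1)$ law, $\eta_{-n}$ is a function of $\eta_{-n-1}$ and the increments on $[-n-1,-n]$. It is therefore independent of the later increments, and it is $\mu[\kappa]$-distributed by stationarity.
\item The flow property coming from pathwise uniqueness identifies the path on $[-n,\infty)$ with $\Phi$ applied to $\eta_{-n}$ and the shifted increments.
\end{itemize}
You get independence of $\Phi$ from $t_0$ directly from the autonomy of \eqref{e:Lang_dyn}, rather than from stationarity.

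\textbf{Comparison.} Your argument is self-contained and produces the coupling with a two-sided Brownian motion explicitly. So (ii), including the independence of $\eta_{t_0}$ from future increments, holds by construction. The paper's argument tacitly needs $\tilde\eta$ to be realised as a weak solution driven by some Brownian motion (via the martingale problem) before pathwise uniqueness can be invoked, and it passes over that step. In exchange, the paper's route is shorter because it outsources the construction of the stationary two-sided process.
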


\begin{proof}
By assumption, the drift $V'_{\kappa(b)}$ is Lipschitz-continuous. So for any fixed $t_0\in\R$ pathwise uniqueness and existence of strong solutions on $[t_0,\infty)$ starting from any deterministic $\eta_0$ follows from standard arguments, see \cite[Lemma 2.2]{MR1463032} or \cite[Section 2.1.3]{MR1872740}. This also provides existence of solution map $\Phi_{t_0}$ (possibly depending on $t_0$) such that $(\eta_{t+t_0})_{t\ge 0}=\Phi_{t_0}\big(\eta_{t_0},(W_{t+t_0})_{t\ge 0}\big)$. Moreover $\mu[\kappa]$ is reversible for the dynamics by the argument in \cite[Proposition 3.1]{MR1463032}, which implies (i).

To extend this to times $(-\infty,\infty)$, we could construct by hand the reversed dynamics on $(-\infty,t_0]$ and glue it together with the dynamics on $[t_0,\infty)$ (both started from the stationary measure). But then it would be slightly tedious to prove time-translation invariance. A faster way is as follows. By the argument in \cite[4.15]{MR512335} there is a Markov process $\tilde\eta_\cdot$ with continuous paths on $l^2(\chi,\mathfrak{a})$ for some weight function $\mathfrak{a}$ with generator \eqref{e:langevin} that is invariant in law under time-shifts, and such that the law of $\tilde\eta_t$ for each fixed $t$ is $\mu[\kappa]$.
So by pathwise uniqueness on any interval $[t_0,\infty)$, we must actually have $\eta_\cdot=\tilde\eta_\cdot$ on any interval $[t_0,\infty)$. But that means that $\tilde\eta_\cdot$ actually is a strong solution on $[t_0,\infty)$ with continuous paths in $\chi_r$. As $\tilde\eta_\cdot$ is invariant in law under time-shifts, pathwise uniqueness then also implies that the $\Phi_{t_0}$ are actually independent of $t_0$. So renaming $\tilde\eta$ to $\eta$, (ii) and (iii) follow as well.
\end{proof}

We denote by $\mathsf{P}^\kappa_{\mu[\kappa]}$ the law of the process $\eta_\cdot$ (and by $\mathsf{E}^\kappa_{\mu[\kappa]}$ the corresponding expectation). In order to establish the Helffer-Sjöstrand representation, we need to establish time-ergodicity of $\eta_\cdot$ under $\mathsf{P}^\kappa_{\mu[\kappa]}$.

\begin{lemma}\label{l:langevin_time_mixing}
Let $\P$ be a shift-invariant ergodic probability measure on $(K^{\bzd},\Kcal^{\otimes\bzd})$ such that $\P$-a.s. $\kappa(b)=\kappa(-b)$ for all $b\in\bzd$. Assume that for all $b\in (\zd)^*$, the map $(\kappa(b),s)\mapsto V_{\kappa(b)}(s)$ is jointly measurable, and for each $\kappa(b)$ the function $V_{\kappa(b)}$ is even and in $C^2(\R)$. Assume also that there exists $0<c_V\le C_V<\infty$ such that $c_V\le V''_{\kappa(b)}\le C_V$ for all $b\in (\zd)^*$ and for $\P$-a.e. $\kappa\in K^\bzd$. 

Let $\kappa\to\mu[\kappa]$ be a shift-covariant disordered gradient Gibbs measure such that the annealed measure $\mu^{\text{av}}$ has zero tilt, is ergodic under the shifts and satisfies \eqref{intcond1a}. Then for any local function $F\in C^2_\text{loc}(\chi)\cap L^2(\chi,\mu[\kappa])$ such that $\Lcal_{[\kappa]} F\in L^2(\chi,\mu[\kappa])$ we have for $\P$-almost all $\kappa\in K^\bzd$ that
\begin{equation}\label{e:langevin_time_mixing}
\lim_{t\to\infty}\big\|\mathsf{E}^\kappa_{\mu[\kappa]}(F(\eta_t)\mid \eta_0=\cdot)-\mu[\kappa] (F)\big\|_{L^2(\mu[\kappa])}=0.
\end{equation}
In particular, if $F$ is also invariant (in the sense that $F=\mathsf{E}^\kappa_{\mu[\kappa]}(F(\eta_t)\mid \eta_0=\cdot)$ for all $t\ge0$), then $F$ must be $\mu[\kappa]$-a.e. constant. Furthermore, if $\mathsf{E}^\kappa_{\mu[\kappa]}(F)=0$ then we also have
\begin{equation}
\label{e:langevin_time_mixingcov}
\lim_{t\to\infty}\mathsf{E}^\kappa_{\mu[\kappa]}(F(\eta_t)F(\eta_0))=0.
\end{equation}
\end{lemma}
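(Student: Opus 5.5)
The plan is to deduce \eqref{e:langevin_time_mixing} from a synchronous coupling of two stationary copies of the dynamics of Lemma~\ref{l:existence_langevin}, in the spirit of Funaki--Spohn \cite{MR1463032} and of the uniqueness proof in \cite{MR3383338}.

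\textbf{Coupling.} Fix $\kappa$ and let $\eta_0,\eta_0'$ be independent with common law $\mu[\kappa]$. Drive both copies of \eqref{e:Lang_dyn} with the \emph{same} Brownian motions $W_t$, using the strong solution map $\Phi$ from Lemma~\ref{l:existence_langevin}(ii). Then $\eta_t,\eta'_t$ each have law $\mu[\kappa]$ by (i), and $\mathsf{E}^\kappa_{\mu[\kappa]}(F(\eta_t)\mid\eta_0)=\E(F(\eta_t)\mid\eta_0)$. Hence, by Jensen,
\[
\big\|\mathsf{E}^\kappa_{\mu[\kappa]}(F(\eta_t)\mid\eta_0=\cdot)-\mu[\kappa](F)\big\|_{L^2(\mu[\kappa])}^2\le \E\big[(F(\eta_t)-F(\eta'_t))^2\big].
\]
For $F$ local and Lipschitz, the right-hand side is bounded by $C_F\sum_{b\in B_F}\E|\eta_t(b)-\eta'_t(b)|^2$, where $B_F$ is the finite set of bonds on which $F$ depends. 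So it suffices to show that for $\P$-a.e. $\kappa$ and every bond $b$,
\[
\E|\eta_t(b)-\eta_t'(b)|^2\to0 .
\]
For a general $F\in C^2_{\mathrm{loc}}\cap L^2$, I would approximate $F$ in $L^2(\mu[\kappa])$ by bounded Lipschitz local functions, for instance by truncation and smoothing. This is legitimate because the Markov semigroup is an $L^2$-contraction.

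\textbf{Energy estimate.} The difference $\zeta_t=\phi_t-\phi'_t$ of the height fields contains no noise. It solves $\partial_t\zeta_t(x)=\sum_{b:\,x_b=x}a_t(b)\nabla\zeta_t(b)$ with conductances
\[
a_t(b)=\int_0^1V''_{\kappa(b)}(s\eta_t(b)+(1-s)\eta'_t(b))\,\d s\in[c_V,C_V].
\]
Testing against $\zeta_t$ on boxes $\Lambda_n$ gives
\[
\tfrac{\d}{\d t}\sum_{x\in\Lambda_n}\zeta_t(x)^2\le -2c_V\sum_{b\in\overline{\Lambda_n^*}}|\nabla\zeta_t(b)|^2+\text{boundary terms}.
\]
Because heights are only defined up to a constant, I would instead work, as in \cite{MR3383338}, with spatial averages of the gradient difference and use shift-covariance. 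The coupled law $\int(\text{law of }(\eta_\cdot,\eta'_\cdot))\,\d\P(\kappa)$ is jointly shift-invariant, so the quantity
\[
h(t):=\int\E\,|\eta_t(b)-\eta'_t(b)|^2\,\d\P(\kappa)
\]
does not depend on $b$ within a direction class. The aim is the annealed inequality
\[
\int_0^T h(t)\,\d t\le C\Big(\limsup_n \frac{1}{|\Lambda_n|}\,\text{(boundary/height-growth terms)}\Big)<\infty,
\]
together with monotonicity of $t\mapsto h(t)$, which follows from the contraction of the coupled dynamics. Together these give $h(t)\to0$. Since $h(t)=\int g_\kappa(t)\,\d\P$ with $g_\kappa(t)\ge0$, and each $g_\kappa$ can also be shown nonincreasing, this yields $g_\kappa(t)\to0$ for $\P$-a.e. $\kappa$ along all $t$. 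Here $g_\kappa(t)$ denotes the same quantity $\E|\eta_t(b)-\eta'_t(b)|^2$ with $\kappa$ fixed.

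\textbf{Main obstacle.} The hardest step is controlling the boundary terms in the energy estimate, i.e.\ showing that the height difference $\zeta_t$ grows sublinearly so that the contributions at $\partial\Lambda_n$ are $o(|\Lambda_n|)$. This is exactly where I would use that $\mu^{\mathrm{av}}$ has zero tilt, is ergodic and satisfies \eqref{intcond1a}. By the ergodic theorem for cocycles (as in the proof of Theorem~\ref{t:existenceGGMs}(b)), both $\mu[\kappa]$ and $\mu[\kappa]\otimes\mu[\kappa]$-a.s. configurations satisfy the sublinearity \eqref{firstlimit12b}, i.e.\ they lie in $E_j$. This sublinearity must then be propagated to time $t$ using the a priori bound $\sup_b\int\eta^2(b)\,\d\mu^{\mathrm{av}}<\infty$ and stationarity. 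I would follow the corresponding step of \cite[Theorem 4.5]{MR3383338} essentially verbatim, since the coupled process there is the same.

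\textbf{Consequences.} Once \eqref{e:langevin_time_mixing} holds, an invariant $F$ satisfies $F=\mathsf{E}^\kappa_{\mu[\kappa]}(F(\eta_t)\mid\eta_0=\cdot)\to\mu[\kappa](F)$ in $L^2$, so $F$ is constant. For \eqref{e:langevin_time_mixingcov}, I would condition on $\eta_0$ and apply Cauchy--Schwarz:
\[
\big|\mathsf{E}^\kappa_{\mu[\kappa]}(F(\eta_t)F(\eta_0))\big|\le\|P_tF\|_{L^2(\mu[\kappa])}\|F\|_{L^2(\mu[\kappa])}\to0,
\]
where $P_tF=\mathsf{E}^\kappa_{\mu[\kappa]}(F(\eta_t)\mid\eta_0=\cdot)$ and the limit uses \eqref{e:langevin_time_mixing} with $\mu[\kappa](F)=0$.
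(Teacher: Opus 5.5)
Your architecture matches the paper's: synchronous coupling of two stationary copies, the Jensen bound $\|P_tF-\mu[\kappa](F)\|^2_{L^2(\mu[\kappa])}\le\E[(F(\eta_t)-F(\eta_t'))^2]$, reduction to local Lipschitz $F$, and the coupling estimate of \cite{MR3383338}. The step that is not justified as written is the passage from the annealed estimate to $\P$-a.e.\ $\kappa$ and \emph{all} $t\to\infty$.

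First, if your bound on $\int_0^Th(t)\,\d t$ is meant uniformly in $T$, it is false for $d\le2$: already for the GFF, $h(t)\asymp t^{-d/2}$. What the energy estimate with sublinear boundary terms yields, and what \cite[Lemma 4.3]{MR3383338} states, is only $\frac1T\int_0^Th(t)\,\d t\to0$.

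Second, and more importantly, monotonicity of $h$ and of $g_\kappa$ does not follow from ``contraction of the coupled dynamics''. The synchronous coupling only makes the (cut-off) $\ell^2$-norm of the height difference $\zeta_t$ dissipative. Pathwise, the gradient difference on a fixed bond can grow as height differences diffuse into it, and for fixed $\kappa$ there is no spatial stationarity to average over. Without monotonicity of $g_\kappa$, even $h(t)\to0$ only gives $g_\kappa(t)\to0$ in $L^1(\P)$, hence $\P$-a.s.\ only along subsequences of times. That is weaker than \eqref{e:langevin_time_mixing}.

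The monotonicity of $g_\kappa$ is in fact true, but for a different reason. Let $\mu^W_t$ be the image of $\mu[\kappa]$ under the flow $\Phi_t(\cdot,W)$. Independence of $\eta_0,\eta_0'$ gives $\E[(F(\eta_t)-F(\eta_t'))^2]=2\Var_{\mu[\kappa]}(F)-2\Var(\mu^W_t(F))$. Conditioning $\mu^W_t(F)$ on the Brownian increments after time $s$ produces a copy of $\mu^W_{t-s}(F)$, by the cocycle property and invariance of $\mu[\kappa]$. Jensen then shows that $t\mapsto\Var(\mu^W_t(F))$ is nondecreasing.

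The paper sidesteps this by putting the monotonicity on the quantity of interest. The map $t\mapsto\|P_tF-\mu[\kappa](F)\|^2_{L^2(\mu[\kappa])}$ is nonincreasing for every $\kappa$, since $P_{t+s}F-\mu[\kappa](F)=P_s(P_tF-\mu[\kappa](F))$ and $P_s$ is an $L^2(\mu[\kappa])$-contraction. Hence
\[
\int\|P_TF-\mu[\kappa](F)\|^2_{L^2(\mu[\kappa])}\,\d\P(\kappa)\le\frac1T\int_0^T\!\!\int\|P_tF-\mu[\kappa](F)\|^2_{L^2(\mu[\kappa])}\,\d\P(\kappa)\,\d t\le C_F\sum_{b\in B_F}\frac1T\int_0^Th_b(t)\,\d t\to0,
\]
where $h_b(t):=\int\E|\eta_t(b)-\eta_t'(b)|^2\,\d\P(\kappa)$. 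Fatou's lemma then gives the $\P$-a.e.\ limit.

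Either fix closes the gap. The rest of your argument is as in the paper: approximation by $\kappa$-independent Lipschitz local functions, constancy of invariant $F$, and Cauchy--Schwarz for \eqref{e:langevin_time_mixingcov}. Once monotonicity of $g_\kappa$ is properly justified, your variant has a mild bonus: the exceptional null set depends only on bonds, so it is uniform over all Lipschitz local $F$.
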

\begin{proof}
The proof is an application of the coupling arguments in \cite{MR1463032,MR1872740}. These arguments have already been adapted to the $\kappa$-disordered setting in \cite{MR3383338}, and the proof of the lemma will use a combination of these ideas.

Namely, we argue as in \cite[Lemma 3.2]{MR1872740} and first reduce the reasoning to just proving the statement for $F$ uniformly Lipschitz with respect to its finitely many arguments. We now fix such an $F$.

Consider now the coupled process $\{\eta_t, {\breve\eta}_t\}_{t\ge 0}$, where $\eta$ and $\breve\eta$ are two solutions of \eqref{e:Lang_dyn}, driven by the same Brownian motion. We take $\eta_0$ and ${\breve\eta}_0$ to be independent and distributed according to $\mu[\kappa]$. Denote the law of the coupled process by $\mathsf{Q}^\kappa_{\mu[\kappa]}$. We set $u(\cdot, t):=\mathsf{E}^\kappa_{\mu[\kappa]}(F(\eta_t)\mid \eta_0=\cdot)$. Then for all $\kappa\in K^\bzd$, we write
\[
\Var_{\mu[\kappa]}(u(\cdot,t))\le\int\int\left[u(\eta, t)-u(\breve\eta, t)\right]^2\d\mu[\kappa](\eta)\d\mu[\kappa](\breve\eta)
\le\int\left(F(\eta_t)-F({\breve\eta}_t)\right)^2\d\mathsf{Q}^\kappa_{\mu[\kappa]}\\
\]
and hence
\[
\int\Var_{\mu[\kappa]}(u(\cdot,t))\d\P(\kappa)\le C\sum_{b\in (\zd)^*\cap \supp(F)}\int\int\left(\eta_t(b)-{\breve\eta}_t(b)\right)^2\d\mathsf{Q}^\kappa_{\mu[\kappa]}\d\P(\kappa),
\]
where the sum is over a finite, independent of $\kappa$, number of bonds since $F$ is local, and $C$ is the Lipschitz constant of $F$. 

Now \cite[Lemma 4.3]{MR3383338} implies that for each fixed $b\in \bzd$ we have
\[\lim_{T\rightarrow\infty}\frac{1}{T}\int_0^T\int\int\left(\eta_t(b)-{\breve\eta}_t(b)\right)^2\d\mathsf{Q}^\kappa_{\mu[\kappa]}\d t\d\P(\kappa)=0\]
This means that also
\begin{align*}
\limsup_{T\rightarrow\infty}\int\frac{1}{T}\int_0^T \Var_{\mu[\kappa]}(u(\cdot,t))\d t\,\d\P(\kappa)
\le\lim_{T\rightarrow\infty}\frac{C\sum_{b\in (\zd)^*\cap \supp(F)}}{T}\int\int_0^T\int\left(\eta_t(b)-{\breve\eta}_t(b)\right)^2\d\mathsf{Q}^\kappa_{\mu[\kappa]}\d t\d\P(\kappa)=0.
\end{align*}

Next, by the same argument as for \cite[(3.10)]{MR1872740}, we get $\frac{\d}{\d t} \Var_{\mu[\kappa]}(u(\cdot,t))\le 0.$ Thus, $\Var_{\mu[\kappa]}(u(\cdot,t))$ is decreasing as a function of $t$ and we have
\begin{align*}
\int\lim_{T\to\infty}\Var_{\mu[\kappa]}(u(\cdot,T))\d\P(\kappa)&\le \liminf_{T\to\infty}\int\Var_{\mu[\kappa]}(u(\cdot,T))\d\P(\kappa)\\
&\le \limsup_{T\rightarrow\infty}\int\frac{1}{T}\int_0^T \Var_{\mu[\kappa]}(u(\cdot,t))\d t\,\d\P(\kappa)=0,
\end{align*}
where the first inequality follows from Fatou's lemma. The above allows us to conclude that for a.e. $\kappa$ there holds
\begin{equation}
\label{varlim}
\lim_{T\to\infty}\Var_{\mu[\kappa]}(u(\cdot,T))=0.
\end{equation}
This in turn immediately implies that for $F$ invariant, $F$ must be $\mu[\kappa]$-a.e. constant. 

We turn next to the proof of \eqref{e:langevin_time_mixing}. By Lemma \ref{l:existence_langevin}, we have that $\E_{\mu[\kappa]} \left(F(\eta_0)\right)= \E_{\mu[\kappa]} \left(F(\eta_t)\right),$
and therefore \eqref{e:langevin_time_mixing} reduces to showing that
\[\lim_{t\to\infty}\left\|\mathsf{E}^\kappa_{\mu[\kappa]}(F(\eta_t)\mid \eta_0=\cdot)-\E_{\mu[\kappa]} \left(F(\eta_t)\right)\right\|_{L^2(\mu[\kappa])}=0.\]
By conditioning and then using \eqref{varlim}, we get \eqref{e:langevin_time_mixing}. Equation \eqref{e:langevin_time_mixingcov} follows by using the Cauchy-Schwarz inequality, and then applying \eqref{e:langevin_time_mixing}.
\end{proof}

In fact, we even have the following strengthening of Lemma \ref{l:langevin_time_mixing}, which will be used in the following subsection
\begin{lemma}\label{l:langevin_time_mixing_unif}
In the setting of Lemma \ref{l:langevin_time_mixing}, for $\P$-a.e. $\kappa$ any invariant function is $\mu[\kappa]$-a.e. constant.
\end{lemma}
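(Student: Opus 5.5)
Lemma \ref{l:langevin_time_mixing} only establishes that \emph{local} $C^2$ invariant functions are constant. The task is to extend this to an arbitrary invariant $G\in L^2(\mu[\kappa])$, i.e.\ $G=P_tG$ for all $t\ge0$, where $P_t G:=\mathsf{E}^\kappa_{\mu[\kappa]}(G(\eta_t)\mid\eta_0=\cdot)$. The uniformity in $G$ must hold outside a single $\P$-null set of $\kappa$. I would proceed by a density argument using a countable family of local functions.

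\textbf{Step 1: a countable family.} Fix a countable family $\mathcal D=\{F_k\}_{k\in\N}$ of local, bounded, uniformly Lipschitz $C^2_{b,\text{loc}}$ functions on $\chi$ that is dense in $L^2(\chi,\nu)$ for every probability measure $\nu$ on $\chi_r$. For instance, take smooth bounded functions of finitely many $\eta(b)$ with rational coefficients. These generate the Borel $\sigma$-field and are an algebra, hence dense in $L^2(\nu)$ by the usual monotone class argument.

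Since each $F_k$ is bounded with bounded derivatives up to order 2, and $\mu^{\text{av}}$ is tempered with $V'_{\kappa(b)}$ of linear growth, $\Lcal_{[\kappa]}F_k\in L^2(\mu[\kappa])$ holds for a.e.\ $\kappa$. Apply Lemma \ref{l:langevin_time_mixing} to each $F_k$ and intersect the countably many full-measure sets. This gives one null set $N$ outside of which
\[
\|P_tF_k-\mu[\kappa](F_k)\|_{L^2(\mu[\kappa])}\to0 \quad\text{for all }k.
\]

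\textbf{Step 2: from the family to all of $L^2$.} Fix $\kappa\notin N$. By reversibility and stationarity from Lemma \ref{l:existence_langevin}(i), $P_t$ is a self-adjoint contraction on $L^2(\mu[\kappa])$ that preserves $\mu[\kappa]$-means. For arbitrary $H\in L^2(\mu[\kappa])$ and $\varepsilon>0$, choose $F_k$ with $\|H-F_k\|_{L^2}<\varepsilon$. Then
\[
\|P_tH-\mu[\kappa](H)\|\le \|P_t(H-F_k)\|+\|P_tF_k-\mu[\kappa](F_k)\|+|\mu[\kappa](F_k-H)|\le 2\varepsilon+o(1).
\]
Hence $P_tH\to\mu[\kappa](H)$ in $L^2$ for every $H$. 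If $G$ is invariant, then $G=P_tG\to\mu[\kappa](G)$, so $G$ is a.e.\ constant.

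Invariant functions that are merely measurable, not in $L^2$, are handled by truncation. The map $G\mapsto (G\wedge n)\vee(-n)$ preserves invariance in the standard sense: invariant sets form a $\sigma$-algebra, so I would work with indicators of invariant sets.

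\textbf{Main obstacle.} The delicate point is Step 1: checking that each $F_k$ satisfies the hypotheses of Lemma \ref{l:langevin_time_mixing}, in particular $\Lcal_{[\kappa]}F_k\in L^2$. Equally delicate is that the null set in that lemma genuinely depends only on $F$ and not on further choices. The proof of Lemma \ref{l:langevin_time_mixing} first reduces to Lipschitz $F$ and produces a $\P$-null exceptional set for each such $F$. By choosing $\mathcal D$ to consist of Lipschitz local functions, I can invoke that intermediate conclusion \eqref{varlim} directly and avoid the $\Lcal_{[\kappa]}F$ condition altogether.
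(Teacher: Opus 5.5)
Your proposal is correct and follows the same route as the paper. You take a countable, $\kappa$-independent family of local $C^2_{b}$ functions dense in every $L^2(\chi,\mu[\kappa])$, apply Lemma~\ref{l:langevin_time_mixing} to each member while discarding the countably many null sets, and then extend \eqref{e:langevin_time_mixing} to all of $L^2(\chi,\mu[\kappa])$ by density and $L^2$-contractivity of the semigroup. Your extra checks only make explicit what the paper leaves implicit: that $\Lcal_{[\kappa]}F_k\in L^2(\mu[\kappa])$ a.s.\ via $|V'_{\kappa}(s)|\le C_V|s|$ and temperedness, and the mean-preservation step in the triangle inequality. Your remark on invariant functions outside $L^2$ goes beyond what the paper proves or needs.
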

\begin{proof}
It suffices to prove that for $\P$-a.e. $\kappa$ we have \eqref{e:langevin_time_mixing} simultaneously for all $F\in L^2(\chi,\mu[\kappa])$. To see this, we proceed by density. Consider the set $C^2_{b,\text{loc}}(\chi)$, the space of local twice differentiable functions on $\chi$ with bounded derivatives up to order 2. There is a countable subset $S$ of $C^2_{b,\text{loc}}(\chi)$, independent of $\kappa$, which is dense in $L^2(\chi,\mu[\kappa])$ (cf. \cite[p. 396]{MR1432591}), and by Lemma \ref{l:langevin_time_mixing} for each $F\in C^2_{b,\text{loc}}(\chi)$ we have \eqref{e:langevin_time_mixing} for $\P$-a.e. $\kappa$.

By countable additivity, we then $\P$-a.s. have \eqref{e:langevin_time_mixing} simultaneously for all $F\in S$. To pass from $C^2_{b,\text{loc}}(\chi)$ to $L^2(\chi,\mu[\kappa])$ we use density and the fact that the semigroup $F\mapsto T_tF:=\mathsf{E}^\kappa_{\mu[\kappa]}(F(\eta_t)\mid \eta_0=\cdot)$ is bounded on $L^2(\chi,\mu[\kappa])$.
\end{proof}

\subsection{Extremality of \texorpdfstring{$\kappa$}{kappa}-disordered measures}

A consequence of Lemma \ref{l:langevin_time_mixing} is an extremality statement for the $\mu[\kappa]$ that we would like to record separately. 

\begin{theorem}\label{th:extremality}
Let $\P$ be a shift-invariant ergodic probability measure on $(K^{\bzd},\Kcal^{\otimes\bzd})$ such that $\P$-a.s. $\kappa(b)=\kappa(-b)$ for all $b\in\bzd$. Assume that for all $b\in (\zd)^*$, the map $(\kappa(b),s)\mapsto V_{\kappa(b)}(s)$ is jointly measurable, and for each $\kappa(b)$ the function $V_{\kappa(b)}$ is even and in $C^2(\R)$. Assume also that there exists $0<c_V\le C_V<\infty$ such that $c_V\le V''_{\kappa(b)}\le C_V$ for all $b\in (\zd)^*$ and for $\P$-a.e. $\kappa\in K^\bzd$.

Let $\kappa\to\mu[\kappa]$ be the a.s.-unique shift-covariant disordered gradient Gibbs measure such that $\mu^{\text{av}}$ has zero tilt, is ergodic under the shifts and satisfies \eqref{intcond1a}. 
Then for $\P$-a.e. $\kappa$, the measure $\mu[\kappa]$ is extremal among gradient Gibbs measures. That is, for $\P$-a.e. $\kappa$ there is no non-trivial representation
\begin{equation}\label{e:non-extremal}
\mu[\kappa]=\lambda\mu'[\kappa]+(1-\lambda)\mu''[\kappa]
\end{equation}
where $0<\lambda<1$ and $\mu'[\kappa]\neq\mu''[\kappa]$ are two different gradient Gibbs measures (not necessarily tempered, not necessarily shift-covariant) for the family of Hamiltonians $(H_{\Lambda,\xi}[\kappa])_{\Lambda\subset\zd,\xi\in\chi}$. 
\end{theorem}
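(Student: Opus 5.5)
Our plan is to derive extremality from time-ergodicity of the Langevin dynamics, Lemma \ref{l:langevin_time_mixing_unif}. Fix $\kappa$ in the full-measure set where that lemma applies and where $\mu[\kappa](\chi_r)=1$. Suppose \eqref{e:non-extremal} holds. Then $\mu'[\kappa]\le\lambda^{-1}\mu[\kappa]$, so $\mu'[\kappa]\ll\mu[\kappa]$ with density $h:=\d\mu'[\kappa]/\d\mu[\kappa]$ satisfying $0\le h\le\lambda^{-1}$; in particular $h\in L^2(\mu[\kappa])$ and $\mu'[\kappa]$ is concentrated on $\chi_r$. The goal is to show that $h$ is invariant for the semigroup $T_t$ of the dynamics from Lemma \ref{l:existence_langevin}. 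Lemma \ref{l:langevin_time_mixing_unif} then gives $h\equiv1$ $\mu[\kappa]$-a.e., hence $\mu'[\kappa]=\mu[\kappa]=\mu''[\kappa]$, which is a contradiction.

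Invariance comes from the Gibbs property. Since $\mu'[\kappa]$ and $\mu[\kappa]$ are both Gibbs for the same specification $\mu_{\Lambda,\xi}[\kappa]$, the density $h$ is measurable with respect to the tail $\sigma$-field $\bigcap_\Lambda\F_{\bzd\setminus\bzdlb}$, up to $\mu[\kappa]$-null sets. This is the standard fact that a Gibbs measure absolutely continuous with respect to another has a tail-measurable density (cf. \cite[Ch.~7]{MR2807681}). We then check that $T_th=h$. A convenient route is Dirichlet forms: by reversibility (Lemma \ref{l:existence_langevin}(i)), $\mu[\kappa]$ is symmetric for $T_t$, and the generator $\Lcal_{[\kappa]}$ only involves the derivatives $\partial_x$ at single sites $x$. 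For each finite $\Lambda$, the DLR equation lets us write $h=\int h\,\d\mu_{\Lambda,\xi}[\kappa]$ as a function of $\xi$ outside $\bzdlb$, so $h$ is in the domain of the Dirichlet form with $\mathcal{E}(h,h)=\sum_x\mu[\kappa]((\partial_xh)^2)=0$. Equivalently, one can show directly that $\mu'[\kappa]$ is stationary for the dynamics. Its finite-volume approximations are, since it is Gibbs, mixtures over boundary data of finite-volume Gibbs measures, which are reversible for the finite-volume dynamics. Passing to the limit uses pathwise uniqueness and the Lipschitz bound $V''_{\kappa(b)}\le C_V$, exactly as in \cite[Proposition 3.1]{MR1463032}. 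Stationarity of both $\mu'[\kappa]$ and $\mu[\kappa]$, combined with reversibility, then yields $T_t^*h=T_th=h$.

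The main obstacle is this stationarity or invariance step for $\mu'[\kappa]$. The measure need not be tempered or shift-covariant, but absolute continuity with respect to $\mu[\kappa]$ supplies the support on $\chi_r$ and all the integrability we need. Our first attempt will be the Dirichlet-form argument, $\mathcal{E}(h,h)=0$ together with the identification of the kernel of $\mathcal{E}$ with $T_t$-invariant functions. If domain issues arise, we fall back on the stationarity argument via \cite{MR1463032}. Since $\mu[\kappa]$ is thereby a non-extremal point of neither kind, the statement holds for $\P$-a.e. $\kappa$.
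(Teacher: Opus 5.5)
Your proof goes through, but only through what you call the fallback, and that route differs from the paper's.

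\textbf{How the paper argues.} The paper never works with the density. It places $\mu[\kappa]$ in the class $\Gcal_{exp}$ of \cite{MR1432591}, using the exponential moments \eqref{logmixtureexpinfrand} together with \cite[Proposition 5.9]{MR1432591}. It then uses Lemma \ref{l:langevin_time_mixing_unif} to obtain condition (iv) of \cite[Theorem 5.15]{MR1432591}, and quotes (iv)$\Rightarrow$(i) to get extremality within $\Gcal_{exp}$. A nontrivial decomposition is excluded because $\mu'[\kappa],\mu''[\kappa]\le\max(\lambda^{-1},(1-\lambda)^{-1})\,\mu[\kappa]$ inherit exponential integrability and therefore also lie in $\Gcal_{exp}$.

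\textbf{How your stationarity argument differs.} It proves the needed instance of (iv)$\Rightarrow$(i) by hand, and it is complete:
\begin{itemize}
\item Since $\mu'[\kappa]\le\lambda^{-1}\mu[\kappa]$, it is a $\kappa$-disordered Gibbs measure concentrated on $\chi_r$ with the moments of $\mu[\kappa]$ up to a factor $\lambda^{-1}$.
\item Hence Lemma \ref{l:existence_langevin}(i), i.e.\ the Funaki--Spohn argument you describe, applies to it verbatim and gives stationarity for the same transition kernel. Pathwise uniqueness makes the solution map independent of the initial law, so the kernel is the same one as for $\mu[\kappa]$.
\item Reversibility of $\mu[\kappa]$ then gives, for bounded $F$,
\[
\int F\,T_th\,\d\mu[\kappa]=\int T_tF\,\d\mu'[\kappa]=\int F\,\d\mu'[\kappa]=\int Fh\,\d\mu[\kappa].
\]
Hence $T_th=h$, and Lemma \ref{l:langevin_time_mixing_unif} forces $h\equiv1$.
\end{itemize}

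\textbf{What each route buys.} Yours is self-contained in terms of lemmas already in the paper. It needs no exponential integrability and no tail-measurability of $h$. It also never has to match the SDE semigroup of Lemma \ref{l:langevin_time_mixing_unif} with the $L^2$-semigroup to which \cite[Theorem 5.15]{MR1432591} refers. The paper's route is shorter, and it gets extremality in $\Gcal_{exp}$ directly from a general theorem.

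\textbf{Gap in your primary route.} The Dirichlet-form route has a real gap as written. To conclude $T_th=h$ from ``$\mathcal{E}(h,h)=0$'', you need $h$ in the domain of the closed form associated with $T_t$. The natural sufficient condition is membership in the closure of $\mathcal{E}$ from smooth cylinder functions. Knowing that $h$ has a version that is $\F_{\bzd\setminus\bzdlb}$-measurable for each $\Lambda$ only gives $\partial_x h=0$ in a weak, maximal-domain sense. $h$ is a non-local bounded measurable function, and there is no evident approximation of it by smooth cylinder functions with vanishing energy. Bridging the minimal and maximal domains is exactly the nontrivial content of \cite{MR1432591}.

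So present the stationarity argument as the proof. Drop the Dirichlet-form step, and drop the tail-measurability remark as well, since the stationarity route does not use it.
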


Before proving this result, let us state a corollary, namely the special case where all $V_\kappa$ are equal (or in other words, $V$ is already in $C^2(\R)$ with $c_V\le V''\le C$ as in the Ginzburg-Landau model). In that case the extremality of shift-invariant ergodic gradient Gibbs measure follows from \cite[Lemma 8.7.1]{MR2251117}, but our proof provides a completely different route to this result.

\begin{corollary}
Let $d\ge 1$.  Assume that there exists $0<c_V\le C_V<\infty$ such that $c_V\le V''\le C_V$. Let $\mu$ be the unique tempered shift-invariant ergodic gradient Gibbs measure with zero tilt, as proved in \cite[Theorem 2.1]{MR1463032}. 
Then the measure $\mu$ is extremal among gradient Gibbs measures. 
\end{corollary}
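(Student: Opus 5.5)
The plan is to deduce the corollary from Theorem \ref{th:extremality} by viewing the Ginzburg--Landau model as a degenerate strongly log-concave log-mixture. I take $K=\{\kappa_*\}$ a single point, $\rho=\delta_{\kappa_*}$ and $V_{\kappa_*}=V$, so that \eqref{e:def_V} holds trivially. Then $\P:=\delta_{\bar\kappa}$, where $\bar\kappa(b)=\kappa_*$ for all $b\in\bzd$, is a shift-invariant ergodic probability measure on $K^{\bzd}$, since it is a Dirac mass at a shift-invariant configuration. Moreover $\bar\kappa(b)=\bar\kappa(-b)$, and the bounds $c_V\le V''_{\kappa(b)}\le C_V$ hold by assumption. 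Joint measurability is trivial, and $V$ may be taken even (evenness is part of the standing assumptions of the setting, as in \cite{MR1463032}). With these choices the disordered Hamiltonians $H_{\Lambda,\xi}[\bar\kappa]$ from \eqref{e:disordered_H} coincide with the Hamiltonians $H_{\Lambda,\xi}$ of the homogeneous model. Hence $\kappa$-disordered gradient Gibbs measures for $\bar\kappa$ are exactly gradient Gibbs measures for $V$.

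Next I define the constant family $\mu[\bar\kappa]:=\mu$. Shift-covariance in Definition \ref{shiftcov1}(b) reduces to shift-invariance of $\mu$, because $\tau_v\bar\kappa=\bar\kappa$. The annealed measure is $\mu^{\text{av}}=\mu$, which has zero tilt and is ergodic. It satisfies \eqref{intcond1a} because $\mu$ is tempered and shift-invariant, so $\int\eta(b)^2\,\d\mu$ takes only finitely many values, one for each of the $2d$ bond directions. This is therefore a shift-covariant disordered gradient Gibbs measure with the properties required in Theorem \ref{th:extremality}. By Theorem \ref{t:CK}(b) it is the a.s.-unique such family. This is consistent with the uniqueness of $\mu$ from \cite{MR1463032}, although only the fact that the family satisfies the hypotheses is needed.

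Applying Theorem \ref{th:extremality}, for $\P$-a.e. $\kappa$ the measure $\mu[\kappa]$ is extremal among gradient Gibbs measures for $(H_{\Lambda,\xi}[\kappa])$. Since $\P$ is a Dirac mass, "$\P$-a.e." means exactly $\kappa=\bar\kappa$. Hence $\mu=\mu[\bar\kappa]$ admits no nontrivial decomposition \eqref{e:non-extremal} into gradient Gibbs measures for $V$, which is the claim.

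The main point to check carefully is the "almost sure" bookkeeping. The statement of Theorem \ref{th:extremality} holds outside a $\P$-null set, and the only null set of a Dirac mass is the empty set minus the atom. So the conclusion indeed holds at $\bar\kappa$ itself, which is what the corollary needs. A secondary check is that the factor conventions in \eqref{e:disordered_H} (the halves on boundary bonds with $\kappa((x,y))=\kappa((y,x))$) reproduce \eqref{eqn00} exactly when all $V_\kappa$ coincide. This is immediate since the two boundary halves combine into one full term.
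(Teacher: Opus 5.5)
Your proposal is correct and follows the paper's own route. The paper obtains the corollary as the special case of Theorem~\ref{th:extremality} in which all $V_\kappa$ coincide (a one-point $K$, so $\P$ is the Dirac mass at the constant configuration); your verification of the hypotheses — zero tilt, ergodicity, \eqref{intcond1a} via temperedness and shift-invariance, the boundary conventions in \eqref{e:disordered_H}, and the trivial $\P$-a.e.\ bookkeeping — is exactly what that specialization requires.
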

We state the result in the zero-tilt case as that is what follows from Theorem \ref{th:extremality}, but in fact the same method applies for arbitrary tilt, using \cite[Lemma 3.2]{MR1872740} as the main input.

Let us now turn to the proof of Theorem \ref{th:extremality}.

\begin{proof}[Proof of Theorem \ref{th:extremality}]

We will use some abstract results from \cite[Section 5]{MR1432591}. There the results are stated in detail for the special case that (in our notation) all $V_\kappa$ are quadratic functions, but as pointed out in \cite[Remark 5.18 (ii)]{MR1432591}, everything in that section applies in our setting as long as $0<c\le V_\kappa''\le C<\infty$. By Lemma \ref{l:langevin_time_mixing_unif}, $T_t$ is ergodic, and assertion \cite[Theorem 5.15 (iv)]{MR1432591} holds. 
Our plan is to apply \cite[Theorem 5.15 (iv) $\implies$ (i)]{MR1432591}, but since \cite{MR1432591} works with exponentially integrable Gibbs measures, some care is needed.

We first note that by \eqref{logmixtureexpinfrand} in Lemma \ref{c:unique_disordered_Gibbs} for $\P$-a.e. $\kappa$ we have
\begin{equation}\label{e:expintegrable}\sup_{b\in\bzd}\E_{\mu[\kappa]}(\e^{|\eta_b|})<\infty
\end{equation}
This means that $\mu[\kappa]$ is exponentially integrable for a.e. $\kappa$, and in the set $\Mcal_{exp}$ as defined in \cite[p. 413]{MR1432591}. As $\mu[\kappa]$ is also a (gradient) Gibbs measure in the sense that it satisfies the DLR conditions, by \cite[Proposition 5.9]{MR1432591} it is in the set $\Gcal_{exp}(\Phi)=\Gcal^{b^\Phi}\cap \Mcal_{exp}$ (as again defined in \cite[p. 413]{MR1432591}). Now \cite[Theorem 5.15 (iv) $\implies$ (i)]{MR1432591} indeed implies that $\mu[\kappa]$ is extremal in $\Gcal_{exp}$. 

Suppose now for the sake of contradiction that there is a nontrivial decomposition as in \eqref{e:non-extremal}. Because
\[\E_{\mu[\kappa]}\e^{|\eta_b|}=\lambda\E_{\mu'[\kappa]}(\e^{|\eta_b|})+(1-\lambda)\E_{\mu''[\kappa]}(\e^{|\eta_b|})\]
\eqref{e:expintegrable} implies that also
\[\sup_{b\in\bzd}\E_{\mu'[\kappa]}(\e^{|\eta_b|})<\infty,\qquad\sup_{b\in\bzd}\E_{\mu''[\kappa]}(\e^{|\eta_b|})<\infty\]
So by the same reasoning as for $\mu[\kappa]$, both $\mu'[\kappa]$ and $\mu''[\kappa]$ are in $\Gcal_{exp}$. This means that $\mu[\kappa]$ is a nontrivial convex combination of two elements of $\Gcal_{exp}$, and so cannot be extremal in $\Gcal_{exp}$. This is a contradiction, and so a decomposition as in \eqref{e:non-extremal} cannot exist.
\end{proof}

\section{Helffer-Sjöstrand representation}\label{s:HS}

In order to prove Theorem \ref{t:scalinglimit}, we will derive a Helffer-Sjöstrand representation for our model. In fact, for each fixed $\kappa$, the standard Helffer-Sjöstrand representation can be used, and ultimately we will take the average of this representation over the $\kappa$.

Let us explain this set-up in detail. Let $\kappa\in K^\bzd$, and let $\mu[\kappa]$ be a $\kappa$-disordered Gibbs measure such that $\mu[\kappa](\chi_r)=1$.

Throughout this section, we will assume that the $V_\kappa''$ are not only bounded away from 0, but also bounded away from $\infty$. This assumption is satisfied in the case we are interested in, strongly log-concave log-mixtures of quadratic growth. If now for some fixed $\kappa$ the $V_\kappa''$ are both bounded away from 0 and $\infty$, then we can obtain a Helffer-Sjöstrand representation for that fixed $\kappa$ by a straightforward adaptation of the argument in \cite{MR1872740}. Eventually we will average these representations over $\kappa$ to obtain a Helffer-Sjöstrand representation for a Gibbs measure associated to a strongly log-concave log-mixture of quadratic growth.

Fix now for the moment some $\kappa\in K^{\bzd}$ with $\kappa(-b)=\kappa(b)$. We consider the Langevin dynamics associated to $\mu[\kappa]$, as introduced in \eqref{e:Lang_dyn}. Given a realisation of $\eta_\cdot$, we define $a^{[\kappa],\eta}$ on $\R\times\bzd$ by $a^{[\kappa],\eta}_t(b)=V_{\kappa(b)}''(\eta_t(b))$. 
Next, given a realisation of $\eta_\cdot$, we can construct a (continuous-time) random walk starting at some $x\in\zd$ at some $s\in\R$, and with jump rates $a^{[\kappa],\eta}_t(b)$ across the bond $b$, which we denote by $X_t^{[\kappa],\eta}$. Our assumption $\kappa(-b)=\kappa(b)$ ensures that the jump rate does not depend on the direction of $b$.

\begin{lemma}\label{l:randomwalk}
Let $\kappa\in K^\bzd$  such that $\kappa(b)=\kappa(-b)$ for all $b\in\bzd$. Assume that for all $b\in (\zd)^*$, $V_{\kappa(b)}$ is even and in $C^2(\R)$, and there exists $0<c_V\le C_V<\infty$ such that $c_V\le V''_{\kappa(b)}\le C_V$ for all $b\in (\zd)^*$. Let $\mu[\kappa]$ be a $\kappa$-disordered Gibbs measure such that $\mu[\kappa](\chi_r)=1$, and consider the Langevin dynamics $\eta_\cdot$ associated with $\mathsf{P}^\kappa_{\mu[\kappa]}$ (as defined in Subsection \ref{ss:dynamics}).

Then the random walk $X_t^{[\kappa],\eta}$ is well-defined and almost surely non-explosive.
\end{lemma}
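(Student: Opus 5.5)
The walk is a continuous-time Markov chain on $\zd$ driven by time-dependent, path-dependent rates. The key structural fact I will use is that the rates are uniformly bounded. For every realisation of $\eta_\cdot$ and every $t\in\R$ and $b\in\bzd$, the assumption $c_V\le V''_{\kappa(b)}\le C_V$ gives $c_V\le a^{[\kappa],\eta}_t(b)\le C_V$. Consequently the total jump rate out of any vertex is at most $2dC_V$, uniformly in time, position and environment. The proof then splits into two parts: checking that the rate field is regular enough to define the walk, and showing that the walk does not explode.

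\textbf{Well-definedness.} By Lemma \ref{l:existence_langevin}, $\eta_\cdot\in C(\R,\chi_r)$ almost surely. Since $V_{\kappa(b)}\in C^2(\R)$, each map $t\mapsto a^{[\kappa],\eta}_t(b)=V''_{\kappa(b)}(\eta_t(b))$ is continuous. It is also bounded and jointly measurable in $(t,\eta_\cdot)$, and it is symmetric in $b\leftrightarrow -b$ because $\kappa(b)=\kappa(-b)$ and $V_{\kappa(b)}$ is even, so $V''_{\kappa(b)}$ is even too. For a fixed realisation of $\eta_\cdot$, I construct $X^{[\kappa],\eta}$ started at $x$ at time $s$ by uniformisation. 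Take a Poisson process of rate $\lambda:=2dC_V$, independent of $\eta_\cdot$. At each Poisson time $\tau$, the walk at $y$ proposes a neighbour $y+e$, with $e$ chosen uniformly among the $2d$ unit vectors, and accepts the jump with probability $a^{[\kappa],\eta}_\tau((y,y+e))/C_V\in[0,1]$. A standard thinning argument shows that the resulting process is a time-inhomogeneous Markov chain with jump rates $a^{[\kappa],\eta}_t(b)$ across $b$. Measurability of the rates ensures that the quenched laws form a kernel in $\eta_\cdot$, so the annealed process is well defined.

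\textbf{Non-explosiveness.} In the construction above, the jumps of $X$ form a subset of the points of a rate-$\lambda$ Poisson process. On any bounded time interval $[s,T]$ there are therefore almost surely only finitely many jumps. Taking a countable sequence $T\uparrow\infty$ gives non-explosiveness almost surely, for every fixed $\eta_\cdot$ and hence also under the annealed measure. Equivalently, one can note that the jump counting process is stochastically dominated by a Poisson process with rate $2dC_V$.

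\textbf{Main obstacle.} The only delicate point I expect is the measurability of the construction jointly in $\eta_\cdot$. I will handle it by defining $X$ as a deterministic measurable function of $\eta_\cdot$, the Poisson clocks, the direction marks and independent uniform variables used for acceptance. This avoids abstract existence theorems for time-inhomogeneous chains altogether.
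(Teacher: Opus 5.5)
Your proposal is correct and follows the same route as the paper. The paper's proof just observes that $V''_{\kappa}\le C_V$ makes the rates $a^{[\kappa],\eta}_t$ uniformly bounded, so the construction on $[s,\infty)$ is standard; your uniformisation with a rate-$2dC_V$ Poisson clock, and the resulting Poisson domination of the jump count, spell out exactly that standard construction (and you rightly note that direction-independence of the rates also uses the evenness of $V_{\kappa(b)}$, not only $\kappa(b)=\kappa(-b)$).
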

\begin{proof}
For quadratically bounded $V$ we have a uniform upper bound on $V''_{\kappa}$, and so the $a^{[\kappa],\eta}_t$ are uniformly bounded. This means that the construction of this random walk on $[s,\infty)$ is standard.
\end{proof}

We denote the law of the random walk $X_t^{[\kappa],\eta}$ when started from $x$ by $\mathbf{P}_x^{[\kappa],\eta}$, and the corresponding expectation by $\mathbf{E}_x^{[\kappa],\eta}$. 
Finally we introduce the heat kernel of this random walk, given by
\[p^{[\kappa],\eta}(s,x,t,y):=\mathbf{P}^{[\kappa],\eta}(X_t^{[\kappa],\eta}=y\mid X_s^{[\kappa],\eta}=x).\]
and recall that $C^2_{b,\text{loc}}(\chi)$ denotes the space of local twice differentiable functions on $\chi$ with bounded derivatives up to order 2.

\begin{lemma}\label{l:HS_quenched}
Under the assumptions of Lemma \ref{l:langevin_time_mixing} for $\P$-a.e. $\kappa$ the following holds:
For any $F,G\in C^2_{b,\text{loc}}(\chi)$ we have
\begin{align*}
    \Cov_{\mathsf{P}^\kappa_{\mu[\kappa]}}\left(F(\eta_0),G(\eta_t)\right)&=\int_0^\infty \sum_{x\in\zd}\E_{\mu[\kappa]} \left(\partial_xF(\eta_0)\mathbf{E}_x^{[\kappa],\eta}\left(\partial_{X_{t+s}^{[\kappa],\eta}}G(\eta_{t+s})\right)\right)\d s\\
    &=\int_0^\infty\sum_{x,y\in\zd}\E_{\mu[\kappa]}\left(\partial_xF(\eta_0)\partial_yG(\eta_{t+s})p^{[\kappa],\eta}(0,x,t+s,y)\right)\d s
\end{align*}
\end{lemma}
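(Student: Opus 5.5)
We follow the argument of \cite{MR1872740} (the Ginzburg--Landau case) for fixed $\kappa$; the only new input is the time-ergodicity of Lemma \ref{l:langevin_time_mixing} and Lemma \ref{l:langevin_time_mixing_unif}. Fix $\kappa$ in the full-measure set where those lemmas hold. Write $T_t$ for the Markov semigroup of the Langevin dynamics \eqref{e:Lang_dyn} under $\mathsf{P}^\kappa_{\mu[\kappa]}$. By reversibility and stationarity (Lemma \ref{l:existence_langevin}),
\[
\Cov_{\mathsf{P}^\kappa_{\mu[\kappa]}}(F(\eta_0),G(\eta_t))=\E_{\mu[\kappa]}\bigl(F\,T_tG\bigr)-\mu[\kappa](F)\mu[\kappa](G).
\]
Set $u(s)=T_{t+s}G$. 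Since $T_sG\to\mu[\kappa](G)$ in $L^2(\mu[\kappa])$ by \eqref{e:langevin_time_mixing}, we obtain
\[
\E(F\,T_tG)-\mu(F)\mu(G)=-\int_0^\infty \frac{\d}{\d s}\E(F\,T_{t+s}G)\,\d s=-\int_0^\infty \E\bigl(F\,\Lcal_{[\kappa]}T_{t+s}G\bigr)\d s .
\]
The integration by parts (Dirichlet form identity) for the reversible measure $\mu[\kappa]$ then rewrites the integrand as $\sum_x\E(\partial_xF\,\partial_x T_{t+s}G)$. This uses the DLR property of $\mu[\kappa]$ and, as in \cite[Section 2]{MR1872740}, is justified first for local $F$ in finite volume and then by approximation.

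The key step is the derivative formula
\[
\partial_x T_\tau G(\eta)=\mathsf{E}^\kappa\Bigl(\mathbf{E}_x^{[\kappa],\eta}\bigl(\partial_{X_\tau}G(\eta_\tau)\bigr)\,\Big|\,\eta_0=\eta\Bigr).
\]
To prove it, we differentiate the strong solution map $\Phi$ from Lemma \ref{l:existence_langevin}(ii) with respect to the initial datum in the direction $\nabla\1_x$. The derivative field $\psi_s(y)=\partial_x\phi_s(y)$ solves the linear random parabolic equation
\[
\partial_s\psi_s(y)=\sum_{b:\,x_b=y}a^{[\kappa],\eta}_s(b)\,\nabla\psi_s(b),\qquad \psi_0=\1_x,
\]
which is the forward (Kolmogorov) equation for the random walk of Lemma \ref{l:randomwalk}. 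Hence $\psi_s(y)=p^{[\kappa],\eta}(0,x,s,y)$, and the chain rule gives
\[
\partial_xG(\eta_\tau)\to\sum_y\partial_yG(\eta_\tau)\,p^{[\kappa],\eta}(0,x,\tau,y).
\]
Differentiability of $\Phi$ in the infinite-dimensional space $\chi_r$ is obtained by finite-volume approximation with Lipschitz drift, using the bounds $c_V\le V''_\kappa\le C_V$ for uniform control. We then insert this formula, set $\tau=t+s$, and note that the sums over $x,y$ are finite because $F,G$ are local and the kernel is a probability. This gives the two displayed expressions of Lemma \ref{l:HS_quenched}.

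The main obstacle is justifying convergence of $\int_0^\infty$ and the interchange of limits. Ergodicity gives only $T_sG\to\mu(G)$, not an integrable rate. We would therefore first prove the identity with upper limit $S$:
\[
\E(F\,T_tG)-\E(F\,T_{t+S}G)=\int_0^S\cdots,
\]
then let $S\to\infty$. The left side converges by \eqref{e:langevin_time_mixing}, so the improper integral converges, interpreted as $\lim_{S\to\infty}\int_0^S$ if absolute integrability is not available.
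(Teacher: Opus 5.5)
Your proposal is correct and takes the same route as the paper, which simply reruns \cite[Proposition 3.1]{MR1872740} for fixed $\kappa$ with the time-ergodicity of Lemma~\ref{l:langevin_time_mixing} in place of \cite[Lemma 3.2]{MR1872740}; your use of Lemma~\ref{l:langevin_time_mixing_unif} to make the full-measure set of $\kappa$ independent of $F,G$ is a welcome extra bit of care. Your caution about the improper integral is harmless but not needed: by reversibility the integrand equals $\langle(-\Lcal_{[\kappa]})^{1/2}T_{(t+s)/2}F,(-\Lcal_{[\kappa]})^{1/2}T_{(t+s)/2}G\rangle_{L^2(\mu[\kappa])}$, and since $\int_0^\infty\|(-\Lcal_{[\kappa]})^{1/2}T_uH\|^2\,\d u\le\frac12\|H\|^2$, Cauchy--Schwarz makes the $s$-integral absolutely convergent, with ergodicity needed only to identify the limit $\mu[\kappa](F)\mu[\kappa](G)$.
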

\begin{proof}
This result in the shift-invariant setting is a direct consequence of \cite[Proposition 3.1]{MR1872740}. But shift-invariance only enters the proof there via the time-ergodicity of the dynamics as stated in \cite[Lemma 3.2]{MR1872740}.
But in our case we have that time-ergodicity for $\P$-a.e. $\kappa$ by \eqref{e:langevin_time_mixingcov}. Using this as input, the remainder of the proof can be done exactly as for \cite[Proposition 3.1]{MR1872740}.
\end{proof}

Our main interest is in an annealed version of this Helffer-Sjöstrand representation, where we average over the $\kappa$. This will lead to a representation as an annealed random walk in a random environment. Let us introduce the setting.

In Lemma \ref{l:randomwalk} we had constructed the random walk in the environment $a^{[\kappa],\eta}_\cdot$ for a fixed $\kappa$. We will now average this construction over $\kappa$ sampled according to $\bar\mu$ (where as in earlier sections $\mu$ is a shift-invariant ergodic tempered gradient Gibbs measure with zero tilt, $\extmu$ its extension, and $\bar\mu$ its $\kappa$-marginal).

So we have a family of random walks in random environments that depends on $\kappa$. We can equivalently view this as a single random walk in a random environment whose law is a mixture of the different environments. That is, we sample first $\kappa$ according to $\bar\mu$, and then $\eta_\cdot$ from $P^{\kappa}_{\mu[\kappa]}$. This leads to a random walk in the random environment $a^{[\kappa],\eta}_\cdot$, where $(\kappa,\eta_\cdot)$ are distributed according to $\mathsf{P}_\mu(\d\kappa,\d\eta_\cdot):=\bar\mu(\d\kappa)\mathsf{P}^\kappa_{\mu[\kappa]}(\d\eta_\cdot)$ (with associated expectation $\mathsf{E}_\mu$). Note first that according to Lemma \ref{l:randomwalk}, this random walk is still almost surely non-explosive.

As we will shortly want to use results from stochastic homogenisation, it will be crucial for us that the environment is space-time ergodic.

\begin{lemma}\label{l:jumprates_ergodic}
Let $V$ be a strongly log-concave log-mixture potential of quadratic growth. Let $\mu$ be a shift-invariant tempered ergodic gradient Gibbs measure with zero tilt. Let $\extmu$ be its extension, and $\bar\mu$ its $\kappa$-marginal (as in Lemma \ref{l:extension_tinv_erg}). Then the law of $a^{[\kappa],\eta}_\cdot$, where $(\kappa,\eta_\cdot)$ are distributed according to $\mathsf{P}_\mu$, is invariant under space-time shifts and space-time ergodic. 
\end{lemma}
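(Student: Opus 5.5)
We work on the joint space of pairs $(\kappa,\eta_\cdot)\in K^{\bzd}\times C(\R,\chi_r)$ with law $\mathsf{P}_\mu$. The jump rates are a measurable, shift-equivariant function of this pair: $a^{[\kappa],\eta}_t(b)=V''_{\kappa(b)}(\eta_t(b))$. Consequently it suffices to prove that $\mathsf{P}_\mu$ is invariant and ergodic under the space-time shifts $\theta_{(s,v)}(\kappa,\eta_\cdot)=(\tau_v\kappa,\tau_v\eta_{\cdot+s})$. The claim for the rates then follows, because a factor of an invariant ergodic system is invariant and ergodic.

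\textbf{Invariance.} Time invariance comes for fixed $\kappa$ from Lemma \ref{l:existence_langevin}(iii), and integrating over $\bar\mu$ preserves it. For space shifts, Lemma \ref{l:extension_disordered} gives $\mu[\tau_v\kappa]=\mu[\kappa]\circ\tau_v^{-1}$ $\bar\mu$-a.s. The drift in \eqref{e:Lang_dyn} is equivariant, and the Brownian family $(W_t(x))_x$ is shift-invariant in law. Since the solution map $\Phi$ is strong and pathwise unique, the law of $\tau_v\eta_\cdot$ under $\mathsf{P}^\kappa_{\mu[\kappa]}$ therefore equals $\mathsf{P}^{\tau_v\kappa}_{\mu[\tau_v\kappa]}$. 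Combining this with the shift-invariance of $\bar\mu$ from Lemma \ref{l:extension_tinv_erg} yields invariance of $\mathsf{P}_\mu$.

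\textbf{Ergodicity.} Let $A$ be an event invariant under all $\theta_{(s,v)}$, and set $g(\kappa,\eta_0):=\mathsf{P}^\kappa_{\mu[\kappa]}(A\mid\eta_0)$. This is well defined because $\eta_\cdot$ is a function of $\eta_0$ and the noise. The main step is to show that $g$ does not depend on $\eta_0$. Because $A$ is invariant under time shifts alone, the Markov property and stationarity from Lemma \ref{l:existence_langevin} show that for fixed $\kappa$ the function $g(\kappa,\cdot)$ is invariant for the semigroup $T_t$. We compute $T_tg(\kappa,\eta_0)=\mathsf{E}(\mathbf 1_A\circ\theta_{(t,0)}\mid\eta_0)=\mathsf{E}(\mathbf 1_A\mid\eta_0)$. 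Lemma \ref{l:langevin_time_mixing_unif} then makes $g(\kappa,\cdot)$ $\mu[\kappa]$-a.e. constant for $\bar\mu$-a.e. $\kappa$; call this constant $h(\kappa)$.

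Next we use that $A$ is also invariant under space shifts. By the equivariance established above, $h(\tau_v\kappa)=h(\kappa)$, so $h$ is a shift-invariant function of $\kappa$. By ergodicity of $\bar\mu$ (Lemma \ref{l:extension_tinv_erg}), $h$ is a.s. equal to a constant $c$. Hence $\mathsf{P}_\mu(A\mid\kappa,\eta_0)=c$ almost surely.

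It remains to upgrade this to $c\in\{0,1\}$, and we will try a martingale argument first. Let $\mathcal G_t$ be the $\sigma$-field generated by $\kappa$ and $\eta_{(-\infty,t]}$, and take conditional expectations of $\mathbf 1_A$ given $\mathcal G_t$. By the Markov property and time-shift invariance of $A$, we have $\mathsf{P}_\mu(A\mid\mathcal G_t)=g(\kappa,\eta_t)=c$. As $t\to\infty$, martingale convergence sends the left side to $\mathbf 1_A$, so $\mathbf 1_A=c$ a.s. and therefore $c\in\{0,1\}$.

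\textbf{Expected obstacle.} The delicate part is the measure-theoretic bookkeeping. We must check that $A\mapsto g$ is jointly measurable in $(\kappa,\eta_0)$. We must also check that the Markov property holds for the two-sided process relative to the filtration $\mathcal G_t$, which rests on the strong-solution map of Lemma \ref{l:existence_langevin}(ii) being independent of $t_0$. Finally, the $\bar\mu$-null sets coming from Lemma \ref{l:langevin_time_mixing_unif} and from the covariance identity must be handled uniformly over all $v\in\zd$; this is fine because $\zd$ is countable.
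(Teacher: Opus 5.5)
Your proposal is correct and follows the paper's proof: the same reduction to the joint law of $(\kappa,\eta_\cdot)$, invariance from time-stationarity plus spatial covariance of $\mathsf{P}^\kappa_{\mu[\kappa]}$, and ergodicity from Lemma~\ref{l:langevin_time_mixing_unif} combined with ergodicity of $\bar\mu$. The one difference is that where the paper cites \cite[Theorem 26.11]{MR4226142} to get triviality of the time-invariant $\sigma$-field on path space (quenched, before using ergodicity of $\bar\mu$), you prove this by hand via the harmonic function $g$ and L\'evy's upward theorem (annealed, after using ergodicity of $\bar\mu$). Your identities $T_tg=g$ and $\mathsf{P}_\mu(A\mid\mathcal G_t)=g(\kappa,\eta_t)$ tacitly use that an event invariant under two-sided time shifts agrees a.s.\ with an event in $\sigma(\kappa,\eta_{[t,\infty)})$; this follows from the standard cylinder-approximation argument plus stationarity, so it is bookkeeping rather than a gap.
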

Here by space-time ergodicity we mean that any bounded measurable function on path space $(\R^+)^{\bzd\times\R}$ that is invariant under shifts of space and time is $\mathsf{P}_\mu$-a.e. constant. Note that ergodicity under space-time shifts is a stronger property than ergodicity under time shifts. So the ergodicity statement here is non-trivial.
\begin{proof}
As $a^{[\kappa],\eta}_\cdot$ is a measurable function of $(\kappa,\eta_\cdot)\in K^\bzd\times (\R^\bzd)^{\R}\subset (K\times \R)^{\bzd\times\R}$, it suffices to prove that its law $\mathsf{P}_\mu$, considered as a probability measure on $(K\times \R)^{\bzd\times\R}$, is space-time shift-invariant and ergodic.

For fixed $\kappa$, the dynamics $\eta_\cdot$ have stationary measure $\mu[\kappa]$ and are invariant under time shifts. Because $\bar\mu$ is invariant under space shifts, and the law of $\mathsf{P}^\kappa_{\mu[\kappa]}$ is covariant under space-shifts (as follows from shift-invariance of the law of $W_t$), the law of $a^{[\kappa],\eta}_\cdot$ is also invariant under space shifts. So the law is indeed invariant under space-time shifts.

To prove the ergodicity, consider the process $\eta_\cdot$. This process is Markovian, and according to Lemma \ref{l:langevin_time_mixing_unif}, for a.e. $\kappa$ all invariant functions for this Markov process are constant. Now \cite[Theorem 26.11]{MR4226142} implies that if we consider $\eta_\cdot$ as process on path space $(\R^\bzd)^{\R}=\R^{\bzd\times\R}$ then the invariant sigma field is trivial, and so for a.e. $\kappa$ any measurable function that is invariant under time shifts is $\mathsf{P}^\kappa_{\mu[\kappa]}$-a.e. constant. 

Now suppose that $F\colon(K\times \R)^{\bzd\times\R}\to\R$ is bounded, measurable and invariant under space-time shifts. We need to prove that $F$ is constant $\mathsf{P}_\mu$-almost everywhere. To that end, observe first that the invariance of $F$ under time shifts implies that for a.e. $\kappa$ there is a constant $c_\kappa$ such that $F$ is $\mathsf{P}^\kappa_{\mu[\kappa]}$-a.s. equal to $c_\kappa$. This implies that $\mathsf{E}^\kappa_{\mu[\kappa]}(F(\kappa,\cdot))=c_\kappa$ for a.e. $\kappa$. The function $\mathsf{E}^\kappa_{\mu[\kappa]}(F(\kappa,\cdot))$ on the left-hand side is invariant under space shifts (because $F$ is shift-invariant and $\mathsf{P}^\kappa_{\mu[\kappa]}$ is shift-covariant), and so the ergodicity of $\bar\mu$ implies that $\bar\mu$-a.e. $c_\kappa$ is equal to some fixed constant $c$. In other words, $F$ is $\mathsf{P}_\mu$-a.e. constant, as required.
\end{proof}

Thus we now have a random walk in an ergodic random environment.
We introduce the annealed heat kernel of this random walk, given by
\[p(s,x,t,y):=\mathsf{E}_\mu p^{[\kappa],\eta}(s,x,t,y)=\E_{\bar\mu}\mathsf{E}^\kappa_{\mu[\kappa]}p^{[\kappa],\eta}(s,x,t,y).\]
According to Lemma \ref{l:jumprates_ergodic}, $p$ is invariant under shifts of space and time, i.e.  $p(s,x,t,y)=p(0,0,t-s,y-x)$.

\begin{theorem}\label{t:HS_annealed}
Let $V$ be a strongly log-concave log-mixture potential of quadratic growth. Let $\mu$ be a shift-invariant tempered ergodic gradient Gibbs measure with zero tilt. Let $\extmu$ be its extension, and $\bar\mu$ its $\kappa$-marginal (as in Lemma \ref{l:extension_tinv_erg}).
Then for any $F,G\in C^2_{b,\text{loc}}(\chi)$ such that at least one of them is an odd function, and any $t\ge0$ we have 
\begin{equation}\label{e:HS_annealed}
\begin{split}
    \Cov_{\mathsf{P}_\mu}\left(F(\eta_0),G(\eta_t)\right)&=\int_0^\infty \sum_{x\in\zd}\mathsf{E}_\mu \left(\partial_xF(\eta_0)\mathbf{E}_x^{[\kappa],\eta}\left(\partial_{X_{t+s}^{[\kappa],\eta}}G(\eta_{t+s})\right)\right)\d s\\
    &=\int_0^\infty\sum_{x,y\in\zd}\mathsf{E}_\mu\left(\partial_xF(\eta_0)\partial_yG(\eta_{t+s})p^{[\kappa],\eta}(0,x,t+s,y)\right)\d s
\end{split}
\end{equation}

In particular, if $F(\eta)=\sum_{b\in\bzd}f(b)\eta(b)$ and $G(\eta)=\sum_{b\in\bzd}g(b)\eta(b)$ for some $f,g\colon\bzd\to\R$ with compact support, then
\begin{equation}\label{e:HS_annealed_cov}
\Cov_{\mathsf{P}_\mu}\left(F(\eta_0),G(\eta_0)\right)=\int_0^\infty\sum_{b,b'\in\bzd}f(b)g(b')\nabla\nabla' p(0,b,s,b')\d s
\end{equation}
where
\[\nabla\nabla' p(0,b,s,b'):=p(0,y_b,s,y_{b'})-p(0,y_b,s,x_{b'})-p(0,x_b,s,y_{b'})+p(0,x_b,s,x_{b'})\]
\end{theorem}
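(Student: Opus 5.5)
The proof averages the quenched Helffer-Sjöstrand representation of Lemma~\ref{l:HS_quenched} over $\kappa$ drawn from $\bar\mu$. The point to watch is that $\Cov_{\mathsf{P}_\mu}$ is the annealed covariance. It equals the $\bar\mu$-average of the quenched covariances only if the quenched means vanish.

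\textbf{Step 1: quenched means vanish.} By Lemma~\ref{l:extension_disordered}, $\mu[\kappa]=\extmu(\d\eta\mid\Fkappa)(\kappa)$ is the a.s.\ unique disordered gradient Gibbs measure from Theorem~\ref{t:CK}(b). Its annealed measure is $\mu$, which is ergodic, tempered and has zero tilt. That lemma also gives $\E_{\mu[\kappa]}(F)=0$ for $\bar\mu$-a.e.\ $\kappa$ whenever $F$ is odd, and $F\in C^2_{b,\text{loc}}$ is integrable. By stationarity (Lemma~\ref{l:existence_langevin}(i)) the law of $\eta_t$ under $\mathsf{P}^\kappa_{\mu[\kappa]}$ is $\mu[\kappa]$. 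Hence, if $F$ or $G$ is odd, one of the quenched means $\mathsf{E}^\kappa_{\mu[\kappa]}F(\eta_0)$, $\mathsf{E}^\kappa_{\mu[\kappa]}G(\eta_t)$ is zero. The product of the quenched means therefore vanishes, and the law of total covariance gives
\[
\Cov_{\mathsf{P}_\mu}(F(\eta_0),G(\eta_t))=\E_{\bar\mu}\Cov_{\mathsf{P}^\kappa_{\mu[\kappa]}}(F(\eta_0),G(\eta_t)).
\]

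\textbf{Step 2: average the quenched formula.} The hypotheses of Lemma~\ref{l:langevin_time_mixing} hold with $\P=\bar\mu$. Indeed, $\bar\mu$ is shift-invariant and ergodic by Lemma~\ref{l:extension_tinv_erg}, and $c_V\le V_\kappa''\le C_V$ by quadratic growth. So Lemma~\ref{l:HS_quenched} applies for $\bar\mu$-a.e.\ $\kappa$. Integrating both sides against $\bar\mu(\d\kappa)$, and using $\mathsf{P}_\mu=\bar\mu\otimes\mathsf{P}^\kappa_{\mu[\kappa]}$, yields \eqref{e:HS_annealed}, provided we may exchange $\E_{\bar\mu}$ with $\int_0^\infty\d s$ and $\sum_{x,y}$.

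\textbf{Step 3: justify the exchange (main obstacle).} Locality means $\partial_xF$ is nonzero only for $x$ in a finite set, and $\partial_xF,\partial_yG$ are bounded. The $x,y$ sums are therefore finite, and the integrand is bounded by $C\sum_{x\in A,y\in B}p^{[\kappa],\eta}(0,x,t+s,y)$. This bound is not integrable in $s$ (it is about $s^{-d/2}$), so Fubini cannot be applied naively. Instead I would take the quenched $s$-integral as the improper limit $\lim_{T\to\infty}\int_0^T$ produced in the proof of \cite[Proposition 3.1]{MR1872740}. For finite $T$, Fubini applies trivially, since the integrand is bounded and the domain is $[0,T]\times$ a finite set. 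The passage $T\to\infty$ inside $\E_{\bar\mu}$ needs a uniform bound on the tail. In that proof the tail $\int_T^\infty$ is the covariance $\Cov(F(\eta_0),u(\cdot,T))$, which the quenched Cauchy–Schwarz inequality and Brascamp-Lieb (Section~\ref{BLSection}) bound by $C\|\nabla F\|\|\nabla G\|$, uniformly in $\kappa$. The same tail tends to $0$ for a.e.\ $\kappa$ by \eqref{e:langevin_time_mixingcov}. Dominated convergence then gives the limit, with \eqref{e:HS_annealed} understood as an improper integral where necessary.

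\textbf{Step 4: linear observables.} Take $F,G$ linear as in the statement; these are odd. They are not bounded, so first approximate them by truncations in $C^2_{b,\text{loc}}$, or extend Steps 1–3 directly using the Gaussian moment bounds of Theorem~\ref{t:existenceGGMs}(b). Here $\partial_xF=\sum_b f(b)(\1_{y_b=x}-\1_{x_b=x})$ is deterministic, and likewise $\partial_yG$. Setting $t=0$, the second line of \eqref{e:HS_annealed} becomes $\sum f(b)g(b')$ times signed combinations of $\mathsf{E}_\mu p^{[\kappa],\eta}(0,\cdot,s,\cdot)=p(0,\cdot,s,\cdot)$. This is exactly \eqref{e:HS_annealed_cov}.
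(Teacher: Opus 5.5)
Your Steps 1, 2 and 4 follow the paper: vanishing quenched means via Lemma~\ref{l:extension_disordered}, averaging Lemma~\ref{l:HS_quenched} over $\bar\mu$, and truncation for the linear observables. Where you differ is in how you justify exchanging $\E_{\bar\mu}$ with $\int_0^\infty\d s$, and your route yields a slightly weaker conclusion.

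\emph{The paper's route.} It rewrites the integrand, using local extensions $\tilde F,\tilde G\in C^1(\R^{\bzd})$, as $\sum_{b,b'}\partial_b\tilde F(\eta_0)\,\partial_{b'}\tilde G(\eta_{t+s})\,\nabla\nabla'p^{[\kappa],\eta}(0,b,t+s,b')$. It then applies the annealed bound \eqref{e:heatkernelsecder}, whose decay $(1\vee s)^{-(d+2)/2}$ is integrable in every dimension. Hence $\int_0^\infty\mathsf{E}_\mu|\cdots|\,\d s<\infty$ and Fubini applies outright.

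\emph{Your route.} You use the finite-$T$ identity from the proof of \cite[Proposition 3.1]{MR1872740} and exchange on $[0,T]$, where everything is bounded. You then remove the remainder $\E_{\bar\mu}\Cov_{\mathsf{P}^\kappa_{\mu[\kappa]}}(F(\eta_0),G(\eta_{t+T}))$ using the time mixing \eqref{e:langevin_time_mixing} and dominated convergence. This is correct and needs no heat-kernel estimates at this stage. However, it only proves that $\lim_{T\to\infty}\int_0^T$ exists and equals the covariance, i.e.\ \eqref{e:HS_annealed} as a possibly conditionally convergent improper integral. The paper's route also gives absolute integrability of the $s$-integrand, and hence a.s.\ absolute convergence of the quenched integrals, which is the stronger and more convenient form.

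Two details of your version need adjusting.

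First, the remainder needs neither a Brascamp--Lieb bound nor uniformity in $\kappa$. Section~\ref{BLSection} only provides Brascamp--Lieb for finite-volume measures with zero boundary data, not for the quenched $\mu[\kappa]$. For $F,G\in C^2_{b,\text{loc}}(\chi)$ the bound $2\|F\|_\infty\|G\|_\infty$ suffices. For linear observables, $\|F\|_{L^2(\mu[\kappa])}\|G\|_{L^2(\mu[\kappa])}$ is $\bar\mu$-integrable by Cauchy--Schwarz and temperedness of $\mu$.

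Second, and more importantly, in Step~4 you cannot let $M\to\infty$ inside $\int_0^\infty$ as the paper does. Without an $s$-integrable bound on $\mathsf{E}_\mu|\nabla\nabla'p^{[\kappa],\eta}|$ such as \eqref{e:heatkernelsecder}, you have no dominating function on $[0,\infty)$. Instead, take $M\to\infty$ in the finite-$T$ identity first; the truncated remainder converges by temperedness. Only then let $T\to\infty$, applying \eqref{e:langevin_time_mixing} to the linear, Lipschitz $G$. This is allowed because $\mathcal{L}_{[\kappa]}G\in L^2(\mu[\kappa])$, since $|V_\kappa'(s)|\le C_V|s|$.
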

Note that in comparison to Lemma \ref{l:HS_quenched} we have the additional assumption that at least one of $F,G$ is odd. We need it to ensure that its expectation over $\mu[\kappa]$ vanishes, which will imply that $ \Cov_{\mathsf{P}_\mu}\left(F(\eta_0),G(\eta_t)\right)=\E_{\bar\mu}\left(\Cov_{\mathsf{P}^\kappa_{\mu[\kappa]}}\left(F(\eta_0),G(\eta_t)\right)\right)$.

\begin{proof}
Assume w.l.o.g. that $F$ is odd. By Lemma \ref{l:extension_disordered}, we have $\E_{\mu[\kappa]}F(\eta)=0$ for $\bar\mu$-a.e. $\kappa$, and thus also $\mathsf{E}_\mu F(\eta)=\E_\mu F(\eta)=\E_{\bar\mu}\E_{\mu[\kappa]}F(\eta)=0$. This implies that
\[\Cov_{\mathsf{P}_\mu}\left(F(\eta_0),G(\eta_t)\right)=\mathsf{E}_\mu\left(F(\eta_0)G(\eta_t)\right)=\E_{\bar\mu}\mathsf{E}^\kappa_{\mu[\kappa]}\left(F(\eta_0)G(\eta_t)\right)=\E_{\bar\mu}\Cov_{\mathsf{P}^\kappa_{\mu[\kappa]}}\left(F(\eta_0),G(\eta_t)\right)\]

So \eqref{e:HS_annealed} follows from Lemma \ref{l:HS_quenched}, provided that we can interchange $\E_{\bar\mu}$ with the integral. We cannot naively apply Fubini's theorem here, as the heat kernel $p^{[\kappa],\eta}$ does not decay fast enough (at least when $d=2$). So we need to rewrite the integrand first in such a way that only the derivatives of the heat kernel remain (which have better decay).

For that purpose, let $\tilde F,\tilde G$ be extensions of $F,G$ to $C^1(\R^\bzd)$ that are still local functions. Under our assumptions on $F,G$ the functions $\partial_x F$ and $\partial_yG$ vanish except for finitely many $x,y$, and the same holds true for $\partial_x \tilde F$ and $\partial_y \tilde G$

So we can calculate
\begin{align*}
&\sum_{x,y\in\zd}(\partial_xF(\eta_0)\partial_yG(\eta_{t+s})p^{[\kappa],\eta}(0,x,t+s,y)\\
&=\sum_{x,y\in\zd}\bigg(-\sum_{\substack{b\in\bzd\\ x_{b}=x}}\frac{\partial}{\partial \eta(b)}\tilde F(\eta_0)+\sum_{\substack{b\in\bzd\\ y_{b}=x}}\frac{\partial}{\partial \eta(b)}\tilde F(\eta_0)\bigg)\bigg(-\sum_{\substack{b'\in\bzd\\ x_{b'}=y}}\frac{\partial}{\partial \eta(b')}\tilde G(\eta_{t+s})+\sum_{\substack{b'\in\bzd\\ y_{b'}=y}}\frac{\partial}{\partial \eta(b')}\tilde G(\eta_{t+s})\bigg)\\
&\qquad\qquad\times p^{[\kappa],\eta}(0,x,t+s,y)\\
&=\sum_{b,b'\in\bzd}\frac{\partial}{\partial \eta(b)}\tilde F(\eta_0)\frac{\partial}{\partial \eta(b')}\tilde G(\eta_{t+s})\\
&\qquad\qquad\times \left(p^{[\kappa],\eta}(0,y_b,t+s,y_{b'})-p^{[\kappa],\eta}(0,y_b,t+s,x_{b'})-p^{[\kappa],\eta}(0,x_b,t+s,y_{b'})+p^{[\kappa],\eta}(0,x_b,t+s,x_{b'})\right)\\
&=\sum_{b,b'\in\bzd}\frac{\partial}{\partial \eta(b)}\tilde F(\eta_0)\frac{\partial}{\partial \eta(b')}\tilde G(\eta_{t+s})\nabla\nabla'p^{[\kappa],\eta}(0,b,t+s,b')
\end{align*}
After this rewriting, we can note that the annealed heat kernel bounds \eqref{e:heatkernelsecder} in Theorem \ref{t:heatkernel} imply that
\[\int_0^\infty\mathsf{E}_\mu\left|\sum_{b,b'\in\bzd}\frac{\partial}{\partial \eta(b)}\tilde F(\eta_0)\frac{\partial}{\partial \eta(b')}\tilde G(\eta_{t+s})\nabla\nabla'p^{[\kappa],\eta}(0,b,t+s,b')\right|\d s<\infty\]
and now \eqref{e:HS_annealed} follows from Fubini's theorem.

For \eqref{e:HS_annealed_cov}, we cannot directly apply \eqref{e:HS_annealed}, as $F,G$ are not in $C^2_{b,\text{loc}}(\chi)$. But we can use a straightforward truncation argument. Given $M>0$, let $\theta_M(x)\in C^2(\R)$ be an odd function such that $\theta_M(x)=x$ for $|x|\le M$, $|\theta_M(x)|\le C|x|$, $|\theta_M'(x)|\le C$, $|\theta_M''(x)|\le C$ for all $x$ (for instance, let $\theta\in C^2(\R)$ be an odd function such that $\theta(x)=x$ for $|x|\le1$, $\theta(x)=0$ for $|x|\ge2$ and take $\theta_M(x)=M\theta\left(\frac{x}{M}\right)$). Consider $F_M(\eta)=\theta_M(F(\eta))$ and $G_M(\eta)=\theta_M(G(\eta))$. Then $F_M,G_M\in C^2_{b,\text{loc}}(\chi)$, and from \eqref{e:HS_annealed} we conclude
\begin{align*}
\Cov_{\mathsf{P}_\mu}\left(F_M(\eta_0),G_M(\eta_0)\right)&=\int_0^\infty\sum_{x,y\in\zd}\mathsf{E}_\mu\left(\partial_xF_M(\eta_0)\partial_yG_M(\eta_{s})p^{[\kappa],\eta}(0,x,s,y)\right)\d s\\
&=\int_0^\infty\sum_{x,y\in\zd}\mathsf{E}_\mu\left(\theta_M'(F(\eta_0))\theta_M'(G(\eta_s))\partial_xF(\eta_0)\partial_yG(\eta_{s})p^{[\kappa],\eta}(0,x,s,y)\right)\d s\\
&=\int_0^\infty \sum_{b,b'\in\bzd}f(b)g(b')\mathsf{E}_\mu\left(\theta_M'(F(\eta_0))\theta_M'(G(\eta_s))\nabla\nabla' p^{[\kappa],\eta}(0,b,s,b')\right)\d s
\end{align*}
We can take the limit $M\to\infty$ by dominated convergence and obtain \eqref{e:HS_annealed_cov}. On the left-hand side the necessary bound follows from temperedness of $\mu$, on the right-hand side again from the heat kernel bound \eqref{e:heatkernelsecder} and the fact that $\theta_M'$ is uniformly bounded. 
\end{proof}

The proof of the theorem required annealed bounds for the heat kernel.
Because our jump rates $a^{[\kappa],\eta}_\cdot$ are bounded above and below, we can use for this purpose the classical Delmotte-Deuschel estimates \cite{MR2198017}.

\begin{theorem}\label{t:heatkernel}
In the setting of Theorem \ref{t:HS_annealed}, we have the uniform time-decay bounds
\begin{align}
\left(\mathsf{E}_\mu|\nabla p^{[\kappa],\eta}(0,b,t,y)|^2\right)^{1/2}&\le \frac{C(d,V)}{(1\vee t)^{(d+1)/2}}\label{e:heatkernelfirstder}\\
\mathsf{E}_\mu|\nabla\nabla' p^{[\kappa],\eta}(0,b,t,b')|&\le \frac{C(d,V)}{(1\vee t)^{(d+2)/2}}\label{e:heatkernelsecder}
\end{align}
for all $b,b'\in\bzd$ and all $t>0$.

We also have the off-diagonal bound
\begin{equation}\label{e:heatkernelsecderoffdiag}
|\nabla\nabla' p(0,b,t,b')|\le \frac{C(d,V)}{1\vee t}p^*(c(d,V)t,x_b-x_{b'})
\end{equation}
where $p^*(t,x)$ is the heat kernel of continuous time simple random walk on $\zd$. In all three estimates the constants $C(d,V),c(d,V)$ depend on $d$, and on $V$ via the ellipticity parameters $\alpha,C_V$.
\end{theorem}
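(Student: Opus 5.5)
The plan is to reduce everything to the classical Delmotte--Deuschel estimates \cite{MR2198017} for random walks in dynamic random environments with uniformly elliptic, bounded rates. By Lemma~\ref{l:jumprates_ergodic}, the environment $a^{[\kappa],\eta}_\cdot$ under $\mathsf{P}_\mu$ is space-time shift-invariant and ergodic. Since $V$ is a strongly log-concave log-mixture of quadratic growth, we have deterministically $\alpha\le a^{[\kappa],\eta}_t(b)=V''_{\kappa(b)}(\eta_t(b))\le C_V$ for all $t,b$. Moreover, $a^{[\kappa],\eta}_t(b)=a^{[\kappa],\eta}_t(-b)$, because $\kappa(b)=\kappa(-b)$ and $V_\kappa''$ is even. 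These are exactly the hypotheses in \cite{MR2198017}: a stationary ergodic, symmetric, uniformly elliptic time-dependent conductance field.

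I will therefore first check that the mixed law $\mathsf{P}_\mu(\d\kappa,\d\eta_\cdot)=\bar\mu(\d\kappa)\mathsf{P}^\kappa_{\mu[\kappa]}(\d\eta_\cdot)$ fits their abstract framework. This means identifying the probability space with path space $(K\times\R)^{\bzd\times\R}$, the space-time shift group with its measure-preservation (Lemma~\ref{l:jumprates_ergodic}), and the random walk $X^{[\kappa],\eta}$ with their dynamic random walk (Lemma~\ref{l:randomwalk}).

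Next I will quote the estimates themselves.
\begin{itemize}
\item The annealed bound on the mixed second derivative of the heat kernel, of order $t^{-(d+2)/2}$ in $L^1(\mathsf{P}_\mu)$, gives \eqref{e:heatkernelsecder}.
\item The $L^2$ bound on the first derivative, of order $t^{-(d+1)/2}$, gives \eqref{e:heatkernelfirstder}.
\end{itemize}
Delmotte--Deuschel prove these by combining the Nash--Aronson Gaussian bounds for the quenched kernel with an energy estimate. The energy estimate uses stationarity to convert the Dirichlet form into $\mathsf{E}_\mu|\nabla p|^2$, and symmetry (reversibility in space) to differentiate in both arguments. The factor $1\vee t$ handles small times, where trivial bounds hold: $|\nabla p|\le 2$ and $|\nabla\nabla' p|\le 4$.

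The off-diagonal bound \eqref{e:heatkernelsecderoffdiag} concerns the annealed kernel $p$, which is deterministic and translation invariant. I would deduce it from the corresponding statement in \cite{MR2198017}, which controls $\nabla\nabla'$ of the averaged kernel by $t^{-1}$ times a Gaussian, i.e.\ by $p^*(ct,\cdot)$ up to constants. If only a Gaussian form $t^{-d/2}\e^{-c|x|^2/t}$ is available, I would compare it with $p^*$ via the local CLT plus large-deviation bounds for simple random walk. This comparison fails for $|x|\gg t$, but there the walk's Poisson tails give an even smaller annealed kernel, so I would use the Carne--Varopoulos-type bound available for bounded rates.

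The main obstacle is the conversion between heat-kernel derivatives with respect to the starting point and those with respect to the endpoint. Time reversal in a dynamic environment is not free: the walk in reversed time lives in the reversed environment. I would handle this by noting that $\mathsf{P}^\kappa_{\mu[\kappa]}$ is reversible (Lemma~\ref{l:existence_langevin}(i)), so the reversed environment has the same law. Together with space-shift invariance, this lets the annealed second derivative be symmetrised, after which the Delmotte--Deuschel bounds apply in either variable. All constants depend only on $d,\alpha,C_V$.
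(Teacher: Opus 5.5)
Your proposal is correct and is essentially the paper's argument. The paper observes that the rates $a^{[\kappa],\eta}_\cdot$ are space-time stationary, ergodic and bounded away from $0$ and $\infty$, takes \eqref{e:heatkernelsecderoffdiag} directly from \cite[(1.5b)]{MR2198017}, and obtains \eqref{e:heatkernelfirstder} and \eqref{e:heatkernelsecder} from \cite[(1.4) and (1.5a)]{MR2198017} combined with the trivial bound $p^*(t,x)\le C(1\vee t)^{-d/2}$; your Gaussian-comparison fallback and time-reversal step are not needed, since those estimates are already formulated with $p^*$ and cover the mixed derivative $\nabla\nabla'$ directly.
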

\begin{proof}  
The jump rates $a^{[\kappa],\eta}_\cdot$ are invariant under space-time shifts and space-time ergodic, and bounded away from 0 and infinity. 
So 
\eqref{e:heatkernelsecderoffdiag} follows from \cite[(1.5b)]{MR2198017}, while for \eqref{e:heatkernelfirstder} and \eqref{e:heatkernelsecder} one combines \cite[(1.4) and (1.5a)]{MR2198017} with the trivial bound $p^*(t,x)\le\frac{C}{(1\vee t)^{d/2}}$.

\end{proof}

We have not only these annealed heat kernel bounds, but also an ``invariance principle in probability'' (using the language of \cite{biskup}), a small strengthening of an (annealed) central limit theorem. In our setting (and in fact, in more general settings), even a quenched central limit theorem (a much stronger result) is known \cite{ACDSquenched}, and the most convenient way to obtain the invariance principle in probability we need is to use that quenched result.

\begin{theorem}\label{t:clt}
In the setting of Theorem \ref{t:HS_annealed} there is a (deterministic) symmetric positive definite matrix $q\in\R^{d\times d}$ such that the annealed probability measure $\mathsf{E}_\mu\mathbf{P}_0^{[\kappa],\eta}\left(\eps X_{t\eps^{-2}}^{[\kappa],\eta}\in\cdot\right)$ converges weakly as $\eps\to0$ to the centred Gaussian $\Ncal(0,2tq)$.

Moreover, for any $F\in C_b(\R^d)$, $\mathbf{E}_0^{[\kappa],\eta}\left(F(\eps X_{t\eps^{-2}}^{[\kappa],\eta})\right)$ converges in $\mathsf{P}_\mu$-probability to $\E (F(Y_t))$, where $Y_t\sim \Ncal(0,2tq)$.
\end{theorem}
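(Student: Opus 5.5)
The plan is to deduce Theorem~\ref{t:clt} from the quenched invariance principle for random walks in dynamic, space-time ergodic, uniformly elliptic environments, as in \cite{ACDSquenched}. The first step is to confirm that the hypotheses there are met. By Lemma~\ref{l:jumprates_ergodic}, the law of the conductances $a^{[\kappa],\eta}_\cdot$ under $\mathsf{P}_\mu$ is invariant and ergodic under space-time shifts. Because $V$ is a strongly log-concave log-mixture of quadratic growth, the conductances satisfy $\alpha\le a^{[\kappa],\eta}_t(b)\le C_V$ uniformly. They are also symmetric, $a_t(b)=a_t(-b)$, since $\kappa(b)=\kappa(-b)$. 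Under uniform ellipticity, all moment conditions required in \cite{ACDSquenched} hold trivially. The paths $t\mapsto a_t(b)$ are continuous, since $\eta_\cdot\in C(\R,\chi_r)$ and $V_\kappa''$ is continuous, so measurability of the environment process is not an issue.

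Next, \cite{ACDSquenched} gives a deterministic symmetric positive definite matrix $\Sigma$ such that, for $\mathsf{P}_\mu$-a.e.\ environment, the rescaled process $(\eps X^{[\kappa],\eta}_{t\eps^{-2}})_{t\ge0}$ under $\mathbf{P}_0^{[\kappa],\eta}$ converges in law to a Brownian motion with covariance $\Sigma$. We set $q:=\Sigma/2$, so that the time-$t$ marginal is $\Ncal(0,2tq)$. Positive definiteness follows from the lower ellipticity bound: the effective diffusivity dominates that of the walk with all conductances equal to $\alpha$, and this comparison is part of the cited result. Only the one-dimensional marginal at a fixed time $t$ is needed here.

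The second claim then follows directly. Fix $F\in C_b(\R^d)$. Quenched convergence in law gives $\mathbf{E}_0^{[\kappa],\eta}(F(\eps X_{t\eps^{-2}}))\to\E F(Y_t)$ for $\mathsf{P}_\mu$-a.e.\ environment along every sequence $\eps\to0$. Almost sure convergence implies convergence in probability. The annealed statement is obtained by taking $\mathsf{E}_\mu$ and applying dominated convergence, using $|F|\le\|F\|_\infty$.

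The main obstacle is matching conventions with \cite{ACDSquenched}. First, their walk may be the variable-speed walk started at time $0$ in an environment indexed by $t\in\R$ or $t\ge0$. Our walk jumps across $b$ at rate $a_t(b)$, which is exactly the variable-speed walk, so no time change is needed. Second, their ergodicity assumption may be formulated for the full space-time group or separately for space and time. Lemma~\ref{l:jumprates_ergodic} gives joint space-time ergodicity, which is the stronger form. If the cited result instead requires ergodicity under space shifts alone, or a time-reversal or reversibility property, I would use the reversibility of $\mu[\kappa]$ from Lemma~\ref{l:existence_langevin}(i) to supply the missing symmetry.
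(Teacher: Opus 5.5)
Your proposal is correct and follows the paper's proof. Like the paper, you invoke the quenched invariance principle \cite[Theorem 1.7]{ACDSquenched} for the space-time ergodic, uniformly elliptic environment of Lemma~\ref{l:jumprates_ergodic}, set $q=\Sigma/2$, get convergence in probability from almost sure quenched convergence, and get the annealed weak convergence by dominated convergence; positive definiteness of $\Sigma$ is part of the cited theorem, so your comparison heuristic and the reversibility fallback are not needed.
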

The factor 2 might seem slightly artificial here, but it arises from the fact that continuous time simple random walk with jump rates 1 along each edge of $\zd$ scales to $\sqrt{2}B_t$ under parabolic rescaling of space and time. So our normalisation ensures that in the standard GFF case (where $V(s)=\frac{s^2}{2}$) we obtain $q=\mathrm{Id}$, as expected.

\begin{proof}
According to \cite[Theorem 1.7]{ACDSquenched}, there is a deterministic symmetric positive definite matrix $\Sigma\in\R^{d\times d}$ such that for $\mathsf{P}_\mu$-a.e. environment the law of the random walk converges weakly in the Skorokhod space to that of a Brownian motion $Y_t$ with covariance matrix $\Sigma$. In particular, for any $F\in C_b(\R^d)$, $\mathbf{E}_0^{[\kappa],\eta}\left(F(\eps X_{t\eps^{-2}}^{[\kappa],\eta})\right)$ converges $\mathsf{P}_\mu$-almost surely to $\E (F(Y_t))$. This implies directly the claimed convergence in probability and in distribution, when setting $q=\frac12\Sigma$.
\end{proof}

\section{Applications of the Helffer-Sjöstrand representation}\label{s:applications}

In this section, we will use the properties of the extended gradient Gibbs measure from Section \ref{extendedgradGibbs}, and the Helffer-Sjöstrand representation from Section \ref{s:HS} to show the scaling limit result from Theorem \ref{t:scalinglimit} above and the decay of covariances result from Theorem \ref{t:decaycovariances} below.

\subsection{Scaling limit}
\label{SectScaling}

Using Theorem \ref{t:HS_annealed}, Theorem \ref{t:heatkernel} and Theorem \ref{t:clt}, we can now turn to the proof of our main result on the scaling limit of the interfaces. The main step will be to show that the ($\kappa$-dependent) quadratic form arising from the Helffer-Sjöstrand representation converges in $L^p(\mathsf{P}_\mu)$ for some $p>1$ (we will actually use $p=2$ right away). It would be easier to prove convergence in $L^1(\mathsf{P}_\mu)$ (as is done in \cite[Proposition 4.4]{MR2778801}), but in our setting this would be too weak to deduce Gaussianity of the scaling limit.

As we want to obtain convergence in $L^2(\mathsf{P}_\mu)$, we need an $L^2$-bound for the heat kernel. This means that we cannot use the $L^1$-bound \eqref{e:heatkernelsecder} for the second derivative, but need to use the $L^2$-bound \eqref{e:heatkernelfirstder} for the first derivative. This has the side effect that we lose a factor $\frac{1}{t^{1/2}}$ in the proof of the lemma, and for that reason it only applies for $d\ge2$. Fortunately for us, the scaling limit in $d=1$ is true by Donsker's principle (cf. Section \ref{sec:intro_interface}), so it is no problem that our approach does not cover that case.
\begin{lemma}\label{l:scalinglimitvariances}
Let $d\ge1$. Suppose that $V$ is a strongly log-concave log-mixture of quadratic growth, and let $\mu$ be a shift-invariant ergodic tempered gradient Gibbs measure for $V$ with zero tilt. Take $f=(f_1,\ldots,f_d)\in C_c^\infty(\R^d,\R^d)$. Recall the notation of $F_\eps(\eta)$ and $\mathfrak{Q}_f$ from Theorem \ref{t:scalinglimit}, and the definition and properties of the extended measure $\tilde\mu$ from Section \ref{extendedgradGibbs}. Then we have
\begin{equation}\label{e:scalinglimitvariancefixedt}
\lim_{\eps\to0}\mathsf{E}_\mu\bigg(\eps^{d-2}\sum_{x,y\in\zd}\sum_{i=1}^d\sum_{j=1}^df_i(\eps x)f_j(\eps y) \nabla\nabla' p^{[\kappa],\eta}(0,(x,x+e_i),s\eps^{-2},(y,y+e_j))-(\nabla\cdot f,\e^{s Q}\nabla \cdot f)_{L^2(\R^d)}\bigg)^2=0
\end{equation}
for each $s>0$. Moreover, for $d\ge2$ we have that
\begin{equation}\label{e:scalinglimitvarianceintegrated}
\lim_{\eps\to0}\mathsf{E}_\mu\bigg(\int_0^\infty\eps^{d-2}\sum_{x,y\in\zd}\sum_{i=1}^d\sum_{j=1}^df_i(\eps x)f_j(\eps y)\nabla\nabla' p^{[\kappa],\eta}(0,(x,x+e_i),s\eps^{-2},(y,y+e_j))\d s-\mathfrak{Q}_f\bigg)^2=0
\end{equation}
\end{lemma}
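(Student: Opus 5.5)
The plan is to sum by parts to move one derivative of the heat kernel onto the test function. Then the quadratic form becomes a double average of $\nabla p^{[\kappa],\eta}$ against discrete divergences. Write $g_\eps(y):=\sum_j \eps^{-1}(f_j(\eps(y-e_j))-f_j(\eps y))$, which is a discrete approximation of $-\nabla\cdot f(\eps y)$. For fixed $\eps$, summation by parts in both $x$ and $y$ turns the random variable in \eqref{e:scalinglimitvariancefixedt} into
\[
A_\eps(s):=\eps^{d}\sum_{x,y}g_\eps(x)g_\eps(y)\,p^{[\kappa],\eta}(0,x,s\eps^{-2},y).
\]
This step uses that $\sum_y p^{[\kappa],\eta}(0,x,t,y)=1$ and that $f$ has compact support, so the sums are finite. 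We can further write $A_\eps(s)=\eps^d\sum_x g_\eps(x)\mathbf E^{[\kappa],\eta}_x\big(g_\eps(X_{s\eps^{-2}})\big)$.

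The first step is to identify the mean of $A_\eps(s)$ and show its variance vanishes. For the $L^2$ convergence at fixed $s$ we expand the square. The key input is the invariance principle in probability, Theorem \ref{t:clt}, used together with space–time shift invariance (Lemma \ref{l:jumprates_ergodic}). For each $x$, $\mathbf E_x^{[\kappa],\eta}(g_\eps(X_{s\eps^{-2}}))$ equals, in law, the same functional of the walk started at $0$ in the shifted environment. Since $g_\eps(\cdot)\approx -\nabla\cdot f(\eps\,\cdot)$ uniformly, it converges in $\mathsf P_\mu$-probability to $-(\e^{sQ}\nabla\cdot f)(\eps x)$. This uses $Y_s\sim\Ncal(0,2sq)$ and the fact that the generator of $Y$ is exactly $Q$ with our normalisation, plus uniform continuity in the starting point $\eps x$ over compacts.

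Because everything is bounded by $\|g_\eps\|_\infty$, convergence in probability upgrades to $L^2$. Then $\eps^d\sum_x g_\eps(x)\cdot(\ldots)$ is a Riemann sum, and averaging an $L^2$-small error over the $O(\eps^{-d})$ points $x$ keeps it small by Minkowski's inequality. This gives $A_\eps(s)\to(\nabla\cdot f,\e^{sQ}\nabla\cdot f)$ in $L^2(\mathsf P_\mu)$, which is \eqref{e:scalinglimitvariancefixedt}, valid for all $d\ge1$.

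The second step passes to the time integral in \eqref{e:scalinglimitvarianceintegrated}. The obstacle is integrability at large $s$, where we need domination uniform in $\eps$. For that we do a single summation by parts only, keeping one gradient on the heat kernel:
\[
\eps^{d-1}\sum_{x,y}f_i(\eps x)\,g_\eps(y)\,\nabla p(0,(x,x+e_i),s\eps^{-2},y).
\]
We then estimate its $L^2(\mathsf P_\mu)$ norm by Minkowski and \eqref{e:heatkernelfirstder}. This gives a bound $\eps^{d-1}\cdot C\eps^{-2d}\cdot (\eps^2/s)^{(d+1)/2}=C s^{-(d+1)/2}$ for $s\ge\eps^2$. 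A sharper count, bounding the $y$-sum via the $\ell^1$-norm of $g_\eps$, gives the bound $Cs^{-d/2}$, and I expect this sharper bound to be what is needed. The bound $Cs^{-d/2}$ is integrable at infinity exactly when $d\ge3$. For $d=2$ one must exploit the off-diagonal Gaussian decay together with the mean-zero structure, since $\sum_y g_\eps(y)\approx0$ yields an extra $s^{-1/2}$; this is the delicate point.

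For small $s$ we use the trivial bound coming from \eqref{e:vareta}-type estimates, namely $\mathsf E|\nabla\nabla' p|\le C$, together with the form in \eqref{e:scalinglimitvariancefixedt}. Its $L^2$ norm is bounded by a constant uniformly on $[0,S_0]$, because $|\nabla\nabla' p|\le 4$ pointwise and only $O(\eps^{-d})$ pairs at distance $O(\eps^{-1}\sqrt{s})$ matter, via \eqref{e:heatkernelsecderoffdiag}. Given the domination $\|A_\eps(s)\|_{L^2}\le h(s)$ with $h\in L^1(0,\infty)$, Minkowski's integral inequality and dominated convergence applied to $s\mapsto\|A_\eps(s)-(\nabla\cdot f,\e^{sQ}\nabla\cdot f)\|_{L^2}$ finish the proof. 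The limit is identified through $\int_0^\infty \e^{sQ}\d s=(-Q)^{-1}$ on divergences, which is valid since $\nabla\cdot f$ has zero mean.
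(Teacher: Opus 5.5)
Your overall route is the paper's. For fixed $s$ you sum by parts in both variables and combine the invariance principle in probability (Theorem \ref{t:clt}), space-time shift invariance, bounded convergence and a Riemann-sum argument. The uniformity over starting points $\eps x$, which you attribute to uniform continuity, is made precise in the paper by a finite-net argument over the equicontinuous family of translates of $\nabla\cdot f$. For the time integral you sum by parts only once and use the $L^2$ gradient bound \eqref{e:heatkernelfirstder}. Replacing the paper's weighted Cauchy--Schwarz in $s$ by Minkowski's integral inequality is a harmless streamlining. The fixed-$s$ part is fine, but the domination step is muddled in a way that makes $d=2$ look open when it is not.

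\emph{Large $s$.} Your initial estimate $\|\cdot\|_{L^2(\mathsf P_\mu)}\le Cs^{-(d+1)/2}$ for $s\ge\eps^2$ is correct; it is the paper's \eqref{e:scalinglimitvariance2}. The ``sharper'' bound $Cs^{-d/2}$ is actually weaker for $s\ge1$, so there is no reason to switch to it. Since $(d+1)/2>1$ exactly when $d\ge2$, the first bound is already integrable at infinity for $d=2$. This is precisely where the restriction $d\ge2$ comes from, and no off-diagonal decay or mean-zero cancellation is needed.

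\emph{Small $s$.} Your justification fails as written, for three reasons:
\begin{itemize}
\item On the $\nabla\nabla'p$ form with prefactor $\eps^{d-2}$, the pointwise bound $|\nabla\nabla'p|\le4$, even combined with $\sum_y p=1$, only gives $O(\eps^{-2})$.
\item The number of pairs at distance $O(\eps^{-1}\sqrt s)$ is of order $\eps^{-2d}s^{d/2}$, not $O(\eps^{-d})$.
\item \eqref{e:heatkernelsecderoffdiag} concerns the annealed kernel, not $L^2(\mathsf P_\mu)$ norms of the quenched one.
\end{itemize}
The right bound is already in your first paragraph. Take $f$ vanishing outside $\{|x|<R\}$. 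From the doubly summed form,
\[
|A_\eps(s)|\le\|g_\eps\|_\infty\,\eps^d\sum_x|g_\eps(x)|\sum_y p^{[\kappa],\eta}(0,x,s\eps^{-2},y)\le CR^d\|\nabla f\|_{L^\infty}^2
\]
deterministically for all $s\ge0$. This is where $\sum_y p=1$ is actually used (summation by parts itself does not need it), and it is what the paper does.

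\emph{Conclusion.} Combine the two regimes with $|(\nabla\cdot f,\e^{sQ}\nabla\cdot f)_{L^2(\R^d)}|\le C(1\vee s)^{-(d+2)/2}$. This gives the integrable majorant $C(1\vee s)^{-(d+1)/2}$, and Minkowski plus dominated convergence conclude for all $d\ge2$.
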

\begin{proof}
We follow the proof of \cite[Proposition 4.4 and Lemma 4.5]{MR2778801}, with some changes because we are using a slightly different observable, and because we aim for $L^2$-convergence in \eqref{e:scalinglimitvarianceintegrated}. Namely, the annealed invariance principle \cite[Lemma 3.5]{MR2778801} is not strong enough to deduce our desired conclusion, and instead one needs to use an invariance principle in probability, just as we will do here. 

\textbf{Step 1: Proof of \eqref{e:scalinglimitvarianceintegrated} assuming \eqref{e:scalinglimitvariancefixedt}}\\
We begin by observing that \eqref{e:scalinglimitvarianceintegrated} is a consequence of \eqref{e:scalinglimitvariancefixedt} and the heat kernel bounds of Theorem \ref{t:heatkernel}. Indeed, we note that
\[\mathfrak{Q}_f=(\nabla\cdot f,(-Q)^{-1}\nabla\cdot f)_{L^2(\R^d)}=\int_0^\infty(\nabla\cdot f,\e^{s Q}\nabla \cdot f)_{L^2(\R^d)}\d s.\]
Plugging the above into \eqref{e:scalinglimitvarianceintegrated}, and using the abbreviation 
\[\nabla^2p:=\nabla\nabla' p^{[\kappa],\eta}(0,(x,x+e_i),s\eps^{-2},(y,y+e_j)),\]
where we omit for simplicity of notation the dependence on $x,y, \kappa, \eta$, we write
\begin{equation}\label{e:scalinglimitvariance1}
\begin{split}
&\mathsf{E}_\mu\bigg(\int_0^\infty\eps^{d-2}\sum_{x,y\in\zd}\sum_{i=1}^d\sum_{j=1}^df_i(\eps x)f_j(\eps y)\nabla^2p\d s-\mathfrak{Q}_f\bigg)^2\\
&=\mathsf{E}_\mu\bigg(\int_0^\infty\bigg(\eps^{d-2}\sum_{x,y\in\zd}\sum_{i=1}^d\sum_{j=1}^df_i(\eps x)f_j(\eps y)\nabla^2p-(\nabla\cdot f,\e^{s Q}\nabla \cdot f)_{L^2(\R^d)}\bigg)\d s\bigg)^2\\
&\le\left(\int_0^\infty \frac{1}{(1\vee s)^{(d+1)/2}}\d s\right)\mathsf{E}_\mu\int_0^\infty (1\vee s)^{(d+1)/2}\\
&\qquad\qquad\times\bigg(\eps^{d-2}\sum_{x,y\in\zd}\sum_{i=1}^d\sum_{j=1}^df_i(\eps x)f_j(\eps y)\nabla^2p-(\nabla\cdot f,\e^{s Q}\nabla \cdot f)_{L^2(\R^d)}\bigg)^2\d s\\
&\le C(d)\int_0^\infty (1\vee s)^{(d+1)/2}\\
&\qquad\times\mathsf{E}_\mu\bigg(\eps^{d-2}\sum_{x,y\in\zd}\sum_{i=1}^d\sum_{j=1}^df_i(\eps x)f_j(\eps y)\nabla^2p-(\nabla\cdot f,\e^{s Q}\nabla \cdot f)_{L^2(\R^d)}\bigg)^2\d s,
\end{split}
\end{equation}
for some $C(d)>0$. For the first inequality we multiplied and divided by $(1\vee s)^{(d+1)/2}$ in the term after the equality sign, and then we applied the Cauchy-Schwarz inequality. For the second inequality, we interchanged the order of integration in the second term, and bounded the first term by $C(d)$.
By \eqref{e:scalinglimitvariancefixedt}, the integrand on the right-hand side of \eqref{e:scalinglimitvariance1} goes to 0 for each fixed $s$, and so \eqref{e:scalinglimitvarianceintegrated} follows from \eqref{e:scalinglimitvariance1} and dominated convergence provided that we can find an integrable majorant for the integrand. 

By assumption $f\in C_c^\infty(\R^d,\R^d)$, and so in particular all derivatives of $f$ are bounded. Moreover, we can find some $R\ge 1$ such that $f(x)=0$ whenever $|x|\ge R$. Using below summation by parts for the equality (in the $y$-variable only), as well as \eqref{e:heatkernelfirstder} for the second inequality, we can estimate
\begin{equation}\label{e:scalinglimitvariance2}
\begin{split}
&\mathsf{E}_\mu\bigg(\eps^{d-2}\sum_{x,y\in\zd}\sum_{i=1}^d\sum_{j=1}^df_i(\eps x)f_j(\eps y)\nabla\nabla' p^{[\kappa],\eta}(0,(x,x+e_i),s\eps^{-2},(y,y+e_j))\bigg)^2\\
&=\mathsf{E}_\mu\bigg(\eps^{d-1}\sum_{\substack{x,y\in\zd\\|x|,|y|<(R+1)/\eps}}\sum_{i=1}^d\sum_{j=1}^df_i(\eps x)\frac{f_j(\eps y)-f_j(\eps (y-e_j))}{\eps}\nabla p^{[\kappa],\eta}(0,(x,x+e_i),s\eps^{-2},y)\bigg)^2\\
&\le\mathsf{E}_\mu\eps^{2(d-1)}\times C\frac{R^{2d}}{\eps^{2d}}\\
&\qquad\times\sum_{\substack{x,y\in\zd\\|x|,|y|<(R+1)/\eps}}\sum_{i=1}^d\sum_{j=1}^d(f_i(\eps x))^2\left(\frac{f_j(\eps y)-f_j(\eps (y-e_j))}{\eps}\right)^2(\nabla p^{[\kappa],\eta}(0,(x,x+e_i),s\eps^{-2},y))^2\\
&\le C\eps^{2(d-1)}\frac{R^{2d}}{\eps^{2d}}\|f\|_{L^\infty}^2\|\nabla f\|_{L^\infty}^2\frac{C}{(1\vee s\eps^{-2})^{d+1}}\frac{R^{2d}}{\eps^{2d}}\le C\frac{R^{4d}}{(\eps^2\vee s)^{d+1}}\|f\|_{L^\infty}^2\|\nabla f\|_{L^\infty}^2.
\end{split}
\end{equation}
This estimate is useful for $s$ large. For small $s$ we can use the rather crude estimate 
\begin{align*}
&\left|\eps^{d-2}\sum_{x,y\in\zd}\sum_{i=1}^d\sum_{j=1}^df_i(\eps x)f_j(\eps y)\nabla\nabla' p^{[\kappa],\eta}(0,(x,x+e_i),s\eps^{-2},(y,y+e_j))\right|\\
&=\left|\eps^d\sum_{x,y\in\zd}\sum_{i=1}^d\sum_{j=1}^d\frac{f_i(\eps x)-f_i(\eps (x-e_i))}{\eps}\frac{f_j(\eps y)-f_j(\eps (y-e_j))}{\eps}p^{[\kappa],\eta}(0,x,s\eps^{-2},y)\right|\\
&\le\|\nabla f\|_{L^\infty(\R^d)}^2\eps^d\sum_{\substack{x,y\in\zd\\|x|,|y|<(R+1)/\eps}}p^{[\kappa],\eta}(0,x,s\eps^{-2},y)
\le CR^d\|\nabla f\|_{L^\infty(\R^d)}^2,
\end{align*}
where for the first equality we passed from the gradient on the $\nabla\nabla' p^{[\kappa],\eta}$ term to the difference terms on $f$ by opening up the sums and then exploiting the gradient nature of $\nabla\nabla' p^{[\kappa],\eta}$. In the last step we used that there are only $C\frac{R^d}{\varepsilon^d}$ choices of $x$, and for each fixed $x$ the sum over the $y$ is at most 1. 

In combination with \eqref{e:scalinglimitvariance2} this implies that
\begin{equation}\label{e:scalinglimitvariance3}
\begin{split}
&\mathsf{E}_\mu\bigg(\eps^{d-2}\sum_{x,y\in\zd}\sum_{i=1}^d\sum_{j=1}^df_i(\eps x)f_j(\eps y)\nabla\nabla' p^{[\kappa],\eta}(0,(x,x+e_i),s\eps^{-2},(y,y+e_j))\bigg)^2\\
&\le C\frac{R^{4d}}{(1\vee\eps^2\vee s)^{d+1}}\left(\|f\|_{L^\infty(\R^d)}^4+\|\nabla f\|_{L^\infty(\R^d)}^4\right)
\end{split}
\end{equation}

For the other term in the integrand in \eqref{e:scalinglimitvariance1} we can argue similarly. Indeed, if we denote the heat kernel associated with the (constant-coefficient, elliptic) operator $Q$ by $p_Q$, then it (and its derivatives) have the same asymptotics as the heat kernel of the ordinary Laplacian. In particular we have the estimates
\[
(\nabla\cdot f,\e^{s Q}\nabla \cdot f)_{L^2(\R^d)}=\int_{\R^d}\int_{\R^d} \nabla\cdot f(x)\nabla\cdot f(y)p_Q(0,x,s,y)\d x\d y\le C\|\nabla f\|_{L^2(\R^d)}^2\le CR^d\|\nabla f\|_{L^\infty(\R^d)}^2\]
and also
\[
(\nabla\cdot f,\e^{s Q}\nabla \cdot f)_{L^2(\R^d)}=\int_{\R^d}\int_{\R^d}\sum_{i,j=1}^df_i(x)f_j(y)\nabla_i\nabla'_jp_Q(0,x,s,y)\d x\d y\le C\frac{R^{2d}}{s^{d/2+1}}\|f\|_{L^\infty(\R^d)}^2,\]
which can be combined into 
\begin{equation}\label{e:scalinglimitvariance4}
\begin{split}
(\nabla\cdot f,\e^{s Q}\nabla \cdot f)_{L^2(\R^d)}^2\le C\frac{R^{4d}}{(1\vee s)^{d+2}}\left(\|f\|_{L^\infty(\R^d)}^4+\|\nabla f\|_{L^\infty(\R^d)}^4\right).
\end{split}
\end{equation}
The combination of \eqref{e:scalinglimitvariance3} and \eqref{e:scalinglimitvariance4} now shows that the integrand in \eqref{e:scalinglimitvariance1} is bounded above (for $\eps\le1$, say) by
\[C\frac{(1\vee s)^{(d+1)/2}}{(1\vee s)^{d+1}}R^{4d}(\|f\|_{L^\infty(\R^d)}^4+\|\nabla f\|_{L^\infty(\R^d)}^4)\le C\frac{1}{(1\vee s)^{(d+1)/2}}R^{4d}\left(\|f\|_{L^\infty(\R^d)}^4+\|\nabla f\|_{L^\infty(\R^d)}^4\right)\]
which for $d\ge2$ is integrable. So we can pass to the limit $\eps\to0$ in \eqref{e:scalinglimitvariance1}, and \eqref{e:scalinglimitvarianceintegrated} follows. 

\textbf{Step 2: Proof of \eqref{e:scalinglimitvariancefixedt}}\\
It remains to prove \eqref{e:scalinglimitvariancefixedt}. Here it is convenient to rewrite things in terms of the random walk $X_{s\eps^{-2}}^{[\kappa],\eta}$. For that purpose we will use $p^{[\kappa],\eta}(0,x,t,y)=\mathbf{P}_x^{[\kappa],\eta} \left(X_t^{[\kappa],\eta}=y\right)=\mathbf{E}_x^{[\kappa],\eta} \left(\1_{X_t^{[\kappa],\eta}=y}\right)$. 
Using also summation by parts, we can calculate that (for $\eps<1$, say) we have 
\begin{equation}\label{e:scalinglimitvariance5}
\begin{split}
&\eps^{d-2}\sum_{x,y\in\zd}\sum_{i=1}^d\sum_{j=1}^d f_i(\eps x)f_j(\eps y)\nabla\nabla' p^{[\kappa],\eta}(0,(x,x+e_i),s\eps^{-2},(y,y+e_j))\\
&=\eps^d\sum_{x,y\in\zd}\sum_{i=1}^d\sum_{j=1}^d \frac{(f_i(\eps x)-f_i(\eps (x-e_i)))}{\eps}\frac{(f_j(\eps y)-f_j(\eps (y-e_j)))}{\eps}p^{[\kappa],\eta}(0,x,s\eps^{-2},y)\\
&=\eps^d\sum_{\substack{x\in\zd\\|x|<(R+1)/\eps}}\sum_{i=1}^d\sum_{j=1}^d \frac{(f_i(\eps x)-f_i(\eps (x-e_i)))}{\eps}\mathbf{E}_x^{[\kappa],\eta}\left(\frac{f_j(\eps  X_{s\eps^{-2}}^{[\kappa],\eta})-f_j(\eps X_{s\eps^{-2}}^{[\kappa],\eta}-\eps e_j)}{\eps}\right)\\
&=\eps^d\sum_{\substack{x\in\zd\\|x|<(R+1)/\eps}}\sum_{i=1}^d\sum_{j=1}^d \left(\partial_if_i(\epsilon x)+O(\eps\|\nabla^2f\|_{L^\infty})\right)\mathbf{E}_x^{[\kappa],\eta}\left(\partial_jf_j(\eps  X_{s\eps^{-2}}^{[\kappa],\eta})+O(\eps\|\nabla^2f\|_{L^\infty})\right)\\
&=\eps^d\sum_{\substack{x\in\zd\\|x|<(R+1)/\eps}}\left( \nabla\cdot f(\epsilon x)\mathbf{E}_x^{[\kappa],\eta}\nabla\cdot f(\eps  X_{s\eps^{-2}}^{[\kappa],\eta})+O(\eps\|\nabla f\|_{L^\infty}\|\nabla^2f\|_{L^\infty}+\eps^2\|\nabla^2f\|_{L^\infty}^2)\right),
\end{split}
\end{equation}
where for the last equality we used Taylor expansion to approximate finite differences by gradients.

To proceed, we need to find a much finer estimate for $(\nabla\cdot f,\e^{s Q}\nabla\cdot f)_{L^2(\R^d)}$ than the one derived in Step 1 above. We observe that $Q$ is the generator of a Brownian motion with covariance $2q$. We denote it by $Y_s$, and we denote the corresponding expectation when started from $y\in\R^d$ by $\mathbf{E}_y$. 
Then we have that 
\begin{equation}\label{e:scalinglimitvariance6}
\begin{split}
(\nabla\cdot f,\e^{s Q}\nabla\cdot f)_{L^2(\R^d)}&=\int_{\{y\colon|y|<R\}}\nabla \cdot f(y)\mathbf{E}_y \left(\nabla\cdot f(Y_s)\right)\d y\\
&=\sum_{\substack{x\in\zd\\|x|<(R+1)/\eps}}\int_{(0,\eps)^d}\nabla \cdot f(\eps x+y)\mathbf{E}_{\eps x+y}\left(\nabla\cdot f(Y_s)\right)\d y\\
&=\sum_{\substack{x\in\zd\\|x|<(R+1)/\eps}}\int_{(0,\eps)^d}\nabla \cdot f(\eps x+y)\mathbf{E}_{\eps x}\left(\nabla\cdot f(Y_s+y)\right)\d y\\
&=\sum_{\substack{x\in\zd\\|x|<(R+1)/\eps}}\int_{(0,\eps)^d}\left(\nabla \cdot f(\eps x)+O(\eps\|\nabla^2f\|_{L^\infty})\right)\left(\mathbf{E}_{\eps x} \nabla\cdot f(Y_s)+O(\eps\|\nabla^2f\|_{L^\infty}\right)\d y\\
&=\eps^d\sum_{\substack{x\in\zd\\|x|<(R+1)/\eps}}\left(\nabla \cdot f(\eps x)\mathbf{E}_{\eps x} \left(\nabla\cdot f(Y_s)\right)+O(\eps\|\nabla f\|_{L^\infty}\|\nabla^2f\|_{L^\infty}+\eps^2\|\nabla^2f\|_{L^\infty}^2)\right),
\end{split}
\end{equation}
where for the fourth equality we applied Taylor expansion to each of the two terms in the third equality, and for the fifth equality we rearranged the terms around.

Combining \eqref{e:scalinglimitvariance5} and \eqref{e:scalinglimitvariance6}, we find that
\begin{equation}\label{e:scalinglimitvariance7}
\begin{split}
&\mathsf{E}_\mu\bigg(\eps^{d-2}\sum_{x,y\in\zd}\sum_{i=1}^d\sum_{j=1}^df_i(\eps x)f_j(\eps y) \nabla\nabla' p^{[\kappa],\eta}(0,(x,x+e_i),s\eps^{-2},(y,y+e_j))-(\nabla\cdot f,\e^{s Q}\nabla \cdot f)_{L^2(\R^d)}\bigg)^2\\
&=\mathsf{E}_\mu\Bigg[\Bigg(\eps^d\sum_{\substack{x\in\zd\\|x|<(R+1)/\eps}}\Big( \nabla\cdot f(\epsilon x)\left(\mathbf{E}_x^{[\kappa],\eta}\nabla\cdot f(\eps  X_{s\eps^{-2}}^{[\kappa],\eta})-\mathbf{E}_{\eps x} \left(\nabla\cdot f(Y_s)\right)\right)\Big)\Bigg)\\
&\qquad\qquad+O\left(\eps^d\cdot\frac{R^d}{\eps^d}\left(\eps\|\nabla f\|_{L^\infty}\|\nabla^2f\|_{L^\infty}+\eps^2\|\nabla^2f\|_{L^\infty}^2\right)\right)\Bigg]^2\\
&\le2\mathsf{E}_\mu\bigg(\eps^d\sum_{\substack{x\in\zd\\|x|<(R+1)/\eps}}\Big( \nabla\cdot f(\epsilon x)\left(\mathbf{E}_x^{[\kappa],\eta}\nabla\cdot f(\eps  X_{s\eps^{-2}}^{[\kappa],\eta})-\mathbf{E}_{\eps x} \left(\nabla\cdot f(Y_s)\right)\right)\Big)\bigg)^2\\
&\qquad\qquad+C\left(R^d\left(\eps\|\nabla f\|_{L^\infty}\|\nabla^2f\|_{L^\infty}+\eps^2\|\nabla^2f\|_{L^\infty}^2)\right)\right)^2\\
&\le2\bigg(\eps^d\sum_{\substack{x\in\zd\\|x|<(R+1)/\eps}} |\nabla\cdot f(\epsilon x)|^2\bigg)\mathsf{E}_\mu\bigg(\eps^d\sum_{\substack{x\in\zd\\|x|<(R+1)/\eps}}\left(\mathbf{E}_x^{[\kappa],\eta}\nabla\cdot f(\eps  X_{s\eps^{-2}}^{[\kappa],\eta})-\mathbf{E}_{\eps x} \left(\nabla\cdot f(Y_s)\right)\right)^2\bigg)\\
&\qquad\qquad+C\left(R^{2d}\left(\eps^2\|\nabla f\|_{L^\infty}^2\|\nabla^2f\|_{L^\infty}^2+\eps^4\|\nabla^2f\|_{L^\infty}^4)\right)\right)\\
&\le CR^d\|\nabla f\|_{L^\infty}^2 \mathsf{E}_\mu\bigg(\eps^d\sum_{\substack{x\in\zd\\|x|<(R+1)/\eps}}\left(\mathbf{E}_x^{[\kappa],\eta}\nabla\cdot f(\eps  X_{s\eps^{-2}}^{[\kappa],\eta})-\mathbf{E}_{\eps x} \left(\nabla\cdot f(Y_s)\right)\right)^2\bigg)\\
&\qquad\qquad+C\left(R^{2d}\left(\eps^2\|\nabla f\|_{L^\infty}^2\|\nabla^2f\|_{L^\infty}^2+\eps^4\|\nabla^2f\|_{L^\infty}^4)\right)\right),\\
\end{split}
\end{equation}
where for the second inequality in the above, we moved the $\nabla\cdot f(\epsilon x)$ terms out of the expectation by taking a rough bound involving summation after all $|x|<(R+1)/\epsilon$, and for the last inequality we used that $|x|<(R+1)/\epsilon$ to simplify that bound.
We claim that
\begin{equation}\label{e:scalinglimitvariance8}
\lim_{\eps\to0}\mathsf{E}_\mu\bigg(\eps^d\sum_{\substack{x\in\zd\\|x|<(R+1)/\eps}}\left(\mathbf{E}_x^{[\kappa],\eta}\nabla\cdot f(\eps  X_{s\eps^{-2}}^{[\kappa],\eta})-\mathbf{E}_{\eps x} \left(\nabla\cdot f(Y_s)\right)\right)^2\bigg)=0.
\end{equation}
Once we have shown this, the right-hand side of \eqref{e:scalinglimitvariance7} tends to 0 as $\eps\to0$, which implies \eqref{e:scalinglimitvariancefixedt}.

To see why \eqref{e:scalinglimitvariance8} holds, recall that by Theorem \ref{t:clt}, $\mathbf{E}_0^{[\kappa],\eta}\left(F(\eps X_{s\eps^{-2}}^{[\kappa],\eta})\right)$ converges in $\mathsf{P}_\mu$-probability to $\mathbf{E}_0 (F(Y_s))$, whenever $F\in C_b(\R^d)$. As $F$ is bounded, we can upgrade the convergence in probability to $L^2$-convergence, that is we have
\[\lim_{\eps\to0}\mathsf{E}_\mu\left(\mathbf{E}_0^{[\kappa],\eta}\left(F(\eps X_{s\eps^{-2}}^{[\kappa],\eta})\right)-\mathbf{E}_0 (F(Y_s))\right)^2\to0.\]
Actually this convergence holds uniformly for $F\in\Fcal$, where $\Fcal\subset C_b(\R^d)$ is a set of functions that are equicontinuous, uniformly bounded, and supported in a fixed compact set (as follows easily from the Arzelà-Ascoli theorem and a finite-net argument). The family $\Fcal=\{(\nabla \cdot f)(y+\cdot)\colon |y|<(R+1)\}$ has these properties, and hence (specializing to $y=\eps x$ for $x\in\Z^d$) we find that
\[\lim_{\eps\to0}\sup_{\substack{x\in\zd\\|x|<(R+1)/\eps}}\mathsf{E}_\mu\left(\left(\mathbf{E}_0^{[\kappa],\eta}\nabla\cdot f(\eps x+\eps  X_{s\eps^{-2}}^{[\kappa],\eta})-\mathbf{E}_{0} \left(\nabla\cdot f(\eps x+Y_s)\right)\right)^2\right)=0.\]
Using the shift-invariance of $\mathsf{P}_\mu$, this also implies that
\[\lim_{\eps\to0}\sup_{\substack{x\in\zd\\|x|<(R+1)/\eps}}\mathsf{E}_\mu\left(\left(\mathbf{E}_x^{[\kappa],\eta}\nabla\cdot f(\eps  X_{s\eps^{-2}}^{[\kappa],\eta})-\mathbf{E}_{\eps x} \left(\nabla\cdot f(Y_s)\right)\right)^2\right)=0.\]
This in turn immediately implies \eqref{e:scalinglimitvariance8}.
\end{proof}
Using Lemma \ref{l:scalinglimitvariances}, Theorem \ref{t:scalinglimit} follows from the Helffer-Sjöstrand representation in Theorem \ref{t:HS_annealed} and the fact that we have an a priori bound on the exponential moment-generating function (which follows from \eqref{e:Gaussianmoments}). The argument is based on \cite[Proof of Theorem A]{MR1461951}, but with some details filled in.
\begin{proof}[Proof of Theorem \ref{t:scalinglimit}]
Let $\Lambda_\eps(\lambda):=\E_\mu(\exp(\lambda F_\eps(\eta)))$ be the moment-generating function of $F_\eps(\eta)$. By the Brascamp-Lieb exponential moment inequality from Theorem \ref{t:existenceGGMs}(b), we have
\begin{equation}\label{e:scalinglimit2}
\Lambda_\eps(\lambda)\le\exp(C\lambda^2)
\end{equation}
for any $\lambda\in\R$ with an $\eps$-independent constant $C$ (and in particular, $\Lambda_\eps(\lambda)$ is well-defined and even real-analytic). Moreover, we can compute that
\[\Lambda_\eps'(\lambda)=\E_\mu(F_\eps(\eta)\exp(\lambda F_\eps(\eta)))\]
We want to apply Theorem \ref{t:HS_annealed} to the right-hand side in the above (noting that $F_\eps(\eta)$ is an odd function). Some care is needed, though, as neither $F_\eps$ nor $\exp(\lambda F_\eps)$ are bounded functions. So we will use a truncation argument similar to the one used for \eqref{e:HS_annealed_cov}. As in the proof of \eqref{e:HS_annealed_cov}, we let $\theta_M(x)\in C^2(\R)$ be an odd function such that $\theta_M(x)=x$ for $|x|\le M$, $|\theta_M(x)|\le C|x|$, $|\theta_M'(x)|\le C$, $|\theta_M''(x)|\le C$ for all $x$.
Then both $\theta_M(F_\eps)$ and $\theta_M(\exp(\lambda F_\eps))$ are in $C^2_{b,\text{loc}}(\chi)$, and $\theta_M(F_\eps)$ is still odd. So Theorem \ref{t:HS_annealed} implies that
\begin{equation}\label{e:scalinglimit1}
    \begin{split}
&\Cov_{\mathsf{P}_\mu}\big(\theta_M(\exp(\lambda F_\eps(\eta))),\theta_M(F_\eps(\eta))\big)\\
&=\int_0^\infty\sum_{x,y\in\zd}\mathsf{E}_\mu\left(\partial_x\big(\theta_M(\exp(\lambda F_\eps(\eta_0)))\big)\partial_y \theta_M(F_\eps(\eta_s))p^{[\kappa],\eta}(0,x,s,y)\right)\d s\\
&=\lambda\int_0^\infty\eps^d\sum_{x,y\in\zd}\sum_{i=1}^d\sum_{j=1}^d\left(f_i(\eps(x-e_i))-f_i(\eps x)\right)\left(f_j(\eps(y-e_j))-f_j(\eps y)\right)\\
&\qquad\qquad\times\mathsf{E}_\mu\left(\theta_M'(\exp(\lambda (F_{\eps}(\eta_0))))\exp(\lambda (F_{\eps}(\eta_0)))\theta_M'(F_\eps(\eta_s))
p^{[\kappa],\eta}(0,x,s,y)\right)\d s\\
&=\lambda\mathsf{E}_\mu\Bigg(\int_0^\infty\eps^d\sum_{x,y\in\zd}\sum_{i=1}^d\sum_{j=1}^df_i(\eps x)\left(f_j(\eps(y-e_j))-f_j(\eps y)\right)\\
&\qquad\qquad\times\theta_M'(\exp(\lambda (F_{\eps}(\eta_0))))\exp(\lambda (F_{\eps}(\eta_0)))\theta_M'(F_\eps(\eta_s)) \nabla p^{[\kappa],\eta}(0,(x,x+e_i),s,y)\d s\Bigg)
\end{split}
\end{equation}
In the last step we have used summation by parts in the $x$ variable only, and also Fubini's theorem. The latter is justified by noting that\[\big|\theta_M'(\exp(\lambda (F_{\eps}(\eta_0))))\exp(\lambda (F_{\eps}(\eta_0)))\theta_M'(F_\eps(\eta_s))\big|\le C\exp(\lambda (F_{\eps}(\eta_0)))\]
and that, if $f(x)=0$ for $|x|\ge R$, say, we have
\begin{equation}\label{e:scalinglimit5}
    \begin{split}
&\int_0^\infty\eps^d\Big|\sum_{x,y\in\zd}\sum_{i=1}^d\sum_{j=1}^d|f_i(\eps x)|\left|f_j(\eps(y-e_j))-f_j(\eps y)\right|\mathsf{E}_\mu\left(\exp(\lambda (F_{\eps}(\eta_0)))\nabla p^{[\kappa],\eta}(0,(x,x+e_i),s,y)\right)\Big|\d s\\
&\le C\frac{R^{2d}}{\eps^d}\|f\|_{L^\infty}^2\int_0^\infty\big(\mathsf{E}_\mu(\exp(2\lambda (F_{\eps}(\eta_0))))\big)^{1/2}\sup_{x,y\in\zd}\big(\mathsf{E}_\mu(\nabla p^{[\kappa],\eta}(0,(x,x+e_i),s,y))^2\big)^{1/2}\d s
\end{split}
\end{equation}
where the right-hand side is finite according to \eqref{e:heatkernelfirstder} and \eqref{e:scalinglimit2}. We note that for this calculation it was crucial that we used summation by parts only in one variable, as we needed to be able to use an $L^2$-bound for the heat kernel (the same phenomenon already occurred for \eqref{e:scalinglimitvariance2}).

Next, we want to use dominated convergence to take the limit $M\to\infty$ in \eqref{e:scalinglimit1}. On the right-hand side, the justification follows directly from \eqref{e:scalinglimit5}, while on the left-hand side it can be justified by noting that $\big|\theta_M(\exp(\lambda F_\eps(\eta)))\theta_M(F_\eps(\eta))\big|\le C\big|\exp(\lambda F_\eps(\eta))|F_\eps(\eta)|$ which is integrable by \eqref{e:scalinglimit2}. 

Accordingly we obtain that
\begin{align*}
&\Lambda_\eps'(\lambda)=\Cov_{\mathsf{P}_\mu}(\exp(\lambda F_\eps(\eta)),F_\eps(\eta))\\
&=\lambda\mathsf{E}_\mu\Bigg(\int_0^\infty\eps^d\sum_{x,y\in\zd}\sum_{i=1}^d\sum_{j=1}^df_i(\eps x)\left(f_j(\eps(y-e_j))-f_j(\eps y)\right)\exp(\lambda (F_{\eps}(\eta_0))) \nabla p^{[\kappa],\eta}(0,(x,x+e_i),s,y)\d s\Bigg)
\end{align*}
We can now apply summation by parts also in the $y$ variable and change the time variable from $s$ to $s\eps^{-2}$ to conclude that
\begin{equation}\label{e:scalinglimit6}
\begin{split}
&\Lambda_\eps'(\lambda)\\
&=\lambda\mathsf{E}_\mu\Bigg(\int_0^\infty\eps^{d-2}\sum_{x,y\in\zd}\sum_{i=1}^d\sum_{j=1}^df_i(\eps x)f_j(\eps y)\exp(\lambda (F_{\eps}(\eta_0))) \nabla \nabla'p^{[\kappa],\eta}(0,(x,x+e_i),s\varepsilon^{-2},(y,y+e_j))\d s\Bigg)\\
&=\lambda\mathfrak{Q}_f \Lambda_\eps(\lambda)+R(\lambda,\eps),
\end{split}
\end{equation}
where
\begin{equation}\label{e:scalinglimit3}
    \begin{split}
&R(\lambda,\eps)\\
&=\lambda\mathsf{E}_\mu\Bigg(\int_0^\infty\eps^{d-2}\sum_{x,y\in\zd}\sum_{i=1}^d\sum_{j=1}^df_i(\eps x)f_j(\eps y)\exp(\lambda F_\eps(\eta_0)) \nabla\nabla' p^{[\kappa],\eta}(0,(x,x+e_i),s\varepsilon^{-2},(y,y+e_j))\d s\\
&\qquad-\mathfrak{Q}_f\E_\mu(\exp(\lambda F_\eps(\eta)))\Bigg)\\
&=\lambda\mathsf{E}_\mu\Bigg(\exp(\lambda F_\eps(\eta_0))\\
&\qquad\times\Big(\int_0^\infty\eps^{d-2}\sum_{x,y\in\zd}\sum_{i=1}^d\sum_{j=1}^d\left(f_i(\eps x)f_j(\eps y) \nabla\nabla' p^{[\kappa],\eta}(0,(x,x+e_i),s\varepsilon^{-2},(y,y+e_j))\right)\d s-\mathfrak{Q}_f\Big)\Bigg).
\end{split}
\end{equation}
The right-hand side is bounded in absolute value using Cauchy-Schwarz by
\[|\lambda| (\Lambda_\eps(2\lambda))^{1/2}\Bigg(\mathsf{E}_\mu\bigg(\bigg(\int_0^\infty\eps^{d-2}\sum_{x,y\in\zd}\sum_{i=1}^d\sum_{j=1}f_i(\eps x)f_j(\eps y)\nabla\nabla' p^{[\kappa],\eta}(0,(x,x+e_i),s\varepsilon^{-2},(y,y+e_j))\d s-\mathfrak{Q}_f\bigg)^2\bigg)\Bigg)^{1/2},\]
which by Lemma \ref{l:scalinglimitvariances} and \eqref{e:scalinglimit2} tends to 0 as $\eps\to0$, uniformly for $\lambda$ in any compact set. So from \eqref{e:scalinglimit3} we learn that $R(\lambda,\eps)$ tends to 0 as $\eps\to0$, uniformly for $\lambda$ in any compact set.

Now for any fixed $\lambda$ we have that
\begin{align*}
\Lambda_\eps(\lambda)\exp\left(-\frac{\lambda^2}{2}\mathfrak{Q}_f\right)&=1+\int_0^\lambda \left(\Lambda_\eps'(\theta)-\theta\mathfrak{Q}_f \Lambda_\eps(\theta)\right)\exp\left(-\frac{\theta^2}{2}\mathfrak{Q}_f\right)\d \theta\\
&=1+\int_0^\lambda R(\theta,\eps)\exp\left(-\frac{\theta^2}{2}\mathfrak{Q}_f\right)\d \theta.
\end{align*}
The locally uniform convergence of $R(\theta,\eps)$ implies that the integral on the right-hand side goes to 0 as $\eps\to0$. So we obtain \eqref{e:scalinglimit}, which completes the proof of the theorem.
\end{proof}

\subsection{Decay of covariances}

\begin{theorem}
\label{t:decaycovariances}
Let $d\ge 2$. Let $V$ be a strongly log-concave log-mixture potential of quadratic growth, and let $\mu$ be a corresponding shift-invariant ergodic gradient Gibbs measure with zero tilt. Then $\mu$ satisfies for all $f,g\colon\bzd\to\R$ with compact support
\[\bigg|\Cov_{\mu}\bigg(\sum_{b\in\bzd}f(b)\eta(b),\sum_{b'\in\bzd}g(b')\eta(b')
\bigg)\bigg|\le C\|f\|_\infty\|g\|_\infty\sum_{\substack{b\in \bzd\cap\supp(f)\\b'\in \bzd\cap\supp(g)}}\frac{1}{1\vee|x_b-x_{b'}|^d},\]
for some $C>0$, where the sum is over the finite number of terms $b,b'$ in $f$ and $g$. 

More generally, for any $F,G\in C^2_{b,\text{loc}}(\chi)$ that are restrictions of functions $\tilde F,\tilde G\in C^1(\R^\bzd)$ to $\chi$ and such that at least one of $F,G$ is an odd function, we have that
\[\bigg|\Cov_{\mu}\big(F(\eta),G(\eta)\big)\bigg|\le C(d,V)\sum_{b,b'\in \bzd}\frac{||\partial_b \tilde F||_\infty||\partial_{b'} \tilde G||_\infty}{(1\vee |x_b-x_{b'}|)^d},\]
for some $C(d,V)>0$, where we used the notation
$\partial_b\tilde F(\eta):=\frac{d}{d\eta(b)}\tilde F(\eta)$.
\end{theorem}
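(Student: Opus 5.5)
The plan is to read both estimates directly off the annealed Helffer-Sjöstrand representation of Theorem~\ref{t:HS_annealed}, combined with the annealed heat kernel bounds of Theorem~\ref{t:heatkernel}. Implicitly the measure is tempered, which is what Theorem~\ref{t:HS_annealed} requires. Since the $\eta$-marginal of $\mathsf{P}_\mu$ at time $0$ is $\mu$, we have $\Cov_\mu(F,G)=\Cov_{\mathsf{P}_\mu}(F(\eta_0),G(\eta_0))$. We can therefore apply the $t=0$ case of \eqref{e:HS_annealed}, or of \eqref{e:HS_annealed_cov} in the linear case.

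\textbf{Linear case.} Take $F=\sum_b f(b)\eta(b)$ and $G=\sum_{b'}g(b')\eta(b')$. These are odd, so \eqref{e:HS_annealed_cov} gives
\[
\Cov_\mu(F,G)=\int_0^\infty\sum_{b,b'}f(b)g(b')\,\nabla\nabla'p(0,b,s,b')\,\d s .
\]
Bounding $|f|,|g|$ by their sup norms, it suffices to show that for each pair of bonds
\[
\int_0^\infty|\nabla\nabla'p(0,b,s,b')|\,\d s\le C\,(1\vee|x_b-x_{b'}|)^{-d}.
\]
For this we use the off-diagonal bound \eqref{e:heatkernelsecderoffdiag}, $|\nabla\nabla'p|\le C(1\vee s)^{-1}p^*(cs,x_b-x_{b'})$, together with standard estimates for the simple random walk heat kernel. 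Write $r=|x_b-x_{b'}|$ and $z=x_b-x_{b'}$. Then
\[
\int_0^\infty (1\vee s)^{-1}p^*(cs,z)\,\d s\lesssim \int_1^\infty s^{-1-d/2}e^{-r^2/(Cs)}\,\d s+\int_0^1p^*(cs,z)\,\d s .
\]
The first integral is $\lesssim r^{-d}$ after the substitution $s=r^2u$, and is bounded uniformly when $r\le1$. The second is at most $1$ and decays superexponentially in $r$ (Poisson tails), so it is also $\lesssim (1\vee r)^{-d}$. For $r=0$ the whole integral is finite because $d\ge2>0$. This is the step I expect to need the most care: one has to use Gaussian or Poisson tails of $p^*$ at short times. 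If the off-diagonal bound were unavailable, the on-diagonal bound \eqref{e:heatkernelsecder} would only give the time-integrability, and decay in $r$ would be lost. So \eqref{e:heatkernelsecderoffdiag} is essential here. Note that it is an annealed bound on $p$, which is exactly what appears after averaging.

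\textbf{General case.} Take $F,G\in C^2_{b,\mathrm{loc}}(\chi)$ with one of them odd. Apply \eqref{e:HS_annealed} at $t=0$, and rewrite it with the extensions $\tilde F,\tilde G$ exactly as in the proof of Theorem~\ref{t:HS_annealed}:
\[
\Cov_\mu(F,G)=\int_0^\infty\sum_{b,b'}\mathsf{E}_\mu\Big(\partial_b\tilde F(\eta_0)\,\partial_{b'}\tilde G(\eta_s)\,\nabla\nabla'p^{[\kappa],\eta}(0,b,s,b')\Big)\d s .
\]
Bounding the derivatives by sup norms leaves $\int_0^\infty\mathsf{E}_\mu|\nabla\nabla'p^{[\kappa],\eta}(0,b,s,b')|\,\d s$. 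Here there is a subtlety. The off-diagonal bound \eqref{e:heatkernelsecderoffdiag} is stated for the annealed kernel $p$, whereas we need the expectation of the absolute value of the quenched derivative. I would take this from the Delmotte--Deuschel result \cite[(1.5b)]{MR2198017}, which in fact bounds $\mathsf{E}_\mu|\nabla\nabla'p^{[\kappa],\eta}|$ by $C(1\vee s)^{-1}p^*(cs,\cdot)$, and the linear-case bound \eqref{e:heatkernelsecderoffdiag} is derived from precisely that estimate. Granting this form, the same time integration as in the linear case gives the claim with constant $C(d,V)$. If only the annealed kernel bound were available, I would fall back on Cauchy--Schwarz with \eqref{e:heatkernelfirstder}. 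That route gives weaker spatial decay, so the quenched-absolute-value version of \cite{MR2198017} is the route I would commit to.
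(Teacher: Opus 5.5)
Your treatment of the linear inequality is the paper's proof: \eqref{e:HS_annealed_cov} at $t=0$, the annealed off-diagonal bound \eqref{e:heatkernelsecderoffdiag}, and integration in time against a bound on $p^*$. One small correction there. The paper uses $p^*(t,x)\le C(1\vee t)^{-d/2}\exp(-|x|/(1\vee t)^{1/2})$, which holds in every regime. The Gaussian form $s^{-d/2}\mathrm{e}^{-r^2/(Cs)}$ you use for $s\ge1$ is not valid when $r\gg s$, where $p^*$ has Poisson rather than Gaussian tails. On $1\le s\le r$ one still has $p^*(cs,z)\le C\mathrm{e}^{-c'r}$, so that range contributes $O(\mathrm{e}^{-c'r}\log r)$ and your conclusion is unaffected.

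For the general inequality the paper only says it ``follows similarly''. You have correctly identified what that hides. With the random weights $\partial_b\tilde F(\eta_0)\,\partial_{b'}\tilde G(\eta_s)$ inside $\mathsf{E}_\mu$, the bound \eqref{e:heatkernelsecderoffdiag} on the annealed kernel cannot be used, and one needs an off-diagonal bound on $\mathsf{E}_\mu|\nabla\nabla'p^{[\kappa],\eta}(0,b,s,b')|$.

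Your source for that bound, however, conflicts with how the paper reads the reference. Theorem~\ref{t:heatkernel} uses \cite[(1.5b)]{MR2198017} for the annealed kernel $p$. It obtains the quenched $L^1$ bound \eqref{e:heatkernelsecder} from (1.5a) by bounding a $p^*$ factor trivially. So the estimate you want is (1.5a) before that trivial bound is applied, and you should check its exact form rather than grant it.

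If it is not available in that form, you can derive it from an off-diagonal version of (1.4), namely $(\mathsf{E}_\mu|\nabla p^{[\kappa],\eta}(0,b,t,z)|^2)^{1/2}\le C(1\vee t)^{-1/2}p^*(ct,x_b-z)$. First split $\nabla\nabla'p^{[\kappa],\eta}(0,b,t,b')=\sum_z\nabla p^{[\kappa],\eta}(0,b,t/2,z)\,\nabla'p^{[\kappa],\eta}(t/2,z,t,b')$. Then apply Cauchy--Schwarz in $\mathsf{E}_\mu$ term by term; time-stationarity and reversibility of the environment (Lemma on existence of the Langevin dynamics, part (i)) turn the endpoint gradient into a starting-point gradient. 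Finally, Chapman--Kolmogorov for $p^*$ gives $\mathsf{E}_\mu|\nabla\nabla'p^{[\kappa],\eta}(0,b,t,b')|\le C(1\vee t)^{-1}p^*(ct,x_b-x_{b'})$, after which your time integration finishes the proof. As you note yourself, the on-diagonal bound \eqref{e:heatkernelfirstder} alone would not give the $|x_b-x_{b'}|^{-d}$ decay.
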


\begin{proof}
We will prove only the first inequality, as the second one follows similarly. As shown in Theorem \ref{t:HS_annealed}, we have
\begin{equation}\label{e:rep_covariances}
\left|\Cov_{\mu}\big(F(\eta),G(\eta)\big)\right|\le\sum_{b,b'\in\bzd}|f(b)g(b')|\left|\int_0^\infty\nabla\nabla' p(0,b,s,b')\d s\right|.
\end{equation}
Now \eqref{e:heatkernelsecderoffdiag} implies that
\[\left|\int_0^\infty\nabla\nabla' p(0,b,s,b')\d s\right|\le C\int_0^\infty \frac{1}{(1\vee s)}p^*(cs,x_b-x_{b'})\d s.\]
For $p^*$ we have the bound
\[p^*(t,x)\le \frac{C}{(1\vee t)^{d/2}}\exp\left(-\frac{|x|}{(1\vee t)^{1/2}}\right)\]
as follows for example from \cite[Proposition B.3]{MR1872740}, and so we obtain 
\[\left|\int_0^\infty\nabla\nabla' p(0,b,s,b')\d s\right|\le C\int_0^\infty \frac{1}{(1\vee cs)^{(d+2)/2}}\exp\left(-\frac{|x_b-x_{b'}|}{(1\vee cs)^{1/2}}\right)\d s\le \frac{C}{(1\vee |x_b-x_{b'}|)^d},\]
where the last estimate is a straightforward computation. Inserting this into \eqref{e:rep_covariances}, the theorem immediately follows.
\end{proof}

\appendix

\paragraph{Acknowledgements}
The authors would like to thank Sebastian Andres, Oleksandra Antoniouk, Marek Biskup and Michael Röckner for helpful correspondence.

The authors would like to thank the Isaac Newton Institute for Mathematical Sciences, Cambridge, for support and hospitality during the programme 'Stochastic systems for anomalous diffusion', where part of the work on this paper was undertaken. This work was supported by EPSRC grant EP/Z000580/1. CC
has been partly supported by the EPSRC grant EP/M027694/1, and by a Lise Meitner Visiting Professorship held at Lund University.


\end{document}